\setlist[enumerate]{noitemsep,topsep=0pt,label=(\roman*)}
\newtheorem{theorem}{Theorem}[section]
\newtheorem{lemma}[theorem]{Lemma}
\newtheorem{proposition}[theorem]{Proposition}
\newtheorem{corollary}[theorem]{Corollary}
\theoremstyle{remark}
\newtheorem{remark}[theorem]{Remark}
\newcommand{\ind}[1]{{\text{\Large $\mathfrak 1$}}\left(#1\right)}
\newcommand{\PR}{\mathbb{P}}
\newcommand{\E}{\mathbb{E}}
\newcommand{\pr}[1]{\operatorname{\mathbf{P}}\left(#1\right)}
\newcommand{\bc}{\mathrm{bc}}
\newcommand{\finc}{F_\mathrm{inc}}
\newcommand{\fdec}{F_\mathrm{dec}}
\newcommand{\fdiag}{F_\mathrm{diag}}
\newcommand{\leaves}{\mathrm{leaves}}
\newcommand{\good}{\mathrm{good}}
\newcommand{\sleaves}{\mathrm{sl}}
\newcommand{\gs}{\mathrm{gs}}
\newcommand{\compl}{\textrm{c}}
\renewcommand{\tilde}{\widetilde}
\renewcommand{\hat}{\widehat}
\begin{document}

\title{A Lyapunov function for Glauber dynamics on lattice triangulations}

\author{Alexandre Stauffer\thanks{University of Bath, Bath, UK; a.stauffer@bath.ac.uk. Supported in part by a Marie Curie Career Integration
Grant PCIG13-GA-2013-618588 DSRELIS.}}
\date{}
\maketitle

\begin{abstract}
   We study random triangulations of the integer points $[0,n]^2 \cap\mathbb{Z}^2$, where each triangulation $\sigma$
   has probability measure $\lambda^{|\sigma|}$ with $|\sigma|$ denoting the sum of the length of the edges in $\sigma$.
   Such triangulations are called \emph{lattice triangulations}.
   We construct a height function on lattice triangulations and prove that, in the whole subcritical regime $\lambda<1$, 
   the function behaves as a \emph{Lyapunov function} with respect to Glauber dynamics; that is, the function is a supermartingale.
   We show the applicability of the above result by establishing several features of lattice triangulations, 
   such as tightness of local measures, exponential tail of edge lengths, crossings of small triangles, and decay of correlations in thin rectangles. 
   These are the first results on lattice triangulations that are valid in the whole subcritical regime $\lambda<1$.
   In a very recent work with Caputo, Martinelli and Sinclair, we apply this Lyapunov function to establish tight bounds on the mixing time of 
   Glauber dynamics in thin rectangles that hold for all $\lambda<1$.   
   The Lyapunov function result here holds in great generality; it holds for triangulations of general lattice polygons (instead of the $[0,n]^2$ square) and 
   also in the presence of arbitrary constraint edges.
\newline
\newline
\emph{Keywords and phrases.} Lattice triangulations, Glauber dynamics, Lyapunov function.
\newline
MSC 2010 \emph{subject classifications.}
Primary 60J10; % discrete markov chains 
Secondary 60K35, % statistical mechanics
          52C20, % tessellation and tilings
          05C10. % planar graphs
\end{abstract}

%############################################################################################
%############################################################################################
%############################################################################################
\section{Introduction}\label{sec:intro}
Consider the set of integer points $\Lambda^0_{n}=\{0,1,\ldots,n\}^2$ in the plane.
A triangulation $\sigma$ of $\Lambda^0_{n}$ is a maximal collection of edges (straight line segments) such that each edge has its endpoints in 
$\Lambda^0_{n}$ and, aside from its endpoints, intersects no other edge of $\sigma$ and no point of $\Lambda^0_{n}$. Figure~\ref{fig:triang} illustrates a
triangulation with $n=50$.
\begin{figure}[htbp]
   \begin{center}
      \includegraphics[scale=.1]{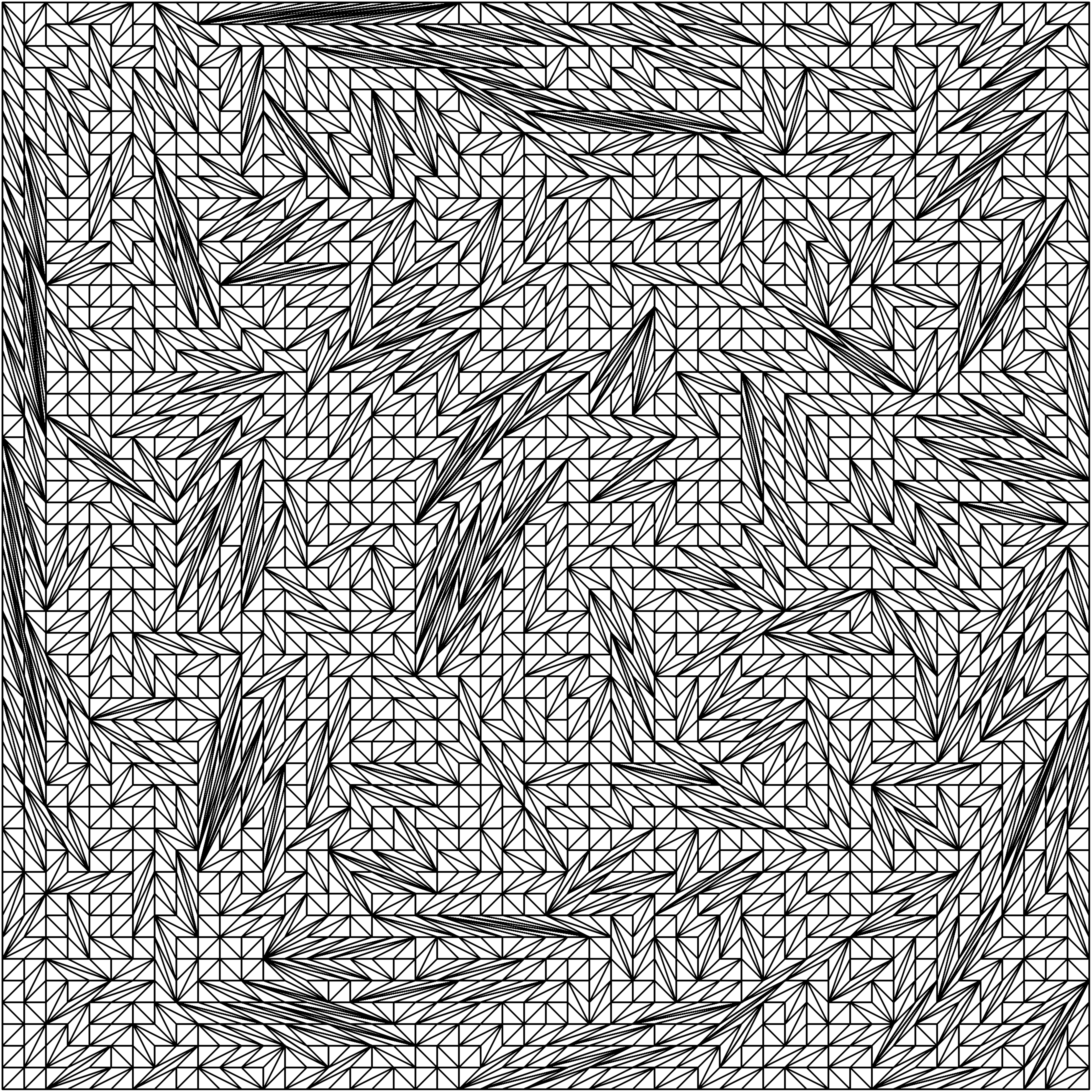}
   \end{center}\vspace{-.5cm}
   \caption{A $50\times 50$ lattice triangulation.}
   \label{fig:triang}
\end{figure}

Our goal is to study properties of \emph{random} triangulations.
Let $\Omega_n$ be the set of all triangulations of $\Lambda^0_n$. 
It is known that, for any triangulation $\sigma \in \Omega_{n}$,
every triangle in $\sigma$ has area exactly $1/2$, and the set of midpoints 
of the edges of $\sigma$ does \emph{not} depend on $\sigma$. 
In particular, this set is 
$$
   \Lambda_{n} = 
   \big\{0, \tfrac{1}{2}, 1, \tfrac{3}{2},\ldots, n-\tfrac{1}{2},n\big\}^2 \setminus \Lambda_{n}^0,
$$
which is the set of half-integer points in $[0,n]^2$ excluding $\Lambda_{n}^0$. 
This allows 
us to regard random lattice triangulations as a \emph{spin system} since 
a lattice triangulation $\sigma \in \Omega_{n}$ can be seen as a collection of variables $\{\sigma_x \colon x \in \Lambda_{n}\}$, where $\sigma_x$ denotes
the edge (representing the spin) of the midpoint $x$ in $\sigma$. 
However, many challenges arise when trying to make use of the vast literature on spin systems to study lattice triangulations. 
For example, lattice triangulations have unbounded dependences as long edges affect far away regions, the interaction between the spins depends on the triangulation,
and some useful properties in the study of spin systems do not hold, one example being the FKG inequality, see Appendix~\ref{sec:nofkg}.

There is a natural Markov chain (or Glauber dynamics) over $\Omega_n$, where transitions are given by 
\emph{flips} of uniformly random edges~\cite{KZ,Welzl}. More precisely, if $\sigma\in\Omega_n$ is the current state of the Markov chain, 
a transition consists of picking a non-boundary edge $e$ of $\sigma$ uniformly at random, and if the two triangles containing $e$ in $\sigma$ form
a strictly convex quadrilateral (in which case they actually form a parallelogram), then with probability $1/2$ 
we remove $e$ and replace it by the opposite diagonal of that 
quadrilateral. Otherwise, the Markov chain stays put; see Figure~\ref{fig:flips}.
\begin{figure}[htbp]
   \begin{center}
      \includegraphics[scale=1.5]{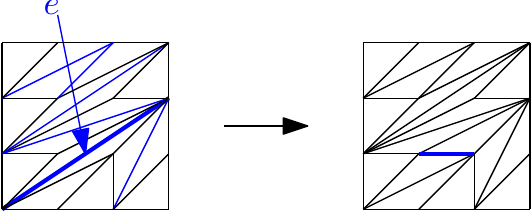}
   \end{center}\vspace{-.5cm}
   \caption{A flip of edge $e$ in a $3\times 3$ triangulation. In the triangulation on the left, the flippable edges are marked in blue, while unflippable edges are in black.}
   \label{fig:flips}
\end{figure}

The graph on $\Omega_n$ induced by the edge-flipping operation above is usually referred to as the \emph{flip graph}, and is 
known to be connected~\cite{Lawson}. In addition, since the transition matrix is symmetric and aperiodic, 
this Markov chain converges to the uniform distribution on $\Omega_n$. 
Very little is currently known regarding the dynamic properties of this Markov chain, 
in particular no non-trivial bound on its mixing time 
(the time until the Markov chain is close enough to its stationary distribution) is known.

In~\cite{CMSS15} we introduced a real parameter $\lambda>0$ and considered \emph{weighted} triangulations: 
each triangulation $\sigma\in\Omega_n$ has weight $\lambda^{\sum_{x\in\Lambda_n}|\sigma_x|}$, where
$|\sigma_x|$ denotes the $\ell_1$ norm of the edge $\sigma_x$. 
Adapting the Markov chain above using the so-called \emph{heat-bath dynamics} gives a 
Markov chain whose stationary distribution is proportional to the weights. 
Simulation suggests that this Markov chain has intriguing behavior, undergoing a phase transition at $\lambda=1$; see Figure~\ref{fig:weights}. 
\begin{figure}[htbp]
   \begin{center}
      \hspace{\stretch{1}}
      \includegraphics[scale=.1]{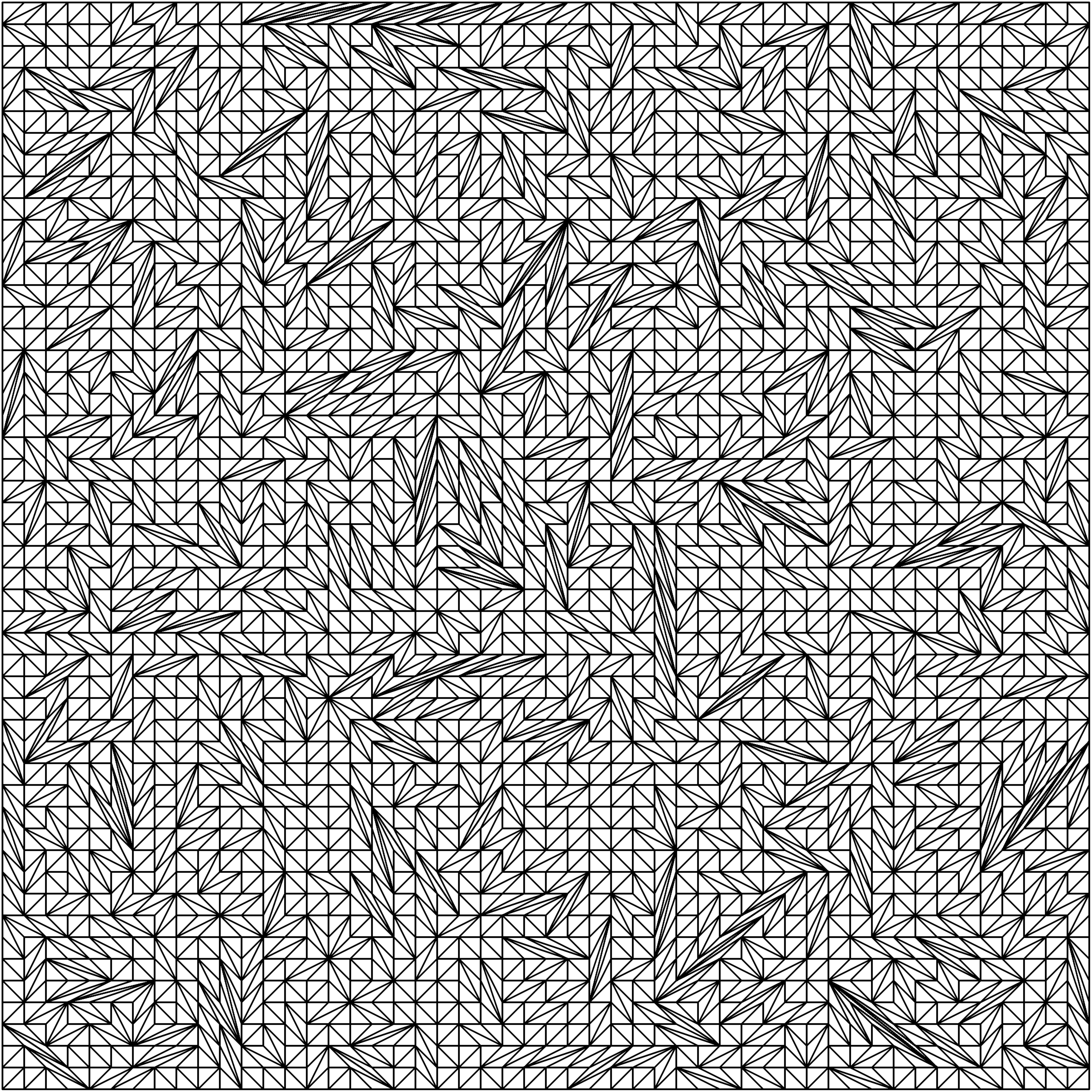}
      \hspace{\stretch{1}}
      \includegraphics[scale=.1]{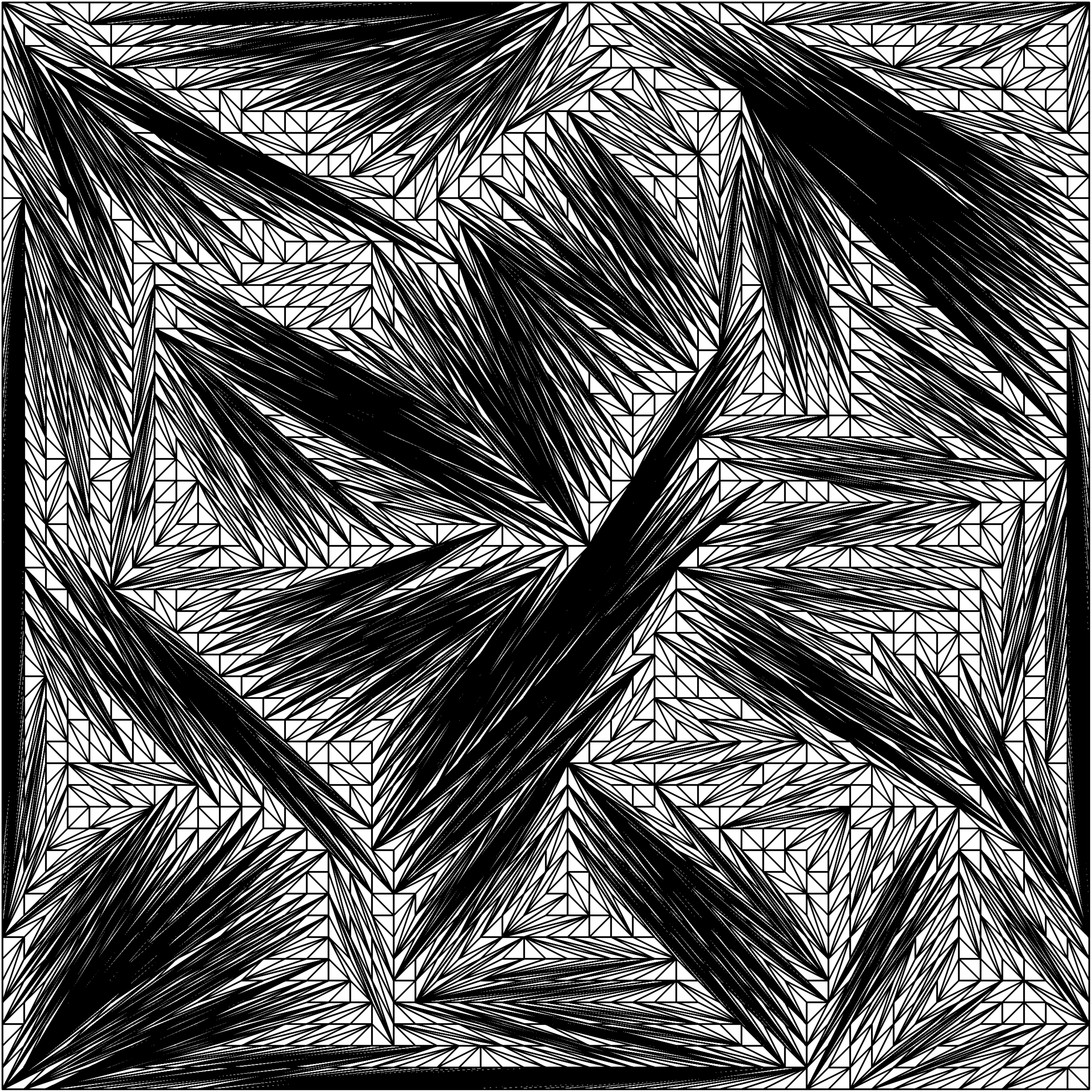}
      \hspace{\stretch{1}}~
   \end{center}\vspace{-.5cm}
   \caption{$50\times 50$ lattice triangulations produced by the edge-flipping Markov chain 
      with $\lambda=0.9$ (left) and $\lambda=1.1$ (right).
      The triangulation in Figure~\ref{fig:triang} was obtained by the edge-flipping Markov chain with $\lambda=1$.}
   \label{fig:weights}
\end{figure}
It is believed that for any $\lambda<1$, which we call the \emph{subcritical regime}, 
regions far from one another in the triangulation evolve roughly independently. 
This suggests the presence of decay of correlations and small mixing time. 
On the other hand, for $\lambda>1$, which we call the \emph{supercritical regime}, 
the Markov chain faces a rigidity phenomenon: long edges give rise to rigid regions of aligned edges. 
This suggests the presence of ``bottlenecks'' in the Markov chain, giving rise to exponential mixing time.
Finally, in the $\lambda=1$ case, which is the case of uniformly random triangulations, relatively long edges appear 
but simulation suggests that the regions of aligned edges are not as rigid as in the supercritical regime.

\paragraph{Our contribution.}
In this paper we construct a \emph{height function} on lattice triangulations: 
given a triangulation $\sigma\in\Omega_n$, the function attributes 
a positive real value to each midpoint in $\Lambda_n$. 
Our main result (Theorem~\ref{thm:lyapunov}) establishes that this function behaves as a \emph{Lyapunov function} with respect to Glauber dynamics.
This means that the value of the function at any midpoint $x\in\Lambda_n$ behaves like a supermartingale. 
Theorem~\ref{thm:lyapunov} holds for any $\lambda\in(0,1)$, and gives the first result on the dynamics of lattice triangulations that are valid in the whole
subcritical regime.
A crucial feature of Theorem~\ref{thm:lyapunov} is that it holds in great generality:
also in the case of triangulations of general lattice polygons\footnote{The set of vertices $\Lambda_n^0$ does not need to be the $n\times n$ square, but 
can be the set of integer points inside any, even non-convex, lattice polygon (a polygon whose vertices are points of $\mathbb{Z}^2$).} 
and in the presence of arbitrary constraint edge\footnote{Triangulations where some given set of edges are forced to be present, 
see Section~\ref{sec:results} for precise definitions.}.

The definition of the height function is a bit involved, so we defer it, as well as the statement of our main result 
(Theorem~\ref{thm:lyapunov}), to Section~\ref{sec:results}.
In particular, the height function is defined in terms of a novel type of geometric crossings, and uses a new partition on the edges of a triangulation in terms
of what we call regions of influence. We believe these two new concepts 
(which we introduce and analyze in Sections~~\ref{sec:treeinfluence} and~\ref{sec:groundstatecrossings}) are of independent interest.
The proof of Theorem~\ref{thm:lyapunov} is given in Section~\ref{sec:proof}.

Our main result has a large range of consequences and applications, which we discuss in Sections~\ref{sec:consequence}--\ref{sec:thinrectangle}. 
For example, we apply it to establish
that the length of an edge of a triangulation has an exponential tail (Corollary~\ref{cor:tailedge}), 
that local measures are tight (Theorem~\ref{thm:tightness}), 
and that there are crossings of triangles of constant size (Theorem~\ref{thm:smalltriangles}).
In the particular case 
of triangulations of $n\times k$ rectangles, where $k$ is a fixed integer independent of $n$, 
our technique yields the existence of local limits (Theorem~\ref{thm:locallimits}), 
decay of correlations (Theorem~\ref{thm:correlations}), and 
recurrence of random walks on the induced graph (Corollary~\ref{cor:recurrence}). 
In a very recent work with Caputo, Martinelli and Sinclair~\cite{CMSS}, 
we apply Theorem~\ref{thm:lyapunov}, as well as a number of other results from this paper, to 
establish tight bounds on the mixing time of $n\times k$ triangulations. 
%These are the first results on lattice triangulations that hold in the whole subcritical regime.

\paragraph{Motivation and previous works.}
Lattice triangulations have appeared in a broad range of contexts. 
A number of beautiful combinatorial arguments have been recently developed to estimate the number of lattice triangulations~\cite{Anclin,KZ,Welzl}.
For example, a very elegant argument by Anclin~\cite{Anclin} shows that the cardinality of $\Omega_n$ is at most $8^{n^2+2n}$. 
Despite not being the best known upper bound, 
Anclin's argument is quite general and applies to lattice triangulations of general lattice polygons; we state 
and use it 
later, see Lemma~\ref{lem:anclin}. The best known bounds on $|\Omega_n|$ are 
$|\Omega_n|\geq 4.13^{n^2}$~\cite{KZ} and $|\Omega_n|\leq c\, 6.86^{n^2}$~\cite{Welzl} for some positive constant $c$.

Lattice triangulations have also been studied in other areas. For example, in algebraic geometry,
they play a key role in the famous construction of 
plane algebraic curves by Viro~\cite{Viro}, which has connections with Hilbert's Sixteenth problem, and 
also appeared in 
the theory of discriminants~\cite{GKZ} and toric varieties~\cite{Dais}. 
Lattice triangulations have also been studied in the contexts of
discretization of random surface models~\cite{Frohlich} and 
two-dimensional quantum gravity~\cite{MYZ}. 
Several other applications of triangulations are discussed in~\cite{tr_book}.

Much less is known about \emph{random} triangulations.
The only result on the mixing time of the above edge-flipping Markov chain is~\cite{CMSS15}.
There we showed that, for any $\lambda>1$, the mixing time is at least $e^{cn}$ for some constant $c>0$. We also showed that, for all \emph{sufficiently small} $\lambda$, 
the mixing time is of order $n^3$, and a random triangulation has decay of correlations. 
Extending these results to the whole subcritical regime turned out to be quite challenging, 
especially since similar results for other spin systems make use of fundamental properties that do not hold on lattice triangulations.
This led us to look for new geometric properties of lattice triangulations and to develop new techniques. 
%The case $\lambda=1$, in turn, is still completely open.

Concurrently to~\cite{CMSS15}, a similar model of random lattice triangulations has independently appeared in the statistical
physics literature~\cite{KKM,KSM}. 
Following~\cite{CMSS15}, a similar model has been introduced to study the mixing time of random rectangular dissections and 
dyadic tilings~\cite{CMR}.

%############################################################################################
%############################################################################################
%############################################################################################
\section{Notation and statement of main result}\label{sec:results}
Unless stated otherwise, henceforth we let $\Lambda^0$ be any subset of $\mathbb{Z}^2$ such that 
$\Lambda^0$ contains all points of $\mathbb{Z}^2$ that lie inside some lattice polygon, including the vertices of the polygon.
A lattice polygon is defined as a polygon whose vertex set only contains points of $\mathbb{Z}^2$, 
and whose edges do not intersect one another (aside from their endpoints) and do not contain points of $\mathbb{Z}^2$ in their interior, 
see Figure~\ref{fig:midpoint}(a).
(In a first reading the reader can consider $\Lambda^0=[0,n]^2\cap\mathbb{Z}^2$ as mentioned in Section~\ref{sec:intro}.)
Let $\Omega$ be the set of triangulations of $\Lambda^0$, 
and $\Lambda$ be the set of midpoints of edges of a triangulation of $\Lambda^0$.
\begin{figure}[htbp]
   \begin{center}
      \includegraphics[scale=.6]{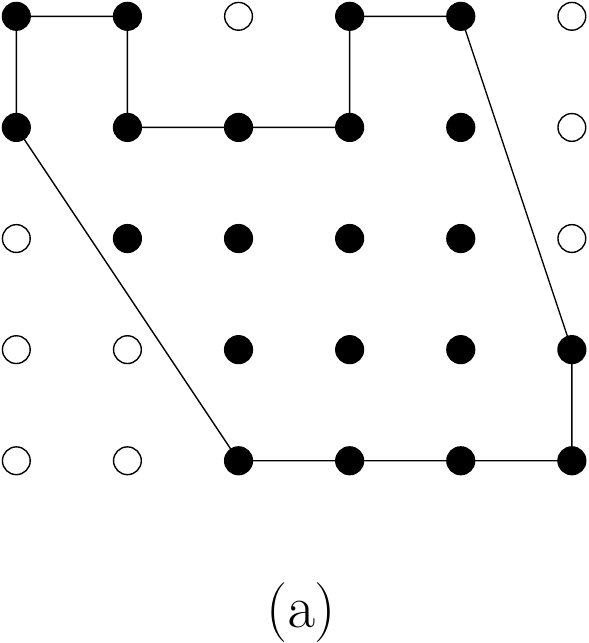}
      \hspace{\stretch{1}}
      \includegraphics[scale=.6]{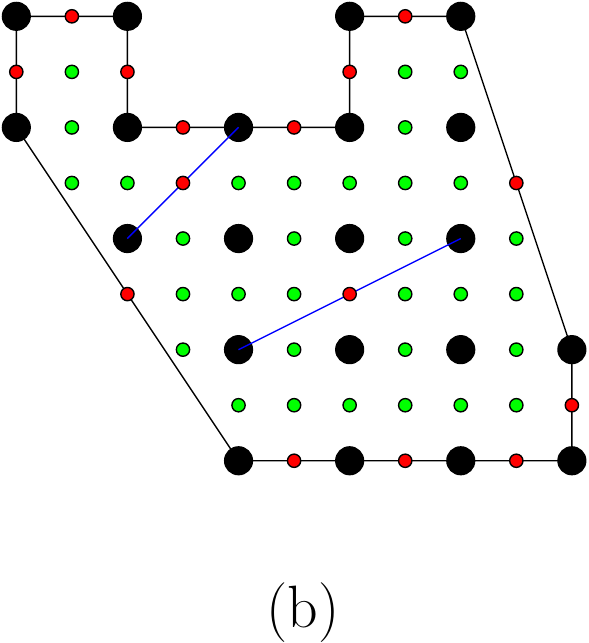}
      \hspace{\stretch{1}}
      \includegraphics[scale=.6]{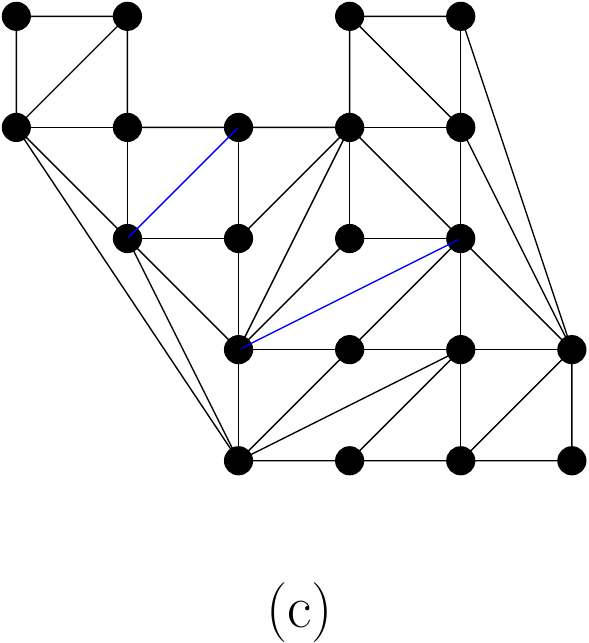}
      \hspace{\stretch{1}}
      \includegraphics[scale=.6]{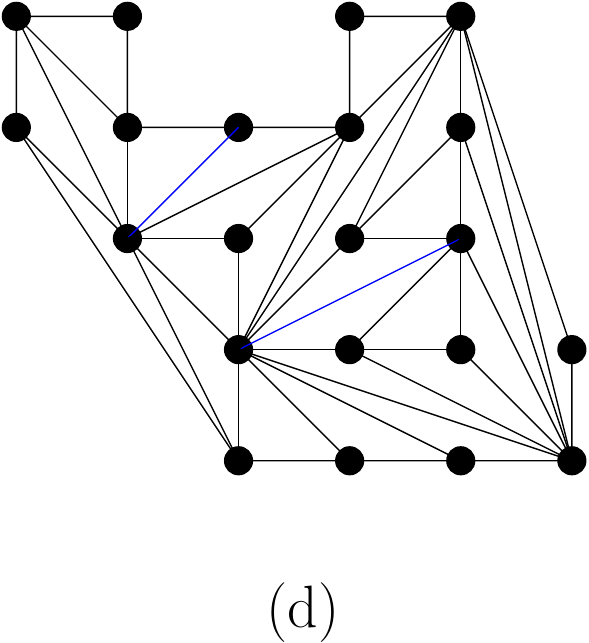}
   \end{center}\vspace{-.5cm}
   \caption{(a) A lattice polygon (black edges): the black points form the induced set $\Lambda^0$ and the white points belong to $\mathbb{Z}^2\setminus \Lambda^0$.
     (b) Two constraint edges (blue edges): the green and red half-integer points form the set of midpoints $\Lambda$, 
        the black and blue edges form the boundary condition $\xi$,
        and the red points form the midpoints $\Lambda^\bc=\xi\cap \Lambda$ of the boundary condition. 
    (c,d) Two triangulations consistent with the boundary condition $\xi$ of part (b).}
   \label{fig:midpoint}
\end{figure}

Now we formally define the notion of \emph{boundary conditions} via constraint edges. 
Consider a collection of edges $\xi=\{\xi_x \colon x\in\Lambda^\bc\}$ for some $\Lambda^\bc\subset\Lambda$ such that each edge of $\xi$ 
has endpoints in
$\Lambda^0$ and, aside from its endpoints, does not intersect other edges of $\xi$ or points in $\Lambda^0$. 
We say that 
% If a triangulation $\sigma\in\Omega$ is such that for all $x\in\Lambda^\bc$ we have $\sigma_x=\xi_x$, we 
% say that $\sigma$ is \emph{compatible} with $\xi$. 
\begin{align*}
   \text{a triangulation $\sigma\in\Omega$ is \emph{compatible} with $\xi$
   if $\sigma_x=\xi_x$ for all $x\in\Lambda^\bc$.}
\end{align*}
see Figure~\ref{fig:midpoint}(b--d).
We refer to $\xi$ as a \emph{boundary condition} and let 
$$
   \Omega^\xi = \{\sigma\in \Omega \colon \sigma \text{ is compatible with } \xi\}.
$$
For convenience, we assume that $\xi$ (resp., $\Lambda^\bc$) 
always contains the boundary edges (resp., the midpoints of the boundary edges) of the lattice polygon induced by $\Lambda^0$; for example, in the case of 
$\Lambda^0=[0,n]^2\cap\mathbb{Z}^2$, we have that $\xi$ contains the $4n$ edges of length $1$ connecting consecutive points 
on the boundary of $[0,n]^2$. See Figure~\ref{fig:midpoint}(b) for another example.
Define 
$$
   \text{$\Xi(\Lambda^0)$ to be the collection of all possible sets of constraint edges with endpoints $\Lambda^0$.}
$$
Consequently, for any boundary conditions $\xi\in \Xi(\Lambda^0)$, $\xi$ contains the boundary of the aforementioned lattice polygon induced by $\Lambda^0$.
Henceforth, for any $\xi\in\Xi(\Lambda^0)$, we denote by $\Lambda^\bc=\Lambda^\bc(\xi)=\Lambda \cap \xi$ the set of midpoints of the edges in $\xi$.

Given any $\lambda>0$, any set of constraint edges $\xi\in\Xi(\Lambda^0)$ and any $\sigma\in\Omega^\xi$, let 
\begin{align*}
   \mathcal{M}_\sigma^\lambda(\Omega^\xi) \text{ be the edge-flipping Markov chain obtained by Glauber dynamics on $\Omega^\xi$}\\
   \text{with parameter $\lambda$ and starting state $\sigma$.}
\end{align*}
(When the starting state is not important, we will simply denote the above Markov chain by $\mathcal{M}^\lambda(\Omega^\xi)$.)
Given a initial triangulation $\sigma\in\Omega^\xi$, the Markov chain $\mathcal{M}_\sigma^\lambda(\Omega^\xi)$ evolves as follows. 
Pick a uniformly random midpoint $x\in\Lambda$.
If $\sigma_x$ is a constraint edge (i.e., $\sigma_x\in\xi$) or $\sigma_x$ is unflippable, do nothing.
Otherwise, let $\sigma^x$ denote the triangulation obtained by flipping $\sigma_x$ in $\sigma$.
Then with probability $\frac{\lambda^{|\sigma_x^x|}}{\lambda^{|\sigma_x^x|}+\lambda^{|\sigma_x|}}$, flip $\sigma_x$ in $\sigma$, otherwise do nothing.
For any edge $e$, we denote by $|e|$ the $\ell_1$ length of $e$.

The stationary measure of $\mathcal{M}^\lambda(\Omega^\xi)$ is denoted by $\pi^\xi$, and is given by
\begin{equation}
   \pi^\xi(\sigma) = \lambda^{\sum_{x\in\Lambda}|\sigma_x|}/Z^\xi(\lambda),
   \label{eq:defpi}
\end{equation}
where
$
   Z^\xi(\lambda) = \sum\nolimits_{\sigma\in\Omega^\xi}\lambda^{\sum_{x\in\Lambda}|\sigma_x|}
   \text{ is a normalizing constant.}
$
We omit the dependence on $\lambda$ from $\pi^\xi$ to simplify the notation.

Given any midpoint $x\in\Lambda$, define
$E_x^\xi$ as the set of edges of midpoint $x$ that are compatible with $\xi$. In symbols, 
\begin{equation}
   E_x^\xi = \{\sigma_x \colon \sigma \in \Omega^\xi\}.
   \label{eq:ex}
\end{equation}
Despite not being the best known upper bound on the number of triangulations, we mention the following upper bound due to Anclin 
as it holds for arbitrary 
boundary conditions $\xi$.
Anclin showed that 
if we order the midpoints in $\Lambda\setminus \Lambda^\bc$ from top to bottom and left to right, and we construct the 
triangulation by sampling edges one by one following this order, then for each midpoint $x\in\Lambda\setminus\Lambda^\bc$ there are at most two edges of
$E_x^\xi$ that are compatible with all previously sampled edges. This immediately implies the following upper bound.
\begin{lemma}[{Anclin's bound,~\cite{Anclin}}]\label{lem:anclin}
   Given any set of constraint edges $\xi\in\Xi(\Lambda^0)$, we have 
   $$
      |\Omega^\xi|\leq 2^{|\Lambda\setminus\Lambda^\bc|}.
   $$
\end{lemma}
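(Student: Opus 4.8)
The plan is to follow exactly the incremental sampling scheme that Anclin introduced, turning his structural observation into a counting bound by a product argument. First I would fix an arbitrary boundary condition $\xi\in\Xi(\Lambda^0)$ and order the midpoints of $\Lambda\setminus\Lambda^\bc$ in the stated way — say $x_1,x_2,\ldots,x_m$ with $m=|\Lambda\setminus\Lambda^\bc|$, scanning from top to bottom and, within a row, from left to right. Every triangulation $\sigma\in\Omega^\xi$ is determined by the tuple $(\sigma_{x_1},\ldots,\sigma_{x_m})$ together with the fixed constraint edges $\{\xi_x\colon x\in\Lambda^\bc\}$, so the map $\sigma\mapsto(\sigma_{x_1},\ldots,\sigma_{x_m})$ is injective. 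Hence it suffices to bound the number of tuples that arise this way.

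Next I would build the tuple one coordinate at a time and invoke the quoted fact from Anclin: when the midpoints are processed in this order, for each $x_i$ there are at most two edges in $E_{x_i}^\xi$ that are consistent with all of $\xi$ together with the edges $\sigma_{x_1},\ldots,\sigma_{x_{i-1}}$ already chosen. Consequently, conditioned on any fixed choice of the first $i-1$ coordinates, the $i$-th coordinate takes at most two values among those that can be completed to a genuine triangulation. Multiplying over $i=1,\ldots,m$ gives that the number of attainable tuples, and therefore $|\Omega^\xi|$, is at most $2^m = 2^{|\Lambda\setminus\Lambda^\bc|}$, which is the claim.

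The only subtlety — and the place where one must be slightly careful rather than the place where there is a genuine obstacle — is the precise meaning of ``compatible with all previously sampled edges'': one should note that the at-most-two edges in question are exactly those whose lower-left triangle (the region that the scanning order has already committed to) can be completed, and that any $\sigma\in\Omega^\xi$ restricted to $x_1,\ldots,x_i$ necessarily yields such a locally completable prefix, so the injective image of $\Omega^\xi$ is contained in the tree of prefixes of branching at most $2$ at each level. Since Anclin's combinatorial lemma is being taken as given (it is the content cited from \cite{Anclin}), there is essentially no hard part remaining; the proof is a one-line product bound once the injectivity of $\sigma\mapsto(\sigma_{x_i})_{i}$ and the branching bound are recorded.
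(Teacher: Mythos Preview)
Your proposal is correct and matches exactly the argument the paper sketches just before the lemma: order the free midpoints, use that each triangulation is determined by its edges at those midpoints, and multiply Anclin's at-most-two-choices-per-step over the ordering. The paper treats Anclin's branching fact as input from \cite{Anclin} and says the bound follows immediately, which is precisely what you do.
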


We refer to the edges of $E_x^\xi$ of smallest length as the \emph{ground state} edges of $x$ given $\xi$. 
The ground state edges of $x$ are either composed of a single edge or are the two opposite unit diagonals (i.e., the diagonals of a square of side length 1).
Let 
\begin{equation}
   \mathbb{G}^\xi = \bigcup_{x\in\Lambda}\{g \in E_x^\xi \colon g \text{ is a ground state edge of $x$ given $\xi$}\}
   \label{eq:defg}
\end{equation}
be the set of ground state edges given $\xi$. Also, define the set of all possible edges as 
$$
   E^\xi = \bigcup_{x\in\Lambda} E_x^\xi.
$$
We consider that the edges in $E^\xi$ are \emph{open} line segments. Hence, two edges $e,f\in E^\xi$ 
that intersect only at their endpoints
are considered to be disjoint. 
The \emph{ground state triangulation} is defined as the triangulation with smallest total edge length. 
The following lemma from~\cite{CMSS15} gives that a ground state triangulation can be easily constructed 
by independently adding the smallest edge of each midpoint that
is compatible with the boundary condition.
\begin{lemma}[{Ground State Lemma, from~\cite[Lemma~3.4]{CMSS15}}]\label{lem:groundstate}
   Given any boundary condition $\xi\in\Xi(\Lambda^0)$, 
   the ground state triangulation given $\xi$ 
   is unique (up to possible flips of unit diagonals), and can be constructed by placing each edge in its minimal length configuration consistent
   with $\xi$, independent of the other edges.
\end{lemma}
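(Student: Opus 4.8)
The plan is to show two things: first, that placing each midpoint's edge in its minimal-length configuration consistent with $\xi$ actually produces a valid triangulation (i.e. the chosen edges are pairwise non-crossing), and second, that any triangulation of smaller or equal total length must coincide with this one up to flips of unit diagonals. For the first part, I would argue by contradiction: suppose two ground state edges $g_x \in E_x^\xi$ and $g_y \in E_y^\xi$ cross at an interior point. Since each $g_x$ is the shortest edge of midpoint $x$ compatible with $\xi$, I would look for a ``local repair'' — a pair of edges $g_x', g_y'$ with midpoints $x$ and $y$ respectively, also compatible with $\xi$, that do not cross each other and satisfy $|g_x'| + |g_y'| \le |g_x| + |g_y|$, with strict inequality unless we are in the unit-diagonal situation. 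The key geometric input is that when two straight segments cross, one can ``uncross'' them by replacing them with segments joining the same four endpoints in the other pairing, and by the triangle inequality (here in $\ell_1$) the total length does not increase; one then checks the midpoints are preserved or that the new configuration still respects $\xi$ and has strictly smaller length, contradicting minimality of $g_x$ or $g_y$. Some care is needed because the midpoint of an edge is fixed data, so the uncrossing must be arranged to respect midpoints — this is where the special role of unit diagonals (two edges sharing... no, rather: the two unit diagonals of a $1\times1$ square have the same length and different midpoints, so they never need uncrossing against each other, and a unit diagonal can coexist) enters.

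For the second part — minimality and essential uniqueness — I would take any triangulation $\tau \in \Omega^\xi$ and compare it edgewise with the candidate ground state $g = \{g_x\}_{x\in\Lambda}$. Since $|\tau| = \sum_{x\in\Lambda} |\tau_x| \ge \sum_{x\in\Lambda} |g_x| = |g|$ term by term (because $g_x$ is by definition the shortest edge of midpoint $x$ compatible with $\xi$, and $\tau_x \in E_x^\xi$), $g$ automatically has the smallest possible total length. Equality $|\tau| = |g|$ forces $|\tau_x| = |g_x|$ for every $x$, hence $\tau_x$ is also a ground state edge of $x$; by the structure of ground state edges stated just before the lemma (a single edge, or the two unit diagonals), $\tau_x = g_x$ except possibly when $g_x$ is one of two unit diagonals, in which case $\tau_x$ is the other — precisely a flip of a unit diagonal. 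This gives uniqueness up to such flips.

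The main obstacle I expect is the first part: proving that the midpoint-wise minimal edges are genuinely pairwise compatible (non-crossing), since a priori ``each edge individually shortest'' need not assemble into a consistent triangulation. The uncrossing argument must be set up so that (a) the replacement edges still lie in $E_\bullet^\xi$, i.e. have endpoints in $\Lambda^0$ and do not cross the fixed constraint edges of $\xi$, and (b) midpoints are tracked correctly, so that the length decrease is charged to the right midpoint's $E_x^\xi$. I would handle this by choosing, among all crossing pairs in $g$, one that is minimal in some suitable sense (e.g. innermost, or minimizing the enclosed area / number of lattice points), performing the uncrossing, and checking that the $\ell_1$ lengths strictly decrease at $x$ or at $y$ — contradicting that $g_x$ (or $g_y$) was the minimum over $E_x^\xi$ (resp.\ $E_y^\xi$). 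The degenerate equality cases are exactly the unit diagonals, which is why the uniqueness is only ``up to possible flips of unit diagonals.''
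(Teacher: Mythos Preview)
The paper does not actually prove this lemma; it is quoted from \cite[Lemma~3.4]{CMSS15} and used as a black box. So there is no in-paper argument to compare against, and your proposal has to be judged on its own.

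Your second part (termwise minimality and uniqueness up to unit-diagonal flips) is fine and is exactly the right observation: since every $\tau_x$ lies in $E_x^\xi$ and $g_x$ minimizes $|\cdot|$ over $E_x^\xi$, the inequality $|\tau|\ge|g|$ and the equality analysis go through immediately.

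The first part, however, has a genuine gap. Your proposed ``uncrossing'' of two crossing ground state edges $g_x=(a,b)$ and $g_y=(c,d)$ replaces them by $(a,c),(b,d)$ (or $(a,d),(b,c)$). The triangle inequality does give a shorter total $\ell_1$ length, but the new edges have midpoints $(a+c)/2$ and $(b+d)/2$, which are \emph{not} $x$ and $y$. Hence the replacement edges do not lie in $E_x^\xi$ or $E_y^\xi$, and no contradiction to the minimality of $g_x$ or $g_y$ follows. You flag this issue yourself (``the midpoint of an edge is fixed data'') but the fix you sketch --- choose an extremal crossing pair and charge the decrease to $x$ or $y$ --- still runs into the same obstruction: uncrossing simply does not produce edges with the original midpoints. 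There is also a secondary issue you do not address: even if you had replacement edges, you would need to check they avoid the constraint edges of $\xi$, which is not automatic from the four endpoints alone.

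A workable route is closer to the geometry the paper itself invokes elsewhere (see the excluded-region argument in the proof of Proposition~\ref{pro:smalltrees}): a non-unit ground state edge $g_x$ is forced by some constraint edge $e\in\xi$, in the sense that $g_x\subset S(e)$ and any edge of midpoint $x$ compatible with $\xi$ must have its endpoints in the two ``cones'' $X,Y$ beyond the endpoints of $g_x$. One then argues, using that the excluded region of $g_x$ contains no lattice points, that a second ground state edge $g_y$ cannot cross $g_x$ without either crossing $e$ (impossible, since $g_y$ is compatible with $\xi$) or admitting a strictly shorter edge of midpoint $y$ compatible with $\xi$ (contradicting minimality of $g_y$). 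Alternatively, one can run the dynamics argument implicit in Proposition~\ref{pro:monotoneflips}: start from any $\sigma\in\Omega^\xi$, perform decreasing flips until none remain, and show that in the resulting triangulation every edge is in ground state; this simultaneously yields existence of a triangulation consisting of ground state edges and (via your part two) its essential uniqueness. Either of these replaces the midpoint-incompatible uncrossing step.
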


The flip operation induces a natural partial order on $E_x^\xi$. 
It is known that
for any non-ground-state edge $f\in E_x^\xi\setminus \mathbb{G}^\xi$ there is a unique edge $e\in E_x^\xi$ such that $e$ can be obtained from $f$ via a 
decreasing flip; see, for example,~\cite[Section 2.2]{CMSS15}. 
In this case we say that $e$ is the \emph{parent} of $f$.
When $f$ belongs to a triangulation where $f$ can be flipped to a shorter edge (which necessarily is $e$), 
in this triangulation $f$ is the largest diagonal of a parallelogram, which is referred to as the \emph{minimal parallelogram of $f$}.
Then for two distinct edges $e,f\in E_x^\xi$, we say that 
\begin{align}
   \text{$e\prec f$ iff there is a sequence $e=e_0,e_1,e_2,\ldots,e_k=f\in E_x^\xi$ such that} \nonumber\\
   \text{$e_i$ is the parent of $e_{i+1}$ for all $i=0,1,2,\ldots,k-1$}. 
   \label{eq:edgeposet}
\end{align}
In other words, if $e \prec f$, 
there is a length-increasing sequence of edges $e=e_1,e_2,\ldots,e_k=f$ 
with $e_i\in E_x^\xi$ for all $i$, 
and such that for each $i=1,2,\ldots,k-1$ 
there exists two triangulations $\sigma,\eta$ adjacent in the flip graph 
satisfying $\sigma_y=\eta_y$ for all $y\in\Lambda\setminus\{x\}$, 
$\sigma_x=e_i$ and 
$\eta_x=e_{i+1}$. 
Hence, the ground state edges of midpoint $x$ are the edges $g\in E_x^\xi$ such that 
there exists no $e\in E_x^\xi\setminus \{g\}$ with $e\prec g$.
We say that $e\preceq f$ if either $e=f$ or $e\prec f$.

Given a boundary condition $\xi\in\Xi(\Lambda^0)$, 
a triangulation $\sigma$, a midpoint $x\in\Lambda$ and a ground state edge $g\in\mathbb{G}^\xi$ (whose midpoint is not necessarily $x$), define the set 
\begin{equation}
   E_x^\xi(\sigma,g) = \{e \in E_x^\xi \colon e\cap g \neq \emptyset \text{ and } e \preceq \sigma_x\}.
   \label{eq:defe}
\end{equation}
We will show later in Proposition~\ref{pro:monotonicity} that $E_x^\xi(\sigma,g)=\emptyset$ if and only if $\sigma_x$ does not intersect $g$.
For any $e\in E^\xi$, let $|e|$ denote the $\ell_1$ length of $e$.
Given a parameter $\alpha>1$, 
define the function $\Psi^\xi_g \colon \Omega^\xi \to \mathbb{R}$ as 
\begin{equation}
   \Psi_g^\xi(\sigma) = \sum_{x\in\Lambda}\sum_{e \in E_x(\sigma,g)}\alpha^{|e|-|g|}.
   \label{eq:lyapunov}
\end{equation}
Note that, for any $g\in\mathbb{G}^\xi$ and $\sigma\in\Omega^\xi$, 
letting $x\in\Lambda$ be the midpoint of $g$, we obtain 
$$
   \Psi_g^\xi(\sigma)\geq \alpha^{|\sigma_x|-|g|}\geq 1.
$$
If $\sigma$ is the ground state triangulation, then $\Psi_g^\xi(\sigma)=1$ for all $g\in\mathbb{G}^\xi$.
We can regard $\Psi^\xi$ as a height function for triangulations: given any triangulation $\sigma\in\Omega^\xi$, 
$\Psi^\xi_g(\sigma)$ can be seen as a height value to the midpoint of $g$.

The theorem below shows that there are values of $\alpha$ for which $\Psi_g^\xi$ is a \emph{Lyapunov function}.
For this reason, in many parts of the paper we will refer to $\Psi^\xi_g$ as the \emph{Lyapunov function}.
Let $\PR_\sigma=\PR_\sigma^\xi$ denote the probability measure induced by $\mathcal{M}_\sigma^\lambda(\Omega^\xi)$, and let 
$\E_\sigma=\E_\sigma^\xi$ be the corresponding expectation.
\begin{theorem}\label{thm:lyapunov}
   For any $\lambda\in(0,1)$, there exists $\alpha\in (1,\lambda^{-1/2})$, $\psi_0>1$ and $\epsilon>0$, each depending only on $\lambda$, 
   for which the following holds. 
   Let $\xi\in\Xi(\Lambda^0)$ be any boundary condition, 
   $\sigma\in\Omega^\xi$ be any triangulation, 
   and $\sigma'$ be a random triangulation
   obtained by applying one step of $\mathcal{M}_\sigma^\lambda(\Omega^\xi)$. 
   For any $g\in\mathbb{G}^\xi$, if $\Psi_g^\xi(\sigma)\geq \psi_0$, then
   $$
      \E_\sigma\big(\Psi_g^\xi(\sigma')\big) \leq \left(1-\frac{\epsilon}{|\Lambda|}\right)\Psi_g^\xi(\sigma).
   $$
\end{theorem}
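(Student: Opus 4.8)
Fix $\alpha\in(1,\lambda^{-1/2})$; all constants below depend only on $\lambda$ (through $\lambda$ and $\alpha$). One step of $\mathcal M^\lambda_\sigma(\Omega^\xi)$ picks a midpoint $x\in\Lambda$ uniformly and, if $\sigma_x$ is neither a constraint edge nor unflippable, flips it to $\sigma^x_x$ with acceptance probability $p_x$. Writing $\Delta_x:=\Psi^\xi_g(\sigma^x)-\Psi^\xi_g(\sigma)$, the claim is equivalent to the one-step drift bound $\sum_x p_x\Delta_x\le-\epsilon\,\Psi^\xi_g(\sigma)$, the sum over flippable non-constraint midpoints. A flip at $x$ changes only $\sigma_x$ (the diagonals of a parallelogram share their midpoint), hence only the $x$-term in~\eqref{eq:lyapunov}. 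Set $w(e):=\alpha^{|e|-|g|}$. By the upward closure of $\{e\in E_x^\xi\colon e\cap g\neq\emptyset\}$ in the flip order on $E_x^\xi$ (the content of Proposition~\ref{pro:monotonicity}), $E_x^\xi(\sigma,g)$ is a terminal segment of the chain $\{e\preceq\sigma_x\}$: empty if $\sigma_x\cap g=\emptyset$, and otherwise running from its shortest $g$-crossing member up to $\sigma_x$. Since lengths increase by at least one along this chain, with $S:=\sum_{x\,:\,\sigma_x\cap g\neq\emptyset}w(\sigma_x)$ we get $S\le\Psi^\xi_g(\sigma)\le\frac{\alpha}{\alpha-1}S$. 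Moreover $\Delta_x=-w(\sigma_x)$ when the flip at $x$ is decreasing and $\sigma_x\cap g\neq\emptyset$ (a \emph{good} move), $\Delta_x=+w(\sigma^x_x)$ when it is increasing and $\sigma^x_x\cap g\neq\emptyset$ (a \emph{bad} move), $\Delta_x=0$ in the other non-degenerate cases, and $|\Delta_x|\le w(\sigma_x)$ in the degenerate case where $\sigma_x$ and $\sigma^x_x$ are the two unit diagonals at $x$.

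The elementary estimates come next. The two diagonals of a lattice parallelogram have $\ell_1$-lengths of equal parity, so a non-degenerate flip changes $|\sigma_x|$ by an even integer $d\ge2$. Thus a good move is accepted with probability $p_x\ge\frac1{1+\lambda^2}$ and contributes at most $-\frac1{1+\lambda^2}w(\sigma_x)$ to the drift, whereas a bad move is accepted with probability $p_x\le\lambda^d$ and contributes at most $\lambda^d w(\sigma^x_x)=(\lambda\alpha)^d\alpha^{|\sigma_x|-|g|}\le(\lambda\alpha)^2\alpha^{|\sigma_x|-|g|}<\lambda\,\alpha^{|\sigma_x|-|g|}$, the last bound using $\alpha<\lambda^{-1/2}$. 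A degenerate flip can affect $\Psi^\xi_g$ only at a midpoint $x$ whose unit square is met by $g$, of which there are $O(|g|)$, each of $w$-weight $\alpha^{2-|g|}$; since $\sup_{t\ge1}t\,\alpha^{2-t}<\infty$, such flips contribute to the drift (and to $S$) at most an absolute constant $C_0(\lambda)$. Finally, by the Ground State Lemma~\ref{lem:groundstate} ground-state edges of distinct midpoints are pairwise non-crossing (they all lie in the ground-state triangulation), and constraint edges are the ground-state edges of their midpoints; hence a ground-state or constraint edge crossing $g$ must equal $g$ or be the other unit diagonal at $g$'s midpoint, so the $g$-crossing midpoints at which $\sigma_x$ is such an edge — and therefore admits no decreasing flip — contribute only $O(1)$ to $S$.

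The geometric core, which I expect to be the main obstacle, is to show that good moves carry a definite fraction of the mass: there exist $c(\lambda)>0$ and $C_1(\lambda)$ with
\[
\sum_{\substack{x\ :\ \sigma_x\cap g\neq\emptyset,\\ \sigma_x\ \text{admits a decreasing flip}}} w(\sigma_x)\ \ge\ c\,S-C_1 ,
\]
and, using the surplus factor $(\lambda\alpha)^2<\lambda$, that bad moves contribute at most $\frac{c}{2(1+\lambda^2)}S+C_1$ to the drift. This is precisely what the constructions of Sections~\ref{sec:treeinfluence} and~\ref{sec:groundstatecrossings} are designed to supply: the edges of $\sigma$ that cross $g$ are the internal diagonals of the path of triangles of $\sigma$ traversed by $g$; one partitions these into regions of influence and, in each region, exhibits an edge admitting a decreasing flip toward its ground state — here one uses crucially that $g$ is itself a ground-state edge, so every $g$-crossing edge lies strictly above, in the flip order at its midpoint, its own ground state, which (being a ground-state edge $\neq g$) does not cross $g$ — while bounding both the multiplicative loss of the charging and the number of $g$-crossing edges charged to a common decreasing flip by constants depending only on $\lambda$. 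A bad move at $x$ turns the short diagonal $\sigma_x$ of a parallelogram into the long diagonal $f=\sigma^x_x$, which in $\sigma^x$ is itself $g$-crossing and admits a decreasing flip (the parallelogram is unchanged); running the same decomposition on $\sigma^x$ identifies $f$ as the responsible edge of a region of influence and lets one charge its small weight $\lambda^d\alpha^{|f|-|g|}$ against $g$-crossing edges of $\sigma$. Making this charging precise and uniform in $\xi$, $\sigma$ and $g$ is the crux.

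Granting the core estimate, the drift is at most
\[
-\tfrac1{1+\lambda^2}\big(cS-C_1\big)+\tfrac{c}{2(1+\lambda^2)}S+C_1+C_0\ \le\ -\tfrac{c}{2(1+\lambda^2)}S+C_2
\]
for some $C_2(\lambda)$, hence at most $-\frac{c(\alpha-1)}{2\alpha(1+\lambda^2)}\Psi^\xi_g(\sigma)+C_2$ by $S\ge\frac{\alpha-1}{\alpha}\Psi^\xi_g(\sigma)$. Taking $\psi_0$ large enough that $C_2\le\frac{c(\alpha-1)}{4\alpha(1+\lambda^2)}\psi_0$, and $\epsilon:=\frac{c(\alpha-1)}{4\alpha(1+\lambda^2)}$, we get $\sum_x p_x\Delta_x\le-\epsilon\,\Psi^\xi_g(\sigma)$ whenever $\Psi^\xi_g(\sigma)\ge\psi_0$; dividing by $|\Lambda|$ gives the theorem. (The requirement $\alpha<\lambda^{-1/2}$, stronger than the $\alpha<\lambda^{-1}$ one might first expect, is exactly what gives the increasing flips room to spare.)
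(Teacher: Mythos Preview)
Your plan has the right architecture and matches the paper's strategy: reduce to a one-step drift inequality, compute the per-flip change exactly, charge the positive contributions of increasing flips against the negative contributions of $g$-crossing decreasing flips, and absorb additive constants into $\psi_0$. The reduction, the computation of $\Delta_x$, and the endgame are all correct.

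The gap is exactly where you flag it, and it is deeper than your sketch suggests. Your ``geometric core'' splits into two halves. The first (your displayed lower bound on the decreasing mass) is the paper's Lemma~\ref{lem:decreasingdominate}: by Proposition~\ref{pro:squaregeom} every $g$-crossing edge lies in a tree of influence rooted at a $g$-crossing decreasing edge, and a geometric series within each tree (Lemma~\ref{lem:treeroot}) does the rest. The second half---bounding the total increasing contribution by a strict fraction of the decreasing one---does \emph{not} follow from your idea of passing to $\sigma^x$; the charging must be done entirely in $\sigma$, again via Proposition~\ref{pro:squaregeom}, sending each increasing leaf $x$ to the root $z\in\tau^{-1}(\sigma,x)$. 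When $|\sigma_x|\le|\sigma_z|-C$ the exponential gap in weights is enough (Lemma~\ref{lem:largetreesmalledges}). But when $|\sigma_x|>|\sigma_z|-C$ there is a genuine obstruction you do not anticipate: a single decreasing root can carry up to two such ``large leaves'', and if both endpoints of the resulting alternating root/leaf chain are increasing, the naive one-to-one charging loses a factor you cannot afford. The paper overcomes this by proving (Lemma~\ref{lem:1d}) that such chains are forced into a rigid one-dimensional configuration of aligned parallelograms between two parallel lines, and that a chain with increasing endpoints must then be long (length $\ge(C'-C)/C^2$); this lets the single surplus increasing edge be amortised over many decreasing ones (Lemma~\ref{lem:largetree1d}). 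This rigidity argument is the heart of the proof and is invisible in your outline.

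Two minor points: lengths along the flip chain below $\sigma_x$ drop by at least $2$, so the sharper comparison is $\Psi_g\le\frac{\alpha^2}{\alpha^2-1}S$; and degenerate unit-diagonal flips contribute exactly zero to the drift (a ground-state edge at a midpoint other than that of $g$ cannot cross $g$, and at the midpoint of $g$ both diagonals have the same length), so your $C_0$ term is unnecessary.
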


% \begin{remark}
%    The power of the above result comes from two characteristics of $\Psi_g^\xi$. First, the theorem above holds for \emph{any} boundary condition 
%    $\xi$. Second, for any $g$, $\Psi_g^\xi$ depends only on the ``local'' configuration of the triangulation around the midpoint of $g$ and establishes a 
%    certain measure between this local configuration and the ground state configuration given $\xi$.
% \end{remark}
%############################################################################################
%############################################################################################
%############################################################################################
\section{Partition of triangulations and trees of influence}\label{sec:treeinfluence}
Fix any boundary condition $\xi\in\Xi(\Lambda^0)$.
Given a triangulation $\sigma\in\Omega^\xi$ and a midpoint $x\in\Lambda$, 
we say that $\sigma_x$ is \emph{increasing} if it is a flippable edge of $\sigma$ and after flipping $\sigma_x$ we 
obtain a (strictly) larger edge. We could define \emph{decreasing} edges in a similar way, however for technical reasons we need 
to include some constraint edges in the set of decreasing edges, namely the constraint edges which would be flippable and decreasing if they were not in $\xi$.
We do this by calling $\sigma_x$ \emph{decreasing} if it is \emph{not} a unit diagonal 
and it is the largest edge of all triangles of $\sigma$ containing $\sigma_x$. 
Note that if $\sigma_x$ is decreasing according to the above definition and $\sigma_x\not\in\xi$, then 
$\sigma_x$ is flippable and after flipping $\sigma_x$ we obtain a (strictly) smaller edge. 
For any $\ell>\mathbb{R}_+$ and triangulation $\sigma$, define the following subsets of $\Lambda$:
\begin{align}
  F_{\ell}(\sigma) &= \big\{x \in\Lambda \colon |\sigma_x|\leq \ell\big\} \nonumber\\
  \finc(\sigma) &= \big\{x \in\Lambda \colon\text{$\sigma_x$ is an increasing edge}\big\} \nonumber\\
  \fdec(\sigma) &= \big\{x \in\Lambda \colon\text{$\sigma_x$ is a decreasing edge}\big\}\nonumber\\
  \fdiag(\sigma) &= \big\{x \in\Lambda \colon\text{$\sigma_x$ is a unit diagonal and the largest edge of all triangles of $\sigma$ containing $\sigma_x$}\big\}.
  \label{eq:deff}
\end{align}
Note that for any triangulation $\sigma\in\Omega^\xi$ 
and midpoint $x\in \fdiag(\sigma)$, we have that $\sigma_x$ is flippable but does not change its length after being flipped.

Given a triangulation $\sigma\in\Omega^\xi$, we define a collection of trees whose vertices are elements of $\Lambda$.
Each tree is rooted at a midpoint in $\fdec(\sigma)\cup \fdiag(\sigma)$, and 
there will be two trees for each $x\in \fdec(\sigma)\cup \fdiag(\sigma)$. We denote these trees by
$\tau^{(1)}(\sigma,x)$ and $\tau^{(2)}(\sigma,x)$. 
To define $\tau^{(1)}(\sigma,x)$ take one of the triangles of $\sigma$ containing $\sigma_x$. Denote this triangle by $\Delta$. 
The tree $\tau^{(2)}(\sigma,x)$ will
be defined analogously by considering the other triangle of $\sigma$ containing $\sigma_x$. 
The root of $\tau^{(1)}(\sigma,x)$ is $x$.
The children of $x$ in $\tau^{(1)}(\sigma,x)$ are the midpoints of the other two edges of $\Delta$.
Then we proceed inductively. The children of a midpoint $y$ with parent $z$ in $\tau^{(1)}(\sigma,x)$
are obtained by considering the triangle $\Delta'$ of $\sigma$ containing $\sigma_y$ but not containing $\sigma_z$. 
If $\sigma_y$ is not the largest edge of $\Delta'$, then $y$ has no child
in $\tau^{(1)}(\sigma,x)$; otherwise the children of $y$ are the midpoints 
of the other edges of $\Delta'$ (see Figure~\ref{fig:treeinf} for a reference).
Note that, for any two midpoints $y,z$ with $y$ being a child of $z$ in
$\tau^{(1)}(\sigma,x)$, we have that $|\sigma_z|>|\sigma_y|$. This guarantees that the construction above ends. 
Define $\tau(\sigma,x)$ as a tree rooted at $x$ obtained by the union of $\tau^{(1)}(\sigma,x)$ and $\tau^{(2)}(\sigma,x)$. 
We call $\tau(\sigma,x)$ the \emph{tree of influence} of $x$.
\begin{figure}[htbp]
   \begin{center}
      \includegraphics[scale=.7]{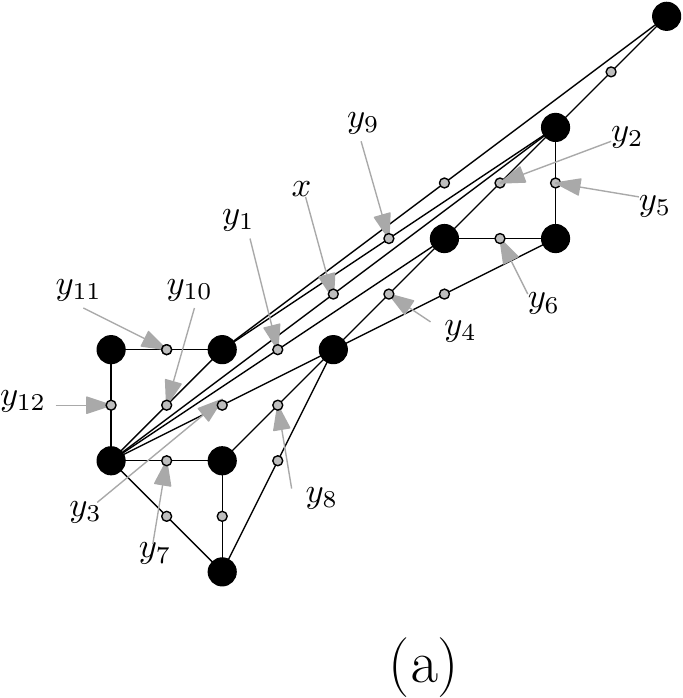}
      \hspace{\stretch{1}}
      \includegraphics[scale=.7]{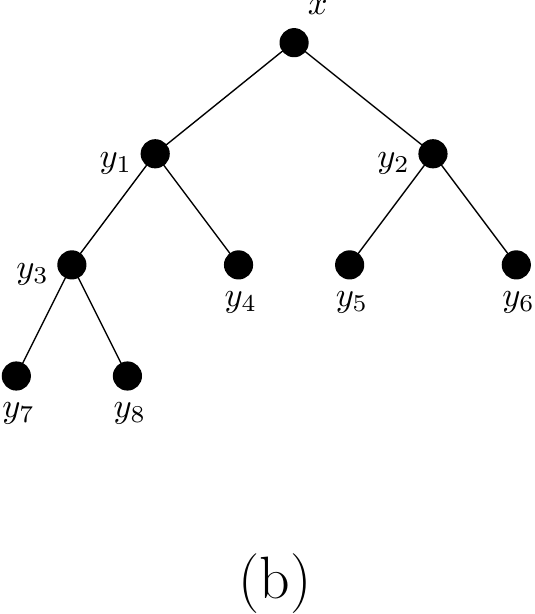}
      \hspace{\stretch{1}}
      \includegraphics[scale=.7]{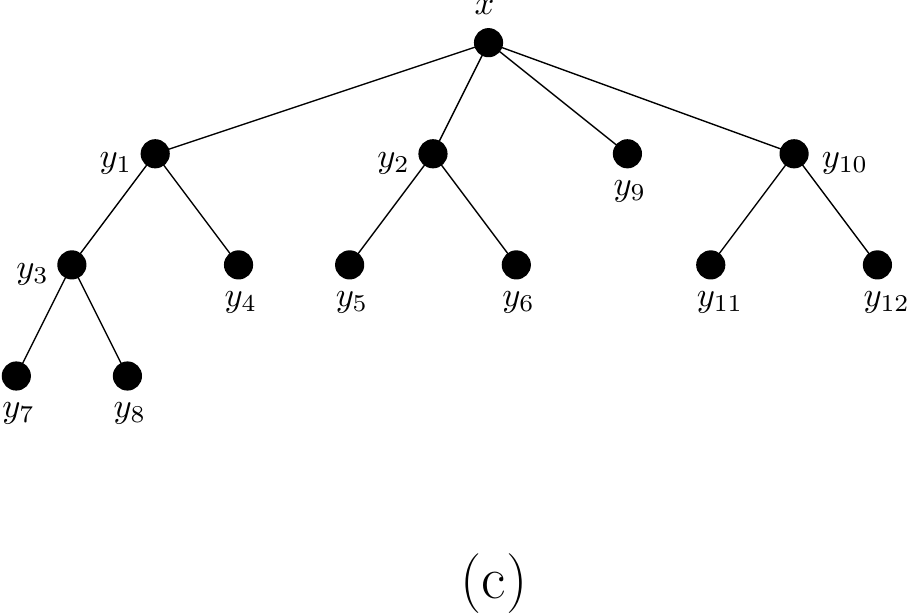}
   \end{center}\vspace{-.5cm}
   \caption{(a) A triangulation $\sigma$, with midpoints illustrated by gray points. 
      (b) The tree $\tau^{(1)}(\sigma,x)$ constructed from the triangle $\sigma_x,\sigma_{y_1},\sigma_{y_2}$. 
      (c) The tree $\tau(\sigma,x)$.}
   \label{fig:treeinf}
\end{figure}

Although we used the term tree, it is not explicit from the construction above that $\tau^{(1)}(\sigma,x)$, $\tau^{(2)}(\sigma,x)$ and $\tau(\sigma,x)$ are actually trees. 
However, if we orient the edges from parents to children, since parents are associated to strictly larger edges than their children,
the construction above is at least guaranteed to produce a directed acyclic graph.
But we have not ruled out the case that a midpoint $y$ is reached from two distinct paths from $x$ (i.e., some vertices may have two parents). 
Proposition~\ref{pro:tautree} below shows that this does not happen, hence the construction described above indeed produces trees.

Given two midpoints $y,z\in\tau(\sigma,x)$, 
we will use standard terminology to say that $y$ is an \emph{ancestor} (resp., \emph{descendant})
of $z$ in $\tau(\sigma,x)$ if there exists a directed path in $\tau(\sigma,x)$ from $y$ to $z$ (resp., from $z$ to $y$)
using the orientation of edges described above.
We will need one more definition. 
Partition $\mathbb{Z}^2$ into $1\times 1$ squares whose edges are parallel to the axes (i.e., the faces of the square lattice). 
Let $\mathbb{S}$ be the set of these $1\times 1$ squares.
Given any edge $e\in E^\xi$, let 
\begin{equation}
   S(e) = \{Q \in \mathbb{S} \colon \text{the interior of $Q$ intersects $e$}\}.
   \label{eq:defs}
\end{equation}

\begin{proposition}\label{pro:tautree}
   Consider any boundary condition $\xi\in\Xi(\Lambda^0)$, any triangulation $\sigma\in\Omega^\xi$, and 
   any $x\in \fdec(\sigma)\cup \fdiag(\sigma)$.
   The following statements hold:
   \begin{enumerate}
      \item\label{it:tree} $\tau(\sigma,x)$ is a tree. 
      \item\label{it:treesquare} For any $y,z\in \tau(\sigma,x)$ with $y$ being an ancestor of $z$, we have $S(\sigma_y)\supset S(\sigma_z)$.
      \item\label{it:leaves} For $i=1,2$, we have 
         $\sum_{y \in \tau^{(i)}_\leaves(\sigma,x)} |\sigma_y|= |\sigma_x|$,
         where $\tau^{(i)}_\leaves(\sigma,x)$ are the set of leaves of $\tau^{(i)}(\sigma,x)$, which are 
         the vertices without children.
   \end{enumerate}   
\end{proposition}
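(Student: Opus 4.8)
The three statements are tightly linked and I would prove them together by a single induction on the tree-building process, or more cleanly by isolating a "local" geometric fact about a single triangle and then propagating it. The key local fact is this: if $\Delta$ is a triangle of $\sigma$ with edges $a,b,c$ of midpoints $u,v,w$, and $a$ is the (strict) largest edge, then $a = b \cup c$ as a set decomposition in a suitable sense — more precisely, $a$, $b$, $c$ with $a$ largest form a triangle of area $1/2$, and I claim $S(a) = S(b)\,\dot\cup\,S(c)$ (disjoint union) and $|a| = |b| + |c|$ in $\ell_1$. The $\ell_1$ identity $|a| = |b| + |c|$ is not true for arbitrary triangles, so the real content is that for a triangle of area $1/2$ with $a$ strictly longest, the vertex opposite $a$ lies in the "shadow" of $a$ in both coordinates, which forces additivity of $\ell_1$ lengths; I would prove this by a short case analysis using the area-$1/2$ constraint (equivalently, primitivity of the edge vectors) — this is the one genuinely geometric lemma and I expect it to be the main obstacle, though it is elementary.

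Granting the local fact, here is how the three parts follow. For part (ii): if $z$ is a child of $y$ via triangle $\Delta'$ (with $\sigma_y$ the longest edge of $\Delta'$), the local fact gives $S(\sigma_y) = S(\sigma_z) \,\dot\cup\, S(\sigma_{z'})$ where $z'$ is the sibling, so in particular $S(\sigma_y) \supsetneq S(\sigma_z)$; transitivity along a directed path gives the general statement. For part (i): suppose some midpoint $w$ were reached by two distinct directed paths from $x$. Walking back up either path, part (ii) (applied along each path) shows $S(\sigma_w)$ is contained in a strictly decreasing-by-squares chain down from the root, and the two paths would have to first diverge at some vertex $y$ into two distinct children $z_1 \ne z_2$; but then $S(\sigma_{z_1})$ and $S(\sigma_{z_2})$ are disjoint by the local fact (they are the two pieces of $S(\sigma_y)$), while both must contain $S(\sigma_w) \ne \emptyset$ — contradiction. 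Hence no vertex has two parents, and since the orientation parent-to-child is acyclic (parents carry strictly longer edges, as already observed), $\tau(\sigma,x)$ is a tree; I should also check the two subtrees $\tau^{(1)},\tau^{(2)}$ share only the root $x$, which follows from the same square-disjointness argument applied to the two triangles at $x$.

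For part (iii), the leaf-sum identity, I would induct on the tree $\tau^{(i)}(\sigma,x)$ from the root. At the root, $|\sigma_x| = |\sigma_{y_1}| + |\sigma_{y_2}|$ by the local fact applied to $\Delta$. Inductively, if $y$ has children $z_1,z_2$ in $\tau^{(i)}$ (which happens exactly when $\sigma_y$ is the longest edge of $\Delta'$), then $|\sigma_y| = |\sigma_{z_1}| + |\sigma_{z_2}|$, so replacing $y$ by its children preserves the sum $\sum |\sigma_y|$ over the current "frontier"; if $y$ has no children (i.e.\ $y$ is a leaf because $\sigma_y$ is not longest in $\Delta'$) it stays in the frontier. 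Since the tree is finite (lengths strictly decrease down every path), the process terminates with the frontier equal to the leaf set, giving $\sum_{y\in\tau^{(i)}_{\leaves}}|\sigma_y| = |\sigma_x|$. The only subtlety is to phrase "frontier" precisely and to note that part (i), just proved, guarantees no double-counting, so the running sum is well defined; with that in place the induction is routine.
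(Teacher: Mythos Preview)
Your plan is essentially the paper's own argument: isolate the local geometric fact at a single triangle (the opposite vertex $p$ lies in the ``staircase'' $S(a)$, forcing $|a|=|b|+|c|$ and the square-set containment/disjointness), then propagate it inductively down the tree. The paper phrases the containment via the regions $Q_1^{(1)},Q_1^{(2)}$ carved out by $p$ inside one half $Q_1$ of $S(\sigma_x)$, but the content is identical.

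One small correction: your stated local fact $S(a)=S(b)\,\dot\cup\,S(c)$ is too strong --- in general only $S(b)\,\dot\cup\,S(c)\subseteq S(a)$ holds. (Take $a$ from $(0,0)$ to $(3,2)$ with $p=(2,1)$: then $S(a)$ contains the square $[1,2]\times[1,2]$, which neither $b$ nor $c$ crosses.) Fortunately your downstream arguments for (i) and (ii) use only the containment and the disjointness $S(b)\cap S(c)=\emptyset$, never the equality, so the proof goes through unchanged once you weaken the claim.
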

\begin{proof}
%    If $x\in \fdiag$, then $\tau^{(i)}(\sigma,x)$ contains only the root $x$ and two leaves, which are the midpoints of the edges in the same 
%    triangle as $\sigma_x$, which clearly forms a tree. 
%    
%    Now consider the case $x \in \fdec$.
   Consider the set of squares $S(\sigma_x)$. 
	Note that $\sigma_x$ partitions this set into two identical regions, which we denote by $Q_1$ and $Q_2$.
	See Figure~\ref{fig:treeexp}(a) for a reference.
	\begin{figure}[tbp]
	   \begin{center}
	      \includegraphics[scale=.7]{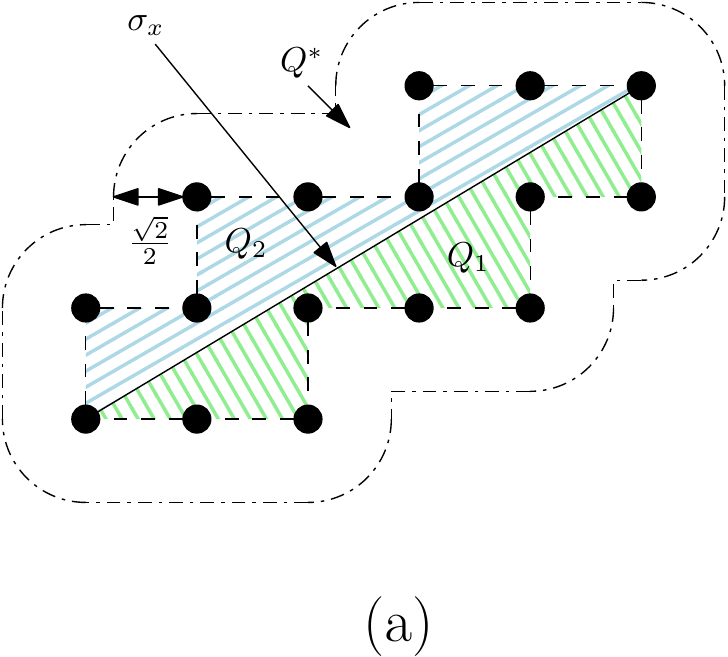}
	      \hspace{\stretch{1}}
	      \includegraphics[scale=.7]{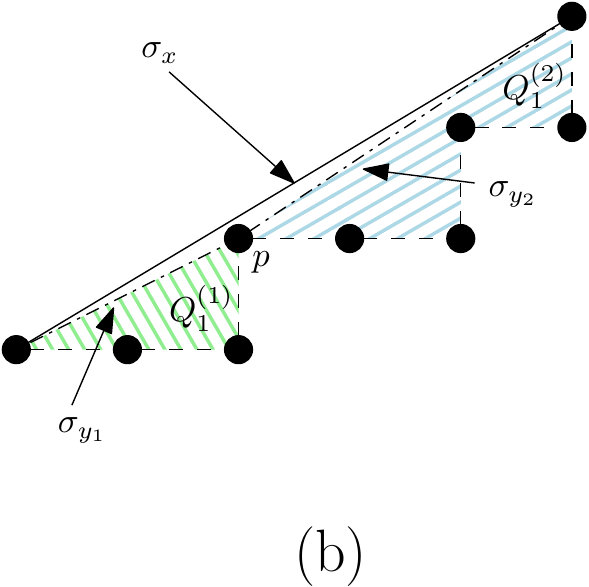}
	      \hspace{\stretch{1}}
	      \includegraphics[scale=.7]{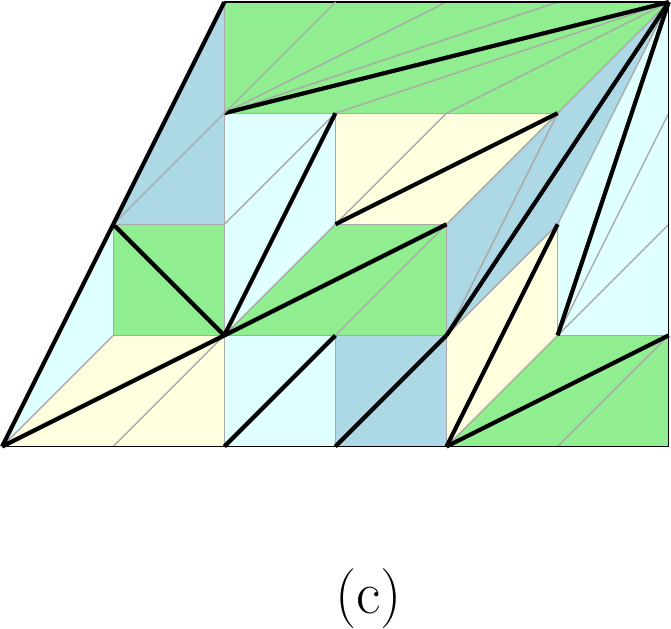}
	   \end{center}\vspace{-.5cm}
	   \caption{(a) The set of squares $S(\sigma_x)=Q_1\cup Q_2$, the two identical regions $Q_1$ and $Q_2$, and the enlarged region $Q^*$.
	            (b) The children $y_1$ and $y_2$ of $x$ decompose $Q_1$ into three disconnected regions: $Q_1^{(1)}$, $Q_1^{(2)}$ and the triangle $(\sigma_x,\sigma_{y_1},\sigma_{y_2})$.
	            (c) A partition (cf.\ Proposition~\ref{pro:partition}) of a triangulation $\sigma$ of the trapezoid into the regions 
	               $\{T(\sigma,x)\colon x\in \fdec(\sigma)\cup\fdiag(\sigma)\}$. 
	               The bold edges represent the edges of midpoint in $\fdec(\sigma)\cup\fdiag(\sigma)$, which are the roots of the trees.}
	   \label{fig:treeexp}
	\end{figure}	
	Note also that $Q_1$ and $Q_2$ are lattice polygons.
	Let $Q^*$ be all points of $\mathbb{R}^2$ within distance $\sqrt{2}/2$ from $Q_1 \cup Q_2$ (including $Q_1 \cup Q_2$).
	We obtain that $Q^*$ contains the same points of $\mathbb{Z}^2$ as $Q_1\cup Q_2$.
	Let $(\sigma_x,\sigma_{y_1}, \sigma_{y_2})$ be one of the triangles containing $\sigma_x$ (say, the one intersecting $Q_1$), 
	and assume that $y_1$ and $y_2$ are the 
	children of $x$ in $\tau^{(1)}(\sigma,x)$.
	We claim that 
	\begin{equation} 
	   \text{all descendants of $x$ in $\tau^{(1)}(\sigma,x)$ are contained
	    in $Q_1$.}
	    \label{eq:treeclaim}
	\end{equation}
	In the discussion below, refer to Figure~\ref{fig:treeexp}(b).
	Since $y_1,y_2$ are children of $x$, we have that $\sigma_{y_1}$ and $\sigma_{y_2}$ have size smaller than $\sigma_x$.
	This implies that the $\ell_2$ length of $\sigma_x$ is at least $\sqrt{2}$.
   Let $p$ denote the vertex of the triangle $(\sigma_x,\sigma_{y_1},\sigma_{y_2})$ that is not an endpoint of $\sigma_x$.
	Since the area of each triangle is equal to $1/2$ and $\sigma_x$ has $\ell_2$ length at least $\sqrt{2}$,
	the distance between $p$ and $\sigma_x$ is at most $\frac{\sqrt{2}}{2}$.
	Thus $p$ must be inside $Q^*$ and, therefore, must be one of the vertices on the boundary of $Q_1$.
	We can use $p$ to partition $Q_1$ into three regions: $Q_1^{(1)}$, $Q_1^{(2)}$ and the triangle $(\sigma_x,\sigma_{y_1},\sigma_{y_2})$. 
	Since the triangle $(\sigma_x,\sigma_{y_1},\sigma_{y_2})$ cannot contain any integer point aside from its three vertices, we have that 
	$\sigma_{y_1}$ and $\sigma_{y_2}$ are entirely contained in $Q_1$.
	Doing this construction inductively for $\sigma_{y_1}$ and $\sigma_{y_2}$, we establish that
	the descendants of $y_1$ are contained in $Q_1^{(1)}$ and the descendants of $y_2$ are contained in $Q_1^{(2)}$, which 
	establishes~\eqref{eq:treeclaim}. Since $Q_1^{(1)}$ and $Q_1^{(2)}$ have disjoint interior, 
	we obtain that $\tau(\sigma,x)$ does not contain any cycle, proving part~\ref{it:tree}.
   This also gives that 
	\begin{equation}
	   |\sigma_{y_1}|+|\sigma_{y_2}|=|\sigma_x|. 
	   \label{eq:sumofchildren}
	\end{equation}
	
	For part~\ref{it:treesquare}, let $S_1$ be the squares of $\mathbb{S}$ whose interior intersects $Q_1^{(1)}$, and let 
	$S_2$ be the squares of $\mathbb{S}$ whose interior intersects $Q_1^{(2)}$. 
	Note that $S_1\cap S_2=\emptyset$ and $S_1\cup S_2=S(\sigma_{y})$. 
	Also, 
	$S(\sigma_{y_1})=S_1$ and $S(\sigma_{y_2})=S_2$. 
	Since the descendants of $y_1$ are contained in $Q_1^{(1)}\subset S_1$, we obtain part~\ref{it:treesquare}. 
	
	For part~\ref{it:leaves}, 
	applying~\eqref{eq:sumofchildren} inductively we have 
	$$
	   \sum_{y\in \tau^{(1)}_\leaves(\sigma,x)} |\sigma_y|
	   = |\sigma_{y_1}|+|\sigma_{y_2}|
	   = |\sigma_x|.
	$$
	The same reasoning applies to $\tau^{(2)}(\sigma,x)$.
\end{proof}

For any triangulation $\sigma\in\Omega^\xi$ and any $x\in\Lambda$, define the set 
$$
   \tau^{-1}(\sigma,x) = \{z\in \fdec(\sigma) \cup \fdiag(\sigma)\colon x\in\tau(\sigma,z)\}.
$$
In words, $\tau^{-1}(\sigma,x)$ is the set of midpoints $z$ such that $x$ is in the tree rooted at $z$.
Note that in any tree containing $x$, the parent of $x$ in the tree is a midpoint $y$ such that $\sigma_y$ is the largest edge in the 
triangle containing both $\sigma_x$ and $\sigma_y$.
\begin{lemma}\label{lem:childparentdefine}
   For any boundary condition $\xi\in\Xi(\Lambda^0)$, any triangulation $\sigma\in\Omega^\xi$ and any $x,y\in \Lambda$ 
   such that $\sigma_x,\sigma_y$ are in the same triangle and $\sigma_y$ is the largest edge of this triangle,
   there exists exactly one tree containing both $x$ and $y$, and $y$ is the parent of $x$ in that tree.
\end{lemma}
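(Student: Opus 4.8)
The plan is to recover, from $x$ and $y$ alone, the entire chain of ancestors that $x$ must have in \emph{any} tree of influence containing both $x$ and $y$, to observe that this chain is forced and terminates at a uniquely determined root, and then to check that the tree rooted there does realise the required parent--child relation. Throughout I will use the remark recorded just before Proposition~\ref{pro:tautree} that in any tree of influence a vertex always has an edge strictly shorter than that of its parent; in particular a midpoint of $\fdec(\sigma)\cup\fdiag(\sigma)$ can never occur as a non-root vertex of another tree, so \emph{each root lies in no tree of influence other than its own}.

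\emph{The forced chain.} Let $\Delta$ be the triangle of $\sigma$ containing both $\sigma_x$ and $\sigma_y$; by hypothesis $\sigma_y$ is its largest edge, so every edge of $\Delta$ has length at most $|\sigma_y|$, and $|\sigma_y|>|\sigma_x|$. Define midpoints $u_0,u_1,\dots$ and triangles $\Delta_{-1},\Delta_0,\Delta_1,\dots$ by $u_0:=y$, $\Delta_{-1}:=\Delta$, and, given $u_i$ with $\sigma_{u_i}$ the largest edge of $\Delta_{i-1}$: if $\sigma_{u_i}$ is the largest edge of every triangle of $\sigma$ containing it, stop (so that $u_i\in\fdec(\sigma)\cup\fdiag(\sigma)$ is a root); otherwise $\sigma_{u_i}$ lies in exactly two triangles, $\Delta_{i-1}$ and one other, which we call $\Delta_i$, and in $\Delta_i$ it is not the largest edge, so we set $u_{i+1}$ to be the midpoint of the largest edge of $\Delta_i$, whence $|\sigma_{u_{i+1}}|>|\sigma_{u_i}|$ and $\sigma_{u_{i+1}}$ is the largest edge of $\Delta_i$. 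Since the lengths $|\sigma_{u_0}|<|\sigma_{u_1}|<\cdots$ strictly increase inside a finite set, the construction stops at some $u_k=:z^{\star}\in\fdec(\sigma)\cup\fdiag(\sigma)$.

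\emph{Existence.} Consider the connected component of $\tau(\sigma,z^{\star})$ obtained by starting the construction of Section~\ref{sec:treeinfluence} from the triangle $\Delta_{k-1}$ (with the convention $\Delta_{-1}=\Delta$ when $k=0$). Running that construction along the chain, I check inductively for $i=k,k-1,\dots,1$ that $u_{i-1}$ is a child of $u_i$: the triangle of $\sigma_{u_i}$ \emph{not} containing the parent edge $\sigma_{u_{i+1}}$ is precisely $\Delta_{i-1}$ (the other triangle, $\Delta_i$, does contain $\sigma_{u_{i+1}}$, and the two triangles share only $\sigma_{u_i}\neq\sigma_{u_{i+1}}$), and $\sigma_{u_i}$ is the largest edge of $\Delta_{i-1}$, so the children of $u_i$ are the midpoints of the other two edges of $\Delta_{i-1}$, one of which is $u_{i-1}$. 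Finally $x$ is among the children of $u_0=y$: when $k\ge 1$ this is because the triangle of $\sigma_y$ not containing $\sigma_{u_1}$ is $\Delta$, of which $\sigma_y$ is the largest edge by hypothesis; when $k=0$, $y$ is the root and $\Delta$ is one of the triangles used to form its children. Hence $\tau(\sigma,z^{\star})$ contains both $x$ and $y$ and $y$ is the parent of $x$ in it.

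\emph{Uniqueness and the parent claim.} Let $\tau(\sigma,z)$ be any tree of influence containing both $x$ and $y$; by Proposition~\ref{pro:tautree} it is a genuine tree, so $x$ has a unique path of ancestors in it. I first argue $x$ is a child of $y$ there. If $y=z$, the children of the root $y$ include the midpoints of the other two edges of $\Delta$ (since $\sigma_y$ is its largest edge), hence $x$. If $y\neq z$, then $y$ has a parent $p$ with $|\sigma_p|>|\sigma_y|$, so $\sigma_p$ is not an edge of $\Delta$; thus the tree reaches $y$ through the \emph{other} triangle of $\sigma_y$, takes the children of $y$ inside $\Delta$, and — again because $\sigma_y$ is the largest edge of $\Delta$ — these include $x$. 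Applying the same type of argument one step higher forces the parent of $y$ in $\tau(\sigma,z)$ to be the midpoint of the largest edge of the triangle of $\sigma_y$ other than $\Delta$, i.e.\ $u_1$; iterating, the parent of every non-root $u_i$ on the ancestor path of $x$ is forced to equal $u_{i+1}$. Since a root lies in no tree but its own, the ancestor path of $x$ must terminate at the first $u_i$ that is a root, namely $z^{\star}$; hence $z=z^{\star}$, so $\tau(\sigma,z^{\star})$ is the unique tree of influence containing both $x$ and $y$.

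\emph{Main obstacle.} The delicate part is the uniqueness argument: one must show that, in an arbitrary tree containing $x$ and $y$, the parent of every vertex along $x$'s ancestor path is pinned down by purely local data — which of its two triangles the tree enters it through, and which edge of that triangle is longest — and to carry this out one repeatedly invokes the strict length decrease from parents to children together with the hypothesis that $\sigma_y$ is the largest edge of $\Delta$; the observation that a root belongs to no tree but its own (itself a consequence of the same strict length decrease) is what finally identifies the root $z^{\star}$.
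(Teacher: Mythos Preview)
Your proof is correct and follows essentially the same strategy as the paper: build the forced chain of ancestors by repeatedly passing to the midpoint of the largest edge of the next triangle, use the strict length increase to guarantee termination at a root in $\fdec(\sigma)\cup\fdiag(\sigma)$, and observe that in any tree of influence the parent of a non-root vertex is pinned down as the midpoint of the largest edge of the unique adjacent triangle in which its own edge is not largest. The paper's version starts the chain at $z_0=x$, $z_1=y$ rather than at $u_0=y$, and is somewhat terser about the existence/uniqueness split and about why a root cannot appear as a non-root vertex of another tree, but the content is the same.
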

\begin{proof}
   We will show that we can construct a path $z_0,z_1,z_2,\ldots$ of adjacent midpoints in $\sigma$ (i.e., midpoints of edges sharing a triangle in $\sigma$) 
   from $z_0=x$ and $z_1=y$ until the root of the tree 
   containing both $x$ and $y$. This path will have the property that $z_i$ is the parent of $z_{i-1}$ in the tree, for all $i$.
   Assume, inductively, that we have defined $z_0=x,z_1=y,z_2,z_3,\ldots,z_i$ with the property that for all $j=1,2,\ldots,i$ we have that 
   $\sigma_{z_j}$ and $\sigma_{z_{j-1}}$ share the same triangle, for which the largest edge is $\sigma_{z_j}$. 
   Let $w$ be the midpoint of the largest edge in the triangle containing $\sigma_{z_i}$ but not $\sigma_{z_{i-1}}$.
   If $w=z_i$ or $z_i$ is contained in only one triangle in $\sigma$ 
   (the later implies that $z_i\in\xi$ as $\sigma_{z_i}$ is at the boundary of the smallest lattice polygon containing $\Lambda^0$), 
   then $\sigma_{z_i}$ is the largest edge in all triangles of $\sigma$ containing $\sigma_{z_i}$ and, consequently, 
   $z_i\in\fdec(\sigma)\cup\fdiag(\sigma)$ is the root of a tree. 
   This gives that 
   $z_i \in \tau^{-1}(\sigma,x)\cap \tau^{-1}(\sigma,y)$.
   Otherwise, let $z_{i+1}=w$, and repeat this procedure. 
   Note that $|\sigma_{z_{i+1}}|>|\sigma_{z_{i}}|$, which implies that this procedure eventually 
   ends, yielding the root of a tree containing $x$ and $y$. 
   It remains to show that this is the unique tree containing $x$ and $y$.
   Since for each $i\geq 1$ in the path $z_0,z_1,\ldots$, we have that $z_i$ is the largest edge 
   in the triangle containing $\sigma_{z_i}$ and $\sigma_{z_{i-1}}$, 
   we obtain that $\sigma_{z_i}$ cannot be a leaf in any tree and 
   the only midpoint that can be a parent of $z_{i}$ in any tree is $z_{i+1}$. This establishes that, for all $i\geq2$, 
   $z_i$ is an ancestor of $y$ in any tree containing $y$, which implies that the root of the tree obtained by the above construction 
   is the root of any tree containing $\sigma_{y}$, completing the proof.
\end{proof}

\begin{proposition}\label{pro:twotrees}
   Given any boundary condition $\xi\in\Xi(\Lambda^0)$, any triangulation $\sigma\in\Omega^\xi$ and any midpoint $x\in\Lambda$, 
   the following holds:
   \begin{enumerate}
      \item\label{it:card} The cardinality of $\tau^{-1}(\sigma,x)$ is either 1 or 2.
      \item\label{it:leaf} If $\tau^{-1}(\sigma,x)=\{y_1,y_2\}$ contains two midpoints, then $x$ is a leaf in both $\tau(\sigma,y_1)$ and $\tau(\sigma,y_2)$.
      \item\label{it:inc} If $x\in \finc(\sigma)$, then $\tau^{-1}(\sigma,x)$ contains two midpoints.
      \item\label{it:large} If $x\in \Lambda$ is such that $\sigma_x$ is the largest edge in some triangle in $\sigma$, 
         then $\tau^{-1}(\sigma,x)$ contains only one midpoint.
   \end{enumerate}
\end{proposition}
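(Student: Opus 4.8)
The plan is to reduce all four parts to a single exact description of $\tau^{-1}(\sigma,x)$ in terms of the at most two triangles of $\sigma$ incident to $\sigma_x$, using Lemma~\ref{lem:childparentdefine} and one geometric fact about flips. Fix $x\in\Lambda$ and let $T_x$ be the set of triangles of $\sigma$ having $\sigma_x$ as an edge, so $|T_x|\in\{1,2\}$, with $|T_x|=1$ exactly when $\sigma_x$ lies on the boundary of the polygon; recall that every triangle of $\sigma$ has a unique edge of maximum $\ell_1$-length. Call $\Delta\in T_x$ \emph{bad for $x$} if $\sigma_x$ is not the largest edge of $\Delta$, and in that case let $y(\Delta)\in\Lambda$ be the midpoint of the largest edge of $\Delta$ (so $y(\Delta)\neq x$); write $B_x\subseteq T_x$ for the set of bad triangles. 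Then $B_x=\emptyset$ is equivalent to $\sigma_x$ being the largest edge of all of its triangles, i.e.\ to $x\in\fdec(\sigma)\cup\fdiag(\sigma)$, i.e.\ to $x$ being the root of a tree. If $B_x=\emptyset$, then $x$ is the root of $\tau(\sigma,x)$ and belongs to no other tree: were $x$ a non-root vertex of some $\tau(\sigma,z)$, the child-generation rule would force the parent $p$ of $x$ to be the midpoint of the largest edge of the triangle shared by $\sigma_p$ and $\sigma_x$, while $\sigma_x\neq\sigma_p$ would be a strictly shorter edge of that triangle, contradicting $B_x=\emptyset$. Hence $\tau^{-1}(\sigma,x)=\{x\}$ whenever $B_x=\emptyset$.

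Now assume $B_x\neq\emptyset$, so $x$ is not a root. For each $\Delta\in B_x$, Lemma~\ref{lem:childparentdefine} applied to the largest edge $\sigma_{y(\Delta)}$ of $\Delta$ yields a unique tree containing both $x$ and $y(\Delta)$, with $y(\Delta)$ the parent of $x$ in it; let $\rho(\Delta)$ be its root, so $\rho(\Delta)\in\tau^{-1}(\sigma,x)$. Conversely, any tree $\tau(\sigma,z)$ containing $x$ has $x$ as a non-root vertex, hence with a parent $p$ which, by the child-generation rule, is the midpoint of the largest edge of a triangle $\Delta\in T_x$ through $\sigma_x$; since $\sigma_x\neq\sigma_p$, this $\Delta$ is bad, $p=y(\Delta)$, and uniqueness in Lemma~\ref{lem:childparentdefine} forces $z=\rho(\Delta)$. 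Moreover $\Delta\mapsto\rho(\Delta)$ is injective on $B_x$, because a tree is determined by its root, a vertex has at most one parent in a tree, and two distinct edges lie in at most one common triangle. Thus $\tau^{-1}(\sigma,x)=\{\rho(\Delta):\Delta\in B_x\}$ has cardinality $|B_x|$, and combined with the previous paragraph and $|B_x|\le|T_x|\le2$ this gives $|\tau^{-1}(\sigma,x)|\in\{1,2\}$, which is part~\ref{it:card}. Part~\ref{it:large} follows immediately: if $\sigma_x$ is the largest edge of some triangle, that triangle is not bad, so $|B_x|\le1$ and $|\tau^{-1}(\sigma,x)|=1$. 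For part~\ref{it:leaf}: if $|\tau^{-1}(\sigma,x)|=2$ then $|B_x|=2$, so both triangles of $\sigma_x$ are bad; in the tree rooted at $\rho(\Delta)$ the children of $x$ are read off from the triangle of $T_x$ other than $\Delta$, which is again bad, so $x$ has no child there and is a leaf — in each of the two trees of $\tau^{-1}(\sigma,x)$.

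It remains to prove part~\ref{it:inc}, for which the one genuinely geometric input is: \emph{if $\sigma_x$ is flippable and increasing, then $\sigma_x$ is not the largest edge of either of its two triangles.} I would prove this by writing the minimal parallelogram of the flip with integer side vectors $u,w$ satisfying $|\det(u,w)|=1$ (each of the two triangles has area $\tfrac12$), so that $\sigma_x$ corresponds to the diagonal $u+w$, the flipped edge to $u-w$, both triangles carry the edge-length multiset $\{|u|_1,|w|_1,|u+w|_1\}$, and ``increasing'' means $|u-w|_1>|u+w|_1$. A short case analysis on the coordinate signs of $u$ and $w$ — where $|\det(u,w)|=1$ is used precisely to exclude the mixed-sign configurations — shows that $|u+w|_1\ge|u|_1$ and $|u+w|_1\ge|w|_1$ together force $|u+w|_1\ge|u-w|_1$, so being the largest edge of a triangle is incompatible with being increasing. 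Granting this, $x\in\finc(\sigma)$ gives $B_x=T_x$ with $|B_x|=2$, whence $|\tau^{-1}(\sigma,x)|=2$, which is part~\ref{it:inc}.

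The step I expect to be the main obstacle is precisely this last geometric fact: everything else is bookkeeping built on Lemma~\ref{lem:childparentdefine} and the child-generation rule, but the equivalence ``a flippable edge is increasing if and only if it is not the largest edge of either incident triangle'' genuinely relies on the unimodularity $|\det(u,w)|=1$ of empty lattice triangles — it is false for general triangles — so it needs the sign case analysis (or can be quoted from \cite{CMSS15}, where these dynamics were introduced).
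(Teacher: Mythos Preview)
Your proof is correct and follows essentially the same route as the paper's: both reduce everything to Lemma~\ref{lem:childparentdefine} and the observation that the parent of $x$ in any tree is forced to be the midpoint of the largest edge of one of the (at most two) triangles through $\sigma_x$. Your $B_x$ bookkeeping just makes this explicit; the paper argues the same four points more tersely.

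The only substantive difference is your treatment of part~\ref{it:inc}. The paper simply asserts that $x\in\finc(\sigma)$ implies there are two \emph{distinct} midpoints $z_1,z_2$ whose edges are the largest in the two triangles containing $\sigma_x$, and then invokes Lemma~\ref{lem:childparentdefine}. You instead prove the underlying geometric fact (``increasing $\Rightarrow$ not the largest edge of either incident triangle'') by a sign case analysis using $|\det(u,w)|=1$. That works, but there is a quicker route already available in the paper: by~\eqref{eq:sumofchildren} (established in the proof of Proposition~\ref{pro:tautree}), in every lattice triangle the largest $\ell_1$-edge equals the sum of the other two. Hence for a flippable $\sigma_x$ sitting in a parallelogram with side lengths $a\ge b$, the two diagonals have $\ell_1$-lengths $a+b$ and $a-b$; if $\sigma_x$ were the largest edge of a triangle it would be the $a+b$ diagonal and flipping would give the shorter $a-b$, contradicting $x\in\finc(\sigma)$. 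This replaces your unimodular case analysis with a one-line appeal to~\eqref{eq:sumofchildren}.
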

\begin{proof}
   Lemma~\ref{lem:childparentdefine} implies~\ref{it:card} since for any $x$ there exists at least one and at most two midpoints $z_1,z_2\in\Lambda$,
   not necessarily distinct from $x$, such that $\sigma_{z_1}$ and $\sigma_{z_2}$ are the longest edges in a triangle containing $\sigma_x$.
   For~\ref{it:leaf}, note that the cardinality of $\tau^{-1}(\sigma,x)$ being two 
   implies that $x$ is not the root of a tree, and there are two midpoints $z_1,z_2$ such that $z_1$ is the parent of $x$ in one tree 
   and $z_2$ is the parent of $x$ in the other tree.
   Therefore, $\sigma_x$ has two distinct parents, one in each tree, implying that $\sigma_x$ cannot be the largest edge in any triangle of $\sigma$;
   hence $x$ cannot be the parent of any midpoint in any tree. This gives that 
   $x$ is a leaf in all trees containing $x$. 
   For~\ref{it:inc}, note that if $x\in\finc(\sigma)$, then there are two distinct midpoints $z_1,z_2\in\Lambda$ such that $\sigma_{z_1}$ and $\sigma_{z_2}$ are the 
   largest edges in triangles containing $\sigma_x$. Therefore, using Lemma~\ref{lem:childparentdefine},
   we have that $z_1$ and $z_2$ are the parents of $x$ in the trees containing $x$, implying that $x$ is contained 
   in two trees.
   For~\ref{it:large}, note that if $\sigma_x,\sigma_y,\sigma_z$ is a triangle such that $\sigma_x$ is the largest edge,
   then there exists at most one midpoint that can be the parent of $x$ in a tree: namely, the midpoint of the largest edge contained in a triangle with $\sigma_x$, 
   if that midpoint exists and is different than $x$. Therefore $x$ can belong to only one tree.
\end{proof}

For each $x\in\fdec(\sigma)\cup\fdiag(\sigma)$, consider the following subset of $\mathbb{Z}^2$:
$$
   T(\sigma,x) = \text{union of all triangles of $\sigma$ containing only edges of midpoint in $\tau(\sigma,x)$}.
$$
\begin{proposition}\label{pro:partition}
   For any boundary condition $\xi\in\Xi(\Lambda^0)$ and any triangulation $\sigma\in\Omega^\xi$, 
   the set $\{T(\sigma,z)\colon z\in \fdec(\sigma)\cup\fdiag(\sigma)\}$ partitions the lattice polygon with vertices in $\Lambda^0$.
\end{proposition}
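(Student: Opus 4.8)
The plan is to show that every triangle of $\sigma$ is contained in $T(\sigma,z)$ for exactly one root $z\in\fdec(\sigma)\cup\fdiag(\sigma)$. This suffices, because each $T(\sigma,z)$ is by definition a union of triangles of $\sigma$: once we know that every triangle of $\sigma$ belongs to exactly one such union, it follows that the sets $T(\sigma,z)$ have pairwise disjoint interiors and cover the lattice polygon induced by $\Lambda^0$, since the triangles of $\sigma$ themselves tile that polygon with pairwise disjoint interiors. (Each $T(\sigma,z)$ is also nonempty: the triangle used to build $\tau^{(1)}(\sigma,z)$ has all three of its edge midpoints in $\tau(\sigma,z)$.) Throughout I use that within any lattice triangle there is a unique edge of maximum $\ell_1$-length, which is already implicit in the construction of the trees of influence.

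\emph{Covering.} Fix a triangle $\Delta$ of $\sigma$ and let $a,b,c\in\Lambda$ be the midpoints of its three edges, labelled so that $\sigma_c$ is the longest edge of $\Delta$. Since $\sigma_a$ and $\sigma_c$ lie in the common triangle $\Delta$, in which $\sigma_c$ is longest, Lemma~\ref{lem:childparentdefine} applied to the pair $(a,c)$ produces a unique tree $\mathcal T$ containing both $a$ and $c$, in which $c$ is the parent of $a$. In particular $c$ is not a leaf of $\mathcal T$, so, by the construction of the trees of influence, the children of $c$ in $\mathcal T$ are exactly the midpoints of the two edges other than $\sigma_c$ of some triangle $\Delta^{*}$ of $\sigma$ that contains $\sigma_c$ and in which $\sigma_c$ is the longest edge (if $c$ has a parent $c'$ in $\mathcal T$, then $\Delta^{*}$ is the triangle on $\sigma_c$ not containing $\sigma_{c'}$; if $c$ is the root of $\mathcal T$, then $\Delta^{*}$ is the triangle used to build whichever of $\tau^{(1)}(\sigma,c)$, $\tau^{(2)}(\sigma,c)$ has $a$ as a child of $c$). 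Now $\sigma_a$ is one of the edges of $\Delta^{*}$ because $a$ is a child of $c$; but $\sigma_a$ and $\sigma_c$ are also both edges of $\Delta$, and two edges of a triangle determine it (any two edges share a vertex, and the two remaining endpoints together with that vertex are its three vertices), so $\Delta^{*}=\Delta$. Hence the children of $c$ in $\mathcal T$ are exactly $a$ and $b$, so $a,b,c$ all lie in $\mathcal T$; thus $\Delta$ is a triangle all of whose edge midpoints lie in $\tau(\sigma,z)$, where $z$ is the root of $\mathcal T$, and therefore $\Delta\subseteq T(\sigma,z)$.

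\emph{Uniqueness.} Suppose $\Delta\subseteq T(\sigma,z_1)\cap T(\sigma,z_2)$ with $z_1,z_2\in\fdec(\sigma)\cup\fdiag(\sigma)$. Since $\Delta$ is a triangle of $\sigma$ and each $T(\sigma,z_i)$ is a union of triangles of $\sigma$, the inclusion $\Delta\subseteq T(\sigma,z_i)$ forces $\Delta$ to be one of these constituent triangles, so all three midpoints $a,b,c$ lie in $\tau(\sigma,z_i)$ for $i=1,2$; in particular $c\in\tau(\sigma,z_1)\cap\tau(\sigma,z_2)$, that is, $z_1,z_2\in\tau^{-1}(\sigma,c)$. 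Since $\sigma_c$ is the longest edge of a triangle of $\sigma$, part~\ref{it:large} of Proposition~\ref{pro:twotrees} gives $|\tau^{-1}(\sigma,c)|=1$, so $z_1=z_2$. Combined with the covering step, this shows that every triangle of $\sigma$ lies in exactly one $T(\sigma,z)$, which is the claimed partition.

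The main obstacle is the covering step, and within it the identification $\Delta^{*}=\Delta$: one must make sure that the triangle from which $c$ spawns its children in $\mathcal T$ is $\Delta$ itself and not the other triangle incident to $\sigma_c$, which is precisely where it is used that $\sigma_c$ attains the maximum length strictly within $\Delta$ (so that $\sigma_c$ genuinely generates children there). The uniqueness step, by contrast, follows almost immediately from Proposition~\ref{pro:twotrees}.
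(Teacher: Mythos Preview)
Your proof is correct and follows essentially the same approach as the paper's: both show that every triangle of $\sigma$ belongs to $T(\sigma,z)$ for the unique $z$ given by Proposition~\ref{pro:twotrees}\ref{it:large} applied to the midpoint of the triangle's longest edge. The paper's proof is terser---it simply asserts that $x$ is the parent of both $y$ and $z$ in the unique tree containing $x$---whereas you justify this carefully via Lemma~\ref{lem:childparentdefine} and the identification $\Delta^{*}=\Delta$, and you also make the uniqueness step and nonemptiness explicit.
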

\begin{proof}
   Proposition~\ref{pro:twotrees}~\ref{it:large} gives that for any triangle $\sigma_x,\sigma_y,\sigma_z$ of $\sigma$, where 
   $\sigma_x$ is the largest edge of this triangle, there exists only one tree containing $x$. Let $\tau(\sigma,w)$ be this tree. We have that 
   $x$ is the parent of both $y$ and $z$ in $\tau(\sigma,w)$, therefore 
   $T(\sigma,w)$ contains the triangle $\sigma_x,\sigma_y,\sigma_z$, and the proof is completed.
\end{proof}

We recall the notion of the \emph{minimal parallelogram} of an edge, which was introduced in~\cite{CMSS15} and appeared briefly in the paragraph preceding~\eqref{eq:edgeposet}.
For any edge $e\in E^\xi$, the \emph{minimal parallelogram} of $e$ is the unique parallelogram composed of two lattice triangles for which 
$e$ is the longest diagonal.
\begin{proposition}\label{pro:smallestedgehierarchy}
   Let $\xi\in\Xi(\Lambda^0)$ be any boundary condition and $\sigma\in\Omega^\xi$ be any triangulation. 
   Let $\Delta_1=(\sigma_{y_1},\sigma_{z_1},\sigma_{w_1})$ and 
   $\Delta_2=(\sigma_{y_2},\sigma_{z_2},\sigma_{w_2})$ be two triangles of $\sigma$ in the same tree $\tau(\sigma,x)$, for some $x\in\Lambda$.
   Assume that 
   $|\sigma_{y_1}|>|\sigma_{z_1}|\geq |\sigma_{w_1}|$, $|\sigma_{y_2}|>|\sigma_{z_2}|\geq |\sigma_{w_2}|$ and 
   $y_1$ is an ancestor of $y_2$ in $\tau(\sigma,x)$. 
   Then, $|\sigma_{w_1}|\geq |\sigma_{w_2}|$. 
\end{proposition}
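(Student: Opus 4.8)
The plan is to exploit the geometric picture from the proof of Proposition~\ref{pro:tautree}: each vertex $y$ of the tree $\tau(\sigma,x)$ carries not just its edge $\sigma_y$ but the lattice polygon $Q(y)$ (a copy of the region $Q_1$ from that proof) swept out by $S(\sigma_y)$ on the relevant side, and the descendants of $y$ all live inside $Q(y)$. By Proposition~\ref{pro:tautree}\ref{it:treesquare}, passing from a parent to a child strictly shrinks the associated square-set, and the key additional fact is that the minimal parallelogram of $\sigma_y$ is exactly the union of the two triangles of $\sigma$ flanking $\sigma_y$ when $\sigma_y$ is the largest edge of both (which it is for internal nodes). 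So the tree structure records a nested family of minimal parallelograms. I would first reduce to the case where $\Delta_2$ is a \emph{child} triangle of $\Delta_1$, i.e.\ $y_2$ is a child of some vertex of $\Delta_1$; the general statement then follows by iterating along the ancestor path $y_1 \to \cdots \to y_2$, since "$|\sigma_{w}|$ is non-increasing" composes.

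For the one-step claim, here is the geometry I would push: in the triangle $\Delta_1$, the edge $\sigma_{y_1}$ is the longest and $\sigma_{z_1},\sigma_{w_1}$ are its two children, with the apex $p_1$ lying at distance $\le \tfrac{\sqrt2}{2}$ from $\sigma_{y_1}$ (area-$\tfrac12$ plus $|\sigma_{y_1}|_{\ell_2}\ge\sqrt2$, exactly as in the proof of Proposition~\ref{pro:tautree}). Now $y_2$ is a descendant, so $\Delta_2$ sits inside one of the two sub-polygons $Q_1^{(1)}, Q_1^{(2)}$ cut off by $p_1$; say $\Delta_2 \subset Q_1^{(1)}$, the region bounded by $\sigma_{w_1}$ (the child edge on that side) together with part of the boundary of $Q(y_1)$. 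The shortest edge $\sigma_{w_2}$ of $\Delta_2$ is the child edge of $\sigma_{y_2}$ that is "furthest from the apex line", and the whole point is that this child edge is confined to $Q_1^{(1)}$, whose "width" in the direction transverse to $\sigma_{w_1}$ is at most $|\sigma_{w_1}|$ — more precisely, $S(\sigma_{w_2}) \subseteq S(\sigma_{w_1})$ would already give $|\sigma_{w_2}|_{\ell_2}\le|\sigma_{w_1}|_{\ell_2}$ and hence (for lattice segments) $|\sigma_{w_2}| \le |\sigma_{w_1}|$ in the $\ell_1$ sense too. So the crux is: \emph{the smallest edge of a descendant triangle has its square-set contained in the square-set of the smallest edge of the triangle above it.}

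The main obstacle — and where I'd spend the real work — is establishing precisely that containment of square-sets of the \emph{smallest} edges. Proposition~\ref{pro:tautree}\ref{it:treesquare} only directly compares $S(\sigma_y)$ for ancestors $y$ along a single root-to-leaf path, but $w_1$ need not be an ancestor of $w_2$ (indeed $w_1$ is typically a leaf). So I cannot just quote it. Instead I would argue: among the edges of $\Delta_1$, $\sigma_{w_1}$ is the one whose square-set is the "outer" half of $S(\sigma_{y_1})$ on the $Q_1^{(1)}$ side, i.e.\ $S(\sigma_{w_1})$ equals the set of squares of $\mathbb S$ meeting $Q_1^{(1)}$; since $\Delta_2\subseteq Q_1^{(1)}$ we get $S(\sigma_{y_2})\subseteq S(\sigma_{w_1})$, and a fortiori $S(\sigma_{w_2})\subseteq S(\sigma_{w_1})$. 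From $S(\sigma_{w_2})\subseteq S(\sigma_{w_1})$ the $\ell_1$-length comparison $|\sigma_{w_2}|\le|\sigma_{w_1}|$ is a short lattice-geometry fact (an open lattice segment meeting $k$ unit squares has $\ell_1$-length controlled by $k$ in a monotone way; cleanest is via the bounding box). I'd finish by iterating this down the ancestor chain from $y_1$ to $y_2$, getting a nested chain $S(\sigma_{w_1})\supseteq S(\sigma_{w'})\supseteq\cdots\supseteq S(\sigma_{w_2})$ of smallest-edge square-sets, which yields $|\sigma_{w_1}|\ge|\sigma_{w_2}|$.
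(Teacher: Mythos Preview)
Your reduction to the parent--child step and the subsequent induction along the path from $y_1$ to $y_2$ is the right overall shape, and matches the paper. The gap is in the one-step argument. You write ``say $\Delta_2\subset Q_1^{(1)}$, the region bounded by $\sigma_{w_1}$'', but this is not a without-loss-of-generality choice: the labels $z_1,w_1$ are fixed by the size ordering $|\sigma_{z_1}|\ge|\sigma_{w_1}|$, and the hard case is precisely $y_2=z_1$, where $\Delta_2$ lies on the $z_1$ side, so $\Delta_2\subset Q_1^{(z_1)}$ and Proposition~\ref{pro:tautree}\ref{it:treesquare} only yields $S(\sigma_{w_2})\subseteq S(\sigma_{y_2})\subseteq S(\sigma_{z_1})$, not $\subseteq S(\sigma_{w_1})$. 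Your claimed containment $S(\sigma_{w_2})\subseteq S(\sigma_{w_1})$ is in fact false: take $\sigma_{y_1}$ from $(0,0)$ to $(5,3)$ with apex $(2,1)$, so $\sigma_{w_1}=(0,0)\text{--}(2,1)$ and $\sigma_{z_1}=(2,1)\text{--}(5,3)$; then $\Delta_2$ (on the far side of $\sigma_{z_1}$) has apex $(4,2)$ and $\sigma_{w_2}=(4,2)\text{--}(5,3)$. Here $S(\sigma_{w_1})$ consists of the two unit squares in $[0,2]\times[0,1]$ while $S(\sigma_{w_2})=\{[4,5]\times[2,3]\}$, so the square-sets are disjoint. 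The length inequality $|\sigma_{w_2}|=2\le 3=|\sigma_{w_1}|$ still holds, but not for the reason you give.

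The paper handles the case $y_2=z_1$ by a different mechanism: it observes that the edge $e$ opposite to $\sigma_{w_1}$ in the minimal parallelogram of $\sigma_{y_1}$ (so $|e|=|\sigma_{w_1}|$) is also one of the four sides of the minimal parallelogram of $\sigma_{z_1}$. Since $\sigma_{w_2}$ is by definition the \emph{shortest} side of that parallelogram, one gets $|\sigma_{w_2}|\le|e|=|\sigma_{w_1}|$ directly. In the example above, $e=(3,2)\text{--}(5,3)$ and the minimal parallelogram of $\sigma_{z_1}$ has vertices $(2,1),(4,2),(5,3),(3,2)$, confirming the claim. This minimal-parallelogram fact is what replaces your square-set containment and is the missing idea in your argument.
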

\begin{proof}
   First consider the case of $y_1$ being the parent of $y_2$ in $\tau(\sigma,x)$, 
   which gives that $y_2\in \{z_1,w_1\}$.
	\begin{figure}[tbp]
	   \begin{center}
	      \hspace{\stretch{1}}
	      \includegraphics[scale=.7]{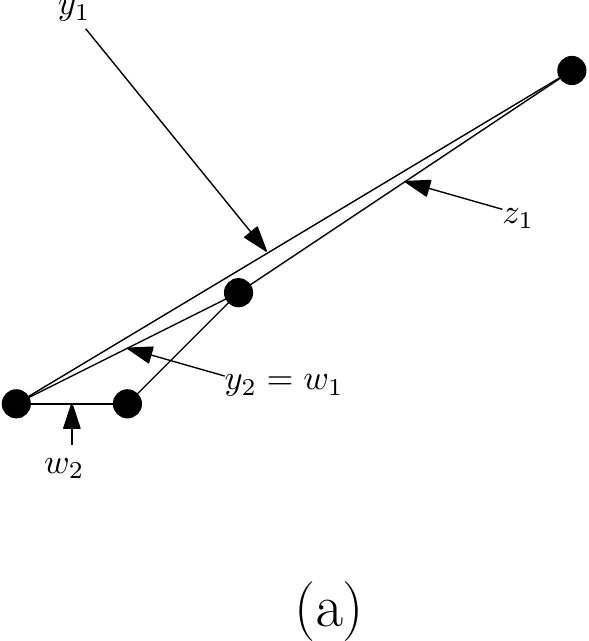}
	      \hspace{\stretch{1}}
	      \includegraphics[scale=.7]{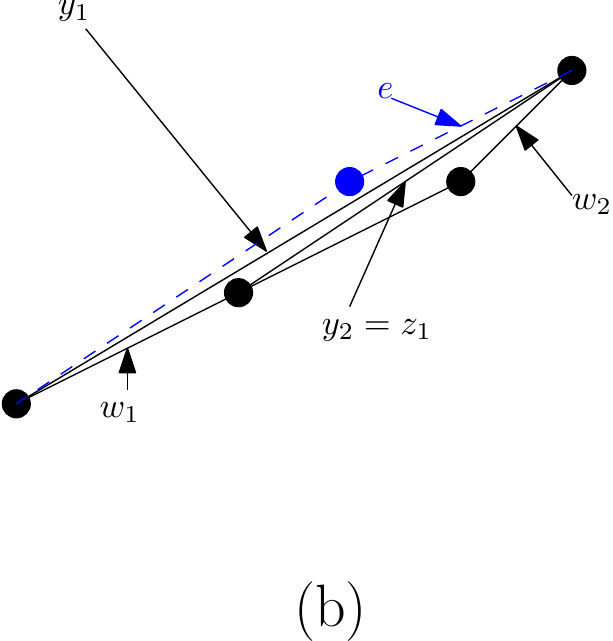}
	      \hspace{\stretch{1}}~
	   \end{center}\vspace{-.5cm}
	   \caption{Illustration for the proof of Proposition~\ref{pro:smallestedgehierarchy} when $y_2=w_1$ (a) and $y_2=z_1$ (b).}
	   \label{fig:smalledge}
	\end{figure}	
   If $y_2=w_1$ (see Figure~\ref{fig:smalledge}(a)), the lemma clearly holds since
   $$
      |\sigma_{w_2}|< |\sigma_{y_2}|=|\sigma_{w_1}|. 
   $$
   If $y_2=z_1$, then we use that 
   $\sigma_{z_1},\sigma_{w_1}$ are part of the minimal parallelogram of $\sigma_{y_1}$. 
   Refer to Figure~\ref{fig:smalledge}(b).
   Let $e$ be the edge opposite to $\sigma_{w_1}$ in the minimal paralellogram of $\sigma_{y_1}$. 
   Note that $e$ may not belong to $\sigma$, and $|e|=|\sigma_{w_1}|$.
   We claim that 
   \begin{equation} 
      \text{$e$ is in the minimal parallelogram of $\sigma_{z_1}$}.
      \label{eq:claimmp}
   \end{equation}
   Using this claim, since $\sigma_{w_2}$ is the smallest edge in the minimal parallelogram of $\sigma_{y_2}=\sigma_{z_1}$, we have
   $$
      |\sigma_{w_2}|\leq |e|=|\sigma_{w_1}|,
   $$
   and the proposition follows when $y_1$ is the parent of $y_2$.
   In the general case of 
   $y_1$ not being the parent of $y_2$, the proposition follows by applying
   the above reasoning inductively along the path from $y_1$ to $y_2$ in the tree $\tau(\sigma,x)$.
   
   It remains to establish~\eqref{eq:claimmp}. If $e$ were an edge of $\sigma$ and $\sigma_{y_1}$ were flippable in $\sigma$ (as illustrated in
   Figure~\ref{fig:smalledge}(b)), 
   then $\sigma_{y_1}$ would be a decreasing edge and, by flipping $\sigma_{y_1}$, we would obtain a triangulation in which 
   $\sigma_{z_1}$ and $e$ are in the same triangle, whose largest edge is 
   $\sigma_{z_1}$. This gives that $e$ is part of the minimal parallelogram of 
   $\sigma_{z_1}$, as claimed. 
\end{proof}

%############################################################################################
%############################################################################################
%############################################################################################
\section{Crossings of ground state edges}\label{sec:groundstatecrossings}
In this section we consider a given edge $g\in\mathbb{G}^\xi$ and establish 
geometric properties of the set of edges of a triangulation $\sigma$ that intersect $g$; recall the definition 
of $\mathbb{G}^\xi$ from~\eqref{eq:defg}. 
In particular, given one edge $\sigma_x$ intersecting $g$, 
one of our main results here gives that the edges of midpoint $\tau^{-1}(\sigma,x)$ 
also intersect $g$. 

% Then given two triangulations $\sigma,\eta$, we say that 
% \begin{equation}
%    \sigma \preceq \eta \text{ if } \sigma_x\preceq \eta_x \text{ for all } x\in\Lambda.
%    \label{eq:tpreceq}
% \end{equation}
% 

We will need the following useful facts from~\cite{CMSS15}.
Fix any boundary condition $\xi\in\Xi(\Lambda^0)$ and any midpoint $x\in\Lambda$.
Two edges $e,f\in E_x^\xi$ are said to be neighbors if we can obtain $e$ from $f$ via a single flip; 
more formally, if there are 
$\sigma,\sigma'\in\Omega^\xi$ such that 
$\sigma_x=e,\sigma'_x=f$ and $\sigma_y=\sigma'_y$ for all $y\neq x$. 
It is known that the graph with vertex set $E_x^\xi$ and the neighborhood relation described above 
is a tree.
This follows since, for any edge $e\in E_x^\xi$, there is at most one $f\in E_x^\xi$ such that 
$e$ and $f$ are neighbors satisfying $|f|< |e|$ (in which case we see $f$ as the parent of $e$ in the tree). 
We consider the ground state edges of $E_x^\xi$ as the root of the tree, and it is possible that the tree has two neighboring roots, which are 
opposite unit diagonals. 
We will call this tree the \emph{tree induced by $E_x^\xi$}.
% We will use the following technical lemma.
% \begin{lemma}[{\cite[Proposition~3.8]{CMSS15}}]\label{lem:distances}
%    Fix a boundary condition $\xi$. 
%    For any midpoint $x\in\Lambda$ and any two triangulations 
%    $\sigma,\sigma'\in\Omega^\xi$, the distance between $\sigma$ and $\sigma'$ in the flip graph is equal to 
%    $\sum_{x\in\Lambda} \kappa(\sigma_x,\sigma'_x)$, 
%    where $\kappa(\sigma_x,\sigma'_x)$ is the distance between $\sigma_x$ and $\sigma'_x$ in the tree induced by $E_x^\xi$. 
% \end{lemma}

Given a boundary condition $\xi\in\Xi(\Lambda^0)$ and a midpoint $x\in\Lambda$, 
we denote by $\bar\sigma_x$ the ground state edge of midpoint $x$ given $\xi$ (with an arbitrary choice among unit diagonals).
Since ground state edges of distinct midpoints are all compatible with one another, 
we have that $\bar \sigma=\{\bar\sigma_x \colon x\in\Lambda\}$ is a ground state triangulation.
In the lemma below we use the partial order on the set $E_x^\xi$, which is defined in~\eqref{eq:edgeposet}, and the set of midpoints of constraint edges
$\Lambda^\bc=\xi\cap\Lambda$.
\begin{proposition}\label{pro:monotoneflips}
   Given any boundary condition $\xi\in\Xi(\Lambda^0)$, any midpoint $x\in\Lambda\setminus \Lambda^\bc$, 
   any two edges $e,f\in E_x^\xi$ such that $e\prec f$, and any triangulation
   $\sigma$ containing $f$, one can obtain a triangulation containing $e$ by performing a sequence of decreasing flips from $\sigma$.
\end{proposition}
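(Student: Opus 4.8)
The plan is to induct on the length of the covering chain witnessing $e \prec f$. By the definition in~\eqref{eq:edgeposet}, there is a sequence $e = e_0, e_1, \ldots, e_k = f$ in $E_x^\xi$ in which each $e_i$ is the parent of $e_{i+1}$, i.e.\ each consecutive pair is related by a single decreasing flip at midpoint $x$ (with the smaller edge being the parent). So it suffices to prove the statement for a single covering step: if $e$ is the parent of $f$ (so $|e| < |f|$ and they are neighbors in the tree induced by $E_x^\xi$), and $\sigma$ is any triangulation containing $f$, then there is a sequence of decreasing flips from $\sigma$ ending at a triangulation containing $e$; iterating this $k$ times along the chain $f = e_k, e_{k-1}, \ldots, e_0 = e$ gives the general claim. (I should be mildly careful that after the first block of decreasing flips I land on \emph{some} triangulation containing $e_{k-1}$, to which I then re-apply the single-step argument; the inductive hypothesis is exactly phrased to allow an arbitrary such triangulation.)

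For the single-step claim, recall that since $e$ is the parent of $f$, the edge $f$ is the longest diagonal of its \emph{minimal parallelogram} $P$, and $e$ is the opposite diagonal of $P$; flipping $f$ to $e$ is a decreasing flip, and it is legal precisely when $P$ appears as the union of the two triangles of the current triangulation that contain the edge $f$. So the task reduces to: starting from $\sigma$ (which contains $f$ but need not have $P$ as the two triangles adjacent to $f$), reach, via decreasing flips only, a triangulation that still contains $f$ and in which the two triangles adjacent to $f$ form $P$. Once there, one final decreasing flip of $f$ produces $e$.

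To do this I would look at the two triangles of $\sigma$ incident to $f$, one on each side. On a given side, the third vertex of the incident triangle is some lattice point $p$, and the two other edges of that triangle, call them $a$ and $b$, lie strictly inside the (closed) triangle spanned by $f$ and $p$. If this side already agrees with $P$, there is nothing to do on that side. Otherwise $p$ is not the vertex of $P$ on that side; I claim $f$ is \emph{not} the longest edge of the incident triangle on at least one of the two sides where $\sigma$ disagrees with $P$ — indeed, if $f$ were the longest edge of \emph{both} incident triangles, then $f$ would be \emph{decreasing} in $\sigma$ and its two incident triangles would already form its minimal parallelogram $P$ (minimal parallelograms are unique, Proposition~\ref{pro:smallestedgehierarchy}'s surrounding discussion), contradiction. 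So on a disagreeing side, one of $a,b$ is strictly longer than $f$; flipping that longer edge is a decreasing flip (it is the longest edge of a triangle containing it, hence decreasing unless it is a unit diagonal — and a unit diagonal cannot be strictly longer than $f$), and it ``peels off'' the obstruction, bringing the configuration around $f$ strictly closer to $P$. I would measure progress by, say, the total number of unit squares in $S(\cdot)$ of the two triangles incident to $f$ (which strictly decreases under such a peeling flip, by the same area/containment reasoning as in the proof of Proposition~\ref{pro:tautree}), or equivalently by the area of the union of those two triangles; either gives a monotone quantity bounded below, so the process terminates, and it can only terminate at the configuration where both incident triangles of $f$ form $P$. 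Throughout, $f$ is never flipped (we only flip strictly longer edges), so $f$ is preserved until the very last step.

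The main obstacle is the geometric bookkeeping in the previous paragraph: verifying that every ``peeling'' flip is genuinely a \emph{decreasing} flip (in particular that the edge being flipped is never a unit diagonal and is never a constraint edge — here one uses that such edges are strictly longer than $f$, together with the hypothesis $x \notin \Lambda^\bc$ only being needed for the final flip of $f$ itself), and that the chosen monotone quantity strictly decreases and forces convergence to $P$ rather than to some other dead end. This is essentially a more careful, localized version of the tree-of-influence containment argument already carried out for Proposition~\ref{pro:tautree}, and I would borrow that machinery — the enlarged region $Q^*$ and the fact that the apex of a small-area triangle on a long edge must sit within distance $\sqrt2/2$ — to pin down where $p$ can lie and to show that peeling makes definite progress.
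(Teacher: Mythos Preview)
Your reduction to a single covering step and the endgame (``once both triangles adjacent to $f$ form the minimal parallelogram $P$, flip $f$'') are fine and match the paper. The gap is in the peeling step. You write that the longer neighbour $a$ ``is the longest edge of a triangle containing it, hence decreasing.'' That inference is wrong: by the paper's definition an edge is \emph{decreasing} only if it is the largest edge in \emph{every} triangle containing it, not just one. The edge $a$ is certainly the largest edge of the triangle $(f,a,b)$, but on the far side of $a$ there may sit an even longer edge, so $a$ need not be flippable to a shorter edge. Your parenthetical about unit diagonals and constraint edges does not address this; the real obstruction is an ordinary longer edge on the other side of $a$.

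This also breaks your progress measure. If you repair the first issue in the natural way---walk from $a$ across successively longer edges until you reach one that \emph{is} largest on both sides---you are exactly climbing to the root $z\in\tau^{-1}(\sigma,x)$ of the tree of influence. But flipping $\sigma_z$ does not in general touch either triangle adjacent to $f$, so ``area of the two triangles incident to $f$'' need not decrease. The paper handles both points at once: it goes straight to $z\in\tau^{-1}(\sigma,x)$ (which is genuinely decreasing; if $\sigma_z$ were a constraint edge then $f$ would be ground state, contradicting $e\prec f$), and it uses the potential $L_x(\sigma)=\sum_{y:\,\sigma_y\cap S(f)\neq\emptyset}|\sigma_y|$. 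Since $S(f)\subset S(\sigma_z)$ by Proposition~\ref{pro:tautree}\ref{it:treesquare}, the flip of $\sigma_z$ drops $L_x$ by at least $2$; as $L_x\geq |f|$ whenever $f$ is present, the process terminates with $f$ decreasing. Your local peeling picture can be salvaged, but only by importing precisely these two ingredients.
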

\begin{proof}
   Since the graph induced by $E_x^\xi$ is a tree, there is a unique path 
   $f=h_1 \succ h_2 \succ \cdots \succ h_k=e$ in this graph. 
   We claim that there exists a sequence of decreasing flips from $\sigma$ that produce a triangulation containing 
   $h_2$. With this the lemma follows since we can apply this claim repetitively for $h_2,h_3,\ldots$ until we obtain a triangulation 
   containing $e$. 
   
   Now we establish the above claim. If $f$ is decreasing in $\sigma$, the claim follows since we can flip $f$ to obtain $h_2$. 
   From now on assume that $f$ is not decreasing, and let $x$ be the midpoint of $f$. 
   Let $L_x(\sigma)$ be the sum of the $\ell_1$ length of the edges 
   of $\sigma$ that cross $S(f)$, where the set $S$ is defined in~\eqref{eq:defs}.
   Let $y\in \tau^{-1}(\sigma,x)$.
   We have that $\sigma_y$ is a decreasing and flippable edge. 
   Otherwise $y\in \tau^{-1}(\sigma,x)$ would imply that $\sigma_y$ is a constraint edge, which gives that 
   $f$ is a ground state edge, contradicting that $f\succ e$.
   Let $\sigma'$ be the triangulation obtained by flipping $\sigma_y$ in $\sigma$.
   By Proposition~\ref{pro:tautree}\ref{it:treesquare}, $S(f)\subset S(\sigma_y)$, hence $\sigma_y$ intersects $S(f)$. 
   Using this and the fact that $|\sigma'_y|\leq |\sigma_y|-2$
   %, together with the fact that $S(\sigma'_y)\subset S(\sigma_y)$, 
   we obtain   
   that $L_x(\sigma')\leq L_x(\sigma)-2$. 
   Repeating these steps we obtain a sequence of triangulations so that the $i$-th triangulation in the sequence 
   is obtained via a decreasing flip of an edge of the $(i-1)$-th triangulation, and the value of $L_x$ monotonically decreases along the sequence.
   Since for any triangulation $\sigma''$ containing $f$ we have that $L_x(\sigma'')\geq|f|$, we obtain that this procedure will eventually make $f$ be a flippable 
   and decreasing edge, establishing the claim.
\end{proof}

The lemma below gives the first property of crossings of ground state edges. We denote by $\ind{\cdot}$ the indicator 
function.
\begin{proposition}[Monotonicity]\label{pro:monotonicity}
   Given any boundary condition $\xi\in\Xi(\Lambda^0)$,
   any midpoint $x\in \Lambda$, 
   any ground state edge $g\in\mathbb{G}^\xi$, 
   and any two edges $e,f\in E_x^\xi$ such that 
   $\bar\sigma_x \preceq e \preceq f$ then 
   $$
      \ind{e \cap g \neq \emptyset} \leq \ind{f \cap g \neq \emptyset}.
   $$
\end{proposition}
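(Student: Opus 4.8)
The plan is to prove the contrapositive-friendly statement directly: assuming $f\cap g\neq\emptyset$ is the interesting case, but actually it is cleaner to show that if $e\cap g=\emptyset$ then $f\cap g=\emptyset$ — wait, that is false in general. Let me restate: since $\bar\sigma_x\preceq e\preceq f$, the edges $\bar\sigma_x, e, f$ all lie on a single increasing path in the tree induced by $E_x^\xi$, with the ground state edge $\bar\sigma_x$ at the bottom. The claim is that crossing $g$ is a monotone event along this path: once an edge of midpoint $x$ crosses $g$, all larger edges (in the $\prec$ order) of midpoint $x$ also cross $g$. So the real content is: \emph{if $e\cap g\neq\emptyset$ and $e\preceq f$, then $f\cap g\neq\emptyset$}; equivalently the set of edges in $E_x^\xi$ that do \emph{not} cross $g$ is downward closed in the poset $\prec$, i.e. forms an ``order ideal'' rooted at $\bar\sigma_x$.

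First I would reduce to the case where $f$ is an immediate successor (child) of $e$ in the tree induced by $E_x^\xi$, using~\eqref{eq:edgeposet}: a general $e\preceq f$ is witnessed by a chain $e=e_0\prec e_1\prec\cdots\prec e_k=f$ of parent-child steps, so it suffices to handle one step and iterate. So now $e$ is the parent of $f$, meaning $f$ can be flipped by a decreasing flip to $e$; when $f$ sits in a triangulation where this flip is available, $f$ is the longest diagonal of its minimal parallelogram $P$, and $e$ is the other diagonal of $P$. The key geometric fact I want is that $e\subset \overline{P}$ (the closed parallelogram) and in fact $e$ and $f$ cross each other inside $P$ at the common center of $P$. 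Therefore the open segment $f$ ``contains'' the crossing structure of $e$ in the following sense: I would argue that any ground state edge $g$ that intersects $e$ must also intersect $f$.

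The cleanest way to see this: by Lemma~\ref{lem:groundstate} (Ground State Lemma), the ground state triangulation $\bar\sigma$ is obtained by placing each midpoint at its minimal configuration independently, and $g\in\mathbb{G}^\xi$ belongs to $\bar\sigma$ (after possibly fixing a choice of unit diagonal). Now $e$ and $f$ both have midpoint $x$, and $f$ is obtained from $e$ by a length-increasing flip across the parallelogram $P$; the two diagonals $e,f$ of $P$ share the midpoint $x$ and bound the same region $\overline{P}$. Suppose $g$ crosses $e$ but not $f$. Since $g$ and $e$ are open segments intersecting, and $x$ is an endpoint neither of $g$ nor in the interior of $g$'s crossing with $e$ unless $g$ passes through $x$ — but $g$ cannot have midpoint $x$ unless $g$ itself is a ground state edge of $x$, in which case $g\preceq e$ and the statement is essentially the definition. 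Assuming $g$ has a different midpoint: $g$ enters the open parallelogram $P$ by crossing $e$, so $g\cap \mathrm{int}(P)\neq\emptyset$. The two triangles of $P$ are separated by $f$; since $g$ enters $\mathrm{int}(P)$ but does not cross $f$, $g$ must be entirely confined (within $P$) to one of the two triangles $T_1,T_2$ of $P$. But each $T_i$ is a lattice triangle of area $1/2$, hence contains no lattice point other than its three vertices and no half-integer point in its interior other than the midpoints of its three edges — and the midpoints of the edges of $T_i$ other than $x$ are the midpoints of the sides of $P$. A ground state edge $g$ whose open segment lies (in part) strictly inside such a $T_i$ without touching $f$ would have to have an endpoint among the vertices of $T_i$ and would be a subsegment forced by the two short sides, which contradicts $g$ being a ground state edge of its own midpoint unless $g$ coincides with a side of $T_i$, i.e. a side of $P$; but a side of $P$ does not cross the open segment $e$. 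This contradiction shows $g$ must cross $f$ as well.

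The step I expect to be the main obstacle is making the ``$g$ confined to one triangle of $P$'' argument fully rigorous: one must rule out the possibility that $g$ pokes into $P$, crosses $e$, and exits through a side of $P$ without ever meeting $f$, using crucially that $g$ is a \emph{ground state} edge (so it is the minimal-length edge of its midpoint given $\xi$) and that $f$ is strictly longer than $e$. I would handle this by combining the area-$1/2$ property of the triangles of $P$ (so $P$ is ``primitive'' and the only lattice points and relevant half-integer points are the corners and side-midpoints of $P$) with the observation that an edge crossing $e$ but not $f$ and staying in $P$ would be shorter than $e$ when restricted appropriately, forcing $g$ to itself be improvable by a decreasing flip — contradicting $g\in\mathbb{G}^\xi$ via Proposition~\ref{pro:monotoneflips} or the characterization of ground state edges as $\prec$-minimal elements. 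Once the one-step case is settled, iterating along the chain from $\bar\sigma_x$ to $f$ via~\eqref{eq:edgeposet} gives the full statement, and the base relation involving $\bar\sigma_x$ is immediate since $\bar\sigma_x$ is the $\prec$-minimum of $E_x^\xi$.
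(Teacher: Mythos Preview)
Your approach is genuinely different from the paper's, but the geometric argument has a real gap that you yourself flag. The reduction to a single parent--child step is fine, so $e,f$ are the two diagonals of the minimal parallelogram $P$ of $f$, and a ground state edge $g$ crossing $e$ but not $f$ would indeed have its intersection with $P$ confined to one of the two triangles $T_i$ cut out by $f$. The problem is your next step: you assert that such a $g$ ``would have to have an endpoint among the vertices of $T_i$,'' and this is not justified. Concretely, if $T_1$ has vertices $A,B,C$ with $f=AC$ and $B$ an endpoint of $e$, then $g$ can enter $T_1$ through the side $AB$ of $P$, cross the half of $e$ lying in $T_1$ (the segment from $B$ to the center $x$), and exit through the adjacent side $BC$, with both endpoints of $g$ well outside $P$. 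Your proposed fallback---that $g$ would then admit a decreasing flip, contradicting $g\in\mathbb{G}^\xi$---is not substantiated: you would need to exhibit the minimal parallelogram of $g$ inside this picture, and nothing in the local geometry of $P$ hands you that. At that point you are essentially forced back to a global argument about $g$ being $\prec$-minimal, which is what the paper does directly.

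The paper instead proves the contrapositive in a few lines using the dynamics rather than local geometry. If $f\cap g=\emptyset$, then $f$ and $g$ can coexist in some triangulation $\sigma\in\Omega^\xi$. Proposition~\ref{pro:monotoneflips} yields a sequence of \emph{decreasing} flips from $\sigma$ to some $\sigma'$ with $\sigma'_x=e$; since $g$ is a ground state edge, no decreasing flip can move it, so $g\in\sigma'$ and hence $e\cap g=\emptyset$. Here the hypothesis $g\in\mathbb{G}^\xi$ is used globally---$g$ survives any sequence of decreasing flips---rather than locally, which is exactly what lets the argument avoid the parallelogram case analysis that trips up your approach.
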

\begin{proof}
   We show that if $f \cap g = \emptyset$ then $e \cap g = \emptyset$. 
   If $f$ does not intersect $g$, then there is a triangulation $\sigma\in\Omega^\xi$ so that $g\in\sigma$ and 
   $\sigma_x=f$. Proposition~\ref{pro:monotoneflips} gives that we can perform a sequence of decreasing flips 
   from $\sigma$ until obtaining a triangulation $\sigma'$ 
   with $\sigma'_x=e$ since $e \preceq f=\sigma_x$. 
   Since $g$ is in ground state, $g$ is not flipped in this sequence. 
   This implies that 
   $g$ is contained in $\sigma'$ and, consequently, cannot intersect $e$. 
\end{proof}

% \begin{lemma}[No intersection by ground state]\label{lem:groundstate}
%    For any $G\subset\mathbb{G}$ and any $x\in\Lambda$ such that $G\cap E_x =\emptyset$, then $\bar\sigma_x$ does not intersect any edge of $G$.
% \end{lemma}
% \begin{proof}
%    Since $\bar\sigma$ is a ground-state triangulation, $\bar\sigma_x$ cannot intersect any edge of $\mathbb{G}$ whose midpoint is not $x$. 
% \end{proof}
% 
The following is a simple geometric lemma that we will need later.
\begin{lemma}\label{lem:2ndangle}
  In any triangle of a lattice triangulation, the largest angle is at least $\pi/2$ and the other angles are 
  at most $\pi/4$.
\end{lemma}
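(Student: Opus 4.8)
The key fact is that every triangle in a lattice triangulation has area exactly $1/2$ and all three vertices in $\mathbb{Z}^2$. I will combine this with the elementary formula relating the area of a triangle to two of its side lengths and the enclosed angle.

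First I would set up notation: let the triangle have vertices $A,B,C\in\mathbb{Z}^2$, with side lengths (in the $\ell_2$ metric) $a=|BC|_2$, $b=|CA|_2$, $c=|AB|_2$, and let $\gamma$ be the angle at $C$, which I take to be the largest angle (so $\gamma$ is opposite the longest side $c$). Since all vertices are lattice points, $a^2,b^2,c^2$ are positive integers, and in particular $a,b\geq 1$, hence $a^2 b^2\geq 1$. On the other hand, the area identity gives $\tfrac12 ab\sin\gamma = \tfrac12$, i.e. $ab\sin\gamma = 1$, so $\sin\gamma = 1/(ab)\le 1$, and $\sin^2\gamma = 1/(a^2b^2) \le 1$.

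For the bound on $\gamma$: I want to show $\gamma\ge\pi/2$, equivalently $\cos\gamma\le 0$. By the law of cosines, $\cos\gamma = (a^2+b^2-c^2)/(2ab)$, so it suffices to show $a^2+b^2\le c^2$. Suppose not, so $a^2+b^2\ge c^2+1$ (integers). Then $\cos^2\gamma = (a^2+b^2-c^2)^2/(4a^2b^2)\ge 1/(4a^2b^2)$. Combined with $\sin^2\gamma\le 1/(a^2b^2)$, we would get $1 = \sin^2\gamma+\cos^2\gamma$; but actually the cleaner route is: if $\gamma<\pi/2$ then $c^2<a^2+b^2$, yet $c$ is the longest side so $c^2\ge a^2$ and $c^2\ge b^2$, giving $2c^2\ge a^2+b^2 > c^2$, which is consistent — so this alone is not enough, and I must use the area constraint. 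The right argument: if $\gamma$ is acute, drop the altitude from $C$ to line $AB$; its foot $H$ lies on segment $AB$ (since the other two angles are then also at most... no). Let me instead argue directly with heights. Write $h$ for the distance from $C$ to line $AB$; then $\tfrac12\,c\,h = \tfrac12$, so $ch=1$. Since $c\ge 1$ (lattice points), $h\le 1$. Also $c\ge a = |BC|_2 \ge h$ and similarly $c\ge h$, automatically. Now the foot $H$ of the altitude: the angle at $A$ is acute iff $H$ is strictly on the $B$-side of $A$, etc. The largest angle $\gamma$ at $C$ is $\ge\pi/2$ iff $H\notin$ open segment $(A,B)$ would be wrong — $\gamma\ge\pi/2$ iff $|CH|$-foot logic... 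I will instead use: $\gamma\ge\pi/2 \iff AB^2 \ge CA^2+CB^2 \iff c^2\ge a^2+b^2$. To force this, note $ab\sin\gamma=1$ and $a^2,b^2\in\mathbb{Z}_{\ge1}$, so if $a=b=1$ the triangle has two unit sides and area $1/2$, forcing $\sin\gamma=1$, i.e. $\gamma=\pi/2$ exactly (the unit right isoceles triangle); if instead $\max(a,b)\ge\sqrt2$, say $a\ge\sqrt2$... I would complete this by the following slick observation: since $ab\sin\gamma=1$ and $\sin\gamma\le1$, we have $ab\ge1$ with equality possible; and $\cos\gamma=(a^2+b^2-c^2)/(2ab)$ has the property that $2ab\cos\gamma=a^2+b^2-c^2\in\mathbb{Z}$, while $2ab\sin\gamma=2$. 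If $\cos\gamma>0$ then $2ab\cos\gamma\ge1$ (positive integer), so $(2ab)^2 = (2ab\cos\gamma)^2+(2ab\sin\gamma)^2\ge 1+4=5$, giving $ab\ge\sqrt5/2$; this doesn't immediately contradict anything, so the integrality of $a^2+b^2-c^2$ needs pairing with a parity or sharper geometric input — and this is the step I expect to be the main obstacle.

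The cleanest complete approach, which I would ultimately write up: The largest angle $\gamma$ satisfies $\gamma\ge\pi/3$ always (largest of three angles summing to $\pi$), so $\sin\gamma\ge\sqrt3/2$, hence $ab = 1/\sin\gamma \le 2/\sqrt3 < 1.16$, forcing $a=b=1$ since $a,b$ are lengths of lattice segments (so $a^2,b^2$ positive integers, $ab\ge1$ with $ab\ge\sqrt2$ unless $a=b=1$). Then $\sin\gamma = 1$, so $\gamma=\pi/2$, and the triangle is the unit right isoceles triangle with legs $a=b=1$ along the coordinate directions (up to lattice symmetry); its other two angles are each $\pi/4$. Hence the largest angle is $\ge\pi/2$ (with equality here) and the others are $\le\pi/4$. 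Wait — this would say the largest angle is always exactly $\pi/2$, which is false (a $1\times 2$ right triangle has angles $\pi/2,\arctan 2,\arctan\tfrac12$ with $\arctan 2>\pi/4$; its area is $1$, not $1/2$, so it is not a lattice-triangulation triangle — good, but a lattice triangle of area $1/2$ can still be, e.g., vertices $(0,0),(1,0),(2,1)$ with area $1/2$ and largest angle obtuse, not $\pi/2$). So the premise $ab\le 2/\sqrt3$ is wrong because $\gamma\ge\pi/3$ is the bound for the largest angle but here $a,b$ are the two sides adjacent to $\gamma$ — fine, that's correct — ah, but in the $(0,0),(1,0),(2,1)$ triangle the sides adjacent to the largest (obtuse) angle are indeed short: the obtuse angle is at $(1,0)$, adjacent sides have length $1$ and $\sqrt2$, product $\sqrt2\approx1.41 > 1.16$, contradicting my bound! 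So $\gamma$ obtuse makes $\sin\gamma$ small, i.e. $ab$ large — I had the inequality backwards. The correct statement: $\sin\gamma = 1/(ab) > 0$ constrains $ab$ from below is false too; $ab\sin\gamma=1$ just says they're reciprocal. I would therefore argue: the two smaller angles $\alpha\le\beta$ are at the endpoints of the longest side $c$; the altitude from the opposite vertex has length $h=1/c\le 1$ (since $c\ge1$), and it lands inside segment of length $c$; then $\tan\alpha = h/(\text{one piece})\le h \le 1$ once I show the near piece has length $\ge1$ — which holds because that near piece, call it $p$, satisfies $p^2 = b^2 - h^2$ where $b\ge1$ and... no, need $p\ge 1$. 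Since $p = b\cos\alpha$ and the vertices are lattice points, the relevant quantity $c\cdot p = b^2 \cdot(\text{something})$... I think the honest write-up uses: $\alpha\le\pi/4 \iff \tan\alpha\le1 \iff h\le p$ (near segment) $\iff h^2\le p^2 = b^2-h^2 \iff 2h^2\le b^2$; and $2h^2 = 2/c^2 \le 2/a^2\wedge\text{...}$; since $b\ge1$ and $c\ge b$ (c longest), $2h^2 = 2/c^2\le 2/b^2\le 2 \cdot$ hmm need $\le b^2$, i.e. $2\le b^2 c^2$, i.e. $b^2c^2\ge2$, true unless $b=c=1$ (then whole triangle has all sides $\le1$, impossible for area $1/2$ with $\ge 3$ distinct lattice points... actually $a\le b=c=1$ and area $1/2$ forces the unit right isoceles triangle where $\alpha=\pi/4$ exactly). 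So the boundary case $b=c=1$ gives $\alpha=\beta=\pi/4$, and otherwise $b^2c^2\ge 2$ gives $2h^2\le b^2$ strictly, i.e. $\alpha<\pi/4$; symmetrically $\beta\le\pi/4$. Then $\gamma=\pi-\alpha-\beta\ge\pi-\pi/2=\pi/2$. The main obstacle is getting this altitude-foot bookkeeping right and handling the degenerate unit-triangle case, but it is entirely elementary once the area-$1/2$ and lattice constraints are in hand. I would present it in the altitude form: drop the perpendicular from the vertex opposite the longest side, use area $=1/2$ to get altitude $h=1/c\le 1$, use the law of cosines / lattice integrality to get that each sub-segment of the longest side has length at least $h$ (hence each small angle $\le\pi/4$), and conclude.
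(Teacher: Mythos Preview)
Your final altitude argument is correct and genuinely different from the paper's proof. Cleaned up: with $c$ the longest side, $h=1/c$ the altitude from the opposite vertex, and $p,q$ the two pieces into which the foot of that altitude cuts $c$ (the foot lies strictly inside the segment because the two base angles, each no larger than the third, are $<\pi/2$), you need $h\le p$ and $h\le q$, equivalently $b^2c^2\ge 2$ and $a^2c^2\ge 2$. Since $a^2,b^2,c^2$ are positive integers and $c$ is longest, it suffices that $c^2\ge 2$; and $c^2=1$ would force $a^2=b^2=1$, i.e.\ three lattice points pairwise at distance~$1$, which is impossible on $\mathbb{Z}^2$. One correction: your identification of the boundary case is off --- $b=c=1$ never occurs; equality $\alpha=\pi/4$ happens precisely when $b^2c^2=2$, i.e.\ $b=1$, $c=\sqrt{2}$, the unit right isoceles triangle.

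The paper takes a different, inductive route through the flip graph: the statement holds with equality for every triangle of a ground-state triangulation of a large square, and it is preserved under increasing flips, because replacing the short diagonal of a parallelogram by the long one strictly enlarges the largest angle in each new triangle while the remaining new angles arise by subdividing non-largest angles of the old triangles. Your argument is more self-contained --- it uses only area $1/2$ and integrality of squared side lengths, applies to any primitive lattice triangle, and does not invoke Proposition~\ref{pro:monotoneflips} or the flip structure --- whereas the paper's proof reuses machinery already in place and makes the role of the ground state and the dynamics explicit.
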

\begin{proof}
  Without loss of generality, assume that $\Lambda^0=[-n,n]^2\cap\mathbb{Z}^2$ and ``empty'' boundary condition (i.e., $\xi$ contains only the unit horizontal and 
  vertical edges at the boundary of $\Lambda^0$).
  The lemma will follow for arbitrary choices of $\Lambda^0$ and $\xi$ since we can choose $n$ large enough so that 
  $\Lambda^0\subseteq [-n,n]^2\cap\mathbb{Z}^2$, which gives that the set of triangulations of $\Lambda^0$ with any boundary condition $\xi$ is 
  contained in the set of triangulations of $[-n,n]^2\cap\mathbb{Z}^2$ with empty boundary condition.
  Now this property clearly holds (with equality) for any ground state triangulation of $[-n,n]^2\cap\mathbb{Z}^2$. 
  Proposition~\ref{pro:monotoneflips} implies that any triangulation $\sigma\in\Omega$ can be obtained by 
  a sequence of increasing flips from some ground state triangulation.
  Hence it suffices to show that the property in the statement of the lemma is preserved under increasing flips.
  Let $\Delta$ and $\Delta'$ be two triangles sharing an edge $e$ such that $e$ is increasing. 
  So $e$ is the smallest diagonal of the parallelogram $\Delta\cup\Delta'$.
  Let $\tilde\Delta$ and $\tilde\Delta'$ be the two new triangles obtained after flipping $e$.
  Note that the largest angles of $\tilde\Delta$ and $\tilde\Delta'$ are larger than the largest angles of 
  $\Delta$ and $\Delta'$. Moreover, the other angles of $\tilde\Delta$ and $\tilde\Delta'$ are obtained by splitting 
  angles $\theta,\theta'$ of $\Delta,\Delta'$, respectively, where $\theta,\theta'$ are not the largest angle of $\Delta,\Delta'$. 
\end{proof}

The next proposition gives an upper bound on the number of small edges intersecting a given ground state edge. 
For any triangulation $\sigma\in\Omega^\xi$, any $g\in\mathbb{G}^\xi$, and any $\ell\in\mathbb{R}^+$, let 
$$
   I_g(\sigma,\ell)=\{\sigma_x \colon \sigma_x \cap g \neq \emptyset \text{ and } |\sigma_x|\leq |g|+\ell\}
$$
be the set edges of $\sigma$ that intersect $g$ and have length at most $|g|+\ell$.
Note that $I_g$ is a set of \emph{edges} (rather than a set of midpoints), 
and the midpoints of the edges in $I_g(\sigma,\ell)$ are given by $I_g(\sigma,\ell)\cap \Lambda$.
A crucial property of the lemma below is that the bounds do not depend on $|g|$.
\begin{proposition}\label{pro:smalltrees}
   Given any boundary condition $\xi\in\Xi(\Lambda^0)$, any $g\in\mathbb{G}^\xi$ and any triangulation $\sigma\in \Omega^\xi$, all the following statements hold:
   \begin{enumerate}
      \item\label{it:larger} If an edge $\sigma_x$ of $\sigma$ intersects $g$, then 
         $|\sigma_x|\geq |g|$, with strict inequality when the midpoint of $g$ is not $x$.
      \item\label{it:thistriang} For any $\ell\geq 1$, the midpoints of $I_g(\sigma,\ell)$ are contained in the ball of radius $2\ell$ centered at the midpoint of $g$.
      \item\label{it:alltriang} There exists a universal $c>0$ such that for any $\ell\geq 1$ we have 
         that the cardinality of $I_g(\sigma,\ell)$ is at most $c\ell^2$ 
         and the cardinality of $\bigcup\nolimits_{\sigma\in\Omega^\xi} I_g(\sigma,\ell)$ is at most $c\ell^4$.
   \end{enumerate}
\end{proposition}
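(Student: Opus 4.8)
\textbf{Proof proposal for Proposition~\ref{pro:smalltrees}.}

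The plan is to prove the three statements more or less in order, using the geometric facts already established and a volume/area-packing argument for the counting bounds.

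For part~\ref{it:larger}, I would argue that any edge intersecting $g$ must have $\ell_1$ length at least $|g|$. The point is that $g$ is a ground state edge of its midpoint, so it is the shortest edge compatible with $\xi$ at that midpoint; any edge $\sigma_x$ that crosses $g$ ``separates'' the two triangles of the minimal parallelogram of $g$ in a way that forces $\sigma_x$ to span at least the same $\ell_1$ extent. More carefully: if $\sigma_x$ intersects the open segment $g$, then the midpoint $x$ of $\sigma_x$ cannot equal the midpoint of $g$ unless $\sigma_x = g$ (two distinct edges of the same midpoint that is a ground state do not cross each other, since one would be an ancestor of the other in the tree induced by $E_x^\xi$ and Proposition~\ref{pro:monotonicity}-type monotonicity forbids a crossing with something that doesn't cross $g$ --- actually here the cleanest route is: a longer edge of the same midpoint contains $g$ in the interior of its minimal parallelogram region, so it does not cross $g$ transversally). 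Granting $x$ is not the midpoint of $g$, the transversal crossing together with the fact that every lattice triangle has area $1/2$ (so $g$ lies ``flat'' and any crossing edge must exit the unit-width strip around $g$) gives $|\sigma_x| > |g|$ strictly. I would phrase this using $S(g)$: a crossing edge must intersect every square in $S(g)$ on one side, hence $S(\sigma_x) \supseteq S(g)$ up to the endpoints, and strict containment when $x \neq$ midpoint of $g$; then $|e| \geq |S(e)|/\text{const}$ type bounds close it. This is the routine-but-fiddly geometric step.

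For part~\ref{it:thistriang}: if $\sigma_x \in I_g(\sigma,\ell)$ then $\sigma_x$ intersects $g$ and $|\sigma_x| \leq |g| + \ell$. By part~\ref{it:larger}, $|\sigma_x| \geq |g|$, and by Lemma~\ref{lem:2ndangle} every lattice triangle is ``fat'' in the sense that its two non-obtuse angles are at most $\pi/4$ --- equivalently, no edge is much longer than the ones it shares a triangle with in a controlled way. The key estimate: since $\sigma_x$ and $g$ intersect, their midpoints are within distance $\tfrac12(|\sigma_x|_2 + |g|_2)$ in $\ell_2$; but $|\sigma_x|_2 \leq |\sigma_x| \leq |g| + \ell$ and $|g|_2 \leq |g|$, which is not yet bounded independently of $|g|$. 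So I need the sharper observation that an edge crossing $g$ with length only slightly more than $|g|$ must be nearly parallel to $g$ and hence its midpoint is close to $g$: indeed if $\sigma_x$ crosses $g$ then the portion of $\sigma_x$ on each side of $g$ has $\ell_1$ length at least the ``overhang,'' and the overhang on at least one side is at least $\tfrac12(|\sigma_x| - |g|)$ only if $\sigma_x$ extends beyond $g$; combined with the unit-strip geometry this forces the midpoint of $\sigma_x$ to lie within $\ell_1$ distance $O(\ell)$ of $g$, hence within $\ell_2$ distance $2\ell$ of the midpoint of $g$. I would make the constant $2$ honest by tracking the strip-width $\sqrt2/2$ and the triangle-area constraint exactly as in the proof of Proposition~\ref{pro:tautree}.

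For part~\ref{it:alltriang}: the first bound is immediate from part~\ref{it:thistriang} by a counting argument --- the midpoints of edges in $I_g(\sigma,\ell)$ lie in a ball of radius $2\ell$, which contains $O(\ell^2)$ points of $\Lambda$, and each midpoint contributes exactly one edge of $\sigma$; so $|I_g(\sigma,\ell)| \leq c\ell^2$. For the second bound, $\bigcup_{\sigma \in \Omega^\xi} I_g(\sigma,\ell) \subseteq \{e \in E^\xi : e \cap g \neq \emptyset,\ |e| \leq |g| + \ell,\ \text{midpoint of } e \text{ in ball of radius } 2\ell\}$; for each of the $O(\ell^2)$ midpoints $x$ in that ball, the number of distinct edges $e \in E_x^\xi$ with $|e| \leq |g| + \ell$ and crossing $g$ is at most $O(\ell^2)$, since such an edge is determined by its two endpoints, both of which lie within $\ell_\infty$-distance $O(\ell)$ of $x$ (an edge of $\ell_1$-length at most $|g| + \ell$ through midpoint $x$, using part~\ref{it:larger} to bound $|g| \leq |e| \leq |g| + \ell$ so the endpoints are within $O(\ell)$ of $x$ --- wait, $|g|$ itself is unbounded, so instead I use that the endpoints of $e$ are within $\ell_1$-distance $\tfrac12|e|$ of $x$, and the endpoints of $g$ are within $\tfrac12|g|$ of $g$'s midpoint, and $e$ crosses $g$, so the relevant bounding box has side $O(\ell)$ by the near-parallel argument of part~\ref{it:thistriang}). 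Multiplying the $O(\ell^2)$ choices of midpoint by the $O(\ell^2)$ choices of edge per midpoint gives $O(\ell^4)$. The main obstacle is part~\ref{it:larger} and the near-parallel refinement in part~\ref{it:thistriang}: making rigorous that a short crossing edge stays inside a bounded strip around $g$, uniformly in $|g|$, is where the real geometric content lies; once that strip confinement is in hand, everything else is lattice point counting.
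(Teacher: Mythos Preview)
Your high-level plan is right, and you correctly isolate the crux: getting bounds that are \emph{uniform in $|g|$} when $g$ may be arbitrarily long. But you are missing the structural tool that makes this work, and without it your arguments for parts~\ref{it:larger} and~\ref{it:thistriang} do not close.

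The gap is this. When $|g|>2$ (so $g$ is not a unit horizontal/vertical/diagonal), the only reason $g$ can be a ground state edge is that some constraint edge $e\in\xi$ forces it: $g\subset S(e)$. The paper's proof splits into the easy case $|g|\leq 2$ and this constrained case, and in the latter uses the \emph{excluded region} of $g$ (the two infinite strips along the sides of the minimal parallelogram, which contain no lattice points) together with $e$. The constraint edge $e$ has its endpoints in the two wedge-shaped regions $X,Y$ beyond the endpoints of $g$; any edge $\sigma_x$ that intersects $g$ but not $e$ is then forced to have its endpoints in $X$ and $Y$ as well. That immediately gives $|\sigma_x|>|g|$ for part~\ref{it:larger}, and for part~\ref{it:thistriang} one intersects $X\cup Y$ with the slab $\{\text{distance}\leq \ell \text{ from the endpoints of }g\}$ to get two small triangles $\Delta_X,\Delta_Y$, each contained (via Lemma~\ref{lem:2ndangle}) in an isosceles right triangle of hypotenuse $2\ell$; the midpoint of any $\sigma_x\in I_g(\sigma,\ell)$ then lands in an $\ell\times 2\ell$ rectangle about the midpoint of $g$. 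Part~\ref{it:alltriang} follows by counting lattice points in $\Delta_X,\Delta_Y$ (area $\leq \ell^2$ each), giving $O(\ell^2)$ endpoints on each side and hence $O(\ell^4)$ possible edges.

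Your ``near-parallel / overhang / $S(g)$'' line of reasoning never invokes $e$, and without it I do not see how to rule out, say, a short edge crossing $g$ near one of its endpoints (which would violate part~\ref{it:larger}) or a crossing edge of length $|g|+\ell$ whose midpoint is far from the midpoint of $g$. You even flag the problem yourself (``$|g|$ itself is unbounded''), but the fix you sketch does not supply the missing constraint. The constraint edge is precisely what pins the geometry down independently of $|g|$.
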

\begin{proof}
   First we establish the lemma when $g$ is either a unit horizontal, a unit vertical or a unit diagonal. 
   Then~\ref{it:larger} holds trivially since any edge with the same midpoint as $g$ has length at least $|g|$, 
   and an edge with midpoint different than $g$ can only 
   intersect $g$ if its length is larger than $\sqrt{2}\geq|g|$. 
   Parts~\ref{it:thistriang} and~\ref{it:alltriang} 
   follows since any edge of length at most $\ell$ that intersects 
   $g$ must be completely contained inside a ball of radius $\frac{|g|}{2}+\ell\leq \frac{\sqrt{2}}{2}+\ell$ centered at the midpoint of $g$.
   
   Now let $g$ be a ground state edge that is not a unit vertical, unit horizontal or unit diagonal. 
   This means that $g$ is constrained by a constraint edge $e\in\xi$; that is, $g \subset S(e)$.
   The proof uses the concept of excluded regions introduced in~\cite{CMSS15}.
   The excluded region of an edge $g$ is obtained by taking its minimal parallelogram
   and considering the infinite strips between both pairs of 
   opposite sides of the parallelogram, as illustrated by the shaded area in Figure~\ref{fig:excludedregion}. 
   The interior of the excluded region contains no point of $\mathbb{Z}^2$, cf.~\cite[Proposition~3.3]{CMSS15}. 
   The endpoints of the constraint edge $e$ are in regions $X$ and $Y$, 
   which are the two components of the complement of the excluded region of $g$ that contain
   an endpoint of $g$ in their boundary, 
   as illustrated in Figure~\ref{fig:excludedregion}. 
   \begin{figure}[htbp]
      \begin{center}
         \includegraphics[scale=.9]{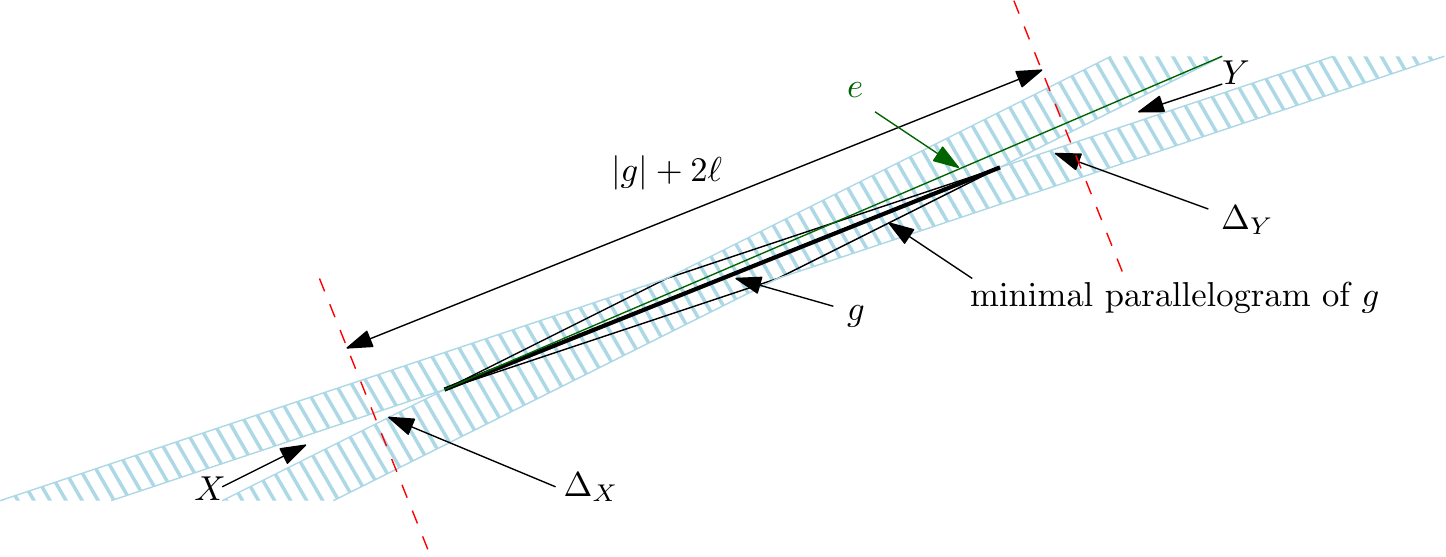}
      \end{center}\vspace{-.5cm}
      \caption{The excluded region (blue shaded area) of edge $g$.}
      \label{fig:excludedregion}
   \end{figure}
   Thus, all edges 
   that intersect $g$ must also have endpoint in $X$ and $Y$, not to intersect $e$. 
   This gives that any edge that intersects $g$ must have length larger than $|g|$, establishing part~\ref{it:larger}.
   
   Now we establish part~\ref{it:thistriang}.
   All edges in $\bigcup\nolimits_\sigma I_g(\sigma,\ell)$ must have endpoints inside the intersection of 
   $X\cup Y$ with the strip between the red dashed lines in Figure~\ref{fig:excludedregion}, which are the lines 
   perpendicular to $g$ and at distance $\ell$ from the endpoints of $g$. 
   The intersection of $X\cup Y$ and this strip forms two triangles $\Delta_X\subset X$ and $\Delta_Y\subset Y$.
   Since the triangles in the minimum parallelogram of $g$ have their two smallest angles of size at most $\pi/4$ (cf.\ Lemma~\ref{lem:2ndangle}), 
   the angle of $\Delta_X$ at the endpoint of $g$ is at most $\pi/2$.
   Therefore, $\Delta_X$ is contained 
   inside an isosceles right triangle whose right angle is at the vertex of $\Delta_X$ which is an endpoint of $g$, and 
   whose hypotenuse is in the red dashed line intersecting $\Delta_X$. The length of the hypotenuse of this isosceles right triangle is $2\ell$. 
   Therefore, the midpoints of an edge with endpoints in $\Delta_X$ and $\Delta_Y$ is contained inside a $\ell \times 2\ell$ rectangle 
   centered at the midpoint of $g$ and whose smallest edges are parallel to $g$. This rectangle is contained inside a ball of radius $2\ell$
   centered at the midpoint of $g$.
   
   Part~\ref{it:alltriang} follows since part~\ref{it:thistriang} implies that $|I_g(\sigma,\ell)|\leq c\ell^2$ for some universal constant $c>0$.
   Also, since the area of each $\Delta_X$ and $\Delta_Y$ is at most $\ell^2$, the number of points of $\Lambda^0$ inside each of $\Delta_X$ 
   and $\Delta_Y$ is at most $c_1 \ell^2$ for some positive constant $c_1$, which gives that 
   $\big|\bigcup\nolimits_{\sigma\in\Omega^\xi} I_g(\sigma,\ell)\big| \leq c_1\ell^4$.
\end{proof}

The geometric property below was the main inspiration for constructing the Lyapunov function~\eqref{eq:lyapunov}.
Roughly speaking, if an increasing edge $\sigma_x$ of $\sigma$ that is not in ground state intersects a ground state edge $g$, 
then the decreasing edges in the same tree as $\sigma_x$ (i.e., the edges $\sigma_y$ for all $y\in \tau^{-1}(\sigma,x)$) also intersect $g$.
Hence each increasing edge can be mapped to a decreasing edge of larger 
length. Since only flips of increasing edges can increase the value of the Lyapunov function, we are able to show that when taking the expectation over 
all possible flips, each flip that increases the Lyapunov function is ``compensated'' by flips that decrease the Lyapunov function.
Another fundamental property in the lemma below is that if $\sigma_x$ itself does not intersect $g$ but the edge obtained by flipping $\sigma_x$ does, 
then $g$ is also intersected by at least one of the decreasing edges in the same tree as $\sigma_x$ (i.e., there exists $y\in\tau^{-1}(\sigma,x)$ such 
that $\sigma_y$ intersects $g$).
\begin{proposition}\label{pro:squaregeom}
   Fix any boundary condition $\xi\in\Xi(\Lambda^0)$, any triangulation $\sigma\in\Omega^\xi$, any midpoint $x\in\Lambda$, and any ground state edge $g\in\mathbb{G}^\xi$. 
   Let $\{z_1,z_2\}=\tau^{-1}(\sigma,x)$; if $\tau^{-1}(\sigma,x)$ has cardinality one, set $z_1=z_2$.
   Then either $\sigma_x=g$ or we have 
   \begin{equation}
      \ind{\sigma_x \cap g \neq \emptyset} \leq \ind{\sigma_{z_1} \cap g \neq \emptyset} \ind{\sigma_{z_2} \cap g \neq \emptyset}.
      \label{eq:squaregeom1}
   \end{equation}
   Moreover, if $x\in \finc(\sigma)$ and $\eta$ is the triangulation obtained by flipping $\sigma_x$ in $\sigma$, we have that 
   \begin{equation}
      \ind{\sigma_x \cap g = \emptyset}\ind{\eta_x \cap g \neq \emptyset} \leq \ind{\sigma_{z_1} \cap g \neq \emptyset}+\ind{\sigma_{z_2} \cap g \neq \emptyset}.
      \label{eq:squaregeom2}
   \end{equation}
\end{proposition}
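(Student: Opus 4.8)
The plan is to analyze the geometry of the tree of influence near its root, using the square-decomposition established in Proposition~\ref{pro:tautree}. First I would dispose of the trivial cases: if $\sigma_x$ does not intersect $g$ there is nothing to prove for~\eqref{eq:squaregeom1}, and if $\sigma_x=g$ we are done. So assume $\sigma_x\neq g$ and $\sigma_x\cap g\neq\emptyset$. By Proposition~\ref{pro:twotrees}\ref{it:card}, $\tau^{-1}(\sigma,x)$ has cardinality one or two; call its elements $z_1,z_2$ (possibly equal). Each $z_i$ is a root in $\fdec(\sigma)\cup\fdiag(\sigma)$, and by Lemma~\ref{lem:childparentdefine} the path from $x$ up to $z_i$ in the tree $\tau(\sigma,z_i)$ is a sequence of midpoints whose associated edges strictly increase in $\ell_1$ length toward the root. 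By Proposition~\ref{pro:tautree}\ref{it:treesquare} applied along this path, $S(\sigma_x)\subseteq S(\sigma_{z_i})$, so in particular $\sigma_{z_i}$ lives in a region of squares containing all the squares met by $\sigma_x$.

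The core geometric claim is: if $\sigma_x$ crosses $g$ and $\sigma_x$ is a non-root edge with ancestor $\sigma_{z_i}$ along the tree, then $\sigma_{z_i}$ also crosses $g$. The way I would get this is to invoke the monotonicity already packaged in Proposition~\ref{pro:monotonicity}: walking from $x$ up the tree one parent at a time, each step replaces the current edge by its parent in $E_{(\cdot)}^\xi$-posets only when the two midpoints coincide — but in general the parent in the tree has a \emph{different} midpoint. So monotonicity as stated does not directly apply across midpoints. Instead I would argue directly from the excluded-region / minimal-parallelogram picture used in the proof of Proposition~\ref{pro:smalltrees}: when $\sigma_{z_i}$ is the parent of $\sigma_x$ in the tree, they share a triangle in which $\sigma_{z_i}$ is the longest edge, so $\sigma_x$ lies inside the minimal parallelogram of $\sigma_{z_i}$ (together with the opposite sides of that triangle). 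Since $g$ is a ground state edge, any edge crossing $g$ must have both endpoints outside the excluded region of $g$ on the two sides $X,Y$, and crossing $g$ is equivalent to "separating" the two endpoints of $g$. Because $\sigma_x$ sits inside the minimal parallelogram of $\sigma_{z_i}$ and $\sigma_x$ separates the endpoints of $g$, the longer diagonal $\sigma_{z_i}$ of that parallelogram must separate them as well — this is the planar-topology heart of the argument. Iterating along the path from $x$ to $z_i$ gives $\sigma_{z_i}\cap g\neq\emptyset$ for $i=1,2$, which is exactly~\eqref{eq:squaregeom1}.

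For~\eqref{eq:squaregeom2}, suppose $x\in\finc(\sigma)$, so by Proposition~\ref{pro:twotrees}\ref{it:inc} the set $\tau^{-1}(\sigma,x)=\{z_1,z_2\}$ has two elements with $z_1,z_2$ the two distinct tree-parents of $x$, living in the two triangles $\Delta_1,\Delta_2$ of $\sigma$ sharing the edge $\sigma_x$. Flipping $\sigma_x$ produces $\eta_x$, the other diagonal of the parallelogram $\Delta_1\cup\Delta_2$. Assume $\sigma_x\cap g=\emptyset$ but $\eta_x\cap g\neq\emptyset$. Then $g$ enters the parallelogram $\Delta_1\cup\Delta_2$ — it crosses $\eta_x$ but not $\sigma_x$ — which forces $g$ to exit through one of the four sides of the parallelogram. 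Those four sides are exactly the edges $\sigma_{z_1}$ (the longest edge of $\Delta_1$, hence one side) together with the second side of $\Delta_1$, and similarly for $\Delta_2$; but the second side of each triangle is shorter than $\sigma_x$, and since $g$ must be shorter than any edge crossing it while also being shorter than $\sigma_x$... here I would instead argue more cleanly: a segment crossing $\eta_x$ but not $\sigma_x$ inside the parallelogram must cross one of the two sides belonging to $\Delta_i$ for the appropriate $i$, and since $\sigma_{z_i}$ is the \emph{longest} edge of $\Delta_i$ while $g$ is ground state, $g$ cannot cross the shorter side without crossing $g$-excluded regions, so it crosses $\sigma_{z_i}$. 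Hence at least one of $\sigma_{z_1}\cap g$, $\sigma_{z_2}\cap g$ is nonempty, giving~\eqref{eq:squaregeom2}.

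I expect the main obstacle to be making rigorous the "longer diagonal must also separate the endpoints of $g$" step — i.e., controlling the planar combinatorics of how $g$ threads through a minimal parallelogram, ruling out the possibility that $g$ enters and leaves through a single short side while avoiding the long diagonal. The clean tool is that $g$ is a ground state edge: its excluded region contains no lattice points, so $g$ cannot be "short-circuited" past the long diagonal without its endpoints being forced into the excluded region, contradicting ground-stateness. I would formalize this with the excluded-region argument from the proof of Proposition~\ref{pro:smalltrees}, combined with Proposition~\ref{pro:tautree}\ref{it:treesquare} to keep everything inside $S(\sigma_{z_i})$; the induction along the tree path from $x$ to $z_i$ then runs routinely since each step is a single parent-child pair sharing a triangle.
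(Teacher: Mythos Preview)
Your approach to~\eqref{eq:squaregeom1} has a genuine gap. You propose an induction along the tree path from $x$ to the root $z_i$, with per-step claim: if a child edge $\sigma_x$ crosses $g$, then its tree-parent edge $\sigma_y$ (the longest side of the shared triangle) also crosses $g$. This is \emph{not} true in general: $g$ can enter the triangle through $\sigma_x$ and exit through the sibling side $\sigma_w$, never meeting $\sigma_y$. Your proposed fix via excluded regions does not obviously rule this out --- nothing about $g$ being ground state prevents it from threading across the two short sides of a thin triangle while missing the long side. The paper avoids this entirely with a different, global argument: assuming $\sigma_{z_1}\cap g=\emptyset$, it builds an auxiliary triangulation $\zeta$ by deleting from $\sigma$ every edge crossing $g$ and passing to the ground state of the remaining constraint (so $\zeta$ contains both $g$ and $\sigma_{z_1}$, and $\zeta\preceq\sigma$). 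The key observation you are missing is that \emph{$\sigma_x$ is already in ground state given the single constraint $\sigma_{z_1}$} (this is what the tree-of-influence structure buys), so $|\zeta_x|\geq|\sigma_x|$; but $\zeta_x$ must differ from $\sigma_x$ (since $\sigma_x$ crosses $g\in\zeta$) and $\zeta_x\preceq\sigma_x$ forces $|\zeta_x|<|\sigma_x|$, a contradiction.

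For~\eqref{eq:squaregeom2} you have conflated two different objects: $z_1,z_2\in\tau^{-1}(\sigma,x)$ are the \emph{roots} of the trees containing $x$, not the immediate parents of $x$, and in particular $\sigma_{z_1}$ is generally \emph{not} a side of the parallelogram $\Delta_1\cup\Delta_2$. The paper's argument is the one you almost wrote but then abandoned: since $\eta_x$ crosses $g$ but $\sigma_x$ does not, $g$ must cross one of the four boundary sides of the parallelogram, say $\sigma_{w_1}$ with $w_1\in\tau(\sigma,z_1)$; now apply part~\eqref{eq:squaregeom1} with $w_1$ in place of $x$ to conclude $\sigma_{z_1}\cap g\neq\emptyset$. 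Your attempt to argue directly that $g$ cannot cross the ``shorter'' side of $\Delta_i$ is both unnecessary and unjustified.
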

\begin{proof}
   We establish~\eqref{eq:squaregeom1} by contradiction. 
   Assume that $x\not\in \fdec(\sigma)\cup \fdiag(\sigma)$, otherwise $x=z_1=z_2$ and the lemma follows trivially.
   Assume also that $\sigma_x\neq g$, and that $\sigma_x$ intersects $g$ but $\sigma_{z_1}$ does not. 
   This implies that 
   there exists a triangulation that contains $g$ and $\sigma_{z_1}$. 
   Take $\zeta$ to be one such triangulation as following. 
   Remove from $\sigma$ all edges intersecting $g$, regard the edges of $\sigma$ that were not removed as 
   a new boundary condition, and define $\zeta$ to be a ground state triangulation containing $g$ given this new boundary condition.  
   Since a ground state triangulation (given any boundary condition) can be obtained by 
   the union of ground state edges by Lemma~\ref{lem:groundstate}, we obtain that $\zeta\prec \sigma$.
   Since $\sigma_x$ intersects $g$ and $\sigma_x\neq g$, we have $|\zeta_x|<|\sigma_x|$. 
   Also, since in $\sigma$ we have that $\sigma_{z_1}$ is the root of a tree containing $\sigma_x$, 
   we have that $\sigma_{x}$ is in ground state given $\sigma_{z_1}$. 
   Since $\zeta_{z_1}=\sigma_{z_1}$, then $|\zeta_{x}|\geq|\sigma_{x}|$, establishing a contradiction.
   The same reasoning applies to $z_2$.
   
   In order to establish~\eqref{eq:squaregeom2}, 
   we assume that $\sigma_x$ does not intersect $g$ but $\eta_x$ does, 
   and show that this implies that either $\sigma_{z_1}$ or $\sigma_{z_2}$ must intersect $g$.
   Let $w_1,y_1\in\tau(\sigma,z_1)$ and $w_2,y_2\in\tau(\sigma,z_2)$ be such that $\sigma_x\sigma_{w_1}\sigma_{y_1}$ and 
   $\sigma_x\sigma_{w_2}\sigma_{y_2}$ are triangles in $\sigma$. 
   Note that if $p_1$ is the common endpoint of $\sigma_{w_1}$ and $\sigma_{y_1}$, and $p_2$ is the common endpoint of 
   $\sigma_{w_2}$ and $\sigma_{y_2}$, then $\eta_x$ has endpoints $p_1,p_2$.
   Since $\eta_x$ intersects $g$ and $\eta_x\neq g$,
   it follows that $g$ intersects at least one of $\sigma_{w_1}$, $\sigma_{y_1}$, $\sigma_{w_2}$ and $\sigma_{y_2}$.
   Assume that $g$ intersects $\sigma_{w_1}$. 
   %From Proposition~\ref{pro:partition}, $w_1$ must belong to either $\tau(\sigma,z_1)$ or $\tau(\sigma,z_2)$; assume $\sigma_{w_1}$ belongs to $\tau(\sigma,z_1)$. 
   Applying the first part of the lemma with $x=w_1$ yields that $g$ intersects $\sigma_{z_1}$.
\end{proof}

\begin{proposition}\label{pro:producegroundstate}
   Fix any boundary condition $\xi\in\Xi(\Lambda^0)$, any triangulation $\sigma\in\Omega^\xi$ and any ground state edge $g\in\mathbb{G}^\xi$.
   There is a sequence of non-increasing flips from $\sigma$
   that produces a triangulation containing $g$. 
   Moreover, if $\Gamma\subset \Lambda$ are the midpoints of the edges of $\sigma$ that intersect $g$, 
   then in this sequence only the edges of midpoint in $\Gamma$ are flipped.
\end{proposition}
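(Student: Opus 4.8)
The plan is to iteratively flip the largest edge of $\sigma$ that intersects $g$, and argue that such a flip is always available and decreasing until no edge other than $g$ itself intersects $g$. Concretely, let $\Gamma=\Gamma(\sigma)$ be the set of midpoints of edges of $\sigma$ intersecting $g$, and consider the quantity $\Lambda_g(\sigma)=\sum_{x\in\Gamma}|\sigma_x|$, the total $\ell_1$-length of edges crossing $g$. If $\Gamma=\{$midpoint of $g\}$ with $\sigma$ already containing $g$, we are done. Otherwise, pick $x\in\Gamma$ with $\sigma_x\neq g$ of maximal length; I claim $\sigma_x$ is decreasing, i.e.\ it is the largest edge in both triangles of $\sigma$ containing it. Indeed, by Proposition~\ref{pro:smalltrees}\ref{it:larger} any edge intersecting $g$ has length $\geq|g|$ with strict inequality unless it equals $g$ (when $g$ has midpoint $x$); and if $\sigma_x$ were not the largest edge in one of its triangles, the largest edge $\sigma_y$ of that triangle would have $|\sigma_y|>|\sigma_x|$. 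But then $\sigma_y$ would be a better candidate unless $\sigma_y$ does not intersect $g$ — so the content of the claim is really that some larger edge in the same triangle must also intersect $g$, which is exactly the type of statement Proposition~\ref{pro:squaregeom} (the first part, inequality~\eqref{eq:squaregeom1}) provides: if $\sigma_x\neq g$ intersects $g$ then the root(s) $\sigma_{z_1},\sigma_{z_2}$ of the tree(s) through $x$ also intersect $g$, and these are larger; maximality of $|\sigma_x|$ then forces $x$ itself to be a root, i.e.\ $x\in\fdec(\sigma)\cup\fdiag(\sigma)$.

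Next I must rule out $x\in\fdiag(\sigma)$: if $\sigma_x$ is a unit diagonal intersecting $g$, then $g$ has $\ell_2$-length $\leq\sqrt2/2<|g|_{\ell_2}$-bound... more carefully, by Proposition~\ref{pro:smalltrees}\ref{it:larger}, $|g|\leq|\sigma_x|$ and since a unit diagonal has $\ell_1$-length $2$, either $g=\sigma_x$ (done) or $|g|<2$, forcing $g$ to be a unit horizontal, unit vertical, or a unit diagonal with the same midpoint as $\sigma_x$; in all these sub-cases one checks directly (as in the base case of the proof of Proposition~\ref{pro:smalltrees}) that no unit diagonal distinct from $g$ can cross $g$, so $x\notin\fdiag(\sigma)$ unless already $\sigma_x=g$. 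Hence $\sigma_x$ is a genuine decreasing flippable edge: flipping it yields $\sigma'$ with $|\sigma'_x|\leq|\sigma_x|-2$, which is a non-increasing flip. I then need $\Gamma(\sigma')\subseteq\Gamma(\sigma)$: the only edge that changed is $\sigma_x\mapsto\sigma'_x$, and I must check $\sigma'_x$ still intersects $g$ or has smaller midpoint-set, but in fact it suffices that $\Lambda_g$ strictly decreases — after the flip, the new edge $\sigma'_x$ either still crosses $g$ (with strictly smaller length, so $\Lambda_g$ drops by $\geq 2$) or does not cross $g$ (so it drops out of $\Gamma$, and since $\sigma'_x$ is the only new edge, $\Gamma$ can only shrink, and $\Lambda_g$ drops by $|\sigma_x|\geq 2$). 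Either way $\Lambda_g(\sigma')\leq\Lambda_g(\sigma)-2$ and $\Gamma(\sigma')\subseteq\Gamma(\sigma)$.

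Since $\Lambda_g$ is a nonnegative integer-valued (actually $\ell_1$-lengths are integers or at worst differ by integers) quantity bounded below by $|g|$, the process terminates after finitely many non-increasing flips, and at termination no edge of the current triangulation $\zeta$ other than possibly $g$ itself crosses $g$; but an edge crossing $g$ always exists (some edge of $\zeta$ must cross the interior of $g$ unless $g\in\zeta$), so $g\in\zeta$. Throughout, every flipped midpoint lies in $\Gamma(\sigma^{(i)})\subseteq\Gamma(\sigma)=\Gamma$, giving the "moreover" clause. The main obstacle is the claim that the maximal-length crossing edge is decreasing; this is where Proposition~\ref{pro:squaregeom}\eqref{eq:squaregeom1} does the real work — one has to be a little careful that the roots $z_1,z_2$ supplied there are genuinely of length $>|\sigma_x|$ (true because $\sigma_x$ is not a root, so it has a strict ancestor in each tree, and ancestors have strictly larger length by the tree construction), so that maximality is actually contradicted unless $x$ is itself a root.
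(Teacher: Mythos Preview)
Your overall strategy---repeatedly flip the longest edge crossing $g$, using Proposition~\ref{pro:squaregeom} to certify that this edge is a root of some tree of influence and hence decreasing---is sound and in fact somewhat cleaner than the paper's route. The paper instead follows the specific flip sequence of Proposition~\ref{pro:monotoneflips} targeted at the midpoint of $g$, and then invokes Proposition~\ref{pro:squaregeom} only to check that each flipped edge happens to cross $g$; it then needs a separate argument (flipping towards the four unit edges of the minimal parallelogram) to set up the final length-preserving flip. Your argument works directly with the set of crossing edges and the potential $\Lambda_g$, which is arguably more transparent.

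However, there is a genuine gap in your treatment of the case $x\in\fdiag(\sigma)$. Your dichotomy ``either $g=\sigma_x$ or $|g|<2$'' is false: Proposition~\ref{pro:smalltrees}\ref{it:larger} gives $|g|\le|\sigma_x|=2$, and equality $|g|=2$ is perfectly possible---it forces the midpoint of $g$ to be $x$, but not $g=\sigma_x$. In that case $g$ is itself a unit diagonal and $\sigma_x$ is the \emph{opposite} unit diagonal $g'$, which certainly crosses $g$. This is precisely the scenario of Remark~\ref{rem:pro:producegroundstate}, and it is the (only) reason the statement says ``non-increasing'' rather than ``decreasing''. The fix is immediate: having established $x\in\fdiag(\sigma)$, both triangles on $g'$ consist of unit horizontal/vertical edges, so $g'$ is flippable to $g$ via a single length-preserving flip and you are done. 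Note $g'\notin\xi$ since $g\in\mathbb{G}^\xi$ would otherwise be incompatible with $\xi$. Your strict-decrease termination argument also needs this patch: $\Lambda_g$ strictly decreases at every step \emph{except possibly the last one}, which is this length-preserving flip.

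One further small point: you should remark that the maximal crossing edge, once shown to lie in $\fdec(\sigma)$, is not a constraint edge (the definition of $\fdec$ allows constraint edges). This is immediate because any edge of $\xi$ is compatible with $g\in\mathbb{G}^\xi$ and so cannot cross it.
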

\begin{remark}\label{rem:pro:producegroundstate}
   In the sequence of flips above, all flips are (strictly) decreasing 
   unless when $g \not\prec \sigma_x$, where $x$ is the midpoint of $g$. 
   In this case, $g$ is a unit diagonal and the opposite unit diagonal $g'$ of midpoint $x$ also belongs to $\mathbb{G}^\xi$. 
   Then the sequence of flips consists of a sequence of decreasing flips that culminates in a triangulation containing $g'$ and its minimal parallelogram,  
   and then a length-preserving flip of $g'$ to obtain $g$. 
\end{remark}
\begin{proof}[Proof of Proposition~\ref{pro:producegroundstate}]
   Assume for the moment that $g\prec \sigma_x$, where $x$ is the midpoint of $g$. 
   We perform the same sequence of triangulations 
   $\sigma=\eta^0,\eta^1,\eta^2,\ldots$ as in the proof of Proposition~\ref{pro:monotoneflips} with $e=g$.
   In this sequence, $\eta^i$ is obtained from $\eta^{i-1}$ by performing a decreasing flip of an edge of midpoint in $\tau^{-1}(\eta^{i-1},x)$.
   Since $\eta^{i-1}_x$ intersects $g$ for all $i$, Proposition~\ref{pro:squaregeom} gives that all edges of midpoint in 
   $\tau^{-1}(\eta^{i-1},x)$ also intersect $g$. Therefore, all flipped edges in this sequence must intersect $g$.
   
   When $g\not\prec \sigma_x$, we have that $g$ is a unit diagonal and the opposite unit diagonal $g'$ of midpoint $x$ also belongs to
   $\mathbb{G}^\xi$; otherwise for all $f\in E_x^\xi$ we have $g\preceq f$. 
   This gives that $g'\preceq \sigma_x$.
   From the previous case we obtain a sequence of triangulations $\sigma=\eta^0,\eta^1,\eta^2,\ldots,\eta^k$
   such that $\eta^k_x=g'$ and, for all $i$, 
   $\eta^i$ is obtained by performing a decreasing flip to an edge of midpoint in $\tau^{-1}(\eta^{i-1},x)$. Since $\eta^{i-1}_x$ intersects $g$, 
   we have that only edges intersecting $g$ are flipped in this sequence. 
   Now we claim that we can perform a sequence of decreasing flips from $\eta^k$ to obtain triangulations $\eta^{k+1},\eta^{k+2},\ldots, \eta^\ell$ 
   such that $\eta^\ell$ contains $g'$ and its 
   minimal parallelogram. Using this the lemma follows since we can perform a length-preserving flip of $g'$ in $\eta^\ell$, which produces $g$. 
   To establish the claim, 
   let $w_1,w_2,w_3,w_4$ be the midpoints of the edges in the minimal parallelogram of $g$. 
   Note that there are exactly four edges $h_1,h_2,h_3,h_4\in\mathbb{G}^\xi$ (which are unit horizontal and vertical edges) 
   such that $w_i$ is the midpoint of $h_i$ for all $i\in\{1,2,3,4\}$.
   The sequence of flips is obtained by applying the previous case for each $h_i$; i.e., 
   at each step we perform a decreasing flip to an edge of midpoint in $\tau^{-1}(\cdot, w_i)$ for some $i\in\{1,2,3,4\}$ until obtaining 
   a triangulation contaning $h_1,h_2,h_3,$ and $h_4$. It remains to show that in this sequence we only flip edges that intersect $g$.
   Note that any edge of a triangulation that intersects $h_i$ for some $i$ must 
   intersect either $g$ or $g'$.    
   Since $g'$ belongs to all 
   triangulations $\eta^{k+1},\eta^{k+2},\ldots,\eta^\ell$ and any edge flipped in this sequence intersects $h_i$ for some $i$, we have that 
   all flipped edges intersect $g$, and the claim is established. 
\end{proof}

%############################################################################################
%############################################################################################
%############################################################################################
\section{Proof of the Lyapunov function (Theorem~\ref{thm:lyapunov})}\label{sec:proof}
During the proof of Theorem~\ref{thm:lyapunov}, we will need to treat small edges (edges smaller than some constant $C$) separately. 
We fix $\alpha\in(1,\lambda^{-1/2})$ and set $C>1$ large enough so that the following two conditions hold:
\begin{equation}
  \alpha^{-C/4}\leq \frac{1}{10}
  \quad\text{and}\quad
  C\alpha^{-C/2} \leq \frac{\alpha^2-1}{10\alpha^2}.
  \label{eq:condclarge}
\end{equation}
Also, we will need to handle ``small trees'' separately: trees whose root edge is smaller than some other constant $C'$. 
After $C$ has been fixed, set $C'$ large enough so that the following conditions hold:
\begin{equation}
   C'>\left(3+\frac{2}{\alpha-1}\right)C^2 
   \quad\text{and}\quad
   4x\alpha^{2C} \leq \alpha^x \text{ for all $x\geq C'$.}
   \label{eq:condcp}
\end{equation}

Throughout this section we fix an arbitrary boundary condition $\xi\in\Xi(\Lambda^0)$ and a triangulation $\sigma\in\Omega^\xi$.
We need to introduce some notation.
For any flippable edge $x\in\Lambda$ of $\sigma$, define
\begin{equation}
   \psi_x = \psi_x(\sigma) = \text{the length of the shortest edge different than $\sigma_x$ in a 
      triangle of $\sigma$ containing $\sigma_x$.}
   \label{eq:defpsi}
\end{equation}
Given any $g\in\mathbb{G}^\xi$, define 
$$
   \fdec^g(\sigma) = \{x\in \fdec(\sigma) \colon \sigma_x\cap g \neq \emptyset\}
   \quad\text{and}\quad
   \finc^g(\sigma) = \{x\in \finc(\sigma) \colon \sigma_x^x\cap g \neq \emptyset\},
$$
where 
$$
   \text{$\sigma^x$ stands for the triangulation obtained by flipping $\sigma_x$ in $\sigma$}.
$$
In words, $\fdec^g(\sigma)$ is the set of decreasing edges of $\sigma$ that intersect $g$ 
and $\finc^g(\sigma)$ is the set of increasing edges of $\sigma$ that either intersect $g$ or get to intersect $g$ after a flip.
(The fact that it is enough 
to define $\finc^g(\sigma)$ in terms of $\sigma^x$ only is a consequence of Proposition~\ref{pro:monotonicity}.)
Since the edges of $\mathbb{G}^\xi$ are all compatible with $\xi$, 
we obtain that, unlike $\fdec(\sigma)$, the set $\fdec^g(\sigma)$
contains no midpoint of $\lambda^\bc$. 
Let
$$
   \sigma' \text{ be the random triangulation obtained from $\sigma$ by one step of the Glauber dynamics,}
$$
and for $x\in\Lambda$ let  
$$
   \tilde \sigma^x = \text{the triangulation obtained by choosing $x$ to be flipped in $\sigma$.}
$$
The triangulation $\tilde\sigma^x$ differs from $\sigma^x$ since $\tilde\sigma^x$ is a \emph{random} triangulation 
(the probability that the flip actually occurs is $\frac{\lambda^{|\sigma^x_x|}}{\lambda^{|\sigma_x|}+\lambda^{|\sigma^x_x|}}$), 
while $\sigma^x$ is a deterministic triangulation.
Hence,
\begin{equation}
   \E_\sigma\big(\Psi_g(\sigma')-\Psi_g(\sigma)\big)
   =\sum_{x\in\Lambda}\frac{1}{|\Lambda|}\E_\sigma\big(\Psi_g(\tilde \sigma^x)-\Psi_g(\sigma)\big)
   =\sum_{x\in \fdec^g(\sigma)\cup\finc^g(\sigma)}\frac{1}{|\Lambda|}\E_\sigma\big(\Psi_g(\tilde \sigma^x)-\Psi_g(\sigma)\big).
   \label{eq:simpledecomp}
\end{equation}

We start estimating the expected change in $\Psi_g(\sigma)$ incurred by flipping a given edge. 
For $x\in\Lambda$, define 
\begin{align}
   \rho_g(\sigma,x) 
   = -\frac{\alpha^{|\sigma_x|-|g|}}{1+\lambda^{2\psi_x}}
   \quad \text{ for all $x\in \fdec^g(\sigma)$},\nonumber\\
   \rho_g(\sigma,x) 
   = \frac{\alpha^{|\sigma_x|-|g|}(\alpha\lambda)^{2\psi_x}}{1+\lambda^{2\psi_x}}
      \quad \text{ for all $x\in \finc^g(\sigma)$}
   \label{eq:defrho}
\end{align}
and 
$\rho_g(\sigma,x)=0$ for all other $x$.
\begin{lemma}\label{lem:individual}
   Fix any boundary condition $\xi\in\Xi(\Lambda^0)$, 
   any triangulation $\sigma\in\Omega^\xi$, 
   any ground state edge $g\in\mathbb{G}^\xi$, and any midpoint $x\in \Lambda$. 
   We have 
   $$
      \E_\sigma\big(\Psi_g(\tilde \sigma^x)-\Psi_g(\sigma)\big)
      = \rho_g(\sigma,x).
   $$
\end{lemma}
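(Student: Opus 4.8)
The plan is to reduce $\E_\sigma\big(\Psi_g(\tilde\sigma^x)-\Psi_g(\sigma)\big)$ to a single-site computation. The starting point is that a flip at the midpoint $x$ changes only the edge sitting at $x$ (the two diagonals of a parallelogram share their midpoint), so $\sigma^x_y=\sigma_y$ for all $y\neq x$ and $\tilde\sigma^x\in\{\sigma,\sigma^x\}$; since by \eqref{eq:defe} the set $E_y^\xi(\sigma,g)$ depends on $\sigma$ only through $\sigma_y$, the only summand of $\Psi_g$ in \eqref{eq:lyapunov} that can change is the one indexed by $x$. Recalling that $\tilde\sigma^x$ equals $\sigma^x$ with probability $p_x:=\lambda^{|\sigma^x_x|}/\big(\lambda^{|\sigma_x|}+\lambda^{|\sigma^x_x|}\big)$ and equals $\sigma$ otherwise, and adopting the convention $\tilde\sigma^x=\sigma$ (so the expectation vanishes) when $\sigma_x$ is a constraint edge or is unflippable, this gives
$$
  \E_\sigma\big(\Psi_g(\tilde\sigma^x)-\Psi_g(\sigma)\big)=p_x\Bigg(\sum_{e\in E_x^\xi(\sigma^x,g)}\alpha^{|e|-|g|}-\sum_{e\in E_x^\xi(\sigma,g)}\alpha^{|e|-|g|}\Bigg).
$$

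Next I would evaluate the bracket using the tree structure of $E_x^\xi$ under the parent relation of \eqref{eq:edgeposet}. If $\sigma_x$ is flippable and is not a unit diagonal, then $\sigma_x$ and $\sigma^x_x$ are neighbours in that tree, with the shorter of the two being the parent of the longer; hence the down-sets $\{e:e\preceq\sigma_x\}$ and $\{e:e\preceq\sigma^x_x\}$ differ by exactly one edge, namely the longer of $\sigma_x,\sigma^x_x$. Intersecting with $\{e:e\cap g\neq\emptyset\}$, the sets $E_x^\xi(\sigma,g)$ and $E_x^\xi(\sigma^x,g)$ differ by at most that edge, and by it precisely when it meets $g$. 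Consequently, for $x\in\fdec^g(\sigma)$ (so $|\sigma^x_x|<|\sigma_x|$ and $\sigma_x\cap g\neq\emptyset$) the bracket equals $-\alpha^{|\sigma_x|-|g|}$, while for $x\in\finc^g(\sigma)$ (so $|\sigma^x_x|>|\sigma_x|$ and $\sigma^x_x\cap g\neq\emptyset$) it equals $+\alpha^{|\sigma^x_x|-|g|}$ — this last point is where phrasing $\finc^g$ through $\sigma^x$ rather than $\sigma$ pays off, since it lets me ignore whether $\sigma_x$ itself meets $g$, cf.\ Proposition~\ref{pro:monotonicity}. In all remaining cases the bracket is $0$, matching $\rho_g(\sigma,x)=0$: when $\sigma_x$ is a constraint edge or unflippable this is immediate (and $x\notin\fdec^g(\sigma)\cup\finc^g(\sigma)$, as $g\in\mathbb{G}^\xi$ crosses no constraint edge); when $x\in\fdec(\sigma)\cup\finc(\sigma)$ but the relevant edge is disjoint from $g$, the two sets $E_x^\xi(\sigma,g)$ and $E_x^\xi(\sigma^x,g)$ coincide; and when $x\in\fdiag(\sigma)$ with $\sigma_x$ flippable one has $|\sigma_x|=|\sigma^x_x|$ with $\sigma^x_x$ the opposite unit diagonal, so the bracket is $\alpha^{|\sigma_x|-|g|}\big(\ind{\sigma^x_x\cap g\neq\emptyset}-\ind{\sigma_x\cap g\neq\emptyset}\big)$, which vanishes because a unit diagonal can meet $g\in\mathbb{G}^\xi$ only when $g$ is the opposite unit diagonal of the same midpoint (by Proposition~\ref{pro:smalltrees}\ref{it:larger} together with the fact that a unit axis-parallel edge cannot cross the interior of a $1\times1$ square), in which case both unit diagonals of that midpoint meet $g$.

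It then remains to rewrite the exponents in the form of $\rho_g$. For this I would use the geometric fact that, for a lattice parallelogram other than a unit square, the two diagonals have distinct $\ell_1$-lengths, the longer being the sum of the two side $\ell_1$-lengths and the shorter their difference. When $\sigma_x$ is a flippable non-unit-diagonal edge, its two triangles in $\sigma$ are the two halves of a lattice parallelogram $P$ other than a unit square; the edges of those triangles other than $\sigma_x$ are the sides of $P$, so $\psi_x$ from \eqref{eq:defpsi} equals the shorter side length of $P$, while one of $\sigma_x,\sigma^x_x$ is the longer diagonal of $P$ and the other the shorter. The displayed fact then gives $\big|\,|\sigma_x|-|\sigma^x_x|\,\big|=(\text{sum of the two side lengths})-(\text{their difference})=2\psi_x$. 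Substituting $|\sigma_x|-|\sigma^x_x|=2\psi_x$ for $x\in\fdec^g(\sigma)$ (resp.\ $|\sigma^x_x|-|\sigma_x|=2\psi_x$ for $x\in\finc^g(\sigma)$) into $p_x$ yields $p_x=1/\big(1+\lambda^{2\psi_x}\big)$ (resp.\ $p_x=\lambda^{2\psi_x}/\big(1+\lambda^{2\psi_x}\big)$), and plugging these into the bracket values found above gives $-\alpha^{|\sigma_x|-|g|}/\big(1+\lambda^{2\psi_x}\big)$ and $\alpha^{|\sigma_x|-|g|}(\alpha\lambda)^{2\psi_x}/\big(1+\lambda^{2\psi_x}\big)$, which is exactly $\rho_g(\sigma,x)$ in the two cases.

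The step I expect to be the main obstacle is the geometric fact about the diagonals: it is the one place where the special arithmetic of lattice parallelograms (unit-area triangles, empty excluded region, cf.\ the discussion around Figure~\ref{fig:excludedregion}) is used essentially, and it comes down to showing that, up to replacing one side vector by its negation, the two side vectors of a lattice parallelogram are coordinatewise comparable in absolute value and have matching signs, the only exception being the unit square — a short argument from $|\det|=1$, which I would either prove directly or take from~\cite{CMSS15}. Everything else is bookkeeping with the edge order \eqref{eq:edgeposet} and the flip probabilities of the Glauber dynamics.
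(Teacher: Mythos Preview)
Your proposal is correct and follows essentially the same line as the paper's proof: reduce to the single midpoint $x$, compute the flip probability, observe that the change in $\Psi_g$ is $\pm\alpha^{|\cdot|-|g|}$ depending on whether the longer of $\sigma_x,\sigma^x_x$ meets $g$, and use the identity $\big||\sigma_x|-|\sigma^x_x|\big|=2\psi_x$. You are actually more explicit than the paper in justifying why $E_x^\xi(\sigma,g)$ and $E_x^\xi(\sigma^x,g)$ differ by at most the longer edge (the paper takes this for granted), and your treatment of the $\fdiag$ case is a slight variant of the paper's but equally valid. The $2\psi_x$ identity you flag as the main obstacle is exactly what the paper proves in one line via $|\sigma_x|=|\sigma_w|\pm|\sigma_y|$ for the triangle $(\sigma_x,\sigma_w,\sigma_y)$, so it is not as delicate as you anticipate.
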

\begin{proof} 
   If $x\in \fdiag(\sigma)$, 
   then $\sigma_x\in\mathbb{G}^\xi$ and, consequently, $\sigma_x$ only intersects $g$ if the midpoint of $g$ is $x$; 
   hence, $\Psi_g(\sigma)=\Psi_g(\sigma^x)$ in this case.
   Assume henceforth that $x\not\in \fdiag(\sigma)$.
   Then for any $x$ such that $\sigma_x$ is a flippable edge we have that the absolute value of $|\sigma^x_x|-|\sigma_x|$ is $2\psi_x$. 
To see this, note that if $w,y\in\Lambda$ are such that $\sigma_w,\sigma_y,\sigma_x$ form a triangle of $\sigma$ with $|\sigma_w|\geq|\sigma_y|$, then $\psi_x=|\sigma_y|$. 
Thus if $\sigma_x$ is the largest edge of the triangle we have $|\sigma_x|=|\sigma_w|+|\sigma_y|$ and $|\sigma_x^x|=|\sigma_w|-|\sigma_y|$, 
otherwise we have $|\sigma_x|=|\sigma_w|-|\sigma_y|$ and $|\sigma_x^x|=|\sigma_w|+|\sigma_y|$.
   Therefore, the probability that $\sigma_x$ is actually flipped in $\tilde\sigma^x$ is 
   $$
      \frac{\lambda^{|\sigma^x_x|}}{\lambda^{|\sigma_x|}+\lambda^{|\sigma^x_x|}}
      = \frac{\lambda^{|\sigma^x_x|}}{\lambda^{|\sigma_x|\land |\sigma^x_x|}(1+\lambda^{2\psi_x})}
      =\frac{\lambda^{2\psi_x}\ind{x\in \finc(\sigma)}+\ind{x\in \fdec(\sigma)\setminus \lambda^\bc}}{1+\lambda^{2\psi_x}}. 
   $$
   Therefore, if $x\in \fdec^g(\sigma)$, we have
   $$
      \E_\sigma\big(\Psi_g(\tilde \sigma^x)-\Psi_g(\sigma)\big)
      = -\alpha^{|\sigma_x|-|g|}\frac{1}{1+\lambda^{2\psi_x}}
      = \rho_g(\sigma,x).
   $$
   If $x\in \finc^g(\sigma)$, we obtain
   $$
      \E_\sigma\big(\Psi_g(\tilde\sigma^x)-\Psi_g(\sigma)\big)
      = \alpha^{|\sigma_x|+2\psi_x-|g|}\frac{\lambda^{2\psi_x}}{1+\lambda^{2\psi_x}}
      = \frac{\alpha^{|\sigma_x|-|g|}(\alpha\lambda)^{2\psi_x}}{1+\lambda^{2\psi_x}}
      = \rho_g(\sigma,x).
   $$
\end{proof}

%############################################################################################
\subsection{Proof overview}\label{sec:overview}
Our goal is to show that $\sum_{x\in\finc^g(\sigma)}\rho_g(\sigma,x)$ can be bounded above by 
$-c \sum_{x\in\fdec^g(\sigma)}\rho_g(\sigma,x)$ for some constant $c<1$, and then apply Lemma~\ref{lem:individual} and~\eqref{eq:simpledecomp} to
establish Theorem~\ref{thm:lyapunov}. We will do this by comparing each $\rho_g(\sigma,x)$ with $x\in \finc^g(\sigma)$ with 
$\rho_g(\sigma,z)$ for $z$ being a root of a tree containing $x$ (i.e., $z\in\tau^{-1}(\sigma,x)$). 
Proposition~\ref{pro:squaregeom} guarantees that there exists
such a $z$ for which $z\in \fdec^g(\sigma)$. Proposition~\ref{pro:twotrees}~\ref{it:leaf} and~\ref{it:inc} gives that for any $x\in \finc^g(\sigma)$, $x$ is a leaf in all
trees containing $x$, so in our proof we will restrict our attention to the roots and leaves of the trees.

The proof is split into sections.
In Section~\ref{sec:sleaves} 
we bound above $\rho_g(\sigma,x)$ with $x\in \finc^g(\sigma)$ in terms of $\rho_g(\sigma,z)$ with $z\in\tau^{-1}(\sigma,x)$
for small leaves (leaves $\sigma_x$ that are smaller enough 
in comparison to $\sigma_z$). In Section~\ref{sec:1dconfig} we do the same for large leaves, which will require a more delicate proof. 
Then in Section~\ref{sec:decrease} we combine the result of the previous two sections with~\eqref{eq:simpledecomp} and establish that the 
expected change in the Lyapunov function can be written as a function of only the decreasing edges. 
In Section~\ref{sec:decdominate} we show that the value of the Lyapunov function can also be written in terms of the decreasing edges only.
Combining these two results together gives that the expected change in the Lyapunov function can be written in terms of the value 
of the Lyapunov function. This is established in Section~\ref{sec:finish}, completing the proof of Theorem~\ref{thm:lyapunov}.

%############################################################################################
\subsection{Handling small leaves}\label{sec:sleaves}
For any $z\in\fdec(\sigma)\cup\fdiag(\sigma)$, we will employ the following definition:
$$
   \tau_\sleaves(\sigma,z)=\big\{x \in \tau_\leaves(\sigma,z) \colon |\sigma_x|\leq |\sigma_z|-C\big\},
$$
where $\tau_\leaves(\sigma,z)$ are the leaves of $\tau(\sigma,z)$.
The subscript $\mathrm{sl}$ above stands for ``small leaves.''
In the lemma below, recall that $\rho_g(\sigma,z)<0$ for all $z\in\fdec^g(\sigma)$.
\begin{lemma}\label{lem:largetreesmalledges}
   Given any boundary condition $\xi\in\Xi(\Lambda^0)$, any triangulation $\sigma\in\Omega^\xi$, any $z\in \fdec^g(\sigma)$, and any $g\in\mathbb{G}^\xi$,
   we have
   $$
      \sum_{x\in \tau_\sleaves(\sigma,z)} \rho_g(\sigma,x)
      \leq 2|\sigma_z|\alpha^{C-|g|} - 4\alpha^{-C}\rho_g(\sigma,z).
   $$
   If in addition we have $|\sigma_z|\geq C'$, then the bound above can be simplified to
   $$
      \sum_{x\in \tau_\sleaves(\sigma,z)} \rho_g(\sigma,x)
      \leq -5\alpha^{-C}\rho_g(\sigma,z).
   $$
\end{lemma}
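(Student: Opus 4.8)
The plan is to reduce the statement to an estimate on how the weights $\alpha^{|\sigma_x|}$ are spread over the leaves of the two subtrees, and then to exploit the identity $\sum_{y\in\tau^{(i)}_\leaves(\sigma,z)}|\sigma_y|=|\sigma_z|$ of Proposition~\ref{pro:tautree}\ref{it:leaves} to rule out the presence of too many long leaves.

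First I would observe that a leaf $x$ of $\tau(\sigma,z)$ has $\sigma_x$ strictly smaller than the largest edge of each of its two triangles, so $x\notin\fdec(\sigma)\cup\fdiag(\sigma)$; hence $\rho_g(\sigma,x)\neq 0$ forces $x\in\finc^g(\sigma)$, and every term in the sum is nonnegative. For such $x$, the bound $1+\lambda^{2\psi_x}>1$ together with $\psi_x\geq 1$ and $\alpha\lambda<\sqrt\lambda<1$ gives the crude per-leaf estimate $\rho_g(\sigma,x)\leq(\alpha\lambda)^2\alpha^{|\sigma_x|-|g|}$. Writing $\alpha^{|\sigma_x|-|g|}=\alpha^{|\sigma_z|-|g|}\alpha^{|\sigma_x|-|\sigma_z|}$ and splitting $\tau_\sleaves(\sigma,z)$ between the two subtrees (the root $z$ is not a small leaf), it suffices to bound $G^{(i)}:=\sum_{x}\alpha^{|\sigma_x|-|\sigma_z|}$, the sum ranging over the small leaves of $\tau^{(i)}(\sigma,z)$, because
\[
  \sum_{x\in\tau_\sleaves(\sigma,z)}\rho_g(\sigma,x)\ \leq\ (\alpha\lambda)^2\,\alpha^{|\sigma_z|-|g|}\,\bigl(G^{(1)}+G^{(2)}\bigr).
\]

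The key point, and the main obstacle, is that $G^{(i)}$ must be shown to be of size $\alpha^{-C}$ up to small errors, rather than the useless bound $(\text{number of leaves})\cdot\alpha^{-C}=O(|\sigma_z|)\,\alpha^{-C}$. I would partition the small leaves of $\tau^{(i)}(\sigma,z)$ according to their length: the ``tiny'' ones with $|\sigma_x|\leq C$, and the remaining ones grouped dyadically, $|\sigma_x|\in(|\sigma_z|2^{-j-1},|\sigma_z|2^{-j}]$ for $j\geq 1$. Since $\sum_{\text{leaves}}|\sigma_x|=|\sigma_z|$, there are at most $|\sigma_z|$ tiny leaves and fewer than $2^{j+1}$ leaves in the $j$-th dyadic band, while a small leaf always has $|\sigma_x|\leq|\sigma_z|-C$. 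The tiny leaves then contribute at most $|\sigma_z|\alpha^{C-|\sigma_z|}$; in the dyadic sum $\sum_{j\geq 1}2^{j+1}\alpha^{|\sigma_z|2^{-j}-|\sigma_z|}$ the ratio of consecutive terms equals $2\alpha^{-|\sigma_z|2^{-j-1}}$, which by the choice of $C$ in~\eqref{eq:condclarge} stays below $1$ for all relevant $j$, so this sum is controlled by its first term and is at most $c\,\alpha^{-|\sigma_z|/2}$; and the single possible leaf of length in $(|\sigma_z|/2,|\sigma_z|-C]$ contributes at most $\alpha^{-C}$. Altogether $G^{(i)}\leq\alpha^{-C}+|\sigma_z|\alpha^{C-|\sigma_z|}+c\,\alpha^{-|\sigma_z|/2}$ for a constant $c$ depending only on $\lambda$.

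Finally I would recombine. Multiplying through, the leading contribution $2(\alpha\lambda)^2\alpha^{-C}\alpha^{|\sigma_z|-|g|}$ is absorbed into $-4\alpha^{-C}\rho_g(\sigma,z)$, using $2(\alpha\lambda)^2<2\leq 4/(1+\lambda^{2\psi_z})$ and $-\rho_g(\sigma,z)=\alpha^{|\sigma_z|-|g|}/(1+\lambda^{2\psi_z})\geq\tfrac12\alpha^{|\sigma_z|-|g|}$, and this leaves a positive residue of order $\alpha^{-C}\alpha^{|\sigma_z|-|g|}$. The contribution $2(\alpha\lambda)^2|\sigma_z|\alpha^{C-|g|}$ is at most $2|\sigma_z|\alpha^{C-|g|}$ since $(\alpha\lambda)^2<1$; and the contribution $2c(\alpha\lambda)^2\alpha^{|\sigma_z|/2-|g|}$ is dominated by the residue once $|\sigma_z|$ exceeds a threshold of the form $2C+c'$ for a suitable $\lambda$-dependent $c'$, and otherwise by what remains of $2|\sigma_z|\alpha^{C-|g|}$, again because $C$ is large. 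This gives the first inequality. For the refined inequality when $|\sigma_z|\geq C'$, the condition $4x\alpha^{2C}\leq\alpha^x$ for $x\geq C'$ in~\eqref{eq:condcp} shows that both the $|\sigma_z|\alpha^{C-|g|}$ and the $\alpha^{|\sigma_z|/2-|g|}$ contributions are themselves $O(\alpha^{-C}\alpha^{|\sigma_z|-|g|})$ with a small constant, so that no additive slack is needed and the three pieces together amount to at most $-5\alpha^{-C}\rho_g(\sigma,z)$.
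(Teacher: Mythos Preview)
Your overall strategy---split the small leaves by size, use $\sum_{y\in\tau^{(i)}_\leaves(\sigma,z)}|\sigma_y|=|\sigma_z|$, and bound $\sum_x\alpha^{|\sigma_x|-|\sigma_z|}$---is exactly the right one and is what the paper does. But two shortcuts make your constants too loose to reach the stated $4$ and $5$, and the recombination step contains a genuine gap.

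The first shortcut is the crude per-leaf bound $\rho_g(\sigma,x)\leq(\alpha\lambda)^2\alpha^{|\sigma_x|-|g|}$ from $\psi_x\geq 1$. The paper instead keeps $(\alpha\lambda)^{2\psi_x}\leq\alpha^{-2\psi_x}$, splits according to whether $|\sigma_x|-2\psi_x\leq C$ (its ``tiny'' set $\tau'_\sleaves$), and for the remaining leaves observes that $|\sigma_x|>\psi_x$, so $\psi_x$ is the smallest edge of the tree-triangle at $x$; Proposition~\ref{pro:smallestedgehierarchy} then gives $\psi_x\leq\psi_z$, allowing $\tfrac{1}{1+\lambda^{2\psi_x}}$ to be replaced by $\tfrac{1}{1+\lambda^{2\psi_z}}$, which matches the denominator in $-\rho_g(\sigma,z)$ exactly. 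You never use Proposition~\ref{pro:smallestedgehierarchy}, and without it you only have $-\rho_g(\sigma,z)\geq\tfrac12\alpha^{|\sigma_z|-|g|}$, losing up to a factor $2$.

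The second shortcut is the dyadic estimate: your non-tiny part of $G^{(i)}$ is $\alpha^{-C}+c\,\alpha^{-|\sigma_z|/2}$ with $c\approx 4$, so for $|\sigma_z|$ just above $2C$ it is $(1+c)\alpha^{-C}\approx 5\alpha^{-C}$. The paper's iterative pairwise-combining estimate (for $\ell_i\in[C,S-C]$, $\sum\ell_i\leq S$, one has $\sum_i\alpha^{\ell_i}\leq 2\alpha^{S-C}$) gives the sharper constant $2$. Putting both losses together, the coefficient you produce in front of $\alpha^{|\sigma_z|-C-|g|}$ is $2(1+c)(\alpha\lambda)^2$, which must be compared to $\tfrac{4}{1+\lambda^{2\psi_z}}\in[2,4]$; since $(\alpha\lambda)^2$ can be arbitrarily close to $\lambda$, for any $\lambda\gtrsim 0.4$ one has $2(1+c)(\alpha\lambda)^2>4$, and the inequality with constant $4$ fails. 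Your attempted rescue---absorbing the dyadic term ``by what remains of $2|\sigma_z|\alpha^{C-|g|}$, again because $C$ is large''---would require roughly $C\gtrsim M^2$ with $M=\tfrac{c(\alpha\lambda)^2}{1-(\alpha\lambda)^2}$; but \eqref{eq:condclarge} only forces $C$ of order $(\log\alpha)^{-1}$, and for $\lambda$ close to $1$ (e.g.\ $\lambda=0.99$, $\alpha=1.004$) one finds $M^2$ larger than $C$ by an order of magnitude, so the absorption fails.

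The remedy is precisely the paper's: retain $\psi_x$ in the per-leaf bound, invoke Proposition~\ref{pro:smallestedgehierarchy} to pass to $\psi_z$, and replace the dyadic argument by the tight iterative combining, which yields the constant $2$ per subtree and hence the $4$ (and then $5$) in the statement.
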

\begin{proof}
   By Proposition~\ref{pro:tautree}\ref{it:leaves} 
   we have that $\sum_{x\in \tau_\sleaves(\sigma,z)}|\sigma_x|\leq 2 |\sigma_z|$. Then, since $\alpha\lambda<1/\alpha$, we write
   $$
      \sum_{x\in \tau_\sleaves(\sigma,z)} \rho_g(\sigma,x)
      = \sum_{x\in \tau_\sleaves(\sigma,z)} \frac{\alpha^{|\sigma_x|-|g|}(\alpha\lambda)^{2\psi_x}}{1+\lambda^{2\psi_x}}
      \leq \sum_{x\in \tau_\sleaves(\sigma,z)}\frac{\alpha^{|\sigma_x|-|g|-2\psi_x}}{1+\lambda^{2\psi_x}}.
   $$
   Now let $\tau'_\sleaves(\sigma,z)\subseteq \tau_\sleaves(\sigma,z)$ be the set of midpoints $x\in\tau_\sleaves(\sigma,z)$ such that $|\sigma_x|-2\psi_x\leq C$. 
   Then for $\tau'_\sleaves(\sigma,z)$ we use the simple bound
   $$
      \sum_{x\in \tau'_\sleaves(\sigma,z)} \rho_g(\sigma,x)
      \leq |\tau'_\sleaves(\sigma,z)|\alpha^{C-|g|}
      \leq 2|\sigma_z|\alpha^{C-|g|}
      \leq \frac{4|\sigma_z|\alpha^{C-|g|}}{1+\lambda^{2\psi_z}}.
   $$
   When $|\sigma_z|\geq C'$, using the condition on $C'$ in~\eqref{eq:condcp} we obtain
   \begin{equation}
      \sum_{x\in \tau'_\sleaves(\sigma,z)} \rho_g(\sigma,x)
      \leq \frac{\alpha^{|\sigma_z|-|g|-C}}{1+\lambda^{2\psi_z}}
      = -\alpha^{-C}\rho_g(\sigma,z).
      \label{eq:smalledges}
   \end{equation}
   For the other edges, we use the fact that $|\sigma_x|> \psi_x$, which implies that $\psi_x$ is the size of the smallest edge in the triangle containing 
   $\sigma_x$ in $\tau(\sigma,z)$, and hence Proposition~\ref{pro:smallestedgehierarchy}
   gives that $\psi_x\leq \psi_z$. Using this, we obtain
   \begin{equation}
      \sum_{x\in \tau_\sleaves(\sigma,z)\setminus \tau'_\sleaves(\sigma,z)} \rho_g(\sigma,x)
      \leq \frac{1}{1+\lambda^{2\psi_z}} \sum_{x\in \tau_\sleaves(\sigma,z)\setminus \tau'_\sleaves(\sigma,z)} \alpha^{|\sigma_x|-|g|-2\psi_x}.
      \label{eq:slp}
   \end{equation}
   For the edges in $\tau_\sleaves(\sigma,z)\setminus \tau'_\sleaves(\sigma,z)$ we will also leverage on the fact that they are not small, 
   applying the following technical estimate.
   Given any positive $\ell_1 \geq \ell_2 \geq \cdots \geq \ell_k\in\mathbb{Z}$ and any $S\geq \sum_{i=1}^k\ell_i$ such that 
   $k\geq 2$ and $\ell_i\in [C,S-C]$ for all $i$, we have 
   \begin{align*}
      \sum_{i=1}^k \alpha^{\ell_i}
      = \alpha^{\ell_1+\ell_2}(\alpha^{-\ell_1}+\alpha^{-\ell_2}) + \sum_{i=3}^k \alpha^{\ell_i}
      \leq 2\alpha^{-C}\alpha^{\ell_1+\ell_2} + \sum_{i=3}^k \alpha^{\ell_i}.
   \end{align*}
   Using that $2\alpha^{-C}<1$, and proceeding in the same way as above, we obtain
   \begin{align*}
      \sum_{i=1}^k \alpha^{\ell_i}
      &\leq \alpha^{\ell_1+\ell_2} + \sum_{i=3}^k \alpha^{\ell_i}\\
      &\leq \alpha^{\ell_1+\ell_2+\cdots+\ell_{k-1}} + \alpha^{\ell_{k}}\\
      &\leq 2\alpha^{-C}\alpha^{\sum_{i=1}^k \ell_i}
      \leq 2\alpha^{S-C}.
   \end{align*}
   
   If $k=1$, then we have $\sum_{i=1}^k \alpha^{\ell_i}=\alpha^{\ell_1}\leq \alpha^{S-C}$, 
   and we can simply use the upper bound above.
   We apply this estimate twice, once for the elements of 
   $\tau_\sleaves(\sigma,z)\setminus \tau'_\sleaves(\sigma,z)$ that belong to $\tau^{(1)}(\sigma,z)$ 
   and another for the ones that belong to $\tau^{(2)}(\sigma,z)$. Since 
   we have that the sum of the $|\sigma_x|$ for $x$ in each of these sets is at most $S=|\sigma_z|$, 
   applying this to the right-hand side of~\eqref{eq:slp} yields
   \begin{align*}
      \sum_{x\in \tau_\sleaves(\sigma,z)\setminus \tau'_\sleaves(\sigma,z)} \rho_g(\sigma,x)
      &\leq \frac{1}{1+\lambda^{2\psi_z}} 4\alpha^{|\sigma_z|-|g|-C}
      = -4\alpha^{-C}\rho_g(\sigma,z).
   \end{align*}
   Summing this and~\eqref{eq:smalledges} establishes the lemma.
\end{proof}

%############################################################################################
\subsection{Large leaves and 1-dimensional configurations}\label{sec:1dconfig}
As mentioned above, the most delicate part of the proof will be to etablish an upper bound on $\rho_g(\sigma,x)$ when
$x\in \finc^g(\sigma)$ is such that $x$ belongs to 
a tree $\tau(\sigma,z)$ for which $\sigma_x$ and $\sigma_z$ have almost the same length. This is the case we treat in this section.

Here we only need to consider trees rooted at long edges.
Fix a boundary condition $\xi\in\Xi(\Lambda^0)$ and a ground state edge $g\in\mathbb{G}^\xi$.
Consider the increasing edges $\sigma_x$ for which either both trees containing $x$ have root in $\fdec^g(\sigma)$ 
and have length at least $|g|+C'$, 
or one of them has root satisfying the conditions above 
and the other has root outside $\fdec^g(\sigma)$. In addition, 
only consider $x$ that does not belong to $\tau_\sleaves(\sigma,z)$ for any $z$. 
More precisely, define
$$
   X=\big\{x\in \finc^g(\sigma) \colon \forall z\in \fdec^g(\sigma) \cap \tau^{-1}(\sigma,x) \text{ we have } |\sigma_z|> |g|+C' \text{ and }
      x\not\in \tau_\sleaves(\sigma,z)\big\}.
$$
Proposition~\ref{pro:squaregeom} gives that the set $\fdec^g(\sigma) \cap \tau^{-1}(\sigma,x)$ has at least one element.

We construct the following bipartite graph $H$ with vertex sets $X$ and $\fdec^g(\sigma)\setminus  F_{|g|+C'}(\sigma)$. 
(Recall the definition of $F_\ell(\sigma)$ from~\eqref{eq:deff}.)
To avoid ambiguity, we will refer to the connections between pairs of vertices of $H$ as \emph{links} instead of edges; 
we reserve the word 
\emph{edges} to the edges of a triangulation.
There is a link between $x\in X$ and $z\in \fdec^g(\sigma)\setminus F_{|g|+C'}(\sigma)$ in $H$
if $x\in\tau(\sigma,z)$. 
Since $\tau^{-1}(\sigma,x)$ has cardinality at most two (cf.~Proposition~\ref{pro:twotrees}\ref{it:card}), 
the degree of $x$ in $H$ is at most two.
Also each edge of midpoint $z\in \fdec^g(\sigma)\setminus  F_{|g|+C'}(\sigma)$ has length at least $|g|+C'>3C$, which gives that a leaf 
$x\in\tau_\leaves(\sigma,z)\setminus \tau_\sleaves(\sigma,z)$ must have size at least $|\sigma_z|-C>2|\sigma_z|/3$.
Since $\sum_{y\in \tau_\leaves(\sigma,z)}|\sigma_y|=2|\sigma_z|$, the set $\tau_\leaves(\sigma,z)\setminus \tau_\sleaves(\sigma,z)$ has at most two elements,
which gives that the degree of $z$ in $H$ is at most two.
Since all vertices of $H$ have degree at most two, $H$ is a graph formed by paths and cycles.\footnote{Actually, as it will be proved in Lemma~\ref{lem:1d}, 
there is no cycle in $H$. But we will not need this fact.}

We will treat each path $P$ of $H$ individually. 
Since $H$ is bipartite, the vertices of $P$ must alternate between midpoints in $X$ (which correspond to increasing edges of $\sigma$) and 
midpoints in $\fdec^g(\sigma)\setminus  F_{|g|+C'}(\sigma)$ (which are decreasing edges of $\sigma$). 
If the number of decreasing edges in $P$ is at least as large as the number of increasing edges in $P$, 
then we can construct a one-to-one mapping between increasing and decreasing edges of $P$, which allow us to show a contraction 
in the Lyapunov function. 
The main challenge is when the number of increasing edges in $P$ is larger than the number of decreasing edges (that is, the 
number of increasing edges is one plus the number of decreasing edges). 
In this case, we will show that the path must form a specific shape in $\sigma$, 
which implies that the path is long enough. Only with this we can establish a contraction in the Lyapunov function for this case. 
This is proved in Lemma~\ref{lem:1d}.
\begin{lemma}\label{lem:1d}
   Let $P=\{w_1,w_2,\ldots,w_\ell\}$ be a path of $H$
   such that $w_1,w_\ell\in X$.
   Then, 
   $$
      \ell \geq \frac{C'-C}{C^2}.
   $$
   Moreover, $\psi_{w_1}=\psi_{w_2}=\cdots=\psi_{w_\ell}$.
   We also obtain that $H$ has no cycles.
\end{lemma}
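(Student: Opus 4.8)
Since $H$ is bipartite with the stated colour classes, $\ell$ is odd, the vertices $w_1,w_3,\dots,w_\ell$ lie in $X\subseteq\finc^g(\sigma)$ — hence, by Proposition~\ref{pro:twotrees}\ref{it:leaf} and~\ref{it:inc}, each is a leaf of every tree containing it — while $w_2,w_4,\dots,w_{\ell-1}$ lie in $\fdec^g(\sigma)\setminus F_{|g|+C'}(\sigma)$, so $|\sigma_{w_{2i}}|>|g|+C'>3C$. For a fixed root $w_{2i}$, its two $P$–neighbours $w_{2i\pm1}$ are leaves of $\tau(\sigma,w_{2i})$ outside $\tau_\sleaves(\sigma,w_{2i})$, so $|\sigma_{w_{2i\pm1}}|>|\sigma_{w_{2i}}|-C>2|\sigma_{w_{2i}}|/3$; since by Proposition~\ref{pro:tautree}\ref{it:leaves} the leaves of each of the two subtrees $\tau^{(1)},\tau^{(2)}$ of $\tau(\sigma,w_{2i})$ have lengths summing to $|\sigma_{w_{2i}}|$, the two large leaves $w_{2i-1}$ and $w_{2i+1}$ must lie in opposite subtrees, and all remaining leaves of each subtree have total length $<C$.

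\textbf{Thin–triangle structure and constancy of $\psi$.}
Walk from the root $w_{2i}$ down to the large leaf $w_{2i\pm1}$ inside the relevant subtree, and let $\Delta_0\supset\Delta_1\supset\dots\supset\Delta_m$ be the triangles visited, with $\Delta_j$ having base $e_j$, continuing child edge $e_{j+1}$, and side edge $s_j$ (so $e_0=\sigma_{w_{2i}}$ and $e_{m+1}=\sigma_{w_{2i\pm1}}$). By $\ell_1$–additivity $|e_j|=|e_{j+1}|+|s_j|$, hence $\sum_j|s_j|=|\sigma_{w_{2i}}|-|\sigma_{w_{2i\pm1}}|<C$; so each $\Delta_j$ is a \emph{thin} triangle whose unique smallest edge is $s_j$ with $|s_j|<C$, its other two edges having length $>|\sigma_{w_{2i}}|-C$. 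Proposition~\ref{pro:smallestedgehierarchy} yields $|s_0|\ge|s_1|\ge\dots\ge|s_m|$. Now use that $\sigma_{w_{2i\pm1}}$ is \emph{increasing}: it is the short diagonal of its minimal parallelogram, whose two triangles are $\Delta_m$ and some $T$; opposite sides of this parallelogram being equal, the smallest edge of $T$ also has length $|s_m|$, whence $\psi_{w_{2i\pm1}}=|s_m|$, while $\psi_{w_{2i}}$ equals the smaller of the side edges of the two root triangles. Feeding these identities and the monotonicity of the $|s_j|$ into $P$ — each interior $w_{2i+1}$ being a large leaf of \emph{both} $\tau(\sigma,w_{2i})$ and $\tau(\sigma,w_{2i+2})$, so its two minimal–parallelogram triangles come one from each tree — one obtains a chain of inequalities between the $\psi_{w_j}$ running in both directions along the acyclic path; since $P$ has no repeated vertex these collapse to $\psi_{w_1}=\psi_{w_2}=\dots=\psi_{w_\ell}$. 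A short computation also gives this common value $\psi<C$.

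\textbf{Reduction to one dimension and the length bound.}
All the segments $\sigma_{w_j}$ cross $g$ (for even $j$ directly, for odd $j$ after the flip, by definition of $\finc^g$) and are nearly parallel and of $\ell_1$–length $>|g|+C'-C$; the detailed geometric analysis shows that the thin triangles of the strip have short sides that are all translates of a single vector of $\ell_1$–length $\psi$, so the apexes of the strip's triangles lie on one line, spaced $\psi$ apart — a genuinely one–dimensional configuration with $\Theta(\ell)$ apexes. Each tree–branch along $P$ contributes at most $C$ of these thin triangles (the base lengths $|e_j|$ are strictly decreasing integers differing from $|\sigma_{w_{2i}}|$ by less than $C$), each advancing the configuration by at most $\psi<C$, so each link of $P$ advances it by at most $C^2$; on the other hand the presence of a root edge of $\ell_1$–length $>|g|+C'$ forces the total extent of the configuration to be at least $C'-C$. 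Hence $\ell\ge(C'-C)/C^2$. If $H$ contained a cycle, then, all its vertices having degree $2$, a whole component of $H$ would be that cycle; running the same strip analysis along it, consecutive layers sit on consistently opposite sides of one another, so the configuration advances monotonically in a fixed perpendicular direction and cannot close up — a contradiction. Thus $H$ is a disjoint union of paths.

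\textbf{Main obstacle.}
The delicate point is the middle step: upgrading the ``thin triangle'' bookkeeping to the genuinely one–dimensional picture — in particular showing that all the side edges $s_j$ occurring anywhere along the strip are \emph{translates of one fixed vector}, not merely of equal length and monotone. This is exactly what simultaneously powers $\psi_{w_1}=\dots=\psi_{w_\ell}$, the ``single apex line'' reduction, and the acyclicity of $H$, and it requires a careful case analysis of how a thin lattice triangle can sit between two long nearly–parallel edges, using the area–$\tfrac12$ constraint together with Lemma~\ref{lem:2ndangle}.
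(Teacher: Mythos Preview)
Your outline has the right architecture, but there is a genuine gap at exactly the point you flag as the ``main obstacle'': you never actually establish that all the side edges $s_j$ along the strip are translates of a single vector. Without this, your argument for $\psi_{w_1}=\cdots=\psi_{w_\ell}$ does not close. Concretely, your monotonicity chain only yields $\psi_{w_{2i}}=|s_0^{(i,\pm)}|\ge |s_m^{(i,\pm)}|=\psi_{w_{2i\pm1}}$, i.e.\ each root's $\psi$ dominates its neighbouring leaves' $\psi$; matching the last side edges across the parallelogram at an interior leaf gives $\psi_{w_{2i+1}}=|s_m^{(i,+)}|=|s_{m'}^{(i+1,-)}|$, which is just $\psi_{w_{2i+1}}=\psi_{w_{2i+1}}$. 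There is no inequality running in the reverse direction, so the chain does not collapse to equality. The paper avoids this entirely by proving the stronger geometric fact directly: starting from the first triangle $\Delta_1=(\sigma_{w_1},\sigma_{x_1},\sigma_{y_1})$, one shows inductively that the apex of each successive $\Delta_{j+1}$ is forced to be the specific lattice point $p_1=v+(v-u)$ on the line $L''$ parallel to $\sigma_{x_j}$ (area $1/2$ pins the apex to $L''$, and any other lattice point on $L''$ would make $|\sigma_{x_{j+1}}|$ exceed $|\sigma_{w_2}|$, contradicting $x_{j+1}\in\tau(\sigma,w_2)$). This immediately gives that all $\sigma_{y_j}$ are collinear translates of $\sigma_{y_1}$, whence $\psi_{w_i}=|\sigma_{y_1}|$ for all $i$ and the acyclicity of $H$ fall out simultaneously.

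Your length bound also has a gap: the claim that ``the presence of a root edge of length $>|g|+C'$ forces the total extent of the configuration to be at least $C'-C$'' is not justified --- the long root edge is \emph{along} the strip, not across it, so by itself it says nothing about the extent in the transverse direction. The paper's argument here is more delicate and uses a fact you do not invoke: since $w_1,w_\ell$ have degree one in $H$, Proposition~\ref{pro:squaregeom} forces $\sigma_{w_1}\cap g=\emptyset$ and $\sigma_{w_\ell}\cap g=\emptyset$, while $\sigma_{w_1}^{w_1}\cap g\neq\emptyset$. One then checks that $\hat g$ must cross both bounding lines $L,L'$ of the strip, with the intersection points $s\in L,\ s'\in L'$ satisfying $\|s-u\|_1\le C$ but $\|s'-u'\|_1\ge |\sigma_{w_1}|-|g|\ge C'-C$; this is what forces at least $(C'-C)/C$ side edges on $L'$, and hence $\ell\ge (C'-C)/C^2$.
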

\begin{proof}
   Given $P$, we will construct a path of adjacent triangles $\Delta_1,\Delta_2,\ldots,\Delta_k$ in $\sigma$ starting from $\sigma_{w_1}$ until reaching 
   $\sigma_{w_\ell}$, and such that it contains all edges 
   $\sigma_{w_1},\sigma_{w_2},\ldots,\sigma_{w_\ell}$. 
   We will show that this path of triangles must have a certain 1-dimensional shape, which we illustrate in Figure~\ref{fig:1dconfig}(a).
   \begin{figure}[htbp]
      \begin{center}
         \includegraphics[scale=.9]{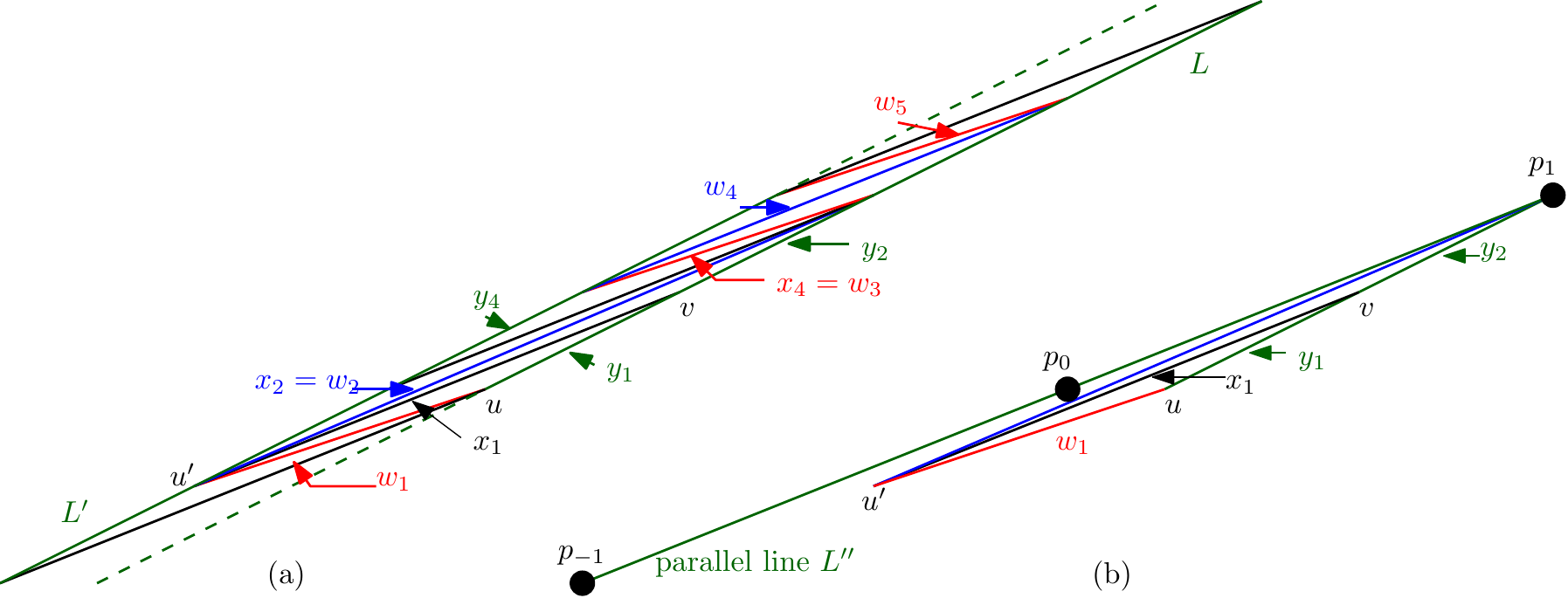}
      \end{center}\vspace{-.5cm}
      \caption{(a) A 1-dimensional configuration: blue edges are decreasing, red edges are increasing, 
         and green edges are the other edges that form the parallel lines $L$ and $L'$. (b) Details of the proof of Lemma~\ref{lem:1d}.}
      \label{fig:1dconfig}
   \end{figure}

   Now we construct the path of triangles. 
   Let $\Delta_1$ be the triangle of $\sigma$ containing $\sigma_{w_1}$ and formed of edges of midpoint in $\tau(\sigma,w_2)$. 
   Let $\sigma_{x_1}$ and $\sigma_{y_1}$ with $|\sigma_{x_1}|\geq|\sigma_{y_1}|$ be the other 
   edges of $\Delta_1$. Since $\sigma_{w_1}$ is increasing, we have $|\sigma_{x_1}|>|\sigma_{w_1}|$. 
   Let $\Delta_2$ be the other triangle containing $\sigma_{x_1}$ in $\sigma$, 
   and let $\sigma_{x_2},\sigma_{y_2}$ be the other edges of $\Delta_2$ with $\sigma_{x_2}$ larger than 
   $\sigma_{y_2}$. Then either $\sigma_{x_1}$ is a decreasing edge or $|\sigma_{x_2}|>|\sigma_{x_1}|$. 
   In the latter case, we look at the other triangle containing $\sigma_{x_2}$ and repeat the procedure above 
   until we reach a triangle $\Delta_j$ such that $\sigma_{x_{j}}$ is a decreasing edge. 
   Since $w_1\in X$, it holds that $x_{j}=w_2$; 
   this must happen since at each step we cross the largest edge of the triangle, traversing a path in $\tau(\sigma,w_2)$ from the leaf $w_1$ to
   the root $w_2$. This establishes a path of adjacent triangles from $\sigma_{w_1}$ to $\sigma_{w_2}$. 
   Similarly, we can find a path of adjacent triangles from the increasing edge $\sigma_{w_3}$ to the decreasing edge $\sigma_{w_2}$ 
   and concatenate the two paths to obtain a path from $\sigma_{w_1}$ to $\sigma_{w_3}$.
   Iterating this procedure we obtain a path of adjacent triangles from $\sigma_{w_1}$ to $\sigma_{w_\ell}$. 
   
   Now define $L$ to be the infinite line containing $\sigma_{y_1}$ and $L'$ to be the infinite line that is parallel to 
   $L$ and contains the other endpoint of $\sigma_{w_1}$.
   We show that the union of the $\sigma_{y_j}$ must lie on $L \cup L'$, 
   and that each triangle $\Delta_j$ has all its vertices on $L \cup L'$ (as illustrated in Figure~\ref{fig:1dconfig}(a)). 
   First assume that $x_1\neq w_2$, and let $\sigma_{y_1}=(u,v)$ 
   where $u,v\in\Lambda^0$ and $v=\sigma_{x_1}\cap \sigma_{y_1}$ (refer to Figure~\ref{fig:1dconfig}(b)). 
   We claim that 
   \begin{equation}
      \sigma_{y_2}=(v,v+v-u), 
      \label{eq:y2}
   \end{equation}
   which is an edge colinear with $\sigma_{y_1}$ and is illustrated by $(v,p_1)$ in Figure~\ref{fig:1dconfig}(b).
   The reason for~\eqref{eq:y2} is the following. Given $\sigma_{x_1}$, the third vertex of $\Delta_2$ must lie on a line $L''$ parallel to $\sigma_{x_1}$ since the area of 
   $\Delta_2$ is $1/2$; this is the line containing $p_{-1},p_0,p_1$ in 
   Figure~\ref{fig:1dconfig}(b). This line must pass through the vertex 
   $v+v-u$ since $v-(v-u)=u$ pass through a similar line on the other side of $\sigma_{x_1}$. 
   Let $\ldots,p_{-2},p_{-1},p_0,p_1,p_2,\ldots$ be the lattice vertices on $L''$ such that $p_0$ is the only such vertex 
   in the minimal parallelogram of $\sigma_{x_1}$.
   Since $x_1\neq w_2$, then $\sigma_{x_1}$ is not decreasing and $p_0$ is not a vertex of $\Delta_2$. 
   Let $p_1=v+v-u$ and define $u'\in\mathbb{Z}^2\cap L'$ such that $\sigma_{w_1}=(u,u')$.
   Note that for the same reason that $(v,p_1)$ is a translate of $\sigma_{y_1}$, 
   $(u',p_{-1})$ is a translate of $\sigma_{w_{1}}$. If $p_{-1}$ were a vertex of $\Delta_2$, then 
   $$
      |\sigma_{x_2}|=|\sigma_{x_1}|+|\sigma_{w_1}|
      \geq 2|\sigma_{w_1}|\geq 2(|\sigma_{w_2}|-C). 
   $$
   But since $|\sigma_{w_2}|\geq 2C$, we obtain $|\sigma_{x_2}|\geq |\sigma_{w_2}|$ 
   which is a contradiction since $x_2\in\tau(\sigma,w_2)$. 
   Similarly, any point $p_{-2},p_{-3},\ldots$ in $L''$ cannot be a vertex of $\Delta_2$; otherwise it makes $\sigma_{x_2}$ be too large. 
   For a similar reason, any point $p_2,p_3,\ldots$ in $L''$ cannot be a vertex of $\Delta_2$, otherwise we would have 
   $$
      |\sigma_{x_2}|\geq |\sigma_{x_1}|+\|v-p_2\|_1 = |\sigma_{x_1}|+|\sigma_{y_1}|+|\sigma_{x_1}|\geq 2|\sigma_{w_1}|. 
   $$
   Therefore, $p_1$ must be the vertex of $\Delta_2$ giving that $\sigma_{y_1}$ and $\sigma_{y_2}$ are colinear, and establishing~\eqref{eq:y2}.
   Now, if $x_1=w_2$, then $x_1$ is decreasing and $p_0$ is the vertex of $\Delta_2$. 
   In this case, $x_1$ is the diagonal of a paralellogram and, clearly, $(u',p_0)$ is parallel to and has the same length as $\sigma_{y_1}$. 
   Proceeding inductively, 
   we obtain that the path of triangles must be between the two (green) parallel lines $L$ and $L'$ in Figure~\ref{fig:1dconfig}(a), 
   which also implies that $H$ has no cycle. Also, it implies that $\psi_{w_i}=|\sigma_{y_1}|$ for all $i$.
   
   Now we compute a lower bound on $\ell$ (the size of the path $P$). 
   First notice that $\sigma_{w_1}$ and $\sigma_{w_\ell}$ do not intersect $g$, 
   otherwise their degree in $H$ would be two. Also, if $R$ is the region between $L$ and $L'$, and $\hat g$ is the closure of $g$ (i.e., $\hat g$ is 
   the union of $g$ and its endpoints), then $R\setminus \hat g$ is not simply connected
   (i.e., $\hat g$ intersects both $L$ and $L'$).
   The reason for this is that $R$ must intersect $g$ (because $\sigma_{w_2}$ intersects $g$ and is contained in $R$), 
   but $R$ does not contain any lattice point since $R$ is part of the excluded region of $\sigma_{w_2}$. (See~\cite[Proposition~3.3]{CMSS15} 
   for the proof that the excluded region of any edge does not contain lattice points.) 
   Let $s=\hat g \cap L$ and $s'=\hat g\cap L'$ be the points where $\hat g$ intersects $L$ and $L'$,
   and let $r,r'$ be the endpoints of $\sigma_{w_\ell}$ such that $r\in L$ and $r'\in L'$. 
   Clearly, $s$ is between $r$ and $u$ in $L$, and $s'$ is between $r'$ and $u'$ in $L'$.
   Recalling that $\sigma^{w_1}$ stands for the triangulation obtained from $\sigma$ by flipping $\sigma_{w_1}$, 
   and since $\sigma_{w_1}^{w_1}$ intersects $g$ and $|\sigma_{y_1}|\leq|\sigma_{w_2}|-|\sigma_{w_1}|\leq C$, we have that 
   $$
      \|s-u\|_1 \leq |\sigma_{y_1}| \leq C,
   $$
   and 
   $$
      \|s'-u'\|_1 \geq |\sigma_{w_1}| -|g| \geq |\sigma_{w_2}| - C - |g| \geq C'-C.
   $$
   Consequently, the number of edges $\sigma_{y_i}$ on $L'$ that belongs to triangles of the path $\Delta_1,\Delta_2,\ldots$ is at least 
   $\frac{C'-C}{C}$. Since for even $j$ we have $|\sigma_{w_j}|\leq |\sigma_{w_{j-1}}|+C$, there must be at most $C$ edges $\sigma_{y_i}$ between
   $\sigma_{w_j}$ and $\sigma_{w_{j-1}}$. Therefore, we have that 
   $$
      \ell \geq \frac{C'-C}{C^2}.
   $$
\end{proof}

\begin{lemma}\label{lem:largetree1d}
   Let $P=\{w_1,w_2,\ldots,w_\ell\}$ be a path of $H$.
   Then
   $$
      \sum_{x\in P\cap \finc^g(\sigma)} \rho_g(\sigma,x)
      \leq -\sum_{z\in P\cap \fdec^g(\sigma)} \alpha^{-2\psi_z}\rho_g(\sigma,z).
   $$
\end{lemma}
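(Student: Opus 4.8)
The plan is to handle a single path $P=\{w_1,w_2,\ldots,w_\ell\}$ of $H$ by distinguishing two cases according to whether the endpoints $w_1,w_\ell$ lie in $X$ (hence are increasing edges) or in $\fdec^g(\sigma)\setminus F_{|g|+C'}(\sigma)$ (hence are decreasing edges). Since $P$ is a path in a bipartite graph whose parts are $X$ and $\fdec^g(\sigma)\setminus F_{|g|+C'}(\sigma)$, its vertices alternate between increasing and decreasing midpoints. If at least one endpoint of $P$ is a decreasing midpoint, then $|P\cap\fdec^g(\sigma)| \geq |P\cap\finc^g(\sigma)|$, and I can set up an \emph{injection} $\phi$ from $P\cap\finc^g(\sigma)$ into $P\cap\fdec^g(\sigma)$ mapping each increasing midpoint $x=w_i$ to one of its neighbours $w_{i-1}$ or $w_{i+1}$ on the path (a decreasing midpoint, hence an element of $\tau^{-1}(\sigma,x)$ by the definition of $H$). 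For each such pair $x, z=\phi(x)$ I want the pointwise estimate $\rho_g(\sigma,x) \leq -\alpha^{-2\psi_z}\rho_g(\sigma,z)$, and summing over $x$ and bounding the (nonpositive) leftover terms $-\sum_{z \text{ unmatched}} \alpha^{-2\psi_z}\rho_g(\sigma,z)$ by $\leq 0$ gives the claim.

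To obtain that pointwise estimate, recall from~\eqref{eq:defrho} that for $x\in\finc^g(\sigma)$ we have $\rho_g(\sigma,x) = \alpha^{|\sigma_x|-|g|}(\alpha\lambda)^{2\psi_x}/(1+\lambda^{2\psi_x})$ and for $z\in\fdec^g(\sigma)$ we have $-\rho_g(\sigma,z) = \alpha^{|\sigma_z|-|g|}/(1+\lambda^{2\psi_z})$. The key geometric input is Lemma~\ref{lem:1d}: along the path $P$ all the $\psi$-values coincide, so $\psi_x = \psi_z =: \psi$ whenever $x,z$ are adjacent on $P$. Since $x$ is a leaf of the tree rooted at $z$ and $x\notin\tau_\sleaves(\sigma,z)$, we also have $|\sigma_x|\geq|\sigma_z|-C$, whence $|\sigma_x|-|g| \geq |\sigma_z|-|g|-C$ — wait, that inequality points the wrong way; I instead use $|\sigma_x| \leq |\sigma_z|$ (a leaf is shorter than the root, by Proposition~\ref{pro:tautree}\ref{it:treesquare} and the strict length decrease along tree edges) to get $\alpha^{|\sigma_x|-|g|} \leq \alpha^{|\sigma_z|-|g|}$. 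Therefore
$$
   \rho_g(\sigma,x) = \frac{\alpha^{|\sigma_x|-|g|}(\alpha\lambda)^{2\psi}}{1+\lambda^{2\psi}}
   \leq \alpha^{|\sigma_z|-|g|}\cdot\frac{(\alpha\lambda)^{2\psi}}{1+\lambda^{2\psi}}
   = (\alpha\lambda)^{2\psi}\cdot\bigl(-\rho_g(\sigma,z)\bigr)
   \leq \alpha^{-2\psi}\bigl(-\rho_g(\sigma,z)\bigr),
$$
using $\alpha\lambda \leq \alpha^{-1}$ (since $\alpha<\lambda^{-1/2}$ means $\alpha^2\lambda<1$, so $\alpha\lambda < \alpha^{-1}$) at the last step. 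This is exactly $\rho_g(\sigma,x)\leq -\alpha^{-2\psi_z}\rho_g(\sigma,z)$.

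The main obstacle is the remaining case: when \emph{both} endpoints $w_1,w_\ell$ lie in $X$, so that $P$ contains one more increasing midpoint than decreasing midpoints, and no injection of the above kind exists. Here I must spend the "extra length" of the path, which is where Lemma~\ref{lem:1d} does its real work: it guarantees $\ell \geq (C'-C)/C^2$, so there are at least $(C'-C)/C^2 - 1$ decreasing midpoints on $P$, all with the same $\psi$-value $\psi$ and all of length at least $|g|+C'$. The strategy is to still match each of the "surplus-free" increasing midpoints to a distinct decreasing neighbour as above, incurring the same pointwise bound; the single unmatched increasing midpoint $x^*$ then has $\rho_g(\sigma,x^*) = \alpha^{|\sigma_{x^*}|-|g|}(\alpha\lambda)^{2\psi}/(1+\lambda^{2\psi}) \leq \alpha^{C-|g|}$ (since leaves outside $\tau_\sleaves$ that are surplus have bounded excess $|\sigma_{x^*}|-|g|$... actually $|\sigma_{x^*}|$ can be large, so instead I bound $\rho_g(\sigma,x^*)$ by a small multiple of $-\rho_g(\sigma,z^*)$ for any decreasing $z^*$ on $P$, using $|\sigma_{x^*}|\leq|\sigma_{z^*}|$ when they share a triangle, or more crudely using that $P$ is long so $\sum_{z\in P\cap\fdec^g}\alpha^{-2\psi}(-\rho_g(\sigma,z))$ has at least $(C'-C)/C^2-1$ comparable terms each of which dominates $\frac{1}{(C'-C)/C^2-1}\rho_g(\sigma,x^*)$ after using the condition~\eqref{eq:condcp} relating $C'$ and $C$). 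Carefully, I expect to show $\rho_g(\sigma,x^*) \leq \sum_{z\in P\cap\fdec^g(\sigma)} \bigl(\alpha^{-2\psi_z} - \text{(share already used)}\bigr)\bigl(-\rho_g(\sigma,z)\bigr)$ by distributing the surplus term across all decreasing midpoints of $P$, exploiting that $\alpha^{-2\psi} \leq \alpha^{-2}$ leaves slack $\alpha^{-2\psi}$ vs. what the matching used, together with $|\sigma_z|\geq |g|+C'$ and the second inequality in~\eqref{eq:condcp}, $4x\alpha^{2C}\leq\alpha^x$ for $x\geq C'$. Getting the bookkeeping of this redistribution right — so that the total coefficient of each $-\rho_g(\sigma,z)$ stays $\leq \alpha^{-2\psi_z}$ — is the delicate point, and Lemma~\ref{lem:1d}'s lower bound on $\ell$ is precisely what makes it possible.
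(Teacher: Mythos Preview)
Your overall strategy matches the paper's: treat the easy case (at least one endpoint of $P$ is decreasing) by an injection from increasing to adjacent decreasing midpoints together with a pointwise bound, and treat the hard case (both endpoints in $X$) by distributing the single surplus increasing term across the many decreasing terms using the length lower bound from Lemma~\ref{lem:1d}. Your easy case is fine and yields exactly $\rho_g(\sigma,x)\leq -\alpha^{-2\psi_z}\rho_g(\sigma,z)$, which suffices for the statement.

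The hard case, however, is where your proposal has two concrete gaps. First, you never say \emph{which} increasing midpoint to leave unmatched. The paper picks $w_j$ with $|\sigma_{w_j}|$ minimal over the whole path; this is essential, because it guarantees $\rho_g(\sigma,w_j)\leq\rho_g(\sigma,w_i)$ for every other increasing $w_i$, so one can write
\[
\sum_{\text{odd }i}\rho_g(\sigma,w_i)\leq\sum_{\text{odd }i\neq j}\Bigl(1+\tfrac{1}{\kappa}\Bigr)\rho_g(\sigma,w_i),\qquad \kappa=\tfrac{\ell-1}{2},
\]
and then match each $w_i$ (for $i\neq j$) to its decreasing neighbour $w_{i\pm1}$. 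Without choosing the minimum you cannot compare the leftover term to the others, and your vaguer idea of comparing $\rho_g(\sigma,x^*)$ to ``any decreasing $z^*$ on $P$'' fails because $|\sigma_{x^*}|$ need not be bounded by $|\sigma_{z^*}|$ unless $z^*$ is a neighbour of $x^*$.

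Second, your pointwise bound in the easy case used only $|\sigma_x|\leq|\sigma_z|$, which exhausts the full budget $\alpha^{-2\psi}$ per decreasing $z$ and leaves no slack to absorb the extra factor $(1+1/\kappa)$. The paper instead exploits the 1-dimensional structure established in Lemma~\ref{lem:1d}: along the path of triangles each step changes length by exactly $\psi=|\sigma_{y_1}|$, so for adjacent $w_i\in X$, $w_{i\pm1}\in\fdec^g(\sigma)$ one has the sharper inequality $|\sigma_{w_i}|\leq|\sigma_{w_{i\pm1}}|-\psi$. This yields $\rho_g(\sigma,w_i)\leq -\alpha^{-3\psi}\rho_g(\sigma,w_{i\pm1})$, and the saved factor $\alpha^{\psi}\geq\alpha$ is precisely what absorbs $(1+1/\kappa)\leq\alpha$ (this last inequality is what the first condition in~\eqref{eq:condcp} is designed to give). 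With only $|\sigma_x|\leq|\sigma_z|$ you would need $(1+1/\kappa)(\alpha^2\lambda)^{2\psi}\leq1$, which does not follow from the stated hypotheses on $C,C'$.
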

\begin{proof}
   Since $H$ is bipartite, the midpoints in $P$ must alternate between increasing and decreasing edges of $\sigma$. 
   Also, if $\sigma_{w_i}$ is increasing, then $|\sigma_{w_i}|\leq |\sigma_{w_{i-1}}| \land |\sigma_{w_{i+1}}|$.
   By Lemma~\ref{lem:1d} we have that all $\psi_{w_i}$ are the same; for simplicity we write $\psi=\psi_{w_i}$. 
   If $\sigma_{w_\ell}$ is decreasing, then the lemma follows
   since each increasing $\sigma_{w_i}$ can be associated with the decreasing edge $\sigma_{w_{i+1}}$, and we can write 
   \begin{equation}
      \rho_g(\sigma,w_i)
      = \frac{\alpha^{|\sigma_{w_{i}}|-|g|}(\alpha\lambda)^{2\psi}}{1+\lambda^{2\psi}}
      \leq \frac{\alpha^{|\sigma_{w_{i+1}}|-|g|-\psi}(\alpha\lambda)^{2\psi}}{1+\lambda^{2\psi}}
      \leq \frac{\alpha^{|\sigma_{w_{i+1}}|-|g|-3\psi}}{1+\lambda^{2\psi}}
      = -\alpha^{-3\psi}\rho_g(\sigma,w_{i+1}).
      \label{eq:nicepaths}
   \end{equation}
   Similarly, if $\sigma_{w_1}$ is decreasing but $\sigma_{w_\ell}$ is increasing, 
   then associate each increasing $\sigma_{w_i}$ with the decreasing edge $\sigma_{w_{i-1}}$, 
   and the lemma follows by an analogous argument as in~\eqref{eq:nicepaths}.
   
   It remains to establish the lemma when both $\sigma_{w_1}$ and $\sigma_{w_\ell}$ are increasing.
   In this case, let $j$ be such that $\sigma_{w_j}$ is the smallest edge of $\sigma$ with midpoint in $P$.
   Clearly, $\sigma_{w_j}$ must be an increasing edge. 
   The idea is to associate each increasing edge that is not $\sigma_{w_j}$ to a different decreasing edge, 
   and then split $\rho_g(\sigma,w_j)$ among all $\frac{\ell-1}{2}$ decreasing edges of $P$. 
   Since $\sigma_{w_j}$ is the smallest edge, and $\ell$ is large enough, this extra 
   addition can be controlled.
   Letting $\kappa=\frac{\ell-1}{2}\geq\frac{C'-C}{2C^2}-\frac{1}{2}$,
   we write
   \begin{align*}
      \sum_{x\in P\cap \finc^g(\sigma)} \rho_g(\sigma,x)
      &= \sum_{x\in P\cap \finc^g(\sigma)} \frac{\alpha^{|\sigma_x|-|g|}(\alpha\lambda)^{2\psi_x}}{1+\lambda^{2\psi_x}}\\
      &= \frac{\alpha^{|\sigma_{w_j}|-|g|}(\alpha\lambda)^{2\psi}}{1+\lambda^{2\psi}} 
         + \sum_{\text{odd $i\neq j$}} \frac{\alpha^{|\sigma_{w_i}|-|g|}(\alpha\lambda)^{2\psi}}{1+\lambda^{2\psi}}\\
      &\leq \sum_{\text{odd $i\neq j$}} \left(1+\frac{1}{\kappa}\right)\frac{\alpha^{|\sigma_{w_i}|-|g|}(\alpha\lambda)^{2\psi}}{1+\lambda^{2\psi}},
   \end{align*}
   where in the inequality we used that $\sigma_{w_j}$ is the smallest among all edges in $P$, and that there are $\kappa$ terms in the summation.
   Now for $i<j$, associate each increasing edge $\sigma_{w_i}$ to the decreasing edge $\sigma_{w_{i+1}}$, obtaining
   \begin{align}
      \left(1+\frac{1}{\kappa}\right)\frac{\alpha^{|\sigma_{w_{i}}|-|g|}(\alpha\lambda)^{2\psi}}{1+\lambda^{2\psi}}
      &\leq \left(1+\frac{1}{\kappa}\right)\frac{\alpha^{|\sigma_{w_{i+1}}|-\psi-|g|}(\alpha\lambda)^{2\psi}}{1+\lambda^{2\psi}}\nonumber\\
      &\leq \left(1+\frac{1}{\kappa}\right)\frac{\alpha^{|\sigma_{w_{i+1}}|-3\psi-|g|}}{1+\lambda^{2\psi}}\nonumber\\
      &\leq \frac{\alpha^{|\sigma_{w_{i+1}}|-2\psi-|g|}}{1+\lambda^{2\psi}}
      = -\alpha^{-2\psi}\rho_g(\sigma,w_{i+1}),
      \label{eq:toughpaths}
   \end{align}
   where in the last inequality we use that $1+1/\kappa\leq \alpha$ from the condition on $C'$ in~\eqref{eq:condcp}.
   For $i>j$, we associate each increasing edge $\sigma_{w_i}$ to the decreasing edge $\sigma_{w_{i-1}}$, and perform the same derivation as
   in~\eqref{eq:toughpaths}. This completes the proof of the lemma.
\end{proof}

%############################################################################################
\subsection{Expected change in terms of decreasing edges}\label{sec:decrease}
The next lemma puts together the results from Sections~\ref{sec:sleaves} and~\ref{sec:1dconfig} to show that 
the expected change in the Lyapunov function can be written as a function of only the decreasing edges, and only those that are large enough.
\begin{lemma}\label{lem:rootdominates}
   There exists a positive constant $c_1=c_1(\alpha,C,C')$ such that
   given any boundary condition $\xi\in\Xi(\Lambda^0)$, 
   any triangulation $\sigma\in\Omega^\xi$ and any ground state edge $g\in\mathbb{G}^\xi$, we have
   $$
      \E_\sigma\big(\Psi_g(\sigma')-\Psi_g(\sigma)\big)
      \leq \frac{c_1}{|\Lambda|}
           + \frac{(\alpha^2-1)}{2\alpha^2|\Lambda|} \sum_{z\in \fdec^g(\sigma)\setminus F_{|g|+C'}(\sigma)}\rho_g(\sigma,z).
   $$
   Consequently, if
   $
      \sum_{z\in \fdec^g(\sigma)\setminus F_{|g|+C'}(\sigma)}\rho_g(\sigma,z)
      \leq -\frac{4\alpha^2c_1}{\alpha^2-1},
   $
   we obtain
   $$
      \E_\sigma\big(\Psi_g(\sigma')-\Psi_g(\sigma)\big)
      \leq \frac{(\alpha^2-1)}{4\alpha^2|\Lambda|}\sum_{z\in \fdec^g(\sigma)\setminus F_{|g|+C'}(\sigma)}\rho_g(\sigma,z).
   $$
\end{lemma}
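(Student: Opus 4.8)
The plan is to combine the exact decomposition~\eqref{eq:simpledecomp} with Lemma~\ref{lem:individual}, which reduces the claim to bounding $\sum_{z\in\fdec^g(\sigma)}\rho_g(\sigma,z)+\sum_{x\in\finc^g(\sigma)}\rho_g(\sigma,x)$. Write $D=\fdec^g(\sigma)\setminus F_{|g|+C'}(\sigma)$, $D'=\fdec^g(\sigma)\cap F_{|g|+C'}(\sigma)$, and $S=\sum_{z\in D}\rho_g(\sigma,z)\le 0$; since $\rho_g(\sigma,z)<0$ on $D'$ we may discard those terms. Each $x\in\finc^g(\sigma)$ is a leaf of both trees containing it, and at least one of its roots lies in $\fdec^g(\sigma)$, by Propositions~\ref{pro:twotrees} and~\ref{pro:squaregeom}. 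I would partition $\finc^g(\sigma)=A\sqcup B\sqcup X$: $A$ is the set of $x$ that are a \emph{small} leaf, $x\in\tau_\sleaves(\sigma,z)$, of some root $z\in\fdec^g(\sigma)$; $B$ is the set of the remaining $x$ having a root $z\in\fdec^g(\sigma)$ with $|\sigma_z|\le|g|+C'$; and $X$ is the set defined in Section~\ref{sec:1dconfig}. A quick check against the definition of $X$ shows this is a partition.

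For $A$: since $A\subseteq\bigcup_{z\in\fdec^g(\sigma)}\tau_\sleaves(\sigma,z)$ and $\rho_g(\sigma,\cdot)\ge 0$ on leaves, $\sum_{x\in A}\rho_g(\sigma,x)\le\sum_{z\in\fdec^g(\sigma)}\sum_{x\in\tau_\sleaves(\sigma,z)}\rho_g(\sigma,x)$, which I split over $z\in D$ and $z\in D'$. For $z\in D$ (so $|\sigma_z|\ge|g|+C'\ge C'$) the second bound of Lemma~\ref{lem:largetreesmalledges} contributes in total at most $-5\alpha^{-C}S$. For $z\in D'$ the first bound of Lemma~\ref{lem:largetreesmalledges} bounds each summand by $2|\sigma_z|\alpha^{C-|g|}-4\alpha^{-C}\rho_g(\sigma,z)$; both pieces are at most a constant, using $|\sigma_z|-|g|\le C'$ and that $t\mapsto t\alpha^{-t}$ is bounded on $[1,\infty)$, and by Proposition~\ref{pro:smalltrees}\ref{it:alltriang} we have $|D'|\le|I_g(\sigma,C')|\le cC'^2$, so the full $D'$-contribution is a constant. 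For $B$: each $x$ is a \emph{large} leaf of a small tree, hence $|\sigma_x|\le|g|+C'$ and $\rho_g(\sigma,x)\le\alpha^{|\sigma_x|-|g|}\le\alpha^{C'}$; moreover, by Proposition~\ref{pro:tautree}\ref{it:leaves} each small root $z\in\fdec^g(\sigma)$ has at most a constant number of large leaves (at most two once $|\sigma_z|>2C$, and at most $4C$ otherwise), and there are at most $cC'^2$ such roots, so $|B|$ and thus $\sum_{x\in B}\rho_g(\sigma,x)$ are bounded by a constant.

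For $X$ I would use the bipartite graph $H$, which by Lemma~\ref{lem:1d} is a disjoint union of paths with every $x\in X$ on a path of length at least two; summing Lemma~\ref{lem:largetree1d} over all paths gives $\sum_{x\in X}\rho_g(\sigma,x)\le-\sum_z\alpha^{-2\psi_z}\rho_g(\sigma,z)$, the sum running over the $\fdec^g$-side vertices occurring in some path, which form a subset of $D$; as $\psi_z\ge 1$ and $\rho_g(\sigma,z)<0$ this is at most $-\alpha^{-2}S$. Adding the three classes and the $D$-sum, $\E_\sigma\big(\Psi_g(\sigma')-\Psi_g(\sigma)\big)\le\frac{c_1}{|\Lambda|}+\frac{1}{|\Lambda|}\big(1-\alpha^{-2}-5\alpha^{-C}\big)S$ for a constant $c_1=c_1(\alpha,C,C')$. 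Since $5\alpha^{-C}\le\frac{\alpha^2-1}{2\alpha^2}$ — which follows from $\alpha^{-C}\le\alpha^{-C/2}\le C\alpha^{-C/2}\le\frac{\alpha^2-1}{10\alpha^2}$ by~\eqref{eq:condclarge} and $C>1$ — and $S\le 0$, we get $\big(1-\alpha^{-2}-5\alpha^{-C}\big)S\le\frac{\alpha^2-1}{2\alpha^2}S$, the first claim. The second claim is pure algebra: if $S\le-\frac{4\alpha^2c_1}{\alpha^2-1}$ then $c_1\le-\frac{\alpha^2-1}{4\alpha^2}S$, and substituting gives the stated bound.

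The step I expect to be the main obstacle is not any single estimate but the bookkeeping behind the partition $A\sqcup B\sqcup X$: one must ensure that every contribution \emph{other} than those attached to the large decreasing edges in $D$ collapses to an absolute constant. This is exactly where the $|g|$-independence of Proposition~\ref{pro:smalltrees}\ref{it:alltriang} is essential — both for the number of small decreasing edges meeting $g$ and for the number of large leaves of small trees — and where one needs the decay $t\alpha^{-t}\to0$ to absorb the terms $2|\sigma_z|\alpha^{C-|g|}$ uniformly in $|g|$; after that, balancing the factor $\alpha^{-2}$ gained from Lemma~\ref{lem:largetree1d} against the loss $5\alpha^{-C}$ from the small leaves via~\eqref{eq:condclarge} is routine.
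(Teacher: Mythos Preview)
Your proposal is correct and follows essentially the same route as the paper: your $A$/$B$/$X$ split of $\finc^g(\sigma)$ plays the role of the paper's $K_{\mathrm{small}}$/$K_{\mathrm{big}}$ split, and in both cases Lemmas~\ref{lem:largetreesmalledges} and~\ref{lem:largetree1d} are fed into~\eqref{eq:simpledecomp} to produce the coefficient $1-\alpha^{-2}-5\alpha^{-C}$, while Proposition~\ref{pro:smalltrees} controls the constant. The one oversight is that your appeal to Proposition~\ref{pro:squaregeom} for ``at least one root lies in $\fdec^g(\sigma)$'' does not cover the case $\sigma_x=g$ (that proposition explicitly excludes it); the paper handles this by separating out the single midpoint $w$ with $\sigma_w\in\mathbb{G}^\xi$, which contributes at most $\alpha^{-2}$ and is absorbed into~$c_1$.
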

\begin{remark}\label{rem:rootdominates}
In Lemma~\ref{lem:rootdominates}, 
recall that $\rho_g(\sigma,z)<0$ for all $z\in\fdec^g(\sigma)$, which implies that 
$$
   \E_\sigma\big(\Psi_g(\sigma')-\Psi_g(\sigma)\big)
   \leq \frac{c_1}{|\Lambda|}
   \quad\text{for all $\sigma\in\Omega^\xi$}.
$$
\end{remark}
\begin{proof}[Proof of Lemma~\ref{lem:rootdominates}]
   Among the increasing edges, only the ones in $\finc^g(\sigma)$ can change the value of $\Psi_g(\sigma)$, cf.~\eqref{eq:simpledecomp}. 
   From Proposition~\ref{pro:squaregeom}, for any $x\in \finc^g(\sigma)$ that is not in ground state, we have that there is a 
   decreasing edge $\sigma_z$ 
   so that $x\in \tau(\sigma,z)$ and $\sigma_z$ intersects $g$. Consequently, $z\in \fdec^g(\sigma)$.
   Note that $\sigma_z$ is not a constraint edge and is not in ground state, 
   since no constraint edge can intersect $g$ and no decreasing edge can be in ground state.
   
   Let $K\subset \Lambda$ be defined as 
   $$
      K=\big\{x\in \fdec^g(\sigma) \cup \finc^g(\sigma) \colon \tau^{-1}(\sigma,x)\cap \fdec^g(\sigma)\neq \emptyset\big\}.
   $$
   By Proposition~\ref{pro:squaregeom}, $K$ includes all flippable edges that are not in ground state and either intersect $g$ 
   or will intersect $g$ after being flipped. 
   Let $w$ be the midpoint of $g$.
   For the ground state edges of $\sigma$, only the one with midpoint $w$ can intersect $g$.
   Therefore, from~\eqref{eq:simpledecomp}, we write
   \begin{align}
      &\E_\sigma\big(\Psi_g(\sigma')-\Psi_g(\sigma)\big)\nonumber\\
      &\leq \frac{\ind{\sigma_w \in\mathbb{G}^\xi, w\in \finc(\sigma)}}{|\Lambda|}\E_\sigma\big(\Psi_g(\tilde \sigma^w)-\Psi_g(\sigma)\big)
         + \frac{1}{|\Lambda|}\sum_{x\in K}\E_\sigma\big(\Psi_g(\tilde \sigma^x)-\Psi_g(\sigma)\big)\nonumber\\
      &\leq \frac{\ind{\sigma_w \in\mathbb{G}^\xi, w\in \finc(\sigma)}}{|\Lambda|} \alpha^{|\sigma_w|-|g|}(\alpha\lambda)^{2\psi_w} 
         + \frac{1}{|\Lambda|}\sum_{x\in K}\E_\sigma\big(\Psi_g(\tilde \sigma^x)-\Psi_g(\sigma)\big)\nonumber\\
      &\leq \frac{\ind{\sigma_w \in\mathbb{G}^\xi, w\in \finc(\sigma)}}{|\Lambda|} \alpha^{-2}
         + \frac{1}{|\Lambda|}\sum_{x\in K}\E_\sigma\big(\Psi_g(\tilde \sigma^x)-\Psi_g(\sigma)\big).
      \label{eq:condexp}
   \end{align}
   We split $K$ into two sets $K_\mathrm{big},K_\mathrm{small}$. 
   Define 
   $$
      K_\mathrm{big} 
      =\big\{x\in K \colon \forall z\in \tau^{-1}(\sigma,x) \cap \fdec^g(\sigma) \text{ we have } |\sigma_z|> |g|+C'\big\}.
   $$
   This set is related to the vertices of the graph $H$ in Section~\ref{sec:1dconfig}. Also define
   $$
      K_\mathrm{small} 
      = K \setminus K_\mathrm{big}.
   $$
   
   We start with $K_\mathrm{small}$. 
   Note that for all $x\in K_\mathrm{small}\cap \finc^g(\sigma)$, 
   there exists an edge $z\in \tau^{-1}(\sigma,x)\cap\fdec^g(\sigma)$ such that $|\sigma_z|\leq|g|+C'$. 
   Hence, for each $x\in K_\mathrm{small}\cap \finc^g(\sigma)$, we can associate one $z\in \tau^{-1}(\sigma,x)\cap \fdec^g(\sigma)$ such that 
   $|\sigma_x|< |\sigma_z|\leq |g|+C'$. 
   Thus,
   \begin{align}
      \frac{1}{|\Lambda|}\sum_{x\in K_\mathrm{small}}\E_\sigma\big(\Psi_g(\tilde \sigma^x)-\Psi_g(\sigma)\big)
      &\leq \frac{1}{|\Lambda|}\sum_{x\in K_\mathrm{small}\cap \finc^g(\sigma)}\E_\sigma\big(\Psi_g(\tilde \sigma^x)-\Psi_g(\sigma)\big)\nonumber\\
      &\leq \frac{1}{|\Lambda|}\sum_{x\in K_\mathrm{small}\cap \finc^g(\sigma)}\alpha^{|\sigma_x|-|g|-2\psi_x}
      \leq \frac{\alpha^{C'}}{|\Lambda|}\sum_{x\in K_\mathrm{small}\cap \finc^g(\sigma)}\alpha^{-2\psi_x}.\nonumber
      %&\leq \frac{2 c (|b_*|+C')^5|g|}{|\Lambda|}\alpha^{|b_*|+C'-2}.
      %\label{eq:small}
   \end{align}   
   Note that for each $x\in K_\mathrm{small}\cap \finc^g(\sigma)$, either $\sigma_x$ or $\sigma_x^x$ intersects $g$.
   In the first case, $\sigma_x$ belongs to the set $I_g(\sigma,C')$, which is the set of edges of $\sigma$ of size at most $|g|+C'$ that intersect $g$.
   By Proposition~\ref{pro:smalltrees}, we have $|I_g(\sigma,C')|\leq c_1{C'}^2$, for some positive constant $c_1$.
   For the case when $\sigma_x^x$ intersects $g$ but $\sigma_x$ does not, then one edge $\sigma_y$ in the same triangle as $\sigma_x$ must intersect $g$. Clearly, 
   $|\sigma_y|\leq |\sigma_x|+\psi_x \leq |g|+C'+\psi_x$, 
   giving that $\sigma_y\in I_g(\sigma,C'+\psi_x)$. 
   Since for each such edge $\sigma_y$ there are at most 
   four other edges in the same triangle as $\sigma_y$, we obtain
   \begin{align}
      \frac{1}{|\Lambda|}\sum_{x\in K_\mathrm{small}}\E_\sigma\big(\Psi_g(\tilde \sigma^x)-\Psi_g(\sigma)\big)
      &\leq \frac{\alpha^{C'}}{|\Lambda|}\left(\alpha^{-2}c_1{C'}^2+\sum_{i\geq 1}4|I_g(\sigma,C'+i)|\alpha^{-2i}\right) \nonumber\\
      &\leq \frac{\alpha^{C'}}{|\Lambda|}\left(\alpha^{-2}c_1{C'}^2+\sum_{i\geq 1}4c_1(C'+i)^2\alpha^{-2i}\right) \nonumber\\
      &\leq \frac{c_2}{|\Lambda|},
      \label{eq:small}
   \end{align}   
   for some positive constant $c_2=c_2(\alpha,C')$. An important feature of the bound above is that it does not depend on $|g|$.
   
   Now for $K_\mathrm{big}$ we have that 
   \begin{align}
      \frac{1}{|\Lambda|}\sum_{x\in K_\mathrm{big}}\E_\sigma\big(\Psi_g(\tilde \sigma^x)-\Psi_g(\sigma)\big)
      &\leq \frac{1}{|\Lambda|}\sum_{z \in \fdec^g(\sigma) \setminus F_{|g|+C'}(\sigma)}\E_\sigma\big(\Psi_g(\tilde \sigma^z)-\Psi_g(\sigma)\big)\nonumber\\
      &\quad+ \frac{1}{|\Lambda|}\sum_{z \in \fdec^g(\sigma) \setminus F_{|g|+C'}(\sigma)}\sum_{x\in\tau(\sigma,z)\cap \finc(\sigma)}
            \E_\sigma\big(\Psi_g(\tilde \sigma^x)-\Psi_g(\sigma)\big).
      \label{eq:kbig}
   \end{align}
   Using Lemmas~\ref{lem:largetreesmalledges} and~\ref{lem:largetree1d} we obtain
   \begin{align*}
      &\frac{1}{|\Lambda|}\sum_{z \in \fdec^g(\sigma) \setminus F_{|g|+C'}(\sigma)}\sum_{x\in\tau(\sigma,z)\cap \finc(\sigma)}
         \E_\sigma\big(\Psi_g(\tilde \sigma^x)-\Psi_g(\sigma)\big)\\
      &\leq -\left(5\alpha^{-C}+\frac{1}{\alpha^2}\right)\frac{1}{|\Lambda|}\sum_{z \in \fdec^g(\sigma) \setminus F_{|g|+C'}(\sigma)}
         \E_\sigma\big(\Psi_g(\tilde \sigma^z)-\Psi_g(\sigma)\big).
   \end{align*}
   Plugging this into~\eqref{eq:kbig}, and using that $1-5\alpha^{-C}-\alpha^{-2}=\frac{\alpha^2-1}{\alpha^2}-5\alpha^{-C}\leq\frac{\alpha^2-1}{2\alpha^2}$ we have
   \begin{align}
      \frac{1}{|\Lambda|}\sum_{x\in K_\mathrm{big}}\E_\sigma\big(\Psi_g(\tilde \sigma_x)-\Psi_g(\sigma)\big)
      &\leq \left(\frac{\alpha^2-1}{2\alpha^2}\right)\frac{1}{|\Lambda|}\sum_{z \in \fdec^g(\sigma) \setminus F_{|g|+C'}(\sigma)}
         \rho_g(\sigma,z).
      \label{eq:big}
   \end{align}
   Putting~\eqref{eq:small} and~\eqref{eq:big} together into~\eqref{eq:condexp} concludes the proof.
\end{proof}

%############################################################################################
\subsection{Long decreasing edges dominate the Lyapunov function}\label{sec:decdominate}
From Section~\ref{sec:decrease} we have that the change in the Lyapunov function can be written as a sum of $\rho_g(\sigma,x)$ over all $x$ such that 
$\sigma_x$ is decreasing and large enough. If this sum is small enough, then the Lyapunov function decreases in expectation. However, we want to write that the 
decrease in the Lyapunov function is a function of $\Psi_g(\sigma)$, the value of the function. 
The next two lemmas are used to establish this. They show that $\Psi_g(\sigma)$ can be written as a constant times a sum over decreasing edges.
\begin{lemma}\label{lem:treeroot}
   For any boundary condition $\xi\in\Xi(\Lambda^0)$, any triangulation $\sigma\in\Omega^\xi$ and any $z\in \fdec(\sigma)$, we have
   $$
      \sum_{x\in\tau(\sigma,z)} \alpha^{|\sigma_{x}|}
      \leq \left(\frac{\alpha+1}{\alpha-1}\right)\alpha^{|\sigma_z|}+10(C-1)\alpha^C|\sigma_z|.
   $$
   If $|\sigma_z|\geq C'$, the bound above simplifies to
   $$
      \sum_{x\in\tau(\sigma,z)} \alpha^{|\sigma_{x}|}
      \leq \left(\frac{2\alpha}{\alpha-1}\right)\alpha^{|\sigma_z|}.
   $$
\end{lemma}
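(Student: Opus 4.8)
The plan is to reduce the estimate to a one-sided inequality for a single subtree and then prove that by induction from the leaves toward the root.

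\emph{Reduction.} Since $z\in\fdec(\sigma)$ roots the two trees $\tau^{(1)}(\sigma,z)$ and $\tau^{(2)}(\sigma,z)$, which meet only at $z$, we have
$$
 \sum_{x\in\tau(\sigma,z)}\alpha^{|\sigma_x|}=\sum_{x\in\tau^{(1)}(\sigma,z)}\alpha^{|\sigma_x|}+\sum_{x\in\tau^{(2)}(\sigma,z)}\alpha^{|\sigma_x|}-\alpha^{|\sigma_z|},
$$
so it is enough to prove, for an arbitrary vertex $r$ of a tree with descendant subtree $D(r)$ (which is $\tau^{(i)}(\sigma,z)$ when $r=z$), the one-sided bound
$$
 \sum_{x\in D(r)}\alpha^{|\sigma_x|}\ \le\ \frac{\alpha}{\alpha-1}\,\alpha^{|\sigma_r|}+5(C-1)\alpha^{C}|\sigma_r|,
$$
because $2\cdot\frac{\alpha}{\alpha-1}-1=\frac{\alpha+1}{\alpha-1}$ and $2\cdot 5(C-1)=10(C-1)$. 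The simplified bound for $|\sigma_z|\ge C'$ will then follow by absorbing $10(C-1)\alpha^{C}|\sigma_z|$ into the gap $\alpha^{|\sigma_z|}=\frac{2\alpha}{\alpha-1}\alpha^{|\sigma_z|}-\frac{\alpha+1}{\alpha-1}\alpha^{|\sigma_z|}$, using that $\alpha^{|\sigma_z|}\ge 4|\sigma_z|\alpha^{2C}$ for $|\sigma_z|\ge C'$ by~\eqref{eq:condcp} while $\alpha^{C}$ is large by~\eqref{eq:condclarge}.

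\emph{The induction.} I would induct on $|\sigma_r|$, which is legitimate since by~\eqref{eq:sumofchildren} the two children of an internal vertex have strictly smaller (positive integer) length. If $r$ is a leaf the bound is trivial. If $|\sigma_r|\le C$, every edge of $D(r)$ has length $\le C$ and, iterating~\eqref{eq:sumofchildren}, the leaf lengths of $D(r)$ sum to $|\sigma_r|$; hence $D(r)$ has at most $|\sigma_r|$ leaves and at most $2|\sigma_r|-1$ vertices, so $\sum_{x\in D(r)}\alpha^{|\sigma_x|}\le(2|\sigma_r|-1)\alpha^{|\sigma_r|}$, which the right-hand side dominates once $5(C-1)\ge 2$. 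If $|\sigma_r|>C$, let the two children of $r$ be $r_1,r_2$ with $\ell_1=|\sigma_{r_1}|\ge\ell_2=|\sigma_{r_2}|$ and $\ell_1+\ell_2=|\sigma_r|$, so $D(r)=\{r\}\sqcup D(r_1)\sqcup D(r_2)$, and apply the inductive hypothesis to $D(r_1),D(r_2)$. In the case $\ell_2>C$ (both children long) one has $\ell_1,\ell_2<|\sigma_r|-C$, so the two main terms combine to at most $\alpha^{|\sigma_r|}\bigl(1+\tfrac{2\alpha^{1-C}}{\alpha-1}\bigr)\le\frac{\alpha}{\alpha-1}\alpha^{|\sigma_r|}$, the last step being the inequality $2\alpha^{1-C}\le 1$, which follows from the conditions on $C$ in~\eqref{eq:condclarge}; the correction terms add up to $5(C-1)\alpha^{C}|\sigma_r|$. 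In the case $\ell_2\le C<\ell_1$ (one short child), $\ell_1\le|\sigma_r|-1$ gives $\alpha^{|\sigma_r|}+\tfrac{\alpha}{\alpha-1}\alpha^{\ell_1}\le\tfrac{\alpha}{\alpha-1}\alpha^{|\sigma_r|}$, while the short subtree $D(r_2)$ contributes at most $(2\ell_2-1)\alpha^{C}\le 5(C-1)\alpha^{C}\ell_2$, so the correction stays $\le 5(C-1)\alpha^{C}|\sigma_r|$; and the case $\ell_1\le C$ (both children short, so $|\sigma_r|\le 2C$) is subsumed by the crude vertex count used above. Thus the induction closes, the key mechanism being that the ``budget'' $5(C-1)\alpha^{C}|\sigma_r|$ increases by at least $5(C-1)\alpha^{C}$ each time a branching sheds a short subtree.

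\emph{The main obstacle.} The delicate part is the bookkeeping of the additive term: its coefficient must be large enough, relative to $C$, that (i) short subtrees hanging off long vertices are paid for out of the per-length budget and (ii) a long vertex branching into two long children yields a genuine contraction — and it is precisely at (ii) that the largeness conditions~\eqref{eq:condclarge} on $C$ are used (to get $2\alpha^{1-C}\le1$). Beyond that, the argument is the routine case analysis above combined with Proposition~\ref{pro:tautree}\ref{it:leaves} (equivalently the iterated form of~\eqref{eq:sumofchildren}) to control the number of leaves, and hence vertices, of each $\tau^{(i)}(\sigma,z)$.
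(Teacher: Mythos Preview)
Your argument is correct and takes a genuinely different route from the paper's. The paper does not induct; instead it splits $\tau(\sigma,z)$ into vertices with $|\sigma_x|>C$ and those with $|\sigma_x|\le C$. For the large vertices it runs a top-down contraction (essentially the same inequality $\alpha^{\ell_1}+\alpha^{\ell_2}\le\alpha^{-1}\alpha^{|\sigma_r|}$ that you use in the two-long-children case, but iterated directly rather than packaged as an induction), producing the $\frac{\alpha+1}{\alpha-1}\alpha^{|\sigma_z|}$ term. For the small vertices it invokes the geometric count of Proposition~\ref{pro:tautree}\ref{it:treesquare}: each ``maximal small'' vertex $w$ has at most $5|S(\sigma_w)|\le 5(C-1)$ descendants, and the total length of these maximal small vertices is at most $2|\sigma_z|$, yielding the $10(C-1)\alpha^C|\sigma_z|$ term.

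Your induction avoids the geometry of $S(\cdot)$ altogether: the vertex count $|D(r)|\le 2|\sigma_r|-1$ comes purely from the arithmetic fact that leaf lengths are positive integers summing to $|\sigma_r|$, i.e.\ only Proposition~\ref{pro:tautree}\ref{it:leaves} is used, not part~\ref{it:treesquare}. This is a bit more elementary, at the cost of a longer case analysis. One small point of imprecision: your ``both children short'' case is ambiguous as written. If you mean to apply the crude bound $(2|\sigma_r|-1)\alpha^{|\sigma_r|}$ to all of $D(r)$, it fails, since here $C<|\sigma_r|\le 2C$ and $(4C-1)\alpha^{2C}$ is not dominated by $5(C-1)\alpha^C\cdot 2C$. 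If instead you mean to handle the root separately and apply the crude count $(2\ell_i-1)\alpha^{C}$ to each child's subtree $D(r_i)$ (exactly as in your one-short-child case, now done twice), then it goes through immediately; you should make this explicit.
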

\begin{proof}
   First, we decompose
   $$
       \sum_{x\in\tau(\sigma,z)} \alpha^{|\sigma_{x}|} = \sum_{x\in \tau(\sigma,z)\setminus F_C(\sigma)} \alpha^{|\sigma_x|}
         +\sum_{x\in \tau(\sigma,z)\cap F_C(\sigma)} \alpha^{|\sigma_x|}.
   $$
   For the edges that are not small (i.e., the first sum in the right-hand side above), we use the tree of influence. 
   If $\sigma_{w_1}, \sigma_{w_2}, \sigma_{w_3}$ form a triangle in $\sigma$ such that 
   $|\sigma_{w_1}|>|\sigma_{w_2}|\geq|\sigma_{w_3}|$,
   and we set $\delta=\frac{1}{\alpha-1}$, then 
   \begin{equation}
      \delta\alpha^{|\sigma_{w_1}|}
      \geq (1+\delta)\left(\alpha^{|\sigma_{w_2}|}\ind{w_2\not\in F_C(\sigma)}
         +\alpha^{|\sigma_{w_3}|}\ind{w_3\not\in F_C(\sigma)}\right).
      \label{eq:downtree}
   \end{equation}
   In order to see this, note that $|\sigma_{w_1}|=|\sigma_{w_2}|+|\sigma_{w_3}|$, which gives that
   \begin{align*}
      &(1+\delta)\left(\alpha^{|\sigma_{w_2}|}\ind{w_2\not\in F_C(\sigma)}
         +\alpha^{|\sigma_{w_3}|}\ind{w_3\not\in F_C(\sigma)}\right)\\
      &= (1+\delta)\alpha^{|\sigma_{w_1}|}\left(\alpha^{-|\sigma_{w_3}|}\ind{w_2\not\in F_C(\sigma)}
         +\alpha^{-|\sigma_{w_2}|}\ind{w_3\not\in F_C(\sigma)}\right)\\
      &\leq \alpha^{|\sigma_{w_1}|}\left(\frac{1+\delta}{\alpha}\right)
      = \delta \alpha^{|\sigma_{w_1}|}.
   \end{align*}
   Let $w_1,w_2,w_3,w_4$ be the children of $z$ in $\tau(\sigma,z)$. Iterating~\eqref{eq:downtree}, we obtain
   \begin{align}
      \sum_{x\in \tau(\sigma,z)\setminus F_C(\sigma)}\alpha^{|\sigma_x|}
      \leq \alpha^{|\sigma_z|}+(1+\delta)\left(\alpha^{|\sigma_{w_1}|}+\alpha^{|\sigma_{w_2}|}+\alpha^{|\sigma_{w_3}|}+\alpha^{|\sigma_{w_4}|}\right)
      \leq (1+2\delta)\alpha^{|\sigma_z|}.
      \label{eq:largeverts}
   \end{align}
   For the small edges, note that an edge of length $\ell$ crosses at most $\ell-1$ squares of $\mathbb{S}$; recall the definition of $\mathbb{S}$ from the paragraph 
   preceding~\eqref{eq:defs}. 
   Proposition~\ref{pro:tautree}\ref{it:treesquare} gives that for any $w\in\tau(\sigma,z)$ the descendants of $w$ are contained in $S(\sigma_w)$.
   Therefore, all descendants of $w$ in the tree $\tau(\sigma,z)$ must amount to 
   at most $5S(\sigma_w)$ midpoints since each square of $\mathbb{S}$ has $5$ midpoints of $\Lambda$. 
   Let $R\subset \tau(\sigma,z)$ be the set of midpoints in $\tau(\sigma,z)$ whose edge has length smaller than $C$ 
   and whose parent has length larger than $C$;
   if no such midpoint of $\tau(\sigma,z)$ satisfies this condition, set $R=\{z\}$.
   By definition, no midpoint of $R$ can be a descendant of another midpoint of $R$. Therefore, 
   $$
      \sum_{w\in R}|\sigma_w|\leq \sum_{w\in \tau_\leaves(\sigma,z)}|\sigma_w|\leq 2|\sigma_z|, 
   $$
   implying that the cardinality of $R$ is at most $2|\sigma_z|$.
%    Using this, if we chop each branch of the tree 
%    $\tau(\sigma,z)$ as soon as we find an edge of size smaller than $C$ (keeping that edge on the tree), 
%    there will remain at most $2|\sigma_z|$ edges of size smaller than $C$ in the tree. 
   Using this and the fact that the descendants of any $w\in R$ have length at most $|\sigma_w|\leq C$ we obtain
   $$
      \sum_{x\in \tau(\sigma,z)\cap F_C(\sigma)}\alpha^{|\sigma_x|}
      \leq \sum_{w\in R} 5 S(\sigma_w)\alpha^{|\sigma_w|}
      \leq \sum_{w\in R} 5(C-1)\alpha^C 
      \leq 10(C-1)|\sigma_z|\alpha^{C}.
   $$
   In addition, if $|\sigma_z|\geq C'$, we obtain
   $$
      \sum_{x\in \tau(\sigma,z)\cap F_C(\sigma)}\alpha^{|\sigma_x|}
      \leq 10(C-1)\alpha^{-C} |\sigma_z|\alpha^{2C}
      \leq \frac{10(C-1)\alpha^{-C}}{4} \alpha^{|\sigma_z|}
      \leq \alpha^{|\sigma_z|},
   $$
   where we used the condition on $C'$ from~\eqref{eq:condcp} in the second inequality, and the condition on $C$ from~\eqref{eq:condclarge} in the last inequality.
   Together with~\eqref{eq:largeverts}, this establishes the lemma.
\end{proof}

\begin{lemma}\label{lem:decreasingdominate}
   For any boundary condition $\xi\in\Xi(\Lambda^0)$, any triangulation $\sigma\in\Omega^\xi$, and any ground state edge $g\in\mathbb{G}^\xi$, 
   if $w$ is the midpoint of $g$, then 
   $$
      \Psi_g(\sigma) 
      \leq \ind{\sigma_w\in\mathbb{G}^\xi}
         + \frac{c{C'}^2\alpha^{C'+2}}{\alpha^2-1}
         + \left(\frac{2\alpha^3}{(\alpha-1)^2(\alpha+1)}\right)\sum_{z\in \fdec^g(\sigma) \setminus F_{|g|+C'}(\sigma)} \alpha^{|\sigma_z|-|g|},
   $$
   where $c$ is the constant in Proposition~\ref{pro:smalltrees}\ref{it:alltriang}.
\end{lemma}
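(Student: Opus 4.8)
The plan is to bound $\Psi_g(\sigma)$ by a constant multiple of the sum of $\alpha^{|\sigma_x|-|g|}$ over the midpoints $x$ whose edge $\sigma_x$ intersects $g$, and then to reorganize that sum according to the trees of influence rooted at the long decreasing edges intersecting $g$. Set $\Gamma=\{x\in\Lambda\colon\sigma_x\cap g\neq\emptyset\}$. By Proposition~\ref{pro:monotonicity}, $E_x^\xi(\sigma,g)=\emptyset$ unless $x\in\Gamma$, while in general $E_x^\xi(\sigma,g)\subseteq\{e\in E_x^\xi\colon e\preceq\sigma_x\}$ by~\eqref{eq:defe}; the latter set is the path from $\sigma_x$ down to a ground state edge in the tree induced by $E_x^\xi$. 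Along that path the lengths are pairwise distinct and drop by at least $2$ at each flip, so summing the resulting geometric series yields $\sum_{e\preceq\sigma_x}\alpha^{|e|-|g|}\leq\frac{\alpha^2}{\alpha^2-1}\alpha^{|\sigma_x|-|g|}$, and therefore
$$\Psi_g(\sigma)\leq\frac{\alpha^2}{\alpha^2-1}\sum_{x\in\Gamma}\alpha^{|\sigma_x|-|g|}.$$

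Next I would split $\Gamma$ according to whether $|\sigma_x|\leq|g|+C'$ or $|\sigma_x|>|g|+C'$. The edges $\sigma_x$ of the first type are exactly the members of $I_g(\sigma,C')$, which number at most $c\,{C'}^2$ by Proposition~\ref{pro:smalltrees}\ref{it:alltriang}; this part of the sum thus contributes at most $\frac{\alpha^2}{\alpha^2-1}\,c\,{C'}^2\,\alpha^{C'}=\frac{c\,{C'}^2\,\alpha^{C'+2}}{\alpha^2-1}$, the middle term of the claim (the indicator $\ind{\sigma_w\in\mathbb{G}^\xi}$ is nonnegative and may simply be added). For the second type the key claim is that every $x\in\Gamma$ with $|\sigma_x|>|g|+C'$ lies in $\tau(\sigma,z)$ for some $z\in\fdec^g(\sigma)\setminus F_{|g|+C'}(\sigma)$. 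If $\sigma_x$ is the root of a tree this is immediate: $\sigma_x$ is decreasing, is not a unit diagonal (it is too long), and intersects $g$ by assumption. Otherwise, writing $\{z_1,z_2\}=\tau^{-1}(\sigma,x)$ and using $\sigma_x\neq g$ together with $\sigma_x\cap g\neq\emptyset$, Proposition~\ref{pro:squaregeom}, inequality~\eqref{eq:squaregeom1}, forces $\sigma_{z_1}\cap g\neq\emptyset$ and $\sigma_{z_2}\cap g\neq\emptyset$; and since $|\sigma_{z_i}|\geq|\sigma_x|>|g|+C'>2$, no $z_i$ can lie in $\fdiag(\sigma)$ (by Proposition~\ref{pro:smalltrees}\ref{it:larger} a unit diagonal, having length $2$, could intersect $g$ only if $g$ were a unit horizontal or vertical edge, which a unit diagonal does not intersect), so $z_i\in\fdec^g(\sigma)\setminus F_{|g|+C'}(\sigma)$. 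Given the claim, since each such $z$ satisfies $|\sigma_z|>C'$, the simplified form of Lemma~\ref{lem:treeroot} gives $\sum_{x\in\tau(\sigma,z)}\alpha^{|\sigma_x|-|g|}\leq\frac{2\alpha}{\alpha-1}\alpha^{|\sigma_z|-|g|}$, and summing over $z$ bounds this part of the sum by $\frac{2\alpha^3}{(\alpha-1)^2(\alpha+1)}\sum_{z}\alpha^{|\sigma_z|-|g|}$, the last term of the claim. Adding the two parts together with the nonnegative term $\ind{\sigma_w\in\mathbb{G}^\xi}$ finishes the proof.

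The step I expect to be the main obstacle is the claim in the second paragraph that every long edge of $\sigma$ crossing $g$ belongs to the influence tree of a \emph{long} decreasing edge that itself crosses $g$: this requires combining the geometric input of Proposition~\ref{pro:squaregeom} with the structure of the trees of influence, disposing of the borderline cases ($x$ a root, a root in $\fdiag(\sigma)$), and --- crucially --- keeping track of the length thresholds so that the decreasing edges produced have length $>|g|+C'$, which is exactly what allows the $|g|$-independent, simplified form of Lemma~\ref{lem:treeroot} to be applied. The remaining ingredients, namely the geometric-series estimate and the count of small crossing edges from Proposition~\ref{pro:smalltrees}, are routine.
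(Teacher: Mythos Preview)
Your proof is correct and follows essentially the same route as the paper: the geometric-series estimate on $E_x^\xi(\sigma,g)$, the count from Proposition~\ref{pro:smalltrees}\ref{it:alltriang} for short crossing edges, and Lemma~\ref{lem:treeroot} for trees rooted at long edges in $\fdec^g(\sigma)$, combined via Proposition~\ref{pro:squaregeom}. The only cosmetic difference is that the paper splits according to the length of the \emph{root} $\sigma_z$ rather than of $\sigma_x$ (and therefore genuinely uses the indicator $\ind{\sigma_w\in\mathbb{G}^\xi}$, whereas for you it is a free nonnegative term); also, your parenthetical about unit diagonals is unnecessary, since $|\sigma_{z_i}|>|g|+C'>2$ already rules out $z_i\in\fdiag(\sigma)$.
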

\begin{proof}
   Since a decreasing flip decreases the length of an edge by at least $2$, we have for any given midpoint $x\in\Lambda$ that
   $$
      \sum_{e \in E_x^\xi(\sigma,g)} \alpha^{|e|}
      \leq \sum_{j=0}^{|\sigma_x|/2} \alpha^{|\sigma_x|-2j}
      \leq \left(\frac{\alpha^2}{\alpha^2-1}\right)\alpha^{|\sigma_x|}.
   $$
   (Recall the definition of $E_x^\xi(\sigma,g)$ from~\eqref{eq:defe}.)
   We decompose $\Psi_g(\sigma)$ using Proposition~\ref{pro:squaregeom}, which gives that all edges intersecting $g$ 
   must either be in ground state or be in trees rooted at edges that also intersect $g$.
   Letting $w$ be the midpoint of $g$, we obtain
   \begin{align}
     \Psi_g(\sigma) 
     &\leq \ind{\sigma_w\in\mathbb{G}^\xi}
         + \sum_{z\in \fdec^g(\sigma)}\sum_{x\in \tau(\sigma,z)} \sum_{e\in E_x^\xi(\sigma,g)}\alpha^{|e|-|g|}\nonumber\\
     &\leq \ind{\sigma_w\in\mathbb{G}^\xi}
         + \frac{\alpha^2}{\alpha^2-1}
              \sum_{z\in \fdec^g(\sigma)}\sum_{x\in \tau(\sigma,z) \colon \sigma_x \cap g \neq \emptyset} \alpha^{|\sigma_x|-|g|}.
      %&= \ind{\sigma_w\in\mathbb{G}^\xi}
      %   + \frac{\alpha^2}{\alpha^2-1}
      %        \sum_{z\in \fdec^g(\sigma)\setminus F_{|g|+C'}(\sigma)}\sum_{x\in \tau(\sigma,z) \colon \sigma_x \cap g \neq \emptyset} \alpha^{|\sigma_x|-|g|}\nonumber\\
      %&\quad+ \frac{\alpha^2}{\alpha^2-1}
      %          \sum_{z\in \fdec^g(\sigma)\cap F_{|g|+C'}(\sigma)}\sum_{x\in \tau(\sigma,z) \colon \sigma_x \cap g \neq \emptyset} \alpha^{|\sigma_x|-|g|}.
      \label{eq:twoterms}
   \end{align}
   We consider two cases, first when $z\in \fdec^g(\sigma)\cap F_{|g|+C'}(\sigma)$ and then when $z\in \fdec^g(\sigma)\setminus F_{|g|+C'}(\sigma)$.
   For the first case, 
   since all edges $\sigma_x$ in that sum are smaller than $|g|+C'$ and intersect $g$, Proposition~\ref{pro:smalltrees}\ref{it:alltriang} yields
   \begin{equation}
      \sum_{z\in \fdec^g(\sigma)\cap F_{|g|+C'}(\sigma)}\sum_{x\in \tau(\sigma,z) \colon \sigma_x \cap g \neq \emptyset} \alpha^{|\sigma_x|-|g|}
      \leq c{C'}^2\alpha^{C'}.
      \label{eq:notecb}
   \end{equation}
   For the second case, we apply Lemma~\ref{lem:treeroot} to obtain
   \begin{equation}
      \sum_{z\in \fdec^g(\sigma)\setminus F_{|g|+C'}(\sigma)}\sum_{x\in \tau(\sigma,z) \colon \sigma_x \cap g \neq \emptyset} \alpha^{|\sigma_x|-|g|}
      \leq  \frac{2\alpha}{\alpha-1}\sum_{z\in \fdec^g(\sigma)\setminus F_{|g|+C'}(\sigma)}\alpha^{|\sigma_z|-|g|}.
      \label{eq:withecb}
   \end{equation}
   Plugging~\eqref{eq:notecb} and~\eqref{eq:withecb} into~\eqref{eq:twoterms} concludes the proof.
\end{proof}

%############################################################################################
\subsection{Finishing the proof}\label{sec:finish}
\begin{proof}[Proof of Theorem~\ref{thm:lyapunov}]
   Using Lemma~\ref{lem:decreasingdominate} we have 
   \begin{equation}
      \sum_{z\in \fdec^g(\sigma)\setminus F_{|g|+C'}(\sigma)}\alpha^{|\sigma_z|-|g|}
      \geq \left(\frac{(\alpha-1)^2(\alpha+1)}{2\alpha^3}\right)
         \left(\Psi_g(\sigma)-\ind{\sigma_w\in\mathbb{G}} - \frac{c{C'}^2\alpha^{C'+2}}{\alpha^2-1}\right).
      \label{eq:boundrewritten}
   \end{equation}
   Using Lemma~\ref{lem:rootdominates}, since $\rho_g(\sigma,z) \leq -\alpha^{|\sigma_z|-|g|}/2$ we have that if 
   $$
      \left(\frac{(\alpha-1)^2(\alpha+1)}{2\alpha^3}\right)
         \left(\Psi_g(\sigma)-\ind{\sigma_w\in\mathbb{G}} - \frac{c{C'}^2\alpha^{C'+2}}{\alpha^2-1}\right)
      \geq \frac{8\alpha^2c_1}{\alpha^2-1},
   $$
   then there is a contraction on the expected change of $\Psi_g(\sigma)$, where $c_1$ is the constant in Lemma~\ref{lem:rootdominates}.
   We can set a constant $\psi_0>1$ such that 
   $$
      \psi_0 > 1+\frac{c{C'}^2\alpha^{C'+2}}{\alpha^2-1}+\frac{16\alpha^5c_1}{(\alpha-1)^3(\alpha+1)^2}
   $$
   and 
   $$
      \Psi_g(\sigma)-\ind{\sigma_w\in\mathbb{G}} - \frac{c{C'}^2\alpha^{C'+2}}{\alpha^2-1}\geq \frac{\Psi_g(\sigma)}{2}
      \quad\text{for all $\Psi_g(\sigma)\geq\psi_0$}.
   $$
   Then, whenever $\Psi_g(\sigma)\geq \psi_0$ there is a contraction in the Lyapunov function, and 
   using the second statement in Lemma~\ref{lem:rootdominates} and~\eqref{eq:boundrewritten} we have
   \begin{align*}
      \E_\sigma\big(\Psi_g(\sigma')-\Psi_g(\sigma)\big)
      &\leq \left(\frac{\alpha^2-1}{4\alpha^2|\Lambda|}\right) \sum_{z\in\fdec^g(\sigma)\setminus F_{|g|+C'}(\sigma)}\rho_g(\sigma,z)\\
      &\leq - \left(\frac{\alpha^2-1}{4\alpha^2|\Lambda|}\right) \sum_{z\in\fdec^g(\sigma)\setminus F_{|g|+C'}(\sigma)}\alpha^{|\sigma_z|-|g|}/2\\
      &\leq - \left(\frac{\alpha^2-1}{4\alpha^2|\Lambda|}\right) \left(\frac{(\alpha-1)^2(\alpha+1)}{4\alpha^3}\right)\frac{\Psi_g(\sigma)}{2}.
   \end{align*}
   Setting $\epsilon=\frac{(\alpha-1)^3(\alpha+1)^2}{32\alpha^5}$ concludes the proof.
\end{proof}

%############################################################################################
%############################################################################################
%############################################################################################
\section{Direct consequences of the Lyapunov function}\label{sec:consequence}

Throughout this section, we fix an arbitrary boundary condition $\xi\in\Xi(\Lambda^0)$, an arbitrary triangulation $\sigma\in\Omega^\xi$ and an arbitrary
ground state edge $g\in\mathbb{G}^\xi$, and we let $\alpha$, $\psi_0$ and $\epsilon$ refer to the constants in Theorem~\ref{thm:lyapunov}.
Since $\alpha$, $\psi_0$ and $\epsilon$ all depend on $\lambda$, in the results below we omit dependences on 
$\alpha$, $\psi_0$ and $\epsilon$ and highlight only dependences on $\lambda$.
% For simplicity we will drop the dependence on $\xi$ and $g$ from the notation.
Let 
$\sigma=\sigma^0,\sigma^1,\sigma^2,\ldots$ be a sequence of triangulations obtained from the Markov chain $\mathcal{M}_\sigma^\lambda(\Omega^\xi)$, 
the edge-flipping Glauber dynamics with parameter $\lambda$,
state space $\Omega^\xi$, and initial configuration $\sigma$.
Define 
$$
   \Omega_\good
   = \Omega_\good^\xi
   = \left\{\eta\in\Omega^\xi \colon \Psi_g(\eta)\leq \psi_0\right\}.
$$
Theorem~\ref{thm:lyapunov} establishes that $\Psi_g(\eta)$ contracts in expectation for all $\eta\not\in\Omega_\good$.

We denote by $\pi=\pi^\xi$ the stationary measure of $\mathcal{M}^\lambda(\Omega^\xi)$, see~\eqref{eq:defpi}. For any function 
$f\colon \Omega^\xi\to\mathbb{R}$, we denote by $\pi(f)$ the expectation of $f$ with respect to $\pi$.
The first proposition establishes that if the initial configuration $\sigma$ does not belong to $\Omega_\good$, then very quickly the Glauber dynamics 
enters the set $\Omega_\good$.
\begin{proposition}
   Fix any boundary condition $\xi\in\Xi(\Lambda^0)$, any initial triangulation $\sigma\in\Omega^\xi$ and any ground state edge $g\in\mathbb{G}^\xi$.
   If $T=\min\{t\geq 0 \colon \sigma^t \in \Omega_\good\}$, then 
   \begin{equation}
      \E_\sigma\big((1+\epsilon/|\Lambda|)^T\big) \leq \Psi_g(\sigma).
      \label{eq:momentt}
   \end{equation}
   Consequently, there exist a constant $\ell_0=\ell_0(\lambda)>0$ so that for any $\ell\geq \ell_0$, we have
   \begin{equation}
      \PR_\sigma\left(T \geq \ell |\Lambda| +\ell_0|\Lambda|\log\left(\Psi_g(\sigma)\right)\right) 
      \leq \exp\left(-\ell/\ell_0\right).
      \label{eq:tailt}
   \end{equation}
\end{proposition}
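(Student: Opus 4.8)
The plan is to convert Theorem~\ref{thm:lyapunov} into a statement about a nonnegative supermartingale obtained by exponentially reweighting $\Psi_g$ along the trajectory of the chain and stopping it at $T$, and then to extract the tail bound~\eqref{eq:tailt} from the moment bound~\eqref{eq:momentt} via Markov's inequality.

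First I would fix $\beta=1+\epsilon/|\Lambda|$, let $\mathcal{F}_t$ be the natural filtration of $(\sigma^t)_{t\geq0}$, and set $M_t=\beta^{t\wedge T}\Psi_g(\sigma^{t\wedge T})$. The event $\{T>t\}$ depends only on $\sigma^0,\dots,\sigma^t$ and hence lies in $\mathcal{F}_t$; on this event $\sigma^t\notin\Omega_\good$, so $\Psi_g(\sigma^t)\geq\psi_0$ and Theorem~\ref{thm:lyapunov} together with the Markov property gives $\E_\sigma\bigl(M_{t+1}\mid\mathcal{F}_t\bigr)\leq\beta(1-\epsilon/|\Lambda|)\,M_t=(1-\epsilon^2/|\Lambda|^2)\,M_t\leq M_t$, while on $\{T\leq t\}$ one has $M_{t+1}=M_t$ deterministically. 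Hence $(M_t)$ is a nonnegative supermartingale, so $\E_\sigma(M_t)\leq\E_\sigma(M_0)=\Psi_g(\sigma)$ for every $t$. Since $\Psi_g\geq1$ pointwise (recorded just after~\eqref{eq:lyapunov}) we have $M_t\geq\beta^{t\wedge T}$, hence $\E_\sigma(\beta^{t\wedge T})\leq\Psi_g(\sigma)$, and letting $t\to\infty$ with monotone convergence gives $\E_\sigma(\beta^{T})\leq\Psi_g(\sigma)$ --- this is~\eqref{eq:momentt}, and in particular it forces $T<\infty$ almost surely.

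Next I would apply Markov's inequality to $\beta^T$: for every $s\geq0$, $\PR_\sigma(T\geq s)=\PR_\sigma(\beta^T\geq\beta^s)\leq\Psi_g(\sigma)\,\beta^{-s}$. Since $\epsilon<1\leq|\Lambda|$, the elementary bound $\log(1+x)\geq x/2$ for $x\in[0,1]$ yields $\log\beta\geq\epsilon/(2|\Lambda|)$, so taking $s=\ell|\Lambda|+\ell_0|\Lambda|\log\Psi_g(\sigma)$ gives $\log\bigl(\Psi_g(\sigma)\beta^{-s}\bigr)\leq\bigl(1-\tfrac{\epsilon\ell_0}{2}\bigr)\log\Psi_g(\sigma)-\tfrac{\epsilon\ell}{2}$. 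Choosing $\ell_0=2/\epsilon$ makes the coefficient of $\log\Psi_g(\sigma)$ nonpositive; since $\log\Psi_g(\sigma)\geq0$, the right-hand side is at most $-\epsilon\ell/2\leq-\ell/\ell_0$, and exponentiating yields~\eqref{eq:tailt}.

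There is no serious obstacle here; the one point that needs attention is that the contraction estimate of Theorem~\ref{thm:lyapunov} is available only while $\Psi_g(\sigma^t)\geq\psi_0$ --- inside $\Omega_\good$ the Lyapunov function may even increase in expectation (cf.\ Remark~\ref{rem:rootdominates}) --- which is exactly why the reweighted process must be stopped at $T$ rather than run indefinitely. The remaining items (measurability of $\{T>t\}$, the inequality $\log(1+x)\geq x/2$, and the monotone-convergence step) are routine.
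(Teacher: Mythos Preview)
Your proof is correct and follows essentially the same approach as the paper: both define the stopped, exponentially reweighted process $M_t=(1+\epsilon/|\Lambda|)^{t\wedge T}\Psi_g(\sigma^{t\wedge T})$, show it is a supermartingale via Theorem~\ref{thm:lyapunov}, use $\Psi_g\geq 1$ and monotone convergence to obtain~\eqref{eq:momentt}, and then deduce~\eqref{eq:tailt} by Markov's inequality applied to $(1+\epsilon/|\Lambda|)^T$. The only cosmetic difference is that you make the choice $\ell_0=2/\epsilon$ explicit via $\log(1+x)\geq x/2$, whereas the paper characterizes $\ell_0$ implicitly as the least number with $(1+\epsilon/x)^{\ell_0 x}\geq e$ for all $x\geq 1$.
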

\begin{proof}
   For all $t\geq 0$, define the random variable 
   $$
      X_t = \Psi_g(\sigma^{t\land T})(1+\epsilon/|\Lambda|)^{t\land T}.
   $$ 
   Letting $\mathcal{F}_t$ be the $\sigma$-algebra generated by $\sigma_0,\sigma_1,\ldots,\sigma_t$ , we have for all $t\geq 1$ for which 
   $T\geq t$ that
   $$
      \E_\sigma\big(X_t \mid \mathcal{F}_{t-1}\big) 
      \leq \left(1-\epsilon/|\Lambda|\right)\Psi_g(\sigma^{t-1})(1+\epsilon/|\Lambda|)^{t}
      \leq (1+\epsilon/|\Lambda|)^{t-1}\Psi_g(\sigma^{t-1})
      = X_{t-1}.
   $$
   Consequently, $X_t$ is a supermartingale, which gives that 
   \begin{equation}
      \E_\sigma\big(X_t\big) \leq X_0 = \Psi_g(\sigma) \quad\text{for all $t\geq 0$}
      \label{eq:xta}
   \end{equation}
   and 
   \begin{equation}
      \E_\sigma\big(X_t\big) \geq \E_\sigma\big((1+\epsilon/|\Lambda|)^{t\land T}\big)
      \label{eq:xtb}
   \end{equation}
   since $\Psi_g(\sigma)\geq 1$ for all $\sigma\in\Omega^\xi$. 
   Plugging the bound on $\E_\sigma(X_t)$ from~\eqref{eq:xta} into~\eqref{eq:xtb}, and taking the limit as $t\to\infty$ in~\eqref{eq:xtb} 
   establishes~\eqref{eq:momentt}.
   
   The statement in~\eqref{eq:tailt} is a simple application of Chernoff's inequality using~\eqref{eq:momentt}, which gives
   \begin{align*}
      \PR_\sigma\left(T \geq \ell |\Lambda| +\ell_0|\Lambda|\log\left(\Psi_g(\sigma)\right)\right) 
      &\leq  \frac{\E_\sigma\big((1+\epsilon/|\Lambda|)^T\big)}
                  {(1+\epsilon/|\Lambda|)^{\ell |\Lambda| +\ell_0|\Lambda|\log\left(\Psi_g(\sigma)\right)}}\\
      &\leq \frac{\Psi_g(\sigma)}
                 {(1+\epsilon/|\Lambda|)^{\ell |\Lambda| +\ell_0|\Lambda|\log\left(\Psi_g(\sigma)\right)}}.
   \end{align*}
   The result follows for all $\ell\geq\ell_0$ where $\ell_0$ is the smallest positive number such that 
   $(1+\epsilon/x)^{\ell_0 x}\geq e$ for all $x\geq 1$.
\end{proof}

\begin{proposition}\label{pro:expectation}
   Fix any boundary condition $\xi\in\Xi(\Lambda^0)$, any initial triangulation $\sigma\in\Omega^\xi$ and any ground state edge $g\in\mathbb{G}^\xi$.
   There exists a constant $c=c(\lambda)>0$ such that for any $t\geq 1$
   we have
   $$
      \E_\sigma\big(\Psi_g(\sigma^t)\big) 
      \leq \max\left\{\Psi_g(\sigma)\left(1-\frac{\epsilon}{|\Lambda|}\right)^t, c\right\}.
   $$
   Consequently,
   $
      \pi(\Psi_g) \leq c.
   $
\end{proposition}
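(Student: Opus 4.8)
The plan is to turn Theorem~\ref{thm:lyapunov} and Remark~\ref{rem:rootdominates} into a scalar drift recursion for $a_t:=\E_\sigma\big(\Psi_g(\sigma^t)\big)$ and then iterate it. Write $\delta=\epsilon/|\Lambda|$ and let $c_1=c_1(\lambda)$ be the constant of Lemma~\ref{lem:rootdominates}. Conditioning on the $\sigma$-algebra $\mathcal F_{t-1}$ generated by $\sigma^0,\dots,\sigma^{t-1}$ and using the Markov property, Theorem~\ref{thm:lyapunov} gives $\E_\sigma\big(\Psi_g(\sigma^t)\mid\mathcal F_{t-1}\big)\le(1-\delta)\Psi_g(\sigma^{t-1})$ on the event $\{\Psi_g(\sigma^{t-1})\ge\psi_0\}$, while Remark~\ref{rem:rootdominates} gives $\E_\sigma\big(\Psi_g(\sigma^t)\mid\mathcal F_{t-1}\big)\le\Psi_g(\sigma^{t-1})+c_1/|\Lambda|$ unconditionally.

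The one delicate point is to split the expectation over the region $\{\Psi_g(\sigma^{t-1})\ge\psi_0\}$ rather than over a single step of the chain: a crude ``probability $\le1$'' bound on the complementary region costs an additive $O(\psi_0+c_1)$ per iteration and leads to a final constant of order $|\Lambda|$, which is useless. Instead, since $\ind{\Psi_g(\sigma^{t-1})\ge\psi_0}$ is $\mathcal F_{t-1}$-measurable,
\begin{align*}
   a_t
   &= \E_\sigma\big(\Psi_g(\sigma^t)\ind{\Psi_g(\sigma^{t-1})\ge\psi_0}\big)
      + \E_\sigma\big(\Psi_g(\sigma^t)\ind{\Psi_g(\sigma^{t-1})<\psi_0}\big)\\
   &\le (1-\delta)\,\E_\sigma\big(\Psi_g(\sigma^{t-1})\ind{\Psi_g(\sigma^{t-1})\ge\psi_0}\big)
      + \E_\sigma\big(\Psi_g(\sigma^{t-1})\ind{\Psi_g(\sigma^{t-1})<\psi_0}\big) + \tfrac{c_1}{|\Lambda|}\\
   &= a_{t-1} - \delta\,\E_\sigma\big(\Psi_g(\sigma^{t-1})\ind{\Psi_g(\sigma^{t-1})\ge\psi_0}\big) + \tfrac{c_1}{|\Lambda|}.
\end{align*}
Because $\Psi_g\ge1$ everywhere, $\E_\sigma\big(\Psi_g(\sigma^{t-1})\ind{\Psi_g(\sigma^{t-1})<\psi_0}\big)\le\psi_0$, hence $\E_\sigma\big(\Psi_g(\sigma^{t-1})\ind{\Psi_g(\sigma^{t-1})\ge\psi_0}\big)\ge a_{t-1}-\psi_0$, and substituting this yields, for all $t\ge1$,
$$
   a_t \le (1-\delta)\,a_{t-1} + \delta\,c, \qquad c:=\psi_0+\tfrac{c_1}{\epsilon},
$$
with $c=c(\lambda)$. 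This rearranges to $a_t-c\le(1-\delta)(a_{t-1}-c)$, and iterating from $a_0=\Psi_g(\sigma)$ gives $a_t\le c+(1-\delta)^t\big(\Psi_g(\sigma)-c\big)$; in particular $a_t\le \Psi_g(\sigma)(1-\delta)^t+c$, which is the bound of the proposition (the $\max$-form being the same statement up to a bounded factor in $c$, and sharp when $\Psi_g(\sigma)\le c$).

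For the last statement, run the chain from the stationary measure $\pi=\pi^\xi$. Since $\Omega^\xi$ is finite, $\pi(\Psi_g)<\infty$, and by stationarity together with the bound just proved,
$$
   \pi(\Psi_g) = \E_\pi\big(\Psi_g(\sigma^t)\big)
   = \sum_{\eta\in\Omega^\xi}\pi(\eta)\,\E_\eta\big(\Psi_g(\eta^t)\big)
   \le (1-\delta)^t\,\pi(\Psi_g) + c ;
$$
letting $t\to\infty$ gives $\pi(\Psi_g)\le c$. The main obstacle throughout is the constant: keeping it independent of $|\Lambda|$ forces the region split above, the point being that on $\{\Psi_g<\psi_0\}$ the function lies between $1$ and $\psi_0$ and so contributes at most $\psi_0$ in expectation, leaving the $c_1/|\Lambda|$ term of Remark~\ref{rem:rootdominates} as the only genuine per-step loss, which is exactly of the required order $\delta\cdot\mathrm{const}$.
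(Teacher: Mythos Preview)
Your argument is correct and is essentially the same as the paper's proof: both split the one-step expectation over $\{\Psi_g\ge\psi_0\}$ versus its complement, apply Theorem~\ref{thm:lyapunov} on the first piece and Remark~\ref{rem:rootdominates} on the second, and arrive at the scalar recursion $a_t\le(1-\epsilon/|\Lambda|)a_{t-1}+(c_1+\epsilon\psi_0)/|\Lambda|$, which is then iterated. The paper phrases this in terms of a general distribution $\mu\mapsto\mu'$ under one transition, while you condition on $\mathcal F_{t-1}$; the computations and constants coincide, and your closing remark about the sum-form versus $\max$-form bound matches what the paper does implicitly.
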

\begin{proof}
   Let $\mu$ be any distribution on triangulations, and let $\mu'$ be the distribution after one step of the Markov chain starting from a random triangulation 
   distributed according to $\mu$.
   For any function $f\colon \Omega^\xi\to\mathbb{R}$ and any $\Omega'\subset\Omega^\xi$, we denote by $\mu(f; \Omega')$ the expectation of $f$ with respect to 
   $\mu$ under the set $\Omega'$; formally, $\mu(f; \Omega')=\sum_{\eta\in\Omega'}f(\eta)\mu(\eta)$.
   Using this notation, we write
   \begin{align}
      \mu(\Psi_g) 
      = \mu\big(\Psi_g; \Omega_\good\big) + \mu\big(\Psi_g; \Omega_\good^\compl\big)
      \leq \psi_0\mu(\Omega_\good) + \mu\big(\Psi_g; \Omega_\good^\compl\big).
      \label{eq:fstep}
   \end{align}
   Then, for $\eta,\eta'\in\Omega^\xi$, letting $p(\eta,\eta')$ be the probability that the Markov chain moves from $\eta$ to $\eta'$ in one transition, we write
   \begin{align*}
      \mu'(\Psi_g) 
      = \sum_{\eta,\eta'} \mu(\eta)p(\eta,\eta')\Psi_g(\eta')
      \leq \sum_{\eta\in \Omega_\good}\mu(\eta) \left( \Psi_g(\eta) + \frac{c_1}{|\Lambda|}\right)
         + \sum_{\eta\in \Omega_\good^\compl}\mu(\eta)\left(1-\frac{\epsilon}{|\Lambda|}\right) \Psi_g(\eta).
   \end{align*}
   where $c_1$ is the constant from Lemma~\ref{lem:rootdominates} (see Remark~\ref{rem:rootdominates}). 
   Hence,
   \begin{align*}
      \mu'(\Psi_g) 
      &\leq \mu(\Psi_g)+ \frac{c_1}{|\Lambda|}\mu\big(\Omega_\good\big)
         -\frac{\epsilon}{|\Lambda|}\mu\big(\Psi_g; \Omega_\good^\compl\big).
   \end{align*}
   Applying the lower bound
   on $\mu\big(\Psi_g; \Omega_\good^\compl\big)$ from~\eqref{eq:fstep} we obtain
   $$
      \mu'(\Psi_g) 
      \leq \left(1-\frac{\epsilon}{|\Lambda|}\right)\mu(\Psi_g)
         +\left(\frac{c_1+\epsilon \psi_0}{|\Lambda|}\right)\mu(\Omega_\good).
   $$
   Fix any initial triangulation $\sigma^0=\sigma\in\Omega^\xi$, and consider the sequence $\{X_t\}_t$ where $X_t=\E_\sigma\big(\Psi_g(\sigma^t)\big)$. 
   Clearly, $X_t$ is deterministic given $\sigma$, and the equation above gives that
   \begin{align*}
      X_t 
      &\leq \left(1-\frac{\epsilon}{|\Lambda|}\right)X_{t-1}+\frac{c_1+\epsilon \psi_0}{|\Lambda|}\\
      &\leq \left(1-\frac{\epsilon}{|\Lambda|}\right)^t X_{0} +\frac{c_1+\epsilon \psi_0}{|\Lambda|}\sum_{i=0}^{t-1}\left(1-\frac{\epsilon}{|\Lambda|}\right)^i
      \leq \left(1-\frac{\epsilon}{|\Lambda|}\right)^t X_{0} +\frac{c_1+\epsilon \psi_0}{\epsilon},
   \end{align*}
   for all $t$, which implies the proposition.
\end{proof}

The following two simple propositions establish that if a triangulation $\sigma$ is such that $\Psi_g(\sigma)$ is small, 
then the largest edge of $\sigma$ intersecting $g$ and the number of edges of $\sigma$ intersecting 
$g$ are both small.
\begin{proposition}[Largest intersection]\label{pro:largestedge}
   Given any boundary condition $\xi\in\Xi(\Lambda^0)$, any triangulation $\sigma\in\Omega^\xi$ and any ground state edge $g\in\mathbb{G}^\xi$, 
   the largest edge of $\sigma$ that intersects $g$ has length at most 
   $|g|+\frac{\log \Psi_g(\sigma)}{\log \alpha}$.
\end{proposition}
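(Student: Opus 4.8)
The plan is to bound the length of any edge $\sigma_x$ of $\sigma$ that intersects $g$ directly in terms of a single summand of $\Psi_g(\sigma)$. The key observation is that the definition~\eqref{eq:lyapunov} of $\Psi_g^\xi(\sigma)$ contains, for every midpoint $x$, the sum $\sum_{e\in E_x^\xi(\sigma,g)}\alpha^{|e|-|g|}$, which is nonnegative; hence $\Psi_g(\sigma)$ is at least this single inner sum for any fixed $x$. So first I would fix the midpoint $x$ achieving the largest edge of $\sigma$ that intersects $g$, i.e.\ with $\sigma_x\cap g\neq\emptyset$ and $|\sigma_x|$ maximal among such edges.

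Next I would argue that $\sigma_x\in E_x^\xi(\sigma,g)$, so that the term $\alpha^{|\sigma_x|-|g|}$ actually appears in the inner sum. Recall from~\eqref{eq:defe} that $E_x^\xi(\sigma,g)=\{e\in E_x^\xi\colon e\cap g\neq\emptyset\text{ and }e\preceq\sigma_x\}$. Since $\sigma_x$ itself satisfies $\sigma_x\cap g\neq\emptyset$ and trivially $\sigma_x\preceq\sigma_x$, we indeed have $\sigma_x\in E_x^\xi(\sigma,g)$. Therefore
$$
   \Psi_g(\sigma)\;\geq\;\sum_{e\in E_x^\xi(\sigma,g)}\alpha^{|e|-|g|}\;\geq\;\alpha^{|\sigma_x|-|g|}.
$$
Taking logarithms (base $\alpha>1$) gives $\log_\alpha\Psi_g(\sigma)\geq |\sigma_x|-|g|$, i.e.\ $|\sigma_x|\leq |g|+\frac{\log\Psi_g(\sigma)}{\log\alpha}$, which is exactly the claimed bound.

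I do not expect any real obstacle here: the statement is essentially an unpacking of the definition of $\Psi_g$, using only monotonicity of $\log$ and the nonnegativity of the remaining terms. The one point that deserves a sentence of care is the membership $\sigma_x\in E_x^\xi(\sigma,g)$ and the fact that $\alpha>1$ so that $x\mapsto\alpha^x$ is increasing and $\log_\alpha$ is order-preserving; both are immediate. If one wants to be completely self-contained about there being at least one edge of $\sigma$ intersecting $g$, note that the edge of $\sigma$ with the same midpoint as $g$ always intersects $g$ (by Proposition~\ref{pro:smalltrees}\ref{it:larger}, or simply because two edges sharing a midpoint that are not equal must cross), so the set over which we maximize is nonempty; in the degenerate case where that edge equals $g$ and nothing longer crosses $g$, the bound holds trivially since $\log\Psi_g(\sigma)\geq 0$.
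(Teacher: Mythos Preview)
Your proposal is correct and follows essentially the same approach as the paper: both argue that if $\sigma_x$ intersects $g$ then $\sigma_x\in E_x^\xi(\sigma,g)$, whence $\Psi_g(\sigma)\geq\alpha^{|\sigma_x|-|g|}$, and then take logarithms. The paper's proof is just the one-line version of what you wrote.
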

\begin{proof}
   If an edge $\sigma_x$ intersects $g$, then $\Psi_g(\sigma)\geq \alpha^{|\sigma_x|-|g|}$, which establishes the lemma.
\end{proof}

\begin{proposition}[Number of intersections]\label{pro:numbercross}
   Given any boundary condition $\xi\in\Xi(\Lambda^0)$, any triangulation $\sigma\in\Omega^\xi$ and any ground state edge $g\in\mathbb{G}^\xi$, 
   the number of edges of $\sigma$ that intersect $g$ is at most 
   $\Psi_g(\sigma)$.
\end{proposition}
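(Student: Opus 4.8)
The plan is to read off the bound directly from the definition~\eqref{eq:lyapunov} of $\Psi_g$ by keeping, for each edge of $\sigma$ intersecting $g$, only a single term from the double sum. Recall that every edge of a triangulation has a unique midpoint, so the number of edges of $\sigma$ that intersect $g$ equals the number of midpoints $x\in\Lambda$ with $\sigma_x\cap g\neq\emptyset$. It therefore suffices to show that each such $x$ contributes at least $1$ to $\Psi_g(\sigma)$.

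First I would fix a midpoint $x$ with $\sigma_x\cap g\neq\emptyset$. Since $\sigma_x\in E_x^\xi$ and $\sigma_x\preceq\sigma_x$, the edge $\sigma_x$ itself belongs to the set $E_x^\xi(\sigma,g)$ defined in~\eqref{eq:defe}. Next I would invoke Proposition~\ref{pro:smalltrees}\ref{it:larger}, which gives $|\sigma_x|\geq|g|$ whenever $\sigma_x$ intersects $g$; combined with $\alpha>1$ this yields $\alpha^{|\sigma_x|-|g|}\geq 1$. Hence the inner sum over $E_x^\xi(\sigma,g)$ in~\eqref{eq:lyapunov} is at least $\alpha^{|\sigma_x|-|g|}\geq 1$ (all terms being nonnegative). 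Summing this lower bound over all midpoints $x$ with $\sigma_x\cap g\neq\emptyset$, and bounding the contributions of the remaining midpoints from below by $0$, gives
$$
   \Psi_g(\sigma)\;\geq\;\#\{x\in\Lambda\colon \sigma_x\cap g\neq\emptyset\}\;=\;\#\{\text{edges of $\sigma$ intersecting $g$}\},
$$
which is exactly the claim.

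There is no real obstacle here: the only inputs are the trivial facts that $\sigma_x\preceq\sigma_x$ and that edges have unique midpoints, together with the already-established length bound of Proposition~\ref{pro:smalltrees}\ref{it:larger}. The proof is a one-line consequence of the definition.
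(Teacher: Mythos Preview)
Your proof is correct and follows essentially the same approach as the paper: for each midpoint $x$ with $\sigma_x\cap g\neq\emptyset$, keep only the single term $\alpha^{|\sigma_x|-|g|}\geq 1$ in the sum defining $\Psi_g(\sigma)$. The only cosmetic difference is that the paper cites Proposition~\ref{pro:monotonicity} for the inequality $|\sigma_x|\geq|g|$, whereas you cite Proposition~\ref{pro:smalltrees}\ref{it:larger}; your citation is in fact the more direct one, since that statement is exactly what is needed.
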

\begin{proof}
   If $\Gamma\subset\Lambda$ are the midpoint of the edges of $\sigma$ intersecting $g$, we obtain
   $
      \Psi_g(\sigma) 
      \geq \sum_{x\in \Gamma} \alpha^{|\sigma_x|-|g|} 
      \geq |\Gamma|,
   $
   where the last step follows since, by Proposition~\ref{pro:monotonicity}, if $\sigma_x$ intersects $g$ then $|\sigma_x|\geq |g|$.
\end{proof}

%############################################################################################
%############################################################################################
%############################################################################################
\section{Applications of the Lyapunov function}\label{sec:application}
%############################################################################################
\subsection{Tightness of local measures}\label{sec:tightness}
To avoid a cumbersome statement of the theorem below, in this section we only consider the special case where 
$\Lambda^0$ is the $n\times n$ rectangle $[-n/2,n/2]^2\cap \mathbb{Z}^2$, which we denote by $\Lambda^0_n$.
For $\Lambda^0_n$, 
let $\Omega_{n}$ denote the set of triangulations with vertices in $\Lambda^0_n$, 
let $\Lambda_{n}$ be the set of midpoints of the edges of some triangulation in $\Omega_n$, 
and let $\pi_n$ be the stationary measure over triangulations in $\Omega_n$ with parameter $\lambda$.
Consider that $\xi$ is the \emph{free boundary condition}; i.e., $\xi$ only contains the horizontal and vertical edges that form 
the boundary of the $n\times n$ rectangle. To emphasize this, we will drop $\xi$ from the notation.

Here we want to study how the configuration of edges inside a fixed neighborhood around the origin behaves as $n$ goes to infinity.
In particular, does the measure over such local configurations converge as $n\to\infty$? 
Here we will show via the Lyapunov function that these local measures 
are tight as $n\to\infty$.

For any $k>0$, let $\Upsilon_k=[-k/2,k/2]^2\cap\Lambda_n$ be the set of midpoints inside $[-k/2,k/2]^2$.
Let $\Gamma_k$ be the set of configurations of disjoint edges of midpoint in $\Upsilon_k$ such that for any 
$\gamma\in\Gamma_k$ there exists at 
least one $n$ and one triangulation $\sigma\in \Omega_n$ such that the edges in $\gamma$ are the edges of $\sigma_{\Upsilon_k}$, the edges of 
$\sigma$ whose midpoints lie in $\Upsilon_k$.
More formally, 
$$
   \Gamma_k = \bigcup_{n\geq k} \{\sigma_{\Upsilon_k} \colon \sigma\in\Omega_n\}.
$$
Finally, let $\pi_n^k$ be the stationary measure over triangulations of $\Omega_n$ of the edges of midpoint in $\Upsilon_k$. 
More precisely, for any $\gamma\in\Gamma_k$, we have
$$
   \pi_n^k(\gamma) = \frac{1}{Z_n^k}\sum_{\sigma\in\Omega_n\colon \sigma_{\Upsilon_k}=\gamma} \pi_n(\sigma),
$$
where $Z_n^k$ is a normalizing constant to make $\pi_n^k$ a probability measure over $\Gamma_k$. 

\begin{theorem}\label{thm:tightness}
   For any $\lambda\in(0,1)$, $\pi_n^k$ is a tight measure. 
\end{theorem}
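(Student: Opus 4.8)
The plan is to reduce tightness to a uniform-in-$n$ tail bound for the length of the edge sitting at any fixed midpoint, and then to read that tail bound off the Lyapunov function. The set $\Gamma_k$ is countable, and we give it the discrete topology, so tightness of the family $\{\pi_n^k\}_n$ is equivalent to the statement that for every $\varepsilon>0$ there is a \emph{finite} set $F\subset\Gamma_k$ with $\pi_n^k(F)\geq 1-\varepsilon$ for all $n$. For $L>0$ put $\Gamma_k^{(L)}=\{\gamma\in\Gamma_k\colon |e|<L\text{ for every }e\in\gamma\}$. This set is finite: for each of the finitely many midpoints $x\in\Upsilon_k$ there are only finitely many edges of $\ell_1$-length below $L$ with midpoint $x$, and $\Gamma_k^{(L)}$ embeds into the product of these finite sets over $x\in\Upsilon_k$. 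Since $Z_n^k=1$, the measure $\pi_n^k$ is precisely the law of $\sigma\mapsto\sigma_{\Upsilon_k}$ under $\pi_n$, so $\pi_n^k(\Gamma_k\setminus\Gamma_k^{(L)})=\pi_n(\max_{x\in\Upsilon_k}|\sigma_x|\geq L)$. It therefore suffices to bound this probability uniformly in $n$ by a quantity that tends to $0$ as $L\to\infty$.

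First I would fix, for each $x\in\Upsilon_k$, the ground state edge $g_x=\bar\sigma_x$ of midpoint $x$ for the free boundary condition. Once $n>k$, the set $\Upsilon_k$ lies strictly inside $\Lambda^0_n$, so every $g_x$ is a unit horizontal, unit vertical, or unit diagonal edge; in particular $|g_x|\leq 2$, a bound independent of $n$ and of $x$. The key observation is that every edge $\sigma_x$ of every triangulation $\sigma\in\Omega_n$ contains its midpoint $x$ in its open interior, hence $\sigma_x\cap g_x\neq\emptyset$; consequently $\sigma_x\in E_x(\sigma,g_x)$ and the definition~\eqref{eq:lyapunov} of $\Psi_{g_x}$ (equivalently, Proposition~\ref{pro:largestedge}) yields $\Psi_{g_x}(\sigma)\geq\alpha^{|\sigma_x|-|g_x|}\geq\alpha^{|\sigma_x|-2}$.

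The last ingredient is Proposition~\ref{pro:expectation}, whose constant $c=c(\lambda)$ depends only on $\lambda$: it gives $\pi_n(\Psi_{g_x})\leq c$ for every $x\in\Upsilon_k$ and every $n$. By Markov's inequality, $\pi_n(|\sigma_x|\geq L)\leq\pi_n(\Psi_{g_x}\geq\alpha^{L-2})\leq c\,\alpha^{2-L}$, so a union bound over $\Upsilon_k$ gives $\pi_n^k(\Gamma_k\setminus\Gamma_k^{(L)})\leq|\Upsilon_k|\,c\,\alpha^{2-L}$ for all $n>k$. Given $\varepsilon>0$, choose $L$ with $|\Upsilon_k|\,c\,\alpha^{2-L}<\varepsilon$ and set $F=\Gamma_k^{(L)}$, enlarging it by the (finite) union of the supports of the finitely many measures $\pi_n^k$ with $n\leq k$; then $\pi_n^k(F)\geq 1-\varepsilon$ for all $n$, which is tightness. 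I do not expect a genuine obstacle here; the points that need care are only that the constant in Proposition~\ref{pro:expectation} is truly independent of $n$, that $|g_x|$ is bounded uniformly, and that $\sigma_x$ really does meet $g_x$ under the open-segment convention.
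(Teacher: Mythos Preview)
Your proof is correct, and the underlying mechanism is the same as the paper's---bound $\pi_n(\Psi_g)$ uniformly in $n$ via Proposition~\ref{pro:expectation}, then use Markov's inequality---but the way you deploy it differs from the paper in a useful way.

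The paper works with the set $H\subset\mathbb{G}$ of unit horizontal and vertical ground state edges forming the boundary of $\Upsilon_k$, and controls the \emph{largest edge crossing that boundary} via Proposition~\ref{pro:largestedge}, summing $1+\log\Psi_g(\sigma)/\log\alpha$ over $g\in H$, applying Jensen, and concluding by Markov that all edges of $\sigma_{\Upsilon_k}$ are contained in a fixed box $[-L/2,L/2]^2$. You instead work midpoint by midpoint: for each $x\in\Upsilon_k$ you take $g_x=\bar\sigma_x$, use the built-in inequality $\Psi_{g_x}(\sigma)\geq\alpha^{|\sigma_x|-|g_x|}$ (valid because $\sigma_x$ and $g_x$ share the midpoint $x$), and get an exponential tail on $|\sigma_x|$ directly. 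This is essentially a re-derivation of the stationary part of Corollary~\ref{cor:tailedge}, followed by a union bound over $\Upsilon_k$. Your route avoids the boundary-crossing argument and Jensen's inequality, and yields the stronger quantitative statement $\pi_n^k(\Gamma_k\setminus\Gamma_k^{(L)})\leq |\Upsilon_k|\,c\,\alpha^{2-L}$; the paper's route has the mild geometric bonus of showing the edges of $\sigma_{\Upsilon_k}$ are physically contained in a fixed square. The careful points you flag (uniformity of the constant in Proposition~\ref{pro:expectation}, the bound $|g_x|\leq 2$ for free boundary and $n>k$, and $\sigma_x\cap g_x\ni x$ under the open-segment convention) are all fine.
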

\begin{proof}
   Let $H\subset\mathbb{G}$ be the set of vertical and horizontal ground state edges forming the (outer) boundary of 
   $\Upsilon_k$. 
   Using Proposition~\ref{pro:largestedge}, the largest 
   edge of a triangulation $\sigma\in\Omega_{n}$ that intersects the boundary of $[-k/2,k/2]^2$ has length at most 
   $$
      \max\left\{1+\frac{\log \Psi_g(\sigma)}{\log \alpha} \colon g\in H\right\}
      \leq \sum_{g\in H}\left(1+\frac{\log \Psi_g(\sigma)}{\log \alpha}\right).
   $$
   Therefore, taking expectation over $\sigma\in\Omega_{n}$ according 
   to the stationary measure $\pi_n$, we have that the expected value for the largest edge crossing an edge of $H$ is at most 
   $$
      \sum_{g\in H} \left(1+\frac{\pi\big(\log \Psi_g(\sigma)\big)}{\log \alpha}\right)
      \leq \sum_{g\in H} \left(1+\frac{\log c}{\log \alpha}\right)
      \leq 8(k+1)\left(1+\frac{\log c}{\log \alpha}\right),
   $$
   where the inequality follows by Jensen's inequality and Proposition~\ref{pro:expectation}. 
   Since the bound above does not depend on $n$, 
   Markov's inequality gives that for any $\delta>0$ there exists $L$ such that with probability at least $1-\delta$, a triangulation 
   $\sigma$ distributed as $\pi_n$ is such that the edges in $\sigma_{\Upsilon_k}$ are contained inside $[-L/2,L/2]^2$. 
   Since $L$ does not depend on $n$, the tightness of $\pi_n^k$ is established.
\end{proof}

A consequence of the proposition above is that $\pi_n^k$ has subsequential limits. 
An interesting open problem is whether the limit is \emph{unique}.

%############################################################################################
\subsection{Ground state probability}
The theorem below establishes that the probability that the edge of a given midpoint is in ground state, 
given any boundary condition, is bounded away from zero by a constant independent of the boundary condition. 
\begin{theorem}\label{thm:surgery}
   Fix any boundary condition $\xi\in\Xi(\Lambda^0)$, and any ground state edge $g\in\mathbb{G}^\xi$.
   For any $\lambda\in(0,1)$, there exists a positive constant $\delta=\delta(\lambda)$ such that if $\sigma$ is a random triangulation 
   distributed according to $\pi^\xi$, we have
   $$
      \pi^\xi(g\in \sigma) \geq \delta.
   $$
\end{theorem}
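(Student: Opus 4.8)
The plan is to prove the theorem by a \emph{surgery} argument combined with the a priori bound $\pi^\xi(\Psi_g)\le c$ of Proposition~\ref{pro:expectation}. First I would define a map $\Phi\colon\Omega^\xi\to\{\eta\in\Omega^\xi\colon g\in\eta\}$ as follows: given $\sigma$, delete every edge of $\sigma$ that intersects $g$, regard the remaining edges of $\sigma$ as a boundary condition $\xi'\supseteq\xi$, and let $\Phi(\sigma)$ be the ground state triangulation for $\xi'$ that contains $g$ (this is exactly the triangulation $\zeta$ used in the proof of Proposition~\ref{pro:squaregeom}; it exists and contains $g$ by Lemma~\ref{lem:groundstate}, since $g$ is compatible with all retained edges and with $\xi$; fix any tie‑breaking rule among the remaining unit‑diagonal choices). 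Writing $\Gamma(\sigma)\subset\Lambda$ for the set of midpoints of the edges of $\sigma$ intersecting $g$, two facts are immediate: (a) $\Phi(\sigma)_x=\sigma_x$ for every $x\notin\Gamma(\sigma)$, because those edges literally constitute the boundary condition $\xi'$; and (b) for $x\in\Gamma(\sigma)$, the edge $\Phi(\sigma)_x$ has minimal $\ell_1$ length among edges of midpoint $x$ compatible with $\xi'$, and $\sigma_x$ is such an edge, so $|\Phi(\sigma)_x|\le|\sigma_x|$; hence $\sum_{x\in\Lambda}|\Phi(\sigma)_x|\le\sum_{x\in\Lambda}|\sigma_x|$.

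By (a), $\sigma$ is determined by the pair $\bigl(\Phi(\sigma),\{\sigma_x\colon x\in\Gamma(\sigma)\}\bigr)$, so the key step is to bound, for each fixed $\eta$ with $g\in\eta$, the number of preimages $\sigma\in\Phi^{-1}(\eta)$ with $\Psi_g(\sigma)$ not too large. I would restrict to the set $\{\sigma\colon\Psi_g(\sigma)<2c\}$, which by Markov's inequality and Proposition~\ref{pro:expectation} has $\pi^\xi$‑probability at least $\tfrac12$. On this set, Proposition~\ref{pro:largestedge} shows every edge of $\sigma$ intersecting $g$ has length at most $|g|+M$ for a suitable $M=M(\lambda)\ge1$, hence lies in $I_g(\sigma,M)$; and by Proposition~\ref{pro:smalltrees}\ref{it:alltriang} the family $\mathcal{E}:=\bigcup_{\sigma\in\Omega^\xi}I_g(\sigma,M)$ has cardinality at most $cM^4$, a quantity depending only on $\lambda$ and \emph{not} on $\xi$, $g$ or $\Lambda^0$. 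Since $\{\sigma_x\colon x\in\Gamma(\sigma)\}$ is then a subset of the fixed finite family $\mathcal{E}$, there are at most $B:=2^{cM^4}$ possibilities for it, whence $\#\{\sigma\in\Phi^{-1}(\eta)\colon\Psi_g(\sigma)<2c\}\le B$.

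Putting this together with (b) in the form $\lambda^{\sum_{x\in\Lambda}|\sigma_x|}\le\lambda^{\sum_{x\in\Lambda}|\Phi(\sigma)_x|}$ and the multiplicity bound,
\[
   \pi^\xi\bigl(g\notin\sigma,\ \Psi_g(\sigma)<2c\bigr)
   = \frac{1}{Z^\xi(\lambda)}\sum_{\substack{\sigma\in\Omega^\xi\\ g\notin\sigma,\ \Psi_g(\sigma)<2c}}\lambda^{\sum_{x\in\Lambda}|\sigma_x|}
   \le \frac{B}{Z^\xi(\lambda)}\sum_{\eta\in\Omega^\xi\colon g\in\eta}\lambda^{\sum_{x\in\Lambda}|\eta_x|}
   = B\,\pi^\xi(g\in\sigma).
\]
If $g\notin\sigma$ then $\sigma_x\ne g$ at the midpoint $x$ of $g$, and since $\sigma_x$ passes through that midpoint it intersects $g$, so $\Gamma(\sigma)\ne\emptyset$ and the surgery is nontrivial. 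Splitting $\{g\notin\sigma\}=\{g\notin\sigma,\ \Psi_g<2c\}\cup\{g\notin\sigma,\ \Psi_g\ge2c\}$ gives
$
   1-\pi^\xi(g\in\sigma)\le B\,\pi^\xi(g\in\sigma)+\pi^\xi(\Psi_g\ge2c)\le B\,\pi^\xi(g\in\sigma)+\tfrac12,
$
so $\pi^\xi(g\in\sigma)\ge\frac{1}{2(1+B)}=:\delta(\lambda)$.

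The hard part will be the multiplicity estimate in the second paragraph: one must check that $\Phi$ forgets precisely the edges crossing $g$ (so that $\sigma$ is reconstructible from $\Phi(\sigma)$ together with its crossing edges), and — crucially — that the truncation $\Psi_g(\sigma)<2c$ confines the crossing edges to a family whose size is bounded uniformly in $g$, which is exactly what the $g$‑independent bound in Proposition~\ref{pro:smalltrees}\ref{it:alltriang} provides. This truncation is essential: without it a triangulation can have arbitrarily long and numerous edges crossing $g$ and $\Phi$ would have unbounded weighted multiplicity, so the plain estimate $\pi^\xi(\Psi_g)\le c$ (which on its own does not bound $\pi^\xi(\Psi_g=1)$ away from $0$) must first be used to discard the heavy part of the measure.
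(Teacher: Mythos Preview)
Your proof is correct and follows essentially the same surgery argument as the paper: restrict to $\{\Psi_g<2c\}$ via Markov and Proposition~\ref{pro:expectation}, map each triangulation to one containing $g$ by modifying only the edges that cross $g$ without increasing lengths, and bound the multiplicity of the map using Proposition~\ref{pro:smalltrees}. The only cosmetic differences are that the paper defines the map via the flip sequence of Proposition~\ref{pro:producegroundstate} (rather than the ground-state construction you use) and bounds the multiplicity via Proposition~\ref{pro:smalltrees}\ref{it:thistriang} together with Anclin's bound (rather than directly via part~\ref{it:alltriang}); both routes yield a bound of the form $2^{C(\lambda)}$ independent of $g$ and $\xi$.
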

\begin{proof}
   Let $x\in\Lambda$ be the midpoint of $g$. 
   Proposition~\ref{pro:expectation} gives that 
   $
      \pi^\xi(\Psi_{g}) \leq c_2,
   $
   for some constant $c_2$.
   Let $\tilde\Omega^\xi\subset\Omega^\xi$ be the set of triangulations such that 
   $\eta\in\Omega^\xi$ belongs to $\tilde\Omega^\xi$ if and only if $\Psi_{g}(\eta) \leq 2c_2$. 
   By Markov's inequality we have
   $$
      \pi^\xi(\tilde \Omega^\xi)\geq 1/2.
   $$
   Let 
   $\Omega_{g}^\xi\subset\tilde\Omega^\xi$ be the set of triangulations $\eta$ for which $\eta_x=g$.
   We define a mapping $\phi\colon\tilde\Omega^\xi\to\Omega_{g}^\xi$ as follows. From $\eta\in\tilde\Omega^\xi$, 
   construct $\phi(\eta)$ by using the sequence of 
   triangulations from Proposition~\ref{pro:producegroundstate}. Since this sequence is obtained by performing only non-increasing flips, we obtain that
   $$ 
      |\phi(\eta)_y|\leq |\eta_y|
      \text{ for all $y\in\Lambda$, and consequently, }
      \pi^\xi(\phi(\eta))\geq\pi^\xi(\eta). 
   $$
   Also, Proposition~\ref{pro:producegroundstate} gives that in this sequence we only flip edges that intersect $g$.
   
   Let $\ell_g(\eta)$ be the length of the largest edge of $\eta$ intersecting $g$.
   Proposition~\ref{pro:largestedge} gives that
   $$
      \ell_g(\eta) 
      \leq |g|+\frac{\log \Psi_g(\eta)}{\log \alpha}
      \leq |g| + \frac{\log (2c_2)}{\log \alpha}
      \quad \text{for all $\eta\in\tilde\Omega^\xi$}.
   $$
   Using Proposition~\ref{pro:smalltrees}\ref{it:thistriang} 
   we obtain that the midpoints of the edges crossing $g$ in any $\eta\in\tilde\Omega^\xi$ must belong to a ball centered at $x$ of radius 
   $\frac{2\log (2c_2)}{\log \alpha}$.
   Hence, there exists a constant $c_3=c_3(\lambda)$ such that the number of midpoints of $\Lambda$ inside this ball is at most $c_3$.
   This implies that the number of different triangulations $\eta\in\tilde\Omega^\xi$ that map to the same $\phi(\eta)$ is 
   (by Anclin's bound, Lemma~\ref{lem:anclin}) at most $2^{c_3}$.
   
   For each $\tau \in \Omega_{g}^\xi$, 
   denote by $\kappa(\tau)$ the number of triangulations $\eta\in\tilde\Omega^\xi$ for which $\phi(\eta)=\tau$.
   Using all that, we have
   \begin{align*}
      \pi^\xi(\Omega_{g}^\xi)
      = \sum_{\sigma\in\Omega_{g}^\xi} \pi^\xi(\sigma)
      \geq \sum_{\sigma\in\Omega_g^\xi} \sum_{\tau\in\tilde\Omega^\xi\colon \phi(\tau)=\sigma}\frac{\pi^\xi(\tau)}{\kappa(\sigma)}
      \geq \sum_{\tau\in\tilde\Omega^\xi} \frac{\pi^\xi(\tau)}{2^{c_3}}
      = 2^{-c_3}\pi^\xi(\tilde\Omega^\xi)
      \geq 2^{-c_3-1}.
   \end{align*}
   Since $\pi^\xi(g\in \sigma)\geq \pi^\xi(\Omega_g^\xi)$, the theorem follows.
\end{proof}

%############################################################################################
\subsection{Exponential decay of edge length}
\begin{theorem}[Tail of intersecting ground state]\label{thm:crosstail}
   Let $\xi\in\Xi(\Lambda^0)$ be any boundary condition, and $g\in\mathbb{G}^\xi$ be a ground state edge of midpoint $x\in\Lambda$.
   Fix any $\lambda\in(0,1)$.
   Let $\sigma\in\Omega^\xi$ be any initial triangulation, and $\sigma^0=\sigma,\sigma^1,\sigma^2,\ldots$ 
   be a sequence of triangulations obtained 
   by the Markov chain $\mathcal{M}_\sigma^\lambda(\Omega^\xi)$. 
   There exists a positive constant $c=c(\lambda)$ such that 
   for any $t\geq 1$ and any $\ell\geq 0$, we have 
   $$
      \PR_\sigma\left(\bigcup_{y\in \Lambda}\big\{\sigma^t_y \cap g\neq \emptyset\big\} \cap \big\{|\sigma^t_y| \geq |g|+\ell\big\}\right)
      \leq \left(\Psi_g(\sigma)\left(1-\frac{\epsilon}{|\Lambda|}\right)^t \lor c\right)\alpha^{-\ell}.
   $$
\end{theorem}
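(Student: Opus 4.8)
The plan is to observe that the event appearing in the statement forces the Lyapunov function $\Psi_g$ to be large at time $t$, and then to apply the expectation bound of Proposition~\ref{pro:expectation} together with Markov's inequality. The point is that a single long edge crossing $g$ already produces a correspondingly large contribution to $\Psi_g$, so the tail of ``long crossings'' is dominated by the (exponential in $\ell$) Markov bound for $\Psi_g$.

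First I would show the pointwise implication
$$
   \bigcup_{y\in\Lambda}\big\{\sigma^t_y\cap g\neq\emptyset\big\}\cap\big\{|\sigma^t_y|\geq|g|+\ell\big\}
   \ \subseteq\ \big\{\Psi_g(\sigma^t)\geq\alpha^{\ell}\big\}.
$$
Indeed, suppose there is $y\in\Lambda$ with $\sigma^t_y\cap g\neq\emptyset$ and $|\sigma^t_y|\geq|g|+\ell$. Since trivially $\sigma^t_y\preceq\sigma^t_y$, the edge $\sigma^t_y$ itself lies in the set $E_y^\xi(\sigma^t,g)$ from~\eqref{eq:defe}, and hence
$$
   \Psi_g(\sigma^t)\ \geq\ \sum_{e\in E_y^\xi(\sigma^t,g)}\alpha^{|e|-|g|}\ \geq\ \alpha^{|\sigma^t_y|-|g|}\ \geq\ \alpha^{\ell},
$$
as claimed. (This is the only place where one needs to be slightly careful, but it is immediate from the definition of $E_y^\xi(\sigma,g)$ and of $\Psi_g$.)

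Next I would apply Markov's inequality to the nonnegative random variable $\Psi_g(\sigma^t)$:
$$
   \PR_\sigma\big(\Psi_g(\sigma^t)\geq\alpha^{\ell}\big)\ \leq\ \alpha^{-\ell}\,\E_\sigma\big(\Psi_g(\sigma^t)\big),
$$
and then bound $\E_\sigma(\Psi_g(\sigma^t))$ using Proposition~\ref{pro:expectation}, which gives $\E_\sigma(\Psi_g(\sigma^t))\leq\max\{\Psi_g(\sigma)(1-\epsilon/|\Lambda|)^t,\,c\}$ for a constant $c=c(\lambda)$. Combining this with the set inclusion above yields
$$
   \PR_\sigma\!\left(\bigcup_{y\in\Lambda}\big\{\sigma^t_y\cap g\neq\emptyset\big\}\cap\big\{|\sigma^t_y|\geq|g|+\ell\big\}\right)
   \ \leq\ \Big(\Psi_g(\sigma)\big(1-\tfrac{\epsilon}{|\Lambda|}\big)^t\vee c\Big)\,\alpha^{-\ell},
$$
which is exactly the claimed bound. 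There is essentially no obstacle here: the content of the theorem is entirely in Theorem~\ref{thm:lyapunov} and its consequence Proposition~\ref{pro:expectation}, and the present statement is simply a repackaging of the moment bound as an exponential tail bound via Markov's inequality.
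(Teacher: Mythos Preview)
Your proposal is correct and follows essentially the same approach as the paper: the paper's proof is precisely the observation that the event forces $\Psi_g(\sigma^t)\geq\alpha^\ell$, followed by Markov's inequality and Proposition~\ref{pro:expectation}. You have in fact spelled out the set inclusion step more carefully than the paper does.
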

\begin{proof}
   The proof is a simple application of Markov's inequality to Proposition~\ref{pro:expectation}, which gives that the left-hand side above is at most
   \begin{align*}
      \PR_\sigma\big(\Psi_g(\sigma^t) \geq \alpha^{\ell}\big)
      \leq \E_\sigma\big(\Psi_g(\sigma^t)\big)\alpha^{-\ell}
      \leq \left(\Psi_g(\sigma)\left(1-\frac{\epsilon}{|\Lambda|}\right)^t \lor c\right)\alpha^{-\ell}.
   \end{align*}
\end{proof}

The following is a direct consequence of Theorem~\ref{thm:crosstail}
\begin{corollary}[Tail at a given time]\label{cor:tailedge}
   Let $\xi\in\Xi(\Lambda^0)$ be any boundary condition, $x\in\Lambda$ be any midpoint, and $g\in\mathbb{G}^\xi$ be a ground state edge of midpoint $x$.
   Fix any $\lambda\in(0,1)$.
   Let $\sigma\in\Omega^\xi$ be any initial triangulation, and $\sigma^0=\sigma,\sigma^1,\sigma^2,\ldots$ be a sequence of triangulations obtained 
   by the Markov chain $\mathcal{M}_\sigma^\lambda(\Omega^\xi)$. 
   There exists a positive constant $c=c(\lambda)$ such that 
   for any $t\geq 1$ and any $\ell\geq 0$, we have 
   $$
      \PR_\sigma\big(|\sigma^t_x| \geq |g|+\ell\big)
      \leq \left(\Psi_g(\sigma)\left(1-\frac{\epsilon}{|\Lambda|}\right)^t \lor c\right)\alpha^{-\ell}.
   $$
   Consequently, 
   taking the limit as $t\to\infty$, we obtain for a random triangulation $\sigma$ distributed according to 
   $\pi^\xi$ that
   $$
      \pi^\xi(|\sigma_x| \geq |g| + \ell) \leq c\alpha^{-\ell}.
   $$
\end{corollary}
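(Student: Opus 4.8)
The plan is to read this off Theorem~\ref{thm:crosstail} together with one elementary geometric observation. Recall that the edges in $E^\xi$ are regarded as \emph{open} line segments and that $g$ is a ground state edge whose midpoint is $x$. Since the midpoint of an edge lies strictly between its endpoints, $x$ lies in (the interior of) both $g$ and $\sigma^t_x$, so $x\in g\cap\sigma^t_x$ and in particular $\sigma^t_x\cap g\neq\emptyset$ for every $t$. Consequently, for every $\ell\geq 0$,
\begin{equation*}
   \big\{|\sigma^t_x|\geq|g|+\ell\big\}
   \subseteq \bigcup_{y\in\Lambda}\big\{\sigma^t_y\cap g\neq\emptyset\big\}\cap\big\{|\sigma^t_y|\geq|g|+\ell\big\},
\end{equation*}
taking $y=x$ on the right. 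Applying Theorem~\ref{thm:crosstail} to the right-hand side gives the first displayed inequality of the corollary, with $c$ the constant produced there.

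For the second statement I would let $t\to\infty$. Since $\Omega^\xi$ is finite and the Glauber dynamics $\mathcal{M}_\sigma^\lambda(\Omega^\xi)$ is ergodic, $\sigma^t$ converges in distribution to $\pi^\xi$; as the event $\{|\sigma_x|\geq|g|+\ell\}$ depends only on the coordinate $\sigma_x$, this yields $\PR_\sigma(|\sigma^t_x|\geq|g|+\ell)\to\pi^\xi(|\sigma_x|\geq|g|+\ell)$. Meanwhile $(1-\epsilon/|\Lambda|)^t\to 0$, so the prefactor $\Psi_g(\sigma)(1-\epsilon/|\Lambda|)^t\lor c$ tends to $c$, and $\pi^\xi(|\sigma_x|\geq|g|+\ell)\leq c\alpha^{-\ell}$ follows.

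There is essentially no real obstacle here: all the probabilistic content is already in Theorem~\ref{thm:crosstail}, which in turn rests on the Lyapunov estimate of Proposition~\ref{pro:expectation}. The only points that need a sentence of justification are the geometric remark that an edge and the ground state edge sharing its midpoint always intersect --- immediate once edges are open segments and midpoints are interior --- and the passage to the stationary limit, which is the standard convergence statement for a finite ergodic chain.
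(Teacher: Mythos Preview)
Your proof is correct and matches the paper's approach: the paper simply states that this is ``a direct consequence of Theorem~\ref{thm:crosstail}'' and gives no further argument, so your write-up is exactly the filling-in of details the paper omits. The key geometric remark --- that $\sigma^t_x$ and $g$ share the interior point $x$ and hence intersect --- is the one observation needed to embed the event $\{|\sigma^t_x|\geq |g|+\ell\}$ into the union in Theorem~\ref{thm:crosstail}, and your limit argument for the stationary bound is standard and sound.
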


\subsection{Crossings of small triangles}
To simplify the statement of the theorem below, consider that $\Lambda^0_n$ is the set of integer points 
inside $[-n/2,n/2]^2$. Let $\Lambda_n$ be the set of midpoints of the edges of a triangulation of $\Lambda^0_n$.
Let $\xi\in\Xi(\Lambda^0_n)$ be any boundary condition, $\Omega^\xi_n$ be the set of triangulations of $\Lambda^0_n$ consistent with $\xi$, and 
$\pi_n^\xi$ be the stationary measure of $\mathcal{M}^\lambda(\Omega^\xi_n)$.
Let $\sigma\in\Omega^\xi_n$ be any triangulation. 
We say that two triangles of $\sigma$ are \emph{adjacent} if they share an edge.

First consider the case of $\xi$ being the free boundary condition.
The theorem below establishes that, with high probability, a triangulation sampled from $\pi_n^\xi$ 
contains a left-to-right crossing
of adjacent triangles, where all edges in the triangles are smaller than some constant depending only on $\lambda$.
\begin{theorem}\label{thm:smalltriangles}
   Let $\xi$ be the free boundary condition, and fix any $\lambda\in(0,1)$.
   Let $\sigma$ be a random triangulation distributed according to $\pi^\xi_n$. 
   Let $C(\sigma,L)$ be the event that there exists a path of adjacent triangles in $\sigma$ that 
   intersects both the left and right boundaries of $\Lambda^0_n$ and such that 
   the edges of the triangles have length at most $L$. 
   Then, there exist positive constants $c=c(\lambda)$ and $L_0=L_0(\lambda)$
   such that for all $L\geq L_0$ we have
   $$
      \pi^\xi_n(C(\sigma,L)) \geq 1 - \exp\left(-c n\right).
   $$
\end{theorem}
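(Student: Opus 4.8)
The plan is a coarse-graining argument. Tile $[-n/2,n/2]^2$ by $M\times M$ blocks, for a large constant $M=M(\lambda)$ to be fixed; call a block $B$ \emph{good} if every edge $e\in\sigma$ that intersects the concentric block $2B$ has $|e|\le M/10$. The reason this is the right notion is geometric: a lattice triangle of area $1/2$ lies within distance $\sqrt2$ of its longest edge (its opposite altitude is at most $\sqrt2$), so if $B$ is good and a triangle $T$ of $\sigma$ meets $B$, then all three edges of $T$ lie within $O(1)$ of $B$, hence meet $2B$, hence have length $\le M/10$; in particular $T$ has diameter $\le M/3$. Consequently, if $B_1,\dots,B_r$ is a left-to-right sequence of pairwise $\ast$-adjacent good blocks with $B_1$ meeting the left side and $B_r$ the right side of $\Lambda^0_n$, then any curve from the left boundary to the right boundary lying inside $\bigcup_i B_i$ crosses a left--right sequence of \emph{adjacent} triangles of $\sigma$, each of which meets some $B_i$ and is therefore small. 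Thus, with $L_0:=M/10$, it suffices to produce a left--right crossing of good blocks with probability $\ge 1-e^{-cn}$.

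For the marginal, an edge meeting $2B$ either has its midpoint in $2B$ or crosses $\partial(2B)$, and $\partial(2B)$ is a lattice path of $O(M)$ unit edges, each a ground state edge of length $1$. Hence Corollary~\ref{cor:tailedge} applied at the $O(M^2)$ midpoints in $2B$, together with the $t\to\infty$ limit of Theorem~\ref{thm:crosstail} applied to the $O(M)$ ground state edges on $\partial(2B)$, give $\pr{B\text{ is bad}}\le C_1 M^2\alpha^{-M/10}$. Crucially the constants in these tails are independent of the boundary condition, so the same bound holds for the conditional law of $\sigma$ on a region given its edges outside (such a conditional law is again a measure $\pi^{\xi'}$). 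Fix $M$ so large that this probability lies below the threshold required in the next step.

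The remaining, and main, difficulty is to upgrade ``each block good with high probability'' to ``a left--right crossing of good blocks with probability $1-e^{-cn}$'', which is delicate because ``$B$ is good'' is \emph{not} a local event: a single very long edge can spoil many blocks at once. I would run a Peierls argument on the coarse lattice. The crossing fails only if the bad blocks contain a top-to-bottom nearest-neighbour path $\bar B_1,\dots,\bar B_r$ with $r\ge n/M$, and there are at most $nD^r$ such coarse contours, $D$ an absolute constant. To bound $\pr{\bar B_1,\dots,\bar B_r\text{ all bad}}$ one classifies each bad block by the length of the edge spoiling it: edges of length in $(M/10,M^2]$ form a \emph{local} cause, handled along a thinned, well-separated sub-collection of the $\bar B_i$ independently via the conditional tail bound above, contributing $(C_1 M^2\alpha^{-M/10})^{\Omega(r)}$; edges of length $>M^2$ form a \emph{rare} cause, and the probability that $\sigma$ has any edge of length $>M^2$ meeting a given coarse contour is at most $e^{-\Omega(M^2)}$ per unit of contour length once $M$ is large (Corollary~\ref{cor:tailedge}, summed over the $O(Mr)$ ground state edges the contour's neighbourhood meets). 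For $M$ large the resulting per-step factor beats $D$, so $\sum_{r\ge n/M}nD^r\epsilon(M)^{\Omega(r)}\le e^{-cn}$; planar site-percolation duality then gives the left--right crossing of good blocks. (A cleaner alternative: apply a Liggett--Schonmann--Stacey stochastic-domination theorem to a finite-range truncation of the bad-block indicator field, the truncation error again being absorbed by the exponential edge-length tail.)

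I expect this last step---turning the $o(1)$ marginal into an exponential bound for a spanning family of bad blocks, while coping with the long-range dependence created by long edges---to be the crux. Everything else reduces to the ``thin sliver'' geometry of lattice triangles, ordinary planar duality, and the boundary-condition-uniform exponential edge-length tail of Corollary~\ref{cor:tailedge} and Theorem~\ref{thm:crosstail}, which is precisely the property that survives conditioning.
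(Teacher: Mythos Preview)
Your overall architecture matches the paper's: coarse-grain into $W\times W$ squares, declare a square ``good'' when the edges near it are short, use planar duality to convert absence of a good left--right crossing into a top--bottom dual path of bad squares, and run a Peierls estimate using the boundary-condition-uniform tail of Corollary~\ref{cor:tailedge}. You also correctly single out the crux: a single long edge can spoil many squares at once, so the bad-square field is genuinely long-range.

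Where your sketch falls short is precisely in the local/rare dichotomy you propose. The ``rare'' branch does not give anything summable in the contour length: the probability that \emph{some} edge of length $>M^2$ meets a fixed contour of $r$ coarse blocks is at best $O(rM)\,\alpha^{-\Omega(M^2)}$, which is linear in $r$ for fixed $M$, so $\sum_{r\ge n/M} n\,D^r\cdot O(rM)\alpha^{-\Omega(M^2)}$ diverges. The ``local'' branch also does not obviously close: a locally-bad event (culprit of length $\le M^2$) depends on midpoints within real distance $\sim M^2$, i.e.\ coarse distance $\sim M$, so thinning yields only $\Omega(r/M)$ conditionally independent events; the resulting per-step factor is $\big(C_1M^2\alpha^{-M/10}\big)^{1/\Theta(M)}\sim \alpha^{-\Theta(1)}$, which need not beat the contour entropy $D$ when $\lambda$ is close to $1$ (so $\alpha$ is close to $1$). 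The Liggett--Schonmann--Stacey alternative runs into the same wall: the truncation error (edges of length $>M^2$) is not exponentially small in $r$.

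The paper's fix is to replace the standard $*$-adjacent dual path by an \emph{invasive path}: from a bad square $Q_{j_{i-1}}$ one is allowed to jump to any square $Q_{j_i}$ that is $*$-adjacent to a square reached by an edge with midpoint in $Q_{j_{i-1}}$. A jump of coarse distance $\ell$ thus carries probability $\lesssim W^2\alpha^{-(\ell-1)W}$ (some edge with midpoint in $Q_{j_{i-1}}$ has length $\ge(\ell-1)W$), uniformly over the conditioning on earlier squares, and has entropy $O(\ell^2)$. Since $\sum_{\ell\ge 1}O(\ell^2)\,W^2\alpha^{-(\ell-1)W}\to 0$ as $W\to\infty$, the weighted path count is summable and beats $D$, yielding the $e^{-cn}$ bound with no two-scale split. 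In effect the invasive path internalises your local/rare dichotomy at \emph{all} scales simultaneously, so that long edges pay for their own entropy; this is the idea your sketch is missing.
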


The theorem above is a consequence of the following, more general theorem which establishes that crossings of small triangles 
occur even in thin slabs inside $\Lambda^0_n$.
\begin{theorem}\label{thm:smalltrianglesgen}
   Let $R$ be a $m\times k$ rectangle inside $\Lambda^0_n$, where $m=m(n)\geq k=k(n)$.
   Consider any boundary condition $\xi\in\Xi(\Lambda^0_n)$ that does not intersect $R$, and take any $\lambda\in(0,1)$.
   Let $\sigma$ be a random triangulation distributed according to $\pi^\xi_n$. 
   For any $L\geq 1$, let $C_R(\sigma,L)$ be the event that there exists a path of adjacent triangles in $\sigma$ that 
   intersect both the left and right boundaries of $R$ and such that 
   the edges of the triangles are contained inside $R$ and have length at most $L$. 
   Then, there exist positive constants $c=c(\lambda),c'=c'(\lambda)$, $L_0=L_0(\lambda)$ and $m_0=m_0(\lambda,L)$ 
   such that for all $m\geq m_0$, all $k\geq c'\log(m)$ and all $L\geq L_0$ we have
   $$
      \pi^\xi_n(C_R(\sigma,L)) \geq 1 - \exp\left(-ck\right).
   $$
\end{theorem}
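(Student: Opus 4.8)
The plan is to reduce the crossing event $C_R(\sigma,L)$ to a percolation statement for ``tame'' unit squares and then to bound the probability of the dual blocking event by the uniform‑in‑boundary‑condition edge‑length tails of Section~\ref{sec:consequence}. Since $\xi\cap R=\emptyset$, every unit segment and unit diagonal inside $R$ belongs to $\mathbb{G}^\xi$, and the same stays true for $\mathbb{G}^{\xi'}$ if $\xi'$ is obtained from $\xi$ by adding constraint edges that avoid the relevant square. Fix $L$ and call a unit square $Q\subseteq R$ \emph{good} (for $\sigma$) if no edge of $\sigma$ of $\ell_1$-length $>L$ comes within $\ell_2$-distance $L$ of $Q$. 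A good square forces every triangle of $\sigma$ meeting $\overline Q$ to have all edges of length $\le L$ and to lie within distance $L$ of $Q$ (the only subtlety, the ``third edge'' of such a triangle, is handled because one of its endpoints is within distance $L$ of $Q$). Hence, if $Q_1,\dots,Q_r$ is a left‑to‑right $\mathbb Z^2$‑connected path of good squares across $R$, the triangles of $\sigma$ meeting $\bigcup_i Q_i$ form a connected family in the triangle‑adjacency graph, they all have edges $\le L$, and (after a routine truncation at the left and right sides of $R$, which only removes triangles adjacent to $\partial R$) they yield a path of adjacent triangles contained in $R$, meeting both vertical sides of $R$; that is, they witness $C_R(\sigma,L)$. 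By planar duality for site percolation on the $m\times k$ grid of unit squares of $R$,
$$
\pi^\xi_n\!\big(C_R(\sigma,L)^c\big)\ \le\ \pi^\xi_n\!\big(\text{there is a top‑to‑bottom $*$‑connected path of bad squares in }R\big).
$$

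Next I would bound the right‑hand side. A bad square is witnessed by an edge of $\sigma$ of length $>L$ passing within distance $L$ of it, and such an edge crosses a unit diagonal of some square within distance $2L$ of it; this already gives $\pi^\xi_n(Q\text{ bad})\le O(L^2)\,c\,\alpha^{-(L-1)}$, which tends to $0$ as $L\to\infty$. Along a top‑to‑bottom $*$‑bad path, picking one witnessing edge per square and deleting repetitions produces distinct edges $e_1,\dots,e_t$ of $\sigma$ with $|e_j|>L$, with consecutive ones within $\ell_2$‑distance $O(L)$, and collectively meeting a unit square in every row of $R$; since $e_j$ meets at most $|e_j|+1$ rows, adding the bound $\sum_j(|e_j|+1)\ge k$ to $\sum_j|e_j|\ge tL\ge t$ gives $\sum_j|e_j|\ge k/2$. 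I would then union bound over these ``walls'', but parametrizing a wall by its constituent long edges rather than by its $\Theta(k)$ squares: $e_1$ meets the top row of $R$ and hence its midpoint has $O\big((m+|e_1|)|e_1|\big)$ positions, and each later $e_j$, being within distance $O(L)$ of $e_{j-1}$, has at most $\mathrm{poly}(|e_{j-1}|,|e_j|)$ positions given $e_{j-1}$.

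For the probability of a prescribed wall I would use the chain rule, $\pi^\xi_n(e_1,\dots,e_t\in\sigma)=\prod_j\pi^{\xi\cup\{e_1,\dots,e_{j-1}\}}(e_j\in\sigma)$, which is legitimate because the $e_i$ are pairwise non‑crossing, so $\xi\cup\{e_1,\dots,e_{j-1}\}$ is a genuine boundary condition. Since $e_j$ crosses $\gtrsim L$ unit diagonals while the previously revealed edges, being disjoint from $e_j$, can block only boundedly many of them, $e_j$ crosses a unit diagonal $g_j$ that is still a ground‑state edge of $\xi\cup\{e_1,\dots,e_{j-1}\}$; by the stationary form of Theorem~\ref{thm:crosstail} (equivalently Propositions~\ref{pro:expectation} and~\ref{pro:largestedge}, which hold uniformly over boundary conditions) this yields $\pi^{\xi\cup\{e_1,\dots,e_{j-1}\}}(e_j\in\sigma)\le\pi^{\xi\cup\{e_1,\dots,e_{j-1}\}}(\exists e:\,e\cap g_j\ne\emptyset,\ |e|\ge|e_j|)\le c\,\alpha^{-(|e_j|-2)}$. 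Multiplying the position counts by these probabilities, writing $\alpha^{-|e_j|}=\alpha^{-|e_j|/2}\alpha^{-|e_j|/2}$ to pull out $\alpha^{-k/4}$ via $\sum_j|e_j|\ge k/2$, and summing over $t\ge1$ and over $|e_1|,\dots,|e_t|>L$ — at which point the per‑edge series $\sum_{\ell>L}\mathrm{poly}(\ell)\,\alpha^{-\ell/2}$ converges and, for $L$ beyond some $L_0(\lambda)$, the geometric series in $t$ converges — produces $\pi^\xi_n\!\big(C_R(\sigma,L)^c\big)\le C(L)\,m\,\alpha^{-k/4}$ with $C(L)\le1$ once $L_0(\lambda)$ is taken large enough. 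To finish, set $c=\tfrac18\log\alpha$ and $c'=\tfrac8{\log\alpha}$ (both depending only on $\lambda$): if $k\ge c'\log m$ then $m\le\alpha^{k/8}$, so $C(L)\,m\,\alpha^{-k/4}\le\alpha^{-k/8}=e^{-ck}$, and $m_0(\lambda,L)$ is chosen large enough that the forced inequality $k\ge c'\log m\ge c'\log m_0$ exceeds the finite $\lambda$‑ and $L$‑dependent thresholds used above; Theorem~\ref{thm:smalltriangles} is the special case $R=\Lambda^0_n$, $\xi$ free.

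The main obstacle is the middle step. A naive Peierls sum over the $\sim7^\ell$ dual paths of bad squares fails, because $\alpha$ is only slightly larger than $1$ and the probabilistic gain $\alpha^{-\Theta(\ell)}$ does not beat the path entropy $7^\ell$; the remedy is to parametrize a blocking wall by its $O(k/L)$ long edges, whose entropy is merely exponential in $t$ with a fixed base and is dominated by the factor $\alpha^{-L}$ carried by each long edge once $L$ is large. The second difficulty is that lattice triangulations have no FKG inequality, so the joint probability that several prescribed long edges all occur must be handled via the chain rule together with the uniform‑in‑$\xi$ tails, which in turn forces a careful choice of revealing order so that a ground‑state unit diagonal crossed by each new long edge is still available under the accumulated conditioning.
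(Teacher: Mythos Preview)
Your overall plan---reduce $C_R(\sigma,L)$ to a left-to-right crossing of ``good'' squares, dualise to a top-to-bottom $*$-path of bad squares, and Peierls-bound the latter via the uniform-in-$\xi$ tails of Section~\ref{sec:consequence}---is exactly the paper's. But the chain-rule step has a genuine gap. The assertion that ``the previously revealed edges, being disjoint from $e_j$, can block only boundedly many'' of the unit diagonals crossed by $e_j$ is false: two non-crossing edges of a lattice triangulation can share an unbounded number of unit squares. Concretely, the three parallel segments $(0,0)\text{--}(n,1)$, $(1,0)\text{--}(n+1,1)$, $(2,0)\text{--}(n+2,1)$ lie together in a triangulation, and the outer two cross \emph{every} unit diagonal and unit vertical that the middle one crosses; once the outer two are added to $\xi$ as constraints, the ground-state edge at the midpoint of the middle one \emph{is} the middle one, and your bound $c\,\alpha^{-(|e_j|-|g_j|)}$ degenerates to $c$. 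You flag that a ``careful choice of revealing order'' is required but do not supply one, and nothing in your setup prevents several of the witnesses $e_1,\dots,e_t$ from forming exactly such a near-parallel family inside the wall, so it is unclear any fixed order rescues the argument. (A secondary issue: a witnessing long edge is only within distance $L$ of its bad square, so it need not meet that square's row, and the inequality $\sum_j(|e_j|+1)\ge k$ is not immediate either.)

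The paper's implementation is different and sidesteps edge-specific conditioning. It tiles $R$ into $W\times W$ blocks with $W=W(\lambda)$ large, calls a block bad when some edge with \emph{midpoint} in it has length $>L$, and replaces the naive $*$-path by an \emph{invasive path} $Q_{j_1},Q_{j_2},\dots$ in which $Q_{j_{i+1}}$ need only be $*$-adjacent to some block reached by an edge with midpoint in $Q_{j_i}$. A jump of block-distance $\ell$ carries probabilistic cost $\sim\alpha^{-(\ell-1)W}$ against entropy $O(\ell^2)$, so taking $W$ large makes each step contribute at most $e^{-c\ell}$, and the double sum over the number of steps and over step lengths converges; the factor $m$ from the choice of starting block is then absorbed by $k\ge c'\log m$. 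The conditioning there is on the \emph{event} that earlier blocks are bad, not on specific long edges being present as constraints, so the obstruction you hit---previously fixed edges forcing the current ground state to be long---does not arise in the same form.
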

\begin{proof}
   Let $W$ be a large enough constant depending on $\lambda$, but independent of $k,m,n$.
   Partition $R$ into squares of side length $W$, such that each midpoint of $\Lambda_n$ belongs to exactly one square; 
   in order to allow the squares to completely partition $R$, we let the squares close to the boundary of $R$ be rectangles with sides of length between $W$ and $2W$ (for simplicity we will continue to refer to them as squares).
   Consider an arbitrary order of the squares and let $Q_i$ denote the $i$th square.
   Let $G_i$ be the event that all edges of midpoint inside $Q_i$ have length at most $L$, and let 
   $G_i^\gs$ be the event that all edges of midpoint inside $Q_i$ are in ground state given $\xi$.
   For any event $F$ for which $F\cap G_i^\gs\neq\emptyset$, we obtain that 
   \begin{equation}
      \pi^\xi_n(G_i\mid F) 
      \geq 1 - \sum_{x\in \Lambda\cap Q_i}\pi^\xi_n(|\sigma_x|> L \mid F)
      \geq 1 - \sum_{x\in \Lambda\cap Q_i}c_1\alpha^{-L}
      \geq 1 - 5W^2c_1\alpha^{-L},
      \label{eq:gi}
   \end{equation}
   where the second inequality follows from the second part of Corollary~\ref{cor:tailedge} for some constant $c_1>0$, and $\alpha$ comes from
   Theorem~\ref{thm:lyapunov}.
   We set $L=L(W)$ as a function of $W$ so that $L\leq W/100$ but $5W^2c_1\alpha^{-L(W)}\to1$ as $W\to\infty$.
   Let $C_R'(\sigma,L)$ the event that there exists a path of adjacent squares such that $G_i$ holds for each square $Q_i$ in the path and 
   there are two squares in the path containing midpoint of the left and right boundaries of $R$. Clearly,
   $$
      \pi^\xi_n(C_R(\sigma,L)) \geq \pi^\xi_n(C_R'(\sigma,L)).
   $$
   
   By planar duality, if $C_R'(\sigma,L)$ does not hold, 
   then there must exist a dual path of $*$-adjacent squares such that $G_i$ does not hold for each square $Q_i$ in the 
   dual path and there are two squares in this path containing midpoints of the top and bottom boundaries of $R$. A $*$-adjacent path of squares is a 
   sequence of squares so that two consecutive squares in the sequence intersect in at least one point. Such a path must contain at least 
   $\lfloor\frac{k}{W}\rfloor$ squares. We say that a path of (not necessarily adjacent) 
   squares $Q_{j_1},Q_{j_2},Q_{j_3},\ldots$ is an \emph{invasive path} if, for all $i$,
   we have that $G_{j_i}$ does not hold and $Q_{j_{i+1}}$ is $*$-adjacent to a square 
   $Q_k$ for which there is an edge of 
   midpoint in $Q_{j_i}$ intersecting $Q_{k}$. We say that an invasive path is a \emph{top-to-bottom} invasive path in $R$ if it starts with a square
   containing a midpoint in the top boundary of $R$ and ends with a square containing a midpoint whose edge intersects the bottom boundary of $R$. 
   Note that the existence of a dual top-to-bottom path of $*$-adjacent squares as described in the beginning of the paragraph 
   implies the existence of a top-to-bottom invasive path in $R$.
   
   Letting 
   $I$ denote the set of indices $i$ such that $Q_i$ contains a midpoint in the top boundary of $R$, we have 
   \begin{align}
      \pi^\xi_n(C_R'(\sigma,L)) 
      &\geq 1 - \pi^\xi_n(\text{there is a top-to-bottom invasive path in $R$})\nonumber\\
      &\geq 1 - \sum_{i\in I}\pi^\xi_n(\text{there is a top-to-bottom invasive path in $R$ starting from $Q_i$}).
      \label{eq:startpath}
   \end{align}
   For two squares $Q_i,Q_j$ let $d(Q_i,Q_j)$ be the length of the shortest path of $*$-adjacent squares from $Q_i$ to $Q_j$.
   For each event $F$ such that $G_i^\gs\cap F\neq\emptyset$ the probability that an edge of midpoint $x\in\Lambda\cap Q_i$ with 
   $d(Q_i,Q_j)\geq \ell$ intersects $Q_j$ is 
   $$
      \pi^\xi_n(\sigma_x \cap Q_j \neq \emptyset \mid F) 
      \leq \pi^\xi_n(|\sigma_x| \geq  (\ell-1)W \mid F) 
      \leq c_1\alpha^{-(\ell-1)W},
   $$
   where the last inequality follows from the second part of Corollary~\ref{cor:tailedge}.
   Hence,
   $$
      \pi^\xi_n\left(\bigcup\nolimits_{x\in\Lambda\cap Q_i}\big\{\sigma_x \cap Q_j \neq \emptyset\big\} \mid F\right) 
      \leq 5 W^2 c_1 \alpha^{-(d(Q_i,Q_j)-1)W}.
   $$
%    If $d(Q_i,Q_j)=1$, then we use the fact that $G_j$ must not hold. 
   Let 
   $p_\ell = 5 W^2 c_1 \alpha^{-(\ell-1)W}$ for $\ell\geq2$. 
   For $\ell=1$, let $p_\ell=5W^2c_1\alpha^{-L}$, which is the bound in~\eqref{eq:gi} for the probability that $G_i$ does not hold.
   Therefore, the probability that a given top-to-bottom invasive path $Q_{j_1},Q_{j_2},Q_{j_3},\ldots,Q_{j_\kappa}$ exists 
   such that $\ell_i=d(Q_{j_i},Q_{j_{i-1}})$ is at most 
   $
      \prod_{i=2}^\kappa p_{\ell_i-1}.
   $
   Note that, by definition of invasive paths, $\ell_i\geq 2$ for all $i$.
   The number of invasive paths with a given sequence $\ell_2,\ell_3,\ldots,\ell_\kappa$ is at most
   $
      \prod_{i=2}^\kappa (2\ell_i+2)^2.
   $
   We put both expressions together, and 
   set $W$ large enough so that $(2\ell_i+2)^2 p_{\ell_i-1}\leq e^{-c_2 \ell_i}$ for some positive constant $c_2$ (that increases with $W$) and all $i$,
   which yields
   $$
      \prod_{i=2}^\kappa (2\ell_i+2)^2 p_{\ell_i-1}
      \leq \exp\left(-c_2\sum\nolimits_{i=1}^\kappa \ell_i\right).
   $$
   For the path to go from the top boundary to the bottom boundary of $R$ we need that $\sum_{i=2}^\kappa (\ell_i+1) \geq \lfloor\frac{k}{W}-1\rfloor$. 
   This implies that $\sum_{i=2}^\kappa \ell_i \geq \lfloor\frac{k}{2W}-1\rfloor$.
   Given $s=\sum_{i=2}^\kappa \ell_i$, there are at most $s$ possible values for $\kappa$ and, given $s$ and $\kappa$,
   there are at most 
   $\binom{s-1}{\kappa-1}$ possible ways to choose the values of $\ell_1,\ell_2,\ldots,\ell_\kappa$ so that $s=\sum_{i=2}^\kappa \ell_i$.
   Plugging
   everything into~\eqref{eq:startpath}, we have
   \begin{align*}
      \pi^\xi_n(C_R'(\sigma,L)) 
      &\geq 1 - \sum_{i\in I} \sum_{k,\ell_1,\ell_2,\ldots,\ell_k}\exp\left(-c_2\sum\nolimits_{i=1}^k \ell_i\right)\\
      &\geq 1 - \sum_{i\in I} \sum_{s\geq \frac{k}{2W}-2}\sum_{\kappa=\frac{k}{2W}}^s \binom{s-1}{\kappa-1}\exp\left(-c_2s\right)\\
      &\geq 1 - \sum_{i\in I} \sum_{s\geq \frac{k}{2W}-2} 2^{s-1}\exp\left(-c_2s\right).
   \end{align*}
   Setting $W$ large enough, which causes $c_2$ to be large enough, we obtain
   $$
      \pi^\xi_n(C_R'(\sigma,L)) 
      \geq 1 - \sum_{i\in I} \exp\left(-\frac{c_3 k}{4W}\right),
   $$
   for some constant $c_3>0$.
   Noting that the cardinality of $I$ is at most $m+1$ and that $k\geq c'\log m$ for some large enough $c$, depending
   on $W$,
   concludes the proof.
\end{proof}

%############################################################################################
%############################################################################################
%############################################################################################
\section{Applications to triangulations of thin triangles}\label{sec:thinrectangle}
In this section we apply our Lyapunov function to 
study asymptotic properties of triangulations of thin rectangles; i.e., triangulations of the integer points in 
$[-n/2,n/2] \times [0,k]$ as $n\to\infty$ while $k$ is kept fixed. 
Define $\Lambda^0_{n,k}$ to be the set of integer points in $[-n/2,n/2] \times [0,k]$ and $\Lambda_{n,k}$ to be the set of midpoints of a triangulation of $\Lambda^0_{n,k}$.
Given any set of constraint edges $\xi\in\Xi(\Lambda^0)$, let 
\begin{align*}
   \Omega^\xi_{n,k} \text{ be the set of triangulations of $\Lambda^0_{n,k}$ compatible with $\xi$.}
\end{align*}
We say that a triangulation $\sigma\in\Omega^\xi_{n,k}$ has a \emph{top-to-bottom crossing of unit verticals} if 
$\sigma$ contains $k$ unit vertical edges with the same horizontal coordinate.
As in~\eqref{eq:defpi}, for any $\lambda$, we denote by 
\begin{align*}
   \pi^\xi_{n,k} \text{ the stationary measure of $\mathcal{M}^\lambda(\Omega^\xi_{n,k})$.}
\end{align*}

%############################################################################################
\subsection{Vertical crossings}
Our first goal is to show that for any $\lambda\in(0,1)$ a typical triangulation 
contains many top-to-bottom crossings of edges that are unit verticals and that the stationary measure has decay of correlations.
\begin{theorem}\label{thm:toptobottom}
   Let $m=m(k)$ be large enough, and let $R$ be an $m \times k$ rectangle inside $[-n/2,n/2]\times [0,k]$. 
   Consider an arbitrary boundary condition $\xi\in\Xi(\Lambda^0_{n,k})$ such that no edge of $\xi$ intersects $R$, and take any $\lambda\in(0,1)$.
   For any triangulation $\eta\in\Omega^\xi_{n,k}$, 
   let $C_R(\eta)$ be the number of disjoint top to bottom crossings of unit verticals of $\eta$ that are inside $R$.
   Let $\sigma$ be a random triangulation distributed according to $\pi^\xi_{n,k}$.
   Then there exist positive constants $c=c(\lambda,k)$ and $\delta=\delta(\lambda,k)\in(0,1)$ such that, for any large enough $m$, we have
   $$
     \pi^\xi_{n,k}(C_R(\sigma) \leq \delta m)\leq e^{-c m}.
   $$
%    Furthermore, let $\xi,\xi'\in\Xi(\Lambda^0_{n,k})$ be two boundary conditions that do not intersect $R$.
%    Let $\sigma$ and $\sigma'$ be two random triangulations distributed according to $\pi^\xi_{n,k}$ and $\pi^{\xi'}_{n,k}$, respectively. 
%    Then we can couple $\sigma$ and $\sigma'$ such that the probability that they have less than $\delta m$ equal top to bottom crossings of unit verticals 
%    is at most $e^{-c m}$.
\end{theorem}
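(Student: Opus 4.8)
The plan is a block (coarse-graining) argument. Partition $R$ into $N=\lfloor m/W\rfloor$ consecutive vertical sub-rectangles $B_1,\dots,B_N$ of width $W$, where $W=W(\lambda,k)$ is a large constant to be fixed at the end, and write $\Upsilon_j$ for the set of midpoints inside $B_j$. Call $B_j$ \emph{open} if $\sigma$ contains all $k$ unit vertical edges of some integer column lying in the middle third of $B_j$. Since columns in distinct blocks are edge-disjoint, $C_R(\sigma)\ge\#\{j:B_j\text{ open}\}$, so it suffices to show that $\#\{j:B_j\text{ open}\}\ge\delta m$ with probability at least $1-e^{-cm}$.

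The heart of the argument is a \emph{conditional} lower bound on the probability that a block is open, obtained by a ground-state surgery generalising Theorem~\ref{thm:surgery}. Fix $j$ and condition on $\sigma$ restricted to $\Upsilon_j^{\mathrm c}$; the conditional law of the edges of midpoint in $B_j$ is $\pi^{\zeta}$ for the boundary condition $\zeta=\xi\cup\{\sigma_y:y\notin\Upsilon_j\}$. Let $\mathrm{NotBad}_j$ be the ($\sigma|_{\Upsilon_j^{\mathrm c}}$-measurable) event that no edge of $\sigma$ with midpoint outside $\Upsilon_j$ meets the middle third of $B_j$. On $\mathrm{NotBad}_j$ no constraint edge of $\zeta$ meets the middle third, so the $k$ unit verticals $g_1,\dots,g_k$ of the central column of $B_j$ all lie in $\mathbb{G}^{\zeta}$. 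By Proposition~\ref{pro:expectation}, whose constant is uniform over boundary conditions, $\pi^{\zeta}(\Psi_{g_i})\le c(\lambda)$ for every $i$, so with $\pi^{\zeta}$-probability at least $1/2$ we have $\Psi_{g_i}\le 2k\,c(\lambda)$ simultaneously for all $i$; on this event Propositions~\ref{pro:largestedge} and~\ref{pro:smalltrees} confine every edge crossing the central column (in any intermediate configuration of the surgery below, since decreasing flips do not increase $\Psi_{g_i}$) to a bounded neighbourhood containing only $c_1=c_1(\lambda,k)$ midpoints. Applying Proposition~\ref{pro:producegroundstate} successively to $g_1,\dots,g_k$ produces a weight-non-decreasing map onto configurations containing the central column that is at most $2^{c_1}$-to-one (the preimage count being controlled by Anclin's bound, Lemma~\ref{lem:anclin}), whence $\pi^{\zeta}(B_j\text{ open})\ge\delta_0:=2^{-c_1-1}$. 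Crucially $\delta_0$ depends only on $\lambda$ and $k$, not on $W$. This gives $\pi(B_j\text{ open}\mid\sigma|_{\Upsilon_j^{\mathrm c}})\ge\delta_0\mathbf{1}_{\mathrm{NotBad}_j}$, equivalently $\pi(B_j\text{ closed}\mid\sigma|_{\Upsilon_j^{\mathrm c}})\le(1-\delta_0)+\mathbf{1}_{\mathrm{Bad}_j}$ where $\mathrm{Bad}_j$ is the complement of $\mathrm{NotBad}_j$.

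Finally one must convert this conditional bound into an exponential estimate. The event $\mathrm{Bad}_j$ requires an edge of length $\Omega(W)$ to touch $B_j$, and a single edge of length $\ell$ can touch only $O(\ell/W)$ blocks in this way, so the number of blocks with $\mathrm{Bad}_j$ is at most $O(W^{-1})$ times the total length of the edges near $R$ longer than $\Omega(W)$. Using the exponential tail of Corollary~\ref{cor:tailedge}---via a renormalisation estimate tolerant of sparse occurrences of long edges, e.g.\ bounding exponential moments using $\pi\propto\lambda^{(\cdot)}$ and domination by the subcritical measure at parameter $\lambda^{1/2}$---one shows that with probability $1-e^{-cm}$ at most a $\delta_0/4$ fraction of the blocks are spoiled (have $\mathrm{Bad}_j$). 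On the complement, a standard conditional stochastic-domination argument applied to the unspoiled blocks dominates the restricted field of closed indicators from above by i.i.d.\ Bernoulli$(1-\delta_0)$, so a Chernoff bound gives at least $\delta_0 N/4$ open blocks except with probability $e^{-\Omega(N)}$. Taking $\delta=\delta_0/(4W)$ and $c=c(\lambda,k)$ small enough completes the proof. The main obstacle is precisely this last step: a typical triangulation of a long thin strip \emph{does} contain some moderately long edges, so one cannot simply condition on their absence; one must instead quantify, with exponentially good probability in $m$, that they are too sparse to spoil more than a small fraction of the blocks, at which point the conditional lower bound on the remaining (well-separated) blocks still forces a linear number of open columns.
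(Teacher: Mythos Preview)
Your surgery argument for the conditional lower bound is sound and is essentially how the paper deploys Theorem~\ref{thm:surgery}: conditioning on the outside of a block and using Propositions~\ref{pro:expectation}, \ref{pro:largestedge}, \ref{pro:smalltrees}, \ref{pro:producegroundstate} and Anclin's bound to get a uniform $\delta_0(\lambda,k)$ for the central column to be in ground state. The difficulty, as you yourself flag, is entirely in the last paragraph, and there are two genuine gaps there.

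First, the ``standard conditional stochastic-domination'' does not go through as stated. Your bound is $\pi(B_j\text{ closed}\mid \sigma|_{\Upsilon_j^{\mathrm c}})\le (1-\delta_0)+\mathbf 1_{\mathrm{Bad}_j}$, where the conditioning is on \emph{everything} outside $B_j$. The $\sigma$-algebras $\mathcal G_j=\sigma(\sigma|_{\Upsilon_j^{\mathrm c}})$ are not nested, and while the indicators $\mathbf 1_{B_i\text{ closed}}$ for $i\neq j$ are $\mathcal G_j$-measurable, the events $\mathrm{Bad}_i$ are not: $\mathrm{Bad}_i$ depends on edges with midpoint in $B_j\subset\Upsilon_i^{\mathrm c}$, which are precisely the variables hidden from $\mathcal G_j$. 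So when you try to peel off the closed indicators one by one, the $\mathbf 1_{\mathrm{Bad}_i}$ terms you pick up along the way cannot be pulled through the next conditional expectation, and the argument stalls after one step. Second, the exponential bound on $\#\{j:\mathrm{Bad}_j\}$ is only sketched. Corollary~\ref{cor:tailedge} gives pointwise tails, hence a bound on the \emph{expectation} of the total long-edge length, but Markov from this yields only an $O(1)$ probability bound, not $e^{-cm}$; the exponential-moment route you allude to (comparing $Z(\lambda)$ with $Z(\lambda^{1/2})$) is not available from the results of the paper and would itself require work comparable to the theorem you are proving.

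The paper sidesteps both issues by replacing the static block picture with a left-to-right \emph{exploration}. One reveals a single column $\Gamma_i$ at a time (the leftmost column of a width-$W$ slab), records the number $A_i$ of slabs to the right contaminated by the revealed edges, and then jumps to the first uncontaminated slab. Because the conditioning is now on a genuine filtration $\mathcal F_{i-1}=\sigma(\Gamma_1,\ldots,\Gamma_{i-1})$, the uniform Lyapunov bounds give simultaneously that $A_i$ is stochastically dominated by a geometric (this replaces your global ``few bad blocks'' estimate with a local ``short reach'' estimate) and that $\pi(V_i\mid\mathcal F_{i-1})\ge e^{-c_1k}$ whenever the current column is unconstrained (this is your $\delta_0$). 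One then couples the $A_i$ to i.i.d.\ geometrics, uses a stopping time for when the exploration exits $R$, and concludes with two Chernoff bounds (Lemmas~\ref{lem:cbgeometric} and~\ref{lem:cbbinomial}). The sequential structure is exactly what makes the domination rigorous and simultaneously obviates the need for any separate concentration estimate on long edges.
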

\begin{proof}
   Let $W>0$ be a large enough constant (independent of $m$).
   Partition $R$ into slabs of width $W$; i.e., each such slab is a translate of $[0,W]\times [0,k]$. 
   For the $i$th slab, let $\Gamma_i$ be the set of midpoints of the $i$th slab with the smallest horizontal coordinate; 
   we assume that $W$ is set in such a way that $\Gamma_i$ only contains
   midpoints for which the ground state edge is a unit vertical.
   We will sample the edges of $\Gamma_i$ for each $i$, in order. We may need to skip some values of $i$ if 
   edges in previous slabs turn out to be long edges.
   In order to do this, define the random variable $A_i\geq1$, such that $A_i=j$ iff slab $i+j$ is the first slab to the right of $\Gamma_i$ not to be intersected by an edge of midpoint in $\Gamma_i$. 
   Let $\mathcal{F}_i$ be the $\sigma$-algebra generated by $\{\sigma_x\}_{x \in \bigcup_{j=1}^i\Gamma_j}$. 
   Let $V_i$ be the event that the edges of midpoint in $\Gamma_i$ are all unit verticals. 
   We will look at events $F\in\mathcal{F}_{i-1}$ for which $F\cap V_i\neq\emptyset$, this means that under $F$ the ground state configuration of $\Gamma_i$ is unit verticals.
   For any $x\in \Gamma_i$ and any $i$, denote by $\bar\sigma_x$ the (ground state) unit vertical edge whose midpoint is $x$.
   Thus, for any event $F\in\mathcal{F}_{i-1}$ for which $F\cap V_i\neq\emptyset$ and any $j\geq2$, we have that 
   $$
      \pi^\xi_{n,k}\big(A_i \geq j \mid F\big) 
      \leq \pi^\xi_{n,k}\left(\bigcup\nolimits_{x\in \Gamma_i}\big\{\Psi_{\bar\sigma_x}(\sigma)\geq \alpha^{(j-1)W-1}\big\} \mid F\right)
      \leq \sum_{x\in \Gamma_i}\pi^\xi_{n,k}\left(\Psi_{\bar\sigma_x}(\sigma)\geq \alpha^{(j-1)W-1} \mid F\right).
   $$
   Using Markov's inequality, we obtain
   \begin{equation}
      \pi^\xi_{n,k}\big(A_i \geq j \,\mid\, F\big) 
      \leq \sum_{x\in \Gamma_i}\pi^\xi_{n,k}\left(\Psi_{\bar\sigma_x} \mid F\right)\alpha^{-(j-1)W+1}
      \leq \sum_{x\in \Gamma_i}c\alpha^{-(j-1)W+1}
      \leq kc\alpha^{-(j-1)W+1},
      \label{eq:ai}
   \end{equation}
   where $c$ does not depend on $W$. We set $W$ large enough so that we can find a $\beta=\beta(W)\in(0,1)$ for which 
   \begin{equation}
      \pi^\xi_{n,k}\big(A_i \geq j \,\mid\, F\big)
      \leq (1-\beta)^{j-1} \text{ for all $j\geq 2$}.
      \label{eq:tailb}
   \end{equation}
   Consequently, $\pi^\xi_{n,k}\big(A_i=1 \,\mid\, F\big)\geq \beta$. In other words, $A_i$ is stochastically dominated by a geometric random variable of parameter $\beta$, uniformly over 
   $F\in\mathcal{F}_{i-1}$ for which $F\cap V_i\neq\emptyset$. 
   Using Theorem~\ref{thm:surgery} for each $x\in\Gamma_i$ we obtain a constant $c_1>0$ that is independent of $m$ and $k$ so that 
   \begin{equation}
      \pi^\xi_{n,k}\big(V_i \,\mid\, F\big)
      \geq e^{-c_1 k} \quad\text{for all event $F\in\mathcal{F}_{i-1}$ with $F\cap V_i\neq \emptyset$}.
      \label{eq:c}
   \end{equation}

   Let $A_1',A_2',\ldots$ be i.i.d.\ geometric random variables of parameter $\beta$. 
   We define a sequence of random variables $k_1,k_2,\ldots$ inductively as follows.
   Let $k_1=1$. Assume that $k_j$ has been defined. 
   Sample $A_{k_j}'$ and $A_{k_j}$ in a coupled way so that $A_{k_j}\leq A_{k_j}'$. Given $A_{k_j}$, sample the edges of midpoint in $\Gamma_{k_j}$.
   Set $k_{j+1}=k_j+A_{k_j}'$, and iterate.
   Define the stopping time 
   $$
      \tau=\min\big\{j\geq 1 \colon k_j \geq \tfrac{m}{W}-1\big\}.
   $$
   In other words, $k_\tau$ is the first value of $k$ falling outside $R$.
   Let $S_{n,p}$ be a binomial random variable of parameters $n$ and $p$.
   Then, setting $\delta=e^{-c_1 k}\beta/(2W)$, we have
   \begin{align*}
      \pi^\xi_{n,k}\big(C_R(\sigma) \leq \delta m\big)
      &\leq \pi^\xi_{n,k}\big(\big\{\tau < 2\beta m /(3W)\big\} \cup \big\{S_{2\beta m/(3W),e^{-c_1k}} \leq \delta m\big\}\big)\\
      &\leq \pi^\xi_{n,k}\big(\tau < 2\beta m /(3W)\big)+ \pi^\xi_{n,k}\big(S_{2\beta m/(3W),e^{-c_1k}} \leq \delta\big).
   \end{align*}
   The first term is the probability that a sum of $2\beta m/(3W)$ i.i.d.\ geometric random variables of success probability $\beta$ 
   is larger than $\frac{m}{W}-1$, which using Lemma~\ref{lem:cbgeometric} with $\epsilon=\frac{1}{2}-\frac{3W}{2m}\in(1/3,1/2)$ gives
   $$
      \pi^\xi_{n,k}\big(\tau < 2\beta m /(3W)\big)
      \leq \exp\left(-\frac{(1/3)^2}{2(1+1/2)}\,\frac{2\beta m}{3W} \right)
      = \exp\left(-\frac{2\beta m}{81 W} \right).
   $$
   The second term is the probability that a Binomial random variable is smaller than $3/4$ of its expectation, which can be bounded above using Lemma~\ref{lem:cbbinomial}, yielding
   $$
      \pi^\xi_{n,k}\big(S_{2\beta m/(3W),e^{-c_1k}} \leq \delta m\big)
      \leq \exp\left(-\frac{1}{2\, 4^2}\,\frac{2\beta m e^{-c_1k}}{3W}\right)
      =\exp\left(-\frac{\delta m}{24}\right).
   $$
\end{proof}

A similar proof establishes that we can couple two triangulations so that they have the same vertical crossing.
\begin{theorem}\label{thm:couple}
   Let $m=m(k)$ be large enough, and let $R$ be an $m \times k$ rectangle inside $[0,n]\times [0,k]$. 
   Consider two arbitrary boundary conditions $\xi,\xi'\in\Xi(\Lambda^0_{n,k})$ such that no edge of $\xi\cup\xi'$ intersects $R$. 
   Take any $\lambda\in(0,1)$.
   Let $\sigma$ and $\sigma'$ be two random triangulations distributed according to $\pi^\xi_{n,k}$ and $\pi^{\xi'}_{n,k}$, respectively. 
   Then there exist positive constants $c=c(\lambda,k)$ and $\delta=\delta(\lambda,k)\in(0,1)$ such that, for any large enough $m$, 
   we can couple $\sigma$ and $\sigma'$ such that the probability that $\sigma,\sigma'$  
   have less than $\delta m$ equal top to bottom crossings of unit verticals 
   in $R$ is at most $e^{-c m}$.
\end{theorem}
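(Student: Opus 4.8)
The plan is to run the slab-by-slab exploration from the proof of Theorem~\ref{thm:toptobottom} \emph{jointly} for the two triangulations. First recall that $\pi^\xi_{n,k}$ has a spatial Markov property: since the weight $\lambda^{\sum_x|\sigma_x|}$ factorizes over midpoints, conditioning on the edges of $\sigma$ with midpoints outside some set $\Lambda'\subset\Lambda$ leaves the conditional law of the remaining edges equal to $\pi^{\xi''}$, where $\xi''$ is the boundary condition obtained by adjoining the revealed edges to $\xi$; moreover the constants in Proposition~\ref{pro:expectation}, Corollary~\ref{cor:tailedge} and Theorem~\ref{thm:surgery} are uniform over the boundary condition, so they apply to $\pi^{\xi''}$. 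Partition $R$ into vertical slabs of width $W$ (a large constant depending only on $\lambda$ and $k$), and for the $i$th slab let $\Gamma_i$ be its leftmost column of midpoints, chosen so that (using $\xi\cup\xi'$ avoids $R$) each $x\in\Gamma_i$ has its unit vertical edge inside $R$; let $V_i$ be the event that all edges of midpoint in $\Gamma_i$ are unit verticals. If $V_i$ occurs in both $\sigma$ and $\sigma'$, then both contain the same $k$ unit verticals at that column, hence an \emph{identical} top to bottom crossing of unit verticals inside $R$, and crossings obtained from distinct slabs are automatically disjoint.

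Next I would build the coupling by processing slabs from left to right, maintaining a filtration generated by the edges of $\Gamma_1,\dots,\Gamma_{i}$ revealed so far in \emph{both} triangulations, which conditionally form two independent Gibbs measures with augmented boundary conditions. A slab $i$ is \emph{processed} only if no previously revealed edge of $\sigma$ or $\sigma'$ intersects it; then the history is compatible with $V_i$, so, exactly as in~\eqref{eq:c}, applying Theorem~\ref{thm:surgery} to each of the $k$ midpoints of $\Gamma_i$ in turn gives that $\sigma$ has $V_i$ with conditional probability at least $e^{-c_1k}$, and likewise for $\sigma'$. I couple the fresh randomness at this step by a maximal coupling of the two conditional laws on $\Gamma_i$: since both assign mass at least $e^{-c_1k}$ to the single configuration ``all unit verticals,'' under this coupling the event that both $\sigma$ and $\sigma'$ realize that configuration has conditional probability at least $q:=e^{-c_1k}>0$, a constant depending only on $\lambda$ and $k$. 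After $\Gamma_i$ is revealed, some of its edges may be long; exactly as in~\eqref{eq:ai}, Corollary~\ref{cor:tailedge} bounds the probability that an edge of $\Gamma_i$ in $\sigma$, or in $\sigma'$, reaches $j$ slabs to the right by $(1-\beta)^{j-1}$ for $W$ large and some $\beta=\beta(W)\in(0,1)$, so the number $A_i\ge1$ of slabs we must skip before the next unobstructed slab is stochastically dominated, uniformly over the history, by a geometric variable of parameter $\beta$.

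Finally, the concentration argument of Theorem~\ref{thm:toptobottom} carries over. Dominating the skips by i.i.d.\ geometrics, the number $\tau$ of slabs actually processed before leaving $R$ satisfies $\tau\ge 2\beta m/(3W)$ except on an event of probability $e^{-cm}$, by Lemma~\ref{lem:cbgeometric}. Conditionally on the history, each processed slab independently produces a common crossing with probability at least $q$, so the number of common crossings stochastically dominates a $\mathrm{Binomial}(2\beta m/(3W),q)$ random variable, which is at least $\delta m$ with $\delta:=q\beta/(2W)$ except on an event of probability $e^{-cm}$, by Lemma~\ref{lem:cbbinomial}; a union bound finishes the proof. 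The hard part, and the only place where care beyond Theorem~\ref{thm:toptobottom} is needed, is the joint exploration: one must verify that revealing the history of \emph{both} triangulations keeps their conditional laws in the Gibbs form required for Theorem~\ref{thm:surgery}, and that a slab is declared processed only when the history is compatible with $V_i$ in both triangulations, so that the lower bound $q$ is legitimate and the per-slab coupling step is well defined.
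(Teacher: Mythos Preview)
Your proposal is correct and follows the same approach as the paper: redefine $A_i$ and $V_i$ to account for both triangulations, then rerun the slab exploration and the geometric/binomial concentration of Theorem~\ref{thm:toptobottom}. The paper's own proof is a three-line sketch that simply states these redefinitions and asserts that \eqref{eq:ai} picks up a factor~$2$ while \eqref{eq:c} ``holds with no change''; you have filled in exactly the details the paper omits, namely the spatial Markov property that keeps the conditional laws in Gibbs form and the explicit per-slab coupling that forces both triangulations onto the all-unit-verticals configuration with probability at least $e^{-c_1k}$. One terminological nit: what you need is not a maximal coupling in the usual total-variation sense but the simpler coupling that loads the common atom ``all unit verticals'' with its minimum marginal mass; your bound $q=e^{-c_1k}$ is correct under that construction.
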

\begin{proof}
   The proof is similar to the proof of Theorem~\ref{thm:toptobottom}.
   We only need to define the $A_i$ so that $A_i=j$ iff slab $i+j$ is the first slab to the right of
   $\Gamma_i$ not to be intersected by any edge of $\sigma$ and $\sigma'$ with midpoint in $\Gamma_i$, and 
   also need to define $V_i$ as the event that both $\sigma$ and $\sigma'$ 
   have unit verticals at edges of midpoint in $\Gamma_i$. 
   Then~\eqref{eq:ai} translates to the bound $\mathbf{P}\big(A_i\geq j\,\mid\,F\big)\leq 2kc\alpha^{-(j-1)W+1}$,
   whereas~\eqref{eq:c} holds with no change since the bound there holds uniformly on $F \cap V_i\neq\emptyset$. 
\end{proof}

%############################################################################################
\subsection{Decay of correlations}
The theorem below establishes decay of correlations for lattice triangulations in thin rectangles uniformly on the 
boundary conditions. We denote by the \emph{horizontal distance} between two points the distance between their horizontal coordinates 
(ignoring their vertical coordinates).
\begin{theorem}[Decay of correlations]\label{thm:correlations}
   Consider any region $Q\subset [-n/2,n/2]\times [0,k]$ and let $\Upsilon=\Upsilon'\cap \Lambda_{n,k}$ be the set of midpoints in $\Upsilon'$.
   Take any $\lambda\in(0,1)$.
   Let $\xi,\xi'\in\Xi(\Lambda^0_{n,k})$ be two boundary conditions that do not intersect $Q$, and 
   let $m$ be the horizontal distance between $\Upsilon$ and the edges of $\xi\cup\xi'$. 
   Let $\sigma$ and $\sigma'$ be two random triangulations distributed according to $\pi^\xi_{n,k}$ and $\pi^{\xi'}_{n,k}$, respectively, and let 
   $\sigma_\Upsilon$ and $\sigma'_\Upsilon$ denote the configuration of the edges of midpoints in $\Upsilon$ in $\sigma$ and $\sigma'$, respectively. 
   Then there exist a positive constant $c=c(\lambda,k)$ 
   and a coupling $\mathbf{P}$
   between $\pi^\xi_{n,k}$ and $\pi^{\xi'}_{n,k}$ such that 
   $
      \mathbf{P}\big(\sigma_\Upsilon = \sigma'_\Upsilon\big)
      \geq 1-e^{-cm}.
   $
\end{theorem}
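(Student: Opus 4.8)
The plan is to reduce the statement to Theorem~\ref{thm:couple}. The key observation is that a top-to-bottom crossing of unit verticals acts as a ``barrier'': once $\sigma$ and $\sigma'$ share such a crossing to the left of $\Upsilon$ and another to the right of $\Upsilon$, the edges of $\sigma$ and $\sigma'$ inside the slab bounded by these two crossings are mutually compatible with the exterior, and by the FKG-free ``surgery''/coupling argument used for Theorem~\ref{thm:couple} we can couple $\sigma$ and $\sigma'$ so that they agree on everything between the two crossings, in particular on $\sigma_\Upsilon$. So the proof has two ingredients: (i) producing shared vertical crossings on both sides of $\Upsilon$ with high probability, and (ii) coupling the interiors given the crossings.

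First I would set up the geometry. Let $[a,b]$ be the horizontal extent of $\Upsilon'$. By hypothesis the edges of $\xi\cup\xi'$ are at horizontal distance at least $m$ from $\Upsilon$, so the rectangles $R_{\mathrm{left}}=[a-m,a]\times[0,k]$ and $R_{\mathrm{right}}=[b,b+m]\times[0,k]$ (shrinking slightly so they are honest $m'\times k$ rectangles with $m'=\Theta(m)$) lie inside $[-n/2,n/2]\times[0,k]$ and are not intersected by any edge of $\xi\cup\xi'$. Apply Theorem~\ref{thm:couple} to $R_{\mathrm{left}}$: there is a coupling of $\pi^\xi_{n,k}$ and $\pi^{\xi'}_{n,k}$ under which, except on an event of probability at most $e^{-c m}$, $\sigma$ and $\sigma'$ have at least one common top-to-bottom crossing of unit verticals inside $R_{\mathrm{left}}$; likewise for $R_{\mathrm{right}}$. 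The main point is that Theorem~\ref{thm:couple} gives $\delta m'$ equal crossings, so in particular at least one, and we may take the leftmost common crossing $\ell$ in $R_{\mathrm{left}}$ and the rightmost common crossing $r$ in $R_{\mathrm{right}}$; both are determined by the edges strictly to the left of $a$ and strictly to the right of $b$, hence do not constrain $\Upsilon$.

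Next I would perform the interior coupling. Condition on the realized crossings $\ell$ and $r$ (more precisely, condition on the configuration of all edges whose midpoints lie weakly outside the open slab $S$ between $\ell$ and $r$). Because a crossing of unit verticals is a set of edges that separates the triangulation — any edge whose midpoint lies inside $S$ cannot intersect $\ell$ or $r$ — the conditional law of $\sigma$ restricted to midpoints in $S$ depends only on the ``inner'' boundary condition induced by $\ell$, $r$ and the fixed exterior, and similarly for $\sigma'$. But $\ell$ and $r$ are common to both, and so is the portion of the exterior that touches $S$ along the top and bottom boundaries of the rectangle (these are the fixed unit horizontals of the polygon boundary, not part of $\xi$ or $\xi'$). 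Hence the two conditional laws on $S$ are \emph{identical}, and we can couple them to agree exactly; in particular $\sigma_\Upsilon=\sigma'_\Upsilon$ on this event. Combining with the two failure probabilities from step~(i) via a union bound gives $\mathbf{P}(\sigma_\Upsilon=\sigma'_\Upsilon)\ge 1-2e^{-cm}\ge 1-e^{-c'm}$ after adjusting the constant.

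The step I expect to be the main obstacle is the second one — making precise that the conditional measures on the slab $S$ coincide. One has to argue carefully that conditioning on ``$\ell$ is the leftmost common crossing in $R_{\mathrm{left}}$ and $r$ the rightmost common crossing in $R_{\mathrm{right}}$, together with all exterior edges'' leaves a conditional law on $S$ that (a) is a genuine weighted-triangulation measure with a boundary condition that is the same for $\sigma$ and $\sigma'$, and (b) is not biased by the ``leftmost/rightmost'' selection rule in a way that breaks the identity. Point (b) is handled by exploring crossings from the outside in — revealing $\ell$ using only edges left of $a$ and $r$ using only edges right of $b$ before touching anything in $S$ — so the selection is measurable with respect to the conditioned-upon $\sigma$-algebra and the residual randomness on $S$ is untouched. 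This is exactly the structure already exploited in the proof of Theorem~\ref{thm:couple}, so the argument is a localization of that one rather than something genuinely new, but the bookkeeping of which edges separate $S$ from the exterior is where the care is needed.
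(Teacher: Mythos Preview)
Your proposal is correct and follows essentially the same route as the paper: apply Theorem~\ref{thm:couple} to an $m'\times k$ rectangle on each side of $\Upsilon$ to produce a common top-to-bottom unit-vertical crossing, then couple the interiors between the two crossings identically. The paper is terser about the conditioning step, but it too explores the crossings from the outside in (``sample the edges with midpoint in $R$ from left to right''), which is exactly your resolution of the selection-rule issue.
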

\begin{proof}
   The proof uses Theorem~\ref{thm:couple}. 
   Let 
   $R$ be the rectangle of height $k$ between the leftmost point of $Q$ and the rightmost point of $\xi\cup\xi'$ that is to the left of $Q$.
   Note that $R$ is a $m'\times k$ rectangle with $m'\geq m$.
   We can apply Theorem~\ref{thm:couple} with this choice of $R$ to show that if we sample the edges with midpoint in
   $R$ from left to right, with probability at least $1-e^{-c_1 m'}$ for some constant $c_1>0$, 
   at some time we sample the same top-to-bottom crossing of unit verticals in both $\sigma$ and $\sigma'$. 
   Then we repeat the same argument with $R'$ being the rectangle between the rightmost point of $Q$ and 
   the leftmost point of $\xi\cup\xi'$ that is to the right of $Q$. 
   If it turns out that, during the construction described in the coupling above, 
   $\sigma$ and $\sigma'$ at some moment sample the same top-to-bottom crossing of unit verticals inside $R$, 
   and similarly inside $R'$, then letting 
   $\Upsilon'$ denote the set of midpoints between 
   these two crossings we obtain that we can couple $\sigma,\sigma'$ so that they coincide in $\Upsilon'$. 
   Since $\Upsilon'\supseteq \Upsilon$, the theorem is established.
\end{proof}

%############################################################################################
\subsection{Local limits}
In Theorem~\ref{thm:tightness} we showed that the measure on local configurations of a sequence of lattice triangulations for any $\lambda\in(0,1)$ is tight. 
In the case of triangulations of thin triangles, taking advantage of decay of correlations (cf.\ Theorem~\ref{thm:correlations}),
we can establish the existence of a unique local limit.

We adapt the definitions from Section~\ref{sec:tightness}. 
For any $\ell$ such that $0<\ell <n$, let $\Upsilon_\ell=[-\ell/2,\ell/2]\times[0,k]\cap\Lambda_n$ be the midpoints inside 
$[-\ell/2,\ell/2]\times[0,k]$, and 
let $\Gamma_\ell$ be the set of configurations of disjoint edges with midpoint in $\Upsilon_\ell$ such that for any 
$\gamma\in\Gamma_\ell$ there exists at 
least one $n$ and one triangulation $\sigma\in \Omega_{n,k}$ such that $\gamma=\sigma_{\Upsilon_\ell}$, where we recall that
$\sigma_{\Upsilon_\ell}$ denotes the set of edges of $\sigma$ whose midpoints belong to $\Upsilon_\ell$.
Let $\pi_{n,k}^\ell$ be the stationary measure over triangulations of $\Omega_{n,k}$ of the edges with midpoint in $\Upsilon_\ell$. 
More precisely, for any $\gamma\in\Gamma_\ell$, we have 
\begin{equation}
   \pi_{n,k}^\ell(\gamma) = 
   \frac{1}{Z_{n,k}^\ell}\sum_{\sigma\in\Omega_{n,k}\colon \sigma_{\Upsilon_\ell}=\gamma} \pi_{n,k}(\sigma),
   \label{eq:pink}
\end{equation}
where $Z_{n,k}^\ell$ is a normalizing constant to make $\pi_{n,k}^\ell$ a probability measure over $\Gamma_\ell$. 

\begin{theorem}\label{thm:locallimits}
   Let $k,\ell$ be fixed integers.
   For any $\lambda\in(0,1)$, 
   there exists a unique probability measure $\pi_{\infty,k}^\ell$ over $\Gamma_\ell$ so that $\pi_{n,k}^\ell$ converges to $\pi_{\infty,k}^\ell$ as 
   $n\to\infty$.
\end{theorem}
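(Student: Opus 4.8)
The plan is to show that the sequence $(\pi_{n,k}^\ell)_n$ is Cauchy for the total variation distance on probability measures over the countable set $\Gamma_\ell$. Since the set of probability measures on a countable space is complete for this distance, a Cauchy sequence has a limiting probability measure, which I will call $\pi_{\infty,k}^\ell$; the limit is automatically unique, and total variation convergence in particular gives the convergence asserted in the statement. So everything reduces to a quantitative estimate
\[
   \big\|\pi_{n_2,k}^\ell-\pi_{n_1,k}^\ell\big\|_{\mathrm{TV}}\le e^{-c\,n_1}\qquad\text{for all }n_2>n_1\ge 4\ell,
\]
with $c=c(\lambda,k)>0$. Here I use that for $n\ge\ell$ the measure $\pi_{n,k}^\ell$ is simply the image of $\pi_{n,k}$ under the restriction map $\sigma\mapsto\sigma_{\Upsilon_\ell}$ (the normalising constant $Z_{n,k}^\ell$ in its definition equals $1$, since any $\gamma\in\Gamma_\ell$ extends to a triangulation of $\Lambda^0_{n,k}$).

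The first ingredient is a \emph{spatial Markov property} that realises $\pi_{n_1,k}^\ell$ as a local marginal of a constrained measure on the larger rectangle. Fix $n_1<n_2$ and let $\xi$ be the boundary condition on $\Lambda^0_{n_2,k}$ obtained from the free boundary condition of $\Lambda^0_{n_2,k}$ by adjoining the two full columns of unit vertical edges that form the left and right sides of the sub-rectangle $\Lambda^0_{n_1,k}$. A full column of unit verticals disconnects every triangulation: an edge crossing it would either contain a point of $\mathbb{Z}^2$ in its interior, or intersect the interior of one of those unit verticals, both of which are forbidden. Hence each $\sigma\in\Omega^\xi_{n_2,k}$ is exactly a triple formed by a triangulation of the left strip, a triangulation of the middle rectangle $\Lambda^0_{n_1,k}$ with free boundary, and a triangulation of the right strip, and any such triple glues back into $\Omega^\xi_{n_2,k}$. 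Since $\sum_x|\sigma_x|$ splits as a constant (coming from $\xi$) plus the three contributions, $\pi_{n_2,k}^\xi$ factorises as a product of the stationary measures of the three sub-rectangles, so its marginal on the midpoints of the middle rectangle is $\pi_{n_1,k}$, and therefore its marginal on $\Upsilon_\ell$ is $\pi_{n_1,k}^\ell$. (The exact positions of the adjoined columns and the parity of $n_1,n_2$ cause no difficulty and are absorbed into the constants.)

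The second ingredient is decay of correlations. The boundary condition $\xi$ and the free boundary condition of $\Lambda^0_{n_2,k}$ agree near $\Upsilon_\ell$: every edge in which they differ lies at horizontal distance at least $\tfrac{n_1}{2}-\ell$ from $\Upsilon_\ell$, and neither meets the region $Q=[-\ell/2,\ell/2]\times[0,k]$ once $n_1>2\ell$. Applying Theorem~\ref{thm:correlations} with this $Q$ yields a coupling of $\pi_{n_2,k}$ and $\pi_{n_2,k}^\xi$ under which the two random triangulations have equal restrictions to $\Upsilon_\ell$ with probability at least $1-e^{-c(n_1/2-\ell)}$. Pushing this coupling forward under $\sigma\mapsto\sigma_{\Upsilon_\ell}$, and using that the image of $\pi_{n_2,k}$ is $\pi_{n_2,k}^\ell$ while the image of $\pi_{n_2,k}^\xi$ is $\pi_{n_1,k}^\ell$ by the first ingredient, we get $\|\pi_{n_2,k}^\ell-\pi_{n_1,k}^\ell\|_{\mathrm{TV}}\le e^{-c(n_1/2-\ell)}\le e^{-c\,n_1/4}$ for all $n_2>n_1\ge4\ell$. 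This is the required Cauchy estimate, and the theorem follows.

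I expect the only genuinely delicate point to be the spatial Markov property used above — concretely, checking that a full column of unit verticals really does decouple the model into independent pieces whose weights multiply. The rest is bookkeeping on top of Theorem~\ref{thm:correlations}, which already contains all the probabilistic content (through Theorems~\ref{thm:couple} and~\ref{thm:surgery}, hence ultimately through the Lyapunov function); no further appeal to the Lyapunov function is needed here.
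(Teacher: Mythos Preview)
Your proposal is correct and follows essentially the same approach as the paper: both arguments embed $\pi_{n_1,k}$ as the marginal of $\pi_{n_2,k}^\xi$ where $\xi$ adjoins the two columns of unit verticals at $\pm n_1/2$, then invoke Theorem~\ref{thm:correlations} to couple $\pi_{n_2,k}^\xi$ with $\pi_{n_2,k}$ on $\Upsilon_\ell$. Your write-up is more explicit about the factorisation across the vertical columns and about the Cauchy-sequence framework, but the underlying idea is the same.
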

\begin{proof}
    Let $\delta>0$ and $\ell\in\mathbb{Z}_+$ be arbitrary.
    Let $n_0=n_0(\delta,\ell)$ be a large enough integer.
    We will show that, for any $\gamma\in\Gamma_\ell$ and any $n\geq n_0$ we have 
    $
       |\pi_{n,k}^\ell(\gamma)-\pi_{n_0,k}^\ell(\gamma)| <\delta.
    $
    Note that a triangulation of $\Lambda_{n_0,k}^0$ can be seen as a triangulation of 
    $\Lambda_{n,k}^0$ with boundary condition containing the unit vertical edges of horizontal coordinates $-n_0/2$ and $n_0/2$.
    Then Theorem~\ref{thm:correlations} gives that there exists a coupling $\mathbf{P}$ between $\pi_{n,k}$ with $\pi_{n_0,k}$ so that,
    if $\sigma,\sigma'$ are triangulations distributed as $\pi_{n,k}$ and $\pi_{n_0,k}$, respectively,
    then
    $$
       \mathbf{P}(\sigma_{\Upsilon_\ell}\neq \sigma_{\Upsilon_\ell}')\leq \exp\left(-c \left(\frac{n_0-\ell}{2}\right)\right)<\frac{\delta}{2},
    $$
    for some constant $c>0$, where the last step follows by having $n_0$ large enough with respect to $\delta$ and $\ell$.
    The theorem follows since $\frac{1}{2}\sum_{\gamma\in\Gamma_\ell}|\pi_{n,k}^\ell(\gamma)-\pi_{n,k}^\ell(\gamma)| 
    \leq \mathbf{P}(\sigma_{\Upsilon_\ell}\neq \sigma_{\Upsilon_\ell}')$.
\end{proof}

%############################################################################################
\subsection{Distributional limits of induced graph}
For each $\sigma\in\Omega_{n,k}$ we obtain a labelled, planar graph $G$ with vertex set $\Lambda_{n,k}^0$ and 
edges given by the edges of the triangulation $\sigma$. We call $G$ the induced graph of $\sigma$.
Let $G_{n,k}$ be the (random) induced graph of a random triangulation distributed according to $\pi_{n,k}$. 
In a seminar work, Benjamini and Schramm~\cite{BS} introduced the distributional limit of finite graphs. 
Given a sequence of graphs $\{H_{n}\}_n$, 
the distributional limit of $\{H_{n}\}_n$ is a random infinite graph $\mathcal{H}$, rooted at a (possibly random) vertex $\rho$,
with the property that finite neighborhoods of $H_n$ around a random vertex converge in distribution to neighborhoods of $\mathcal{H}$ around $\rho$.
Below we show that the distributional limit of $G_{n,k}$ exists as $n\to\infty$ and $k$ remains fixed.
Let $\Lambda_{\infty,k}^0$ be the integers points inside $\mathbb{Z}\times \{0,1,\ldots,k\}$.
\begin{theorem}\label{thm:distrlimit}
   Let $k\geq 1$ and $\lambda\in(0,1)$ be fixed.
   There exists a distribution $\mathcal{G}_k$ on infinite graphs with vertex set $\Lambda_{\infty,k}^0$ such that 
   if $\rho$ is a vertex uniformly distributed 
   on $\{0\}\times \{0,1,2,3,\ldots,k\}$ we have that 
   $\mathcal{G}_k$ rooted at $\rho$ is 
   the distributional limit of $\{G_{n,k}\}_n$.
\end{theorem}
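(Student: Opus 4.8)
The plan is to verify the Benjamini--Schramm criterion (\cite{BS}) directly: it suffices to show that for every fixed radius $r\ge 1$ the law of the ball $B_r(G_{n,k},\rho)$ of radius $r$ in graph distance around a uniformly random vertex $\rho$ of $G_{n,k}$ converges as $n\to\infty$, and that the resulting family of limiting laws is consistent in $r$ (the law of $B_r$ being the pushforward of the law of $B_{r+1}$ under truncation). Consistency is automatic, so Kolmogorov's extension theorem assembles the limit laws into a single distribution over rooted infinite graphs; the steps below will identify its vertex set with $\Lambda^0_{\infty,k}$ and its root with a uniform point of $\{0\}\times\{0,1,\dots,k\}$.

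First I would reduce $B_r(G_{n,k},\rho)$ to a \emph{local} function of the triangulation. Write $h(v)$ for the horizontal coordinate of a vertex $v$, and let $D_r$ be the horizontal extent of $B_r(G_{n,k},\rho)$, i.e.\ $\max\{|h(u)-h(u')|\colon u,u'\in B_r\}$. I claim $D_r$ is tight, uniformly in $n$ and in the position of $\rho$. Indeed, if $B_r$ reaches horizontal distance larger than $M$ from $\rho$, then there is a path of at most $r$ edges out of $\rho$ whose total horizontal displacement exceeds $M$; hence one of its edges has $\ell_1$-length at least $M/r$ and passes through one of the $O(kM)$ unit squares within horizontal distance $M$ of $\rho$. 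With probability $1-O(k/\sqrt{n})$ over the choice of $\rho$ the root lies at horizontal distance at least $\sqrt n$ from the boundary of $\Lambda^0_{n,k}$, where every ground state edge is a unit edge (Lemma~\ref{lem:groundstate}); then Theorem~\ref{thm:crosstail} combined with Proposition~\ref{pro:expectation} (taking $t\to\infty$, and union-bounding over the $O(1)$ ground state edges meeting a given unit square) bounds by $c'\alpha^{-M/r}$ the $\pi_{n,k}$-probability that an edge of length at least $M/r$ crosses a fixed such square. A union bound gives $\PR(D_r>M)\le O(kM)\,c'\alpha^{-M/r}\to 0$ as $M\to\infty$, uniformly in $n$. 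Fixing $\varepsilon>0$ and $M=M(r,\varepsilon,k)$ with $\PR(D_r\le M)\ge 1-\varepsilon$, on the event $\{D_r\le M\}$ the ball $B_r(G_{n,k},\rho)$, together with all edges incident to its vertices, lies in the window $W_M=\{v\colon |h(v)-h(\rho)|\le M\}$, so it is a deterministic function of the restriction of $\sigma$ to the midpoints inside $W_M$.

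Next I would feed this into the local limit and decay of correlations. Conditioning on the vertical coordinate of $\rho$ and translating so that $h(\rho)=0$, the restriction of $\sigma$ to $W_M$ has (up to translation and relabeling of the strip) a law of the form $\pi^{2M}_{n',k}$ from Theorem~\ref{thm:locallimits}, but around a point which need not be the center of the strip. Decay of correlations (Theorem~\ref{thm:correlations}), applied with region $W_M$ and boundary distance at least $\sqrt n$, shows this law is within total variation $e^{-c\sqrt n}$ of the corresponding law around the center, which by Theorem~\ref{thm:locallimits} converges as $n\to\infty$ to the local limit $\pi^{2M}_{\infty,k}$. The measures $\{\pi^{\ell}_{\infty,k}\}_\ell$ are consistent (their finite-$n$ counterparts are, and consistency passes to the limit), so by Kolmogorov extension they are the marginals of a single measure $\pi_{\infty,k}$ on edge configurations over $\Lambda_{\infty,k}$, supported on triangulations of the infinite strip $\mathbb{Z}\times\{0,\dots,k\}$. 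Combining with the localization step, for every fixed $r$ the law of $B_r(G_{n,k},\rho)$ converges to the law of $B_r$ under a $\pi_{\infty,k}$-random triangulation rooted at a vertex uniform on $\{0\}\times\{0,1,\dots,k\}$ --- the vertical coordinate being the only surviving randomness once the horizontal translation invariance of the limit (itself a consequence of Theorems~\ref{thm:correlations} and~\ref{thm:locallimits}, which force all bulk positions to yield the same limit) is used. This identifies $\mathcal{G}_k$ as the graph induced by a $\pi_{\infty,k}$-random triangulation of $\Lambda^0_{\infty,k}$, and completes the proof.

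I expect the localization step --- uniform tightness of the horizontal extent $D_r$ of the graph ball --- to be the main obstacle, since a priori a single long edge can make $B_r$ span a macroscopic horizontal distance; the resolution is that the exponential edge-length tails of Corollary~\ref{cor:tailedge} and Theorem~\ref{thm:crosstail} beat the merely linear growth of the number of unit squares one must union-bound over, and, crucially, that these tails are uniform in $n$ and in the boundary condition. A secondary nuisance is the bookkeeping near the boundary: one must check that the random root lies in the bulk with probability tending to $1$ and that the constants from decay of correlations do not depend on $n$, both of which follow from the stated uniformity of the results in Section~\ref{sec:thinrectangle}.
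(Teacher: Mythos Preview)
Your proof is correct and follows essentially the same strategy as the paper's: show that the $r$-ball around a uniformly random root is contained in a bounded horizontal window with high probability via the exponential edge-length tails of Theorem~\ref{thm:crosstail}, then invoke the local limit of Theorem~\ref{thm:locallimits}. You are in fact more explicit than the paper about two points it glosses over---using Theorem~\ref{thm:correlations} to recenter the window at the origin, and assembling the consistent family $\{\pi^{\ell}_{\infty,k}\}_\ell$ into $\mathcal{G}_k$ via Kolmogorov extension---so the proposal is, if anything, a slightly more careful version of the same argument.
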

\begin{proof}
   First we sample $v_n=(i,j)$ uniformly at random from $\Lambda_{n,k}^0$. 
   We need to show that, for any fixed integer $\ell$, the $\ell$-th neighborhood of $v_n$ 
   in $G_{n,k}$ converges in distribution to some measure.
   Let $\delta>0$ be an arbitrary number.
   Note that with probability at least $1-\frac{1}{\sqrt{n}}$, $v_n$ is at distance 
   at least $\frac{\sqrt{n}}{3}$ from the boundary of $\Lambda_{n,k}^0$. 
   Given any $c_1>1$, let $c_2=c_2(c_1,\delta)$ be large enough so that Theorem~\ref{thm:crosstail} gives that 
   with probability at least $1-\delta/2$, all edges of midpoint in $A_n=v_n+[-c_1\ell^2,c_1\ell^2]\times[0,k]$ have size at most 
   $c_2\log (c_1\ell^2 k)$. Now set $c_1$ large enough so that $\frac{c_1\ell^2}{c_2 \log (c_1\ell^2 k)}\geq 3\ell$. 
   Therefore, under this event, the $\ell$-th neighborhood around $v_n$ in $G_{n,k}$ has only edges of midpoints inside 
   $A_n$. But the set of edges of midpoints inside $A_n$ converge as $n\to\infty$ by Theorem~\ref{thm:locallimits}, concluding the proof.
\end{proof}

We call the random graph $\mathcal{G}_k$ the \emph{infinite lattice triangulation on slabs}.
We can show that the degree of a given vertex of the infinite lattice triangulation on slabs has an exponential tail.
\begin{theorem}\label{thm:exptail}
   For any given $\rho\in\Lambda_{\infty,k}^0$, let $d_{\mathcal{G}_k}(\rho)$ be a random variable denoting the degree of $\rho$ in a graph distributed 
   according to $\mathcal{G}_k$. Then there exists a positive constant $c$ such that for any $\ell\geq 0$ we have that 
   $
      \PR\big(d_{\mathcal{G}_k}(\rho)\geq \ell\big)\leq \exp(-c\ell).
   $
\end{theorem}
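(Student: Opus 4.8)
\textbf{Proof plan for Theorem~\ref{thm:exptail}.}
The plan is to relate the degree of a fixed vertex $\rho$ in the infinite graph $\mathcal{G}_k$ to the number and lengths of edges of a finite triangulation incident to (or passing near) $\rho$, and then invoke the exponential tail on edge lengths (Corollary~\ref{cor:tailedge}) together with the area constraint that every triangle in a lattice triangulation has area $1/2$. First I would use Theorem~\ref{thm:locallimits} (and its proof via Theorem~\ref{thm:distrlimit}) to transfer the event $\{d_{\mathcal{G}_k}(\rho)\geq \ell\}$ to a statement about $\pi_{n,k}$ for large $n$: there is a constant-size window $A$ around $\rho$ (whose size depends only on $k$ and on how large we take the auxiliary cutoff) such that, up to an error that vanishes as $n\to\infty$, the degree of $\rho$ in $G_{n,k}$ and in $\mathcal{G}_k$ have the same distribution, and both are determined by the edges of $\sigma$ with midpoints in $A$.

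The key geometric observation is that if $\rho$ has degree at least $\ell$ in a triangulation $\sigma$, then $\rho$ is a common vertex of at least $\ell-1$ distinct triangles of $\sigma$ (the triangles fanning out around $\rho$). Each such triangle has area $1/2$, and the total area they occupy is the area of the union of triangles incident to $\rho$, which is a bounded region only if all incident edges are short. More precisely, if every edge incident to $\rho$ has $\ell_1$-length at most $r$, then all $\ell-1$ triangles lie inside the ball of radius $r$ around $\rho$, so $(\ell-1)/2 \leq \pi r^2$ (intersected with the slab, so in fact $\leq 2 k r$), forcing $r \geq c\ell$ for a constant $c=c(k)$. Hence $\{d_{\mathcal{G}_k}(\rho)\geq \ell\}$ implies that some edge incident to $\rho$ has length at least $c\ell$; in particular some edge with midpoint in the ball of radius $c\ell$ around $\rho$ has length at least $c\ell$.

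From here I would apply the second (stationary) bound of Corollary~\ref{cor:tailedge}: for a fixed midpoint $x$ with ground state edge $g$, $\pi^\xi(|\sigma_x|\geq |g|+s)\leq c'\alpha^{-s}$, uniformly in the boundary condition $\xi$. Taking a union bound over the $O(\ell^2)$ midpoints $x$ within $\ell_1$-distance $c\ell$ of $\rho$ (using that $|g|=O(1)$ for each, since $\rho$ is in the bulk and these are short ground states once $n$ is large), we get
\begin{equation*}
   \pi_{n,k}\big(\text{some edge incident to }\rho\text{ has length}\geq c\ell\big)
   \leq O(\ell^2)\,c'\alpha^{-c\ell+O(1)}
   \leq \exp(-c''\ell)
\end{equation*}
for a suitable constant $c''>0$ and all $\ell$ large enough (absorbing the polynomial factor). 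Letting $n\to\infty$ and using Theorem~\ref{thm:locallimits} transfers this to $\mathcal{G}_k$, and adjusting the constant handles small $\ell$. The main obstacle is the bookkeeping in the first paragraph: making precise that the degree of $\rho$ in $\mathcal{G}_k$ is a measurable function of a bounded window and that the local-limit convergence genuinely controls it — but this is exactly the content already used in the proof of Theorem~\ref{thm:distrlimit}, so it should go through with the same argument (restrict to the high-probability event that $\rho$ is far from the boundary and that all edges in a growing-but-still-$o(\sqrt n)$ window are short, then apply Theorem~\ref{thm:locallimits}).
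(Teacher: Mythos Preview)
Your overall plan is sound and close to the paper's, but one step is imprecise. You write that if some edge incident to $\rho$ has length at least $c\ell$, then ``in particular some edge with midpoint in the ball of radius $c\ell$ around $\rho$ has length at least $c\ell$,'' and you then union--bound over the $O(\ell^2)$ midpoints in that ball. This does not follow: the long incident edge may have length much greater than $c\ell$, in which case its midpoint lies \emph{outside} that ball, and your union bound misses it. The fix is immediate once you track the midpoint correctly: an edge incident to $\rho$ of $\ell_1$-length $r$ has its midpoint at $\ell_1$-distance exactly $r/2$ from $\rho$, so
\[
   \{d_{\mathcal{G}_k}(\rho)\ge \ell\}
   \subseteq \bigcup_{d\ge c\ell/2}\ \bigcup_{x\,:\,\|x-\rho\|_1=d}\{|\sigma_x|\ge 2d\}.
\]
In the height-$k$ slab there are $O(k)$ midpoints at each distance $d$, and the ground state at each such $x$ has length at most $2$, so Corollary~\ref{cor:tailedge} and a union bound give $\sum_{d\ge c\ell/2} O(k)\,c'\alpha^{-(2d-2)}\le C\alpha^{-c\ell}$, which is the desired exponential tail. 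With this correction your argument goes through.

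For comparison, the paper takes a slightly different (and shorter) route: rather than your area/counting argument, it observes that any edge incident to $\rho$ with the other endpoint outside the $2\times2$ square centred at $\rho$ must cross one of the eight unit ground-state edges $\Gamma$ forming the boundary of that square, so $d(\rho)\le 8+\#\{\text{edges of }\sigma\text{ crossing }\Gamma\}$; it then invokes Proposition~\ref{pro:numbercross} and Theorem~\ref{thm:crosstail} (implicitly using the slab geometry, as you do) to get an exponential tail on that crossing count. Both reductions ultimately feed into the same Lyapunov-derived exponential tail on edge lengths. The paper's version has the advantage of localising to an $\ell$-independent window $\Gamma$, which makes the passage to the local limit entirely trivial; your version is more explicit about where the factor of $k$ enters.
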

\begin{proof}
   Take $n$ large enough so that $\rho\in\Lambda_{n,k}$. Let $\Gamma$ be the set of ground state horizontal and vertical edges forming the $2\times 2$ square 
   centered at $\rho$. Then the degree of $\rho$ is at most 8 plus the number of edges intersecting edges of $\Gamma$. Since this last random
   variable has an exponential tail for all large enough $n$ by Theorem~\ref{thm:crosstail} and Proposition~\ref{pro:numbercross}, the proof is completed.
\end{proof}

It was shown by Gurel-Gurevich and Nachmias~\cite{GGN} that any distributional limit of graphs where the degrees have an exponential 
tail is a recurrent graph almost surely. Therefore, the two theorems above imply the following result.
\begin{corollary}\label{cor:recurrence}
   For any integer $k\geq 1$ and any real number $\lambda\in(0,1)$ the infinite lattice triangulation with parameter $\lambda$ 
   on $k$-slabs is almost surely recurrent.
\end{corollary}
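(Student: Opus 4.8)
The plan is to deduce the corollary directly from the theorem of Gurel-Gurevich and Nachmias~\cite{GGN}: any distributional (Benjamini--Schramm) limit of finite planar graphs in which the degree of the root has an exponential tail is almost surely recurrent. All the ingredients are already in hand. First, each induced graph $G_{n,k}$ is planar, since it is the graph of a lattice triangulation of a bounded strip. Second, Theorem~\ref{thm:distrlimit} establishes that $\{G_{n,k}\}_n$ admits a distributional limit, namely the random graph $\mathcal{G}_k$ on $\Lambda_{\infty,k}^0$ rooted at a vertex $\rho$ chosen uniformly on $\{0\}\times\{0,1,\ldots,k\}$; this $\mathcal{G}_k$ is exactly the object named ``the infinite lattice triangulation on $k$-slabs'' in the statement. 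Third, Theorem~\ref{thm:exptail} supplies an exponential tail for the degree of any fixed vertex of $\mathcal{G}_k$.

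The only thing that needs a (trivial) argument is that the degree of the \emph{random} root $\rho$ has an exponential tail, since that is the hypothesis entering~\cite{GGN}. This follows by averaging Theorem~\ref{thm:exptail} over the $k+1$ possible positions of $\rho$: with $c$ the constant from that theorem (which does not depend on the vertex),
\begin{align*}
   \PR\big(d_{\mathcal{G}_k}(\rho)\geq \ell\big)
   &= \frac{1}{k+1}\sum_{v\in\{0\}\times\{0,1,\ldots,k\}}\PR\big(d_{\mathcal{G}_k}(v)\geq \ell\big)
   \leq \exp(-c\ell)
\end{align*}
for all $\ell\geq 0$. Hence $\mathcal{G}_k$, rooted at $\rho$, meets the hypotheses of the Gurel-Gurevich--Nachmias theorem, and we conclude that it is almost surely recurrent. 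Since the law of $\mathcal{G}_k$ does not depend on the choice of root and recurrence is a property of the graph itself, this is exactly the assertion of the corollary.

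I do not anticipate any real obstacle; the only point requiring care is a matter of matching conventions, namely checking that the notion of distributional limit produced by Theorem~\ref{thm:distrlimit} coincides with the one used in~\cite{GGN}. In particular one should note that $\mathcal{G}_k$ is almost surely connected and locally finite --- connectedness because every $G_{n,k}$ is a connected triangulation of a strip, and local finiteness because the limiting degrees are almost surely finite by the tail bound above --- so that the theorem of~\cite{GGN} genuinely applies.
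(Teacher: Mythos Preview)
Your argument is correct and is essentially the paper's own: the corollary is stated immediately after the remark that Gurel-Gurevich and Nachmias' theorem combined with Theorems~\ref{thm:distrlimit} and~\ref{thm:exptail} yields recurrence, with no further proof given. Your extra averaging over the $k+1$ possible root positions and the checks of planarity, connectedness, and local finiteness are careful but routine additions to what the paper leaves implicit.
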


%############################################################################################
%############################################################################################
%############################################################################################
\section{Open problems}\label{sec:problems}
\begin{itemize}
   \item In Theorem~\ref{thm:locallimits} we show that the local limit of random triangulations on thin rectangles exist. 
      In the case of $n\times n$ triangulations, our results only give that \emph{subsequential} limits exists (cf.\ Theorem~\ref{thm:tightness}).
      An interesting open problem is to establish whether for any $\lambda\in(0,1)$ 
      there exists a unique measure $\pi_\infty^k$ over $\Gamma_k$ so that $\pi_n^k$ converges to $\pi_\infty^k$ as $n\to\infty$?
   
   \item In the context of Theorems~\ref{thm:smalltriangles} and~\ref{thm:smalltrianglesgen}, 
      establish the existence of crossings of small triangles in the presence of arbitrary boundary conditions, where small refers to 
      the difference between the length of the edges in the triangles and their ground state.
   
   \item Establish decay of correlations (in the context of Theorem~\ref{thm:correlations}) 
      and the corresponding results for local limits and distributional limits of the induced graph (Theorems~\ref{thm:locallimits}--\ref{thm:exptail} and 
      Corollary~\ref{cor:recurrence}) for triangulations of $n\times n$. 
\end{itemize}

%############################################################################################
%############################################################################################
%############################################################################################
\section*{Acknowledgments}
I am grateful to Pietro Caputo, Fabio Martinelli and Alistair Sinclair for several useful discussions,
and to Gr\'egory Miermont for suggesting the problem of local limits of lattice triangulations.

%############################################################################################
%############################################################################################
%############################################################################################
\bibliographystyle{plain}
\bibliography{lyapunov}

\appendix

%############################################################################################
\section{Standard large deviation results}

We use the following standard Chernoff bounds and large deviation results.
\begin{lemma}[Chernoff bound for binomial~{\cite[Corollary~A.1.10]{AlonSpencer}}]\label{lem:cbbinomial}
   Let $X$ be the sum of $n$ i.i.d.\ Bernoulli random variables of mean $p$.
%    Then,
%    $\pr{X \geq np+a} \leq \exp\left(a-(pn+a)\log\left(1+\frac{a}{pn}\right)\right)$. In the other direction,
   For any $\epsilon\in(0,1)$, we have
   $\pr{X \leq (1-\epsilon)np} \leq \exp\left(-\frac{\epsilon^2np}{2}\right)$.
\end{lemma}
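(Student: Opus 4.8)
The statement is the standard Chernoff lower-tail bound for a sum of independent Bernoulli variables, and the cited reference \cite[Corollary~A.1.10]{AlonSpencer} already contains a proof; for completeness the plan is to give a short self-contained argument via the exponential-moment method. First I would fix a parameter $t>0$ and apply Markov's inequality to the nonnegative random variable $e^{-tX}$:
\[
   \pr{X \leq (1-\epsilon)np}
   = \pr{e^{-tX} \geq e^{-t(1-\epsilon)np}}
   \leq e^{t(1-\epsilon)np}\,\Ex{e^{-tX}}.
\]

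Next, writing $X=\sum_{i=1}^n X_i$ with the $X_i$ independent Bernoulli$(p)$, the moment generating function factorises, and using $1+x\leq e^x$ I would bound
\[
   \Ex{e^{-tX}} = \prod_{i=1}^n \Ex{e^{-tX_i}} = \big(1+p(e^{-t}-1)\big)^n \leq \exp\!\big(np(e^{-t}-1)\big),
\]
so that $\pr{X\leq(1-\epsilon)np}\leq \exp\!\big(np[\,e^{-t}-1+t(1-\epsilon)\,]\big)$. Optimising the exponent over $t$ leads to the choice $t=-\ln(1-\epsilon)$, which is positive because $\epsilon\in(0,1)$; substituting it gives
\[
   \pr{X\leq(1-\epsilon)np}\leq \exp\!\big(-np\,[\,\epsilon+(1-\epsilon)\ln(1-\epsilon)\,]\big).
\]

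It then remains only to verify the elementary inequality $\epsilon+(1-\epsilon)\ln(1-\epsilon)\geq \epsilon^2/2$ on $(0,1)$. I would set $f(\epsilon)=\epsilon+(1-\epsilon)\ln(1-\epsilon)$, observe that $f(0)=0$, that $f'(\epsilon)=-\ln(1-\epsilon)$ so $f'(0)=0$, and that $f''(\epsilon)=1/(1-\epsilon)\geq 1$ for $\epsilon\in[0,1)$; Taylor's theorem with integral remainder then gives $f(\epsilon)=\int_0^\epsilon(\epsilon-s)f''(s)\,ds\geq\int_0^\epsilon(\epsilon-s)\,ds=\epsilon^2/2$. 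Plugging this into the last display yields the claimed bound. There is no genuine obstacle here, since the whole argument is standard; the only points requiring a little care are the sign conventions in the exponential Markov step (for a lower tail one works with $e^{-tX}$ and $t>0$) and checking that the optimised exponent indeed dominates $\epsilon^2/2$.
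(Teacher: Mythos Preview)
Your argument is correct and entirely standard: the exponential-Markov step, the factorisation of the MGF, the optimisation $t=-\ln(1-\epsilon)$, and the Taylor-remainder verification of $\epsilon+(1-\epsilon)\ln(1-\epsilon)\geq\epsilon^2/2$ are all carried out cleanly. Note that the paper itself gives no proof of this lemma at all---it simply quotes the result from \cite[Corollary~A.1.10]{AlonSpencer}---so your self-contained derivation goes beyond what the paper does rather than diverging from it.
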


\begin{lemma}[Chernoff bound for geometric]\label{lem:cbgeometric}
   Let $X$ be the sum of $n$ i.i.d.\ geometric random variables with mean $p$.
   Then, for any $\epsilon > 0$,
   $
     \pr{X \geq (1+\epsilon) \frac{n}{p} } \leq \exp\left(-\frac{\epsilon^2}{2(1+\epsilon)}n \right).
   $
\end{lemma}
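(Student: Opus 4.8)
The plan is to deduce the bound from the Chernoff bound for the binomial distribution (Lemma~\ref{lem:cbbinomial}) via the classical duality between sums of geometric variables and binomial counts, so that no new concentration machinery is needed. Throughout, a geometric random variable of parameter $p$ is supported on $\{1,2,3,\dots\}$ with mean $1/p$, so that $X=Y_1+\cdots+Y_n$ has mean $n/p$; I will realise $X$ as the number of i.i.d.\ Bernoulli$(p)$ trials needed to see the $n$-th success.

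The first step is the duality identity: running i.i.d.\ Bernoulli$(p)$ trials and letting $S_{j}\sim\mathrm{Bin}(j,p)$ be the number of successes among the first $j$ trials, one has, for every integer $m\ge1$, that $\{X\ge m\}=\{S_{m-1}\le n-1\}$, because the $n$-th success lands at trial $m$ or later precisely when the first $m-1$ trials produced at most $n-1$ successes. Since $X$ is integer-valued, $\pr{X\ge(1+\epsilon)n/p}=\pr{X\ge m}$ for $m=\lceil(1+\epsilon)n/p\rceil$, and then $\Ex{S_{m-1}}=(m-1)p\ge(1+\epsilon)n-1$.

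The second step applies Lemma~\ref{lem:cbbinomial} to $S_{m-1}$ with deviation parameter $\epsilon'=1-\tfrac{n-1}{(m-1)p}$, i.e.\ the exact value for which $(1-\epsilon')\Ex{S_{m-1}}=n-1$; assuming $n\ge2$ (the case $n=1$ forces $S_{m-1}=0$ and the bound then follows from $(1-p)^{m-1}\le e^{-\epsilon}$) we have $\epsilon'\in(0,1)$, so Lemma~\ref{lem:cbbinomial} gives $\pr{X\ge m}\le\exp\!\big(-\tfrac{(\epsilon')^2(m-1)p}{2}\big)$. It then remains to check the elementary inequality $(\epsilon')^2(m-1)p=\tfrac{\big((m-1)p-(n-1)\big)^2}{(m-1)p}\ge\tfrac{\epsilon^2n}{1+\epsilon}$; since $A\mapsto(A-(n-1))^2/A$ is nondecreasing for $A\ge n-1$, the left side is minimised at $A=(1+\epsilon)n-1$, where it equals $\tfrac{(\epsilon n)^2}{(1+\epsilon)n-1}\ge\tfrac{\epsilon^2n}{1+\epsilon}$. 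Combining the last two bounds yields the claim.

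There is no serious obstacle here; the only point requiring care is the off-by-one bookkeeping — the $-1$'s coming from the ceiling and from passing between $n$ and $n-1$ successes must be seen to be absorbed by the $\epsilon n$ slack, which the monotonicity computation above makes explicit. Should a more self-contained argument be preferred, the exponential-moment method works directly: $\Ex{e^{tY_i}}=pe^{t}/(1-(1-p)e^{t})$ for $t<-\log(1-p)$, whence $\pr{X\ge a}\le e^{-ta}\big(pe^{t}/(1-(1-p)e^{t})\big)^{n}$ by Markov's inequality, and optimising over $t$ (roughly $e^{t}\approx 1+\tfrac{\epsilon}{1+\epsilon}p$) reproduces the stated bound after routine estimates on $\log(1+x)$; this is heavier computationally, so I would present the binomial-duality proof.
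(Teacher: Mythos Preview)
Your proof is correct and follows essentially the same approach as the paper: both exploit the duality between a sum of geometrics and a binomial count of successes, then apply Lemma~\ref{lem:cbbinomial}. The paper's argument is a two-line sketch that ignores the ceiling and the $n$ versus $n-1$ distinction (writing $\sum_{i=1}^{(1+\epsilon)n/p}Z_i\le n$ and taking $\epsilon'=\epsilon/(1+\epsilon)$ directly), whereas you carry out the off-by-one bookkeeping carefully; the substance is identical.
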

\begin{proof}
   Enumerate each trial of the $n$ geometric random variables as $Z_1,Z_2,\ldots$. Then the event 
   $X \geq (1+\epsilon) \frac{n}{p}$ implies that $\sum_{i=1}^{(1+\epsilon) \frac{n}{p}} Z_i \leq n$.
   Since $\sum_{i=1}^{(1+\epsilon) \frac{n}{p}} Z_i$ is a binomial random variable with parameters 
   $(1+\epsilon) \frac{n}{p}$ and $p$, the result follows from Lemma~\ref{lem:cbbinomial}.
\end{proof}

%############################################################################################
\section{Absence of FKG}~\label{sec:nofkg}
Given a set of constraint $\xi$, recall that $\bar\sigma_x$ denotes the ground state edge of midpoint $x$ given $\xi$, with an arbitrary
choice among unit diagonals.
\begin{lemma}
   There exist a set of constraints $\xi$ (e.g., Figure~\ref{fig:nofkg}) such that the following holds for two midpoints $x,y\in\Lambda$:
   $
      \pi^\xi\big(\sigma_x = \bar\sigma_x \mid \sigma_y \neq \bar\sigma_y\big)
      > \pi^\xi\big(\sigma_x = \bar\sigma_x \mid \sigma_y=\bar\sigma_y\big)
   $
   and
   $
      \pi^\xi\big(\sigma_x = \bar\sigma_x\big) >
      \pi^\xi\big(\sigma_x = \bar\sigma_x \mid \sigma_y=\bar\sigma_y\big).
   $
   As a consequence, the FKG inequality does not hold.
\end{lemma}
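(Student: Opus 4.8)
The plan is to exhibit an explicit small example and compute the relevant probabilities directly, exploiting the fact that in a small polygon the partition function $Z^\xi(\lambda)$ is a finite polynomial in $\lambda$ that can be written out by hand. First I would choose $\Lambda^0$ to be a small lattice polygon (a $2\times 2$ square together with a couple of extra vertices, or equivalently the polygon underlying Figure~\ref{fig:nofkg}) with a constraint set $\xi$ forcing enough long edges that flipping the spin at one midpoint forces a reconfiguration near another midpoint. The point of the construction is to make the spins at $x$ and $y$ \emph{negatively} correlated through the weight $\lambda^{\sum|\sigma_z|}$: when $\sigma_y$ is in its ground state, geometric rigidity forces some third edge to be long, which in turn makes the ground-state edge at $x$ less favourable than when $\sigma_y$ is not in ground state.

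The key steps, in order, would be: (i) enumerate $\Omega^\xi$ for the chosen example — since the polygon is tiny, Anclin's bound (Lemma~\ref{lem:anclin}) guarantees only a handful of triangulations, which can be listed explicitly; (ii) for each triangulation record the total edge length $\sum_{z\in\Lambda}|\sigma_z|$, so that $Z^\xi(\lambda)=\sum_\sigma \lambda^{\sum|\sigma_z|}$ is an explicit polynomial; (iii) group the triangulations according to the four events $\{\sigma_x=\bar\sigma_x,\ \sigma_y=\bar\sigma_y\}$, $\{\sigma_x=\bar\sigma_x,\ \sigma_y\ne\bar\sigma_y\}$, etc., obtaining four sub-polynomials $Z_{++},Z_{+-},Z_{-+},Z_{--}$; (iv) write the three probabilities in the statement as ratios of these, e.g. $\pi^\xi(\sigma_x=\bar\sigma_x\mid\sigma_y=\bar\sigma_y)=Z_{++}/(Z_{++}+Z_{-+})$ and $\pi^\xi(\sigma_x=\bar\sigma_x\mid\sigma_y\ne\bar\sigma_y)=Z_{+-}/(Z_{+-}+Z_{--})$; (v) clear denominators and check that the required inequalities reduce to a polynomial inequality in $\lambda$ that holds on all of $(0,1)$ — or, if one only wants the qualitative conclusion, check it for one convenient value such as $\lambda=1/2$, or verify the sign of the leading-order term as $\lambda\to0$. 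The failure of FKG then follows because FKG (for the increasing events ``$\sigma_x$ is long'' and ``$\sigma_y$ is long'', or equivalently the decreasing events ``$\sigma_x=\bar\sigma_x$'') would force the opposite inequality $\pi^\xi(\sigma_x=\bar\sigma_x\mid\sigma_y=\bar\sigma_y)\ge\pi^\xi(\sigma_x=\bar\sigma_x)$.

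The main obstacle I anticipate is not conceptual but bookkeeping: one must choose the example carefully so that the enumeration is genuinely small (a few triangulations, not dozens) while still displaying the negative correlation — a naive choice will either be too symmetric (so $x$ and $y$ decouple and FKG holds) or too large to enumerate cleanly. Concretely, the constraint edges should pin down the boundary so that there is exactly one ``long diagonal'' degree of freedom coupling the two midpoints of interest; the figure referenced in the statement is presumably exactly such a configuration, so the real work is to transcribe that figure into an explicit vertex list, list its triangulations, and tabulate their lengths. Once the table is in hand, steps (iv)–(v) are a routine finite computation, and one can even simply state ``a direct computation gives $Z_{++}=\dots$, $Z_{+-}=\dots$, $\dots$'' and then verify the displayed inequalities by inspection of the resulting rational functions of $\lambda$.
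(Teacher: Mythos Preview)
Your plan would work, but it is heavier than necessary and misses the structural shortcut the paper exploits. In the paper's example the constraints are chosen so that when $\sigma_y\neq\bar\sigma_y$ the edge $\sigma_x$ is \emph{geometrically forced} to equal $\bar\sigma_x$; in your notation this means $Z_{--}=0$, hence $\pi^\xi(\sigma_x=\bar\sigma_x\mid\sigma_y\neq\bar\sigma_y)=1$ exactly, for every $\lambda>0$. On the other hand, when $\sigma_y=\bar\sigma_y$ there are at least two admissible choices for $\sigma_x$, so the conditional probability is strictly less than $1$. That single geometric observation already gives the first inequality with no polynomial computation and no dependence on $\lambda$; the second inequality then follows from the law of total probability, since the unconditional probability is a strict convex combination of $1$ and something less than $1$.

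So your steps (i)--(v) are not wrong, but steps (ii), (iv), (v) collapse once you notice $Z_{--}=0$. Your stated intuition (``when $\sigma_y$ is in ground state, rigidity forces a third edge long, making $\bar\sigma_x$ less favourable'') is also reversed: the rigidity acts when $\sigma_y$ is \emph{not} in ground state, and it makes $\bar\sigma_x$ not merely more favourable but mandatory. If you rewrite the argument around that observation, the proof becomes three lines and holds uniformly in $\lambda$, rather than a case-by-case polynomial check.
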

\begin{proof}
   Refer to Figure~\ref{fig:nofkg}. 
   \begin{figure}[htbp]
      \begin{center}
         \includegraphics[scale=.9]{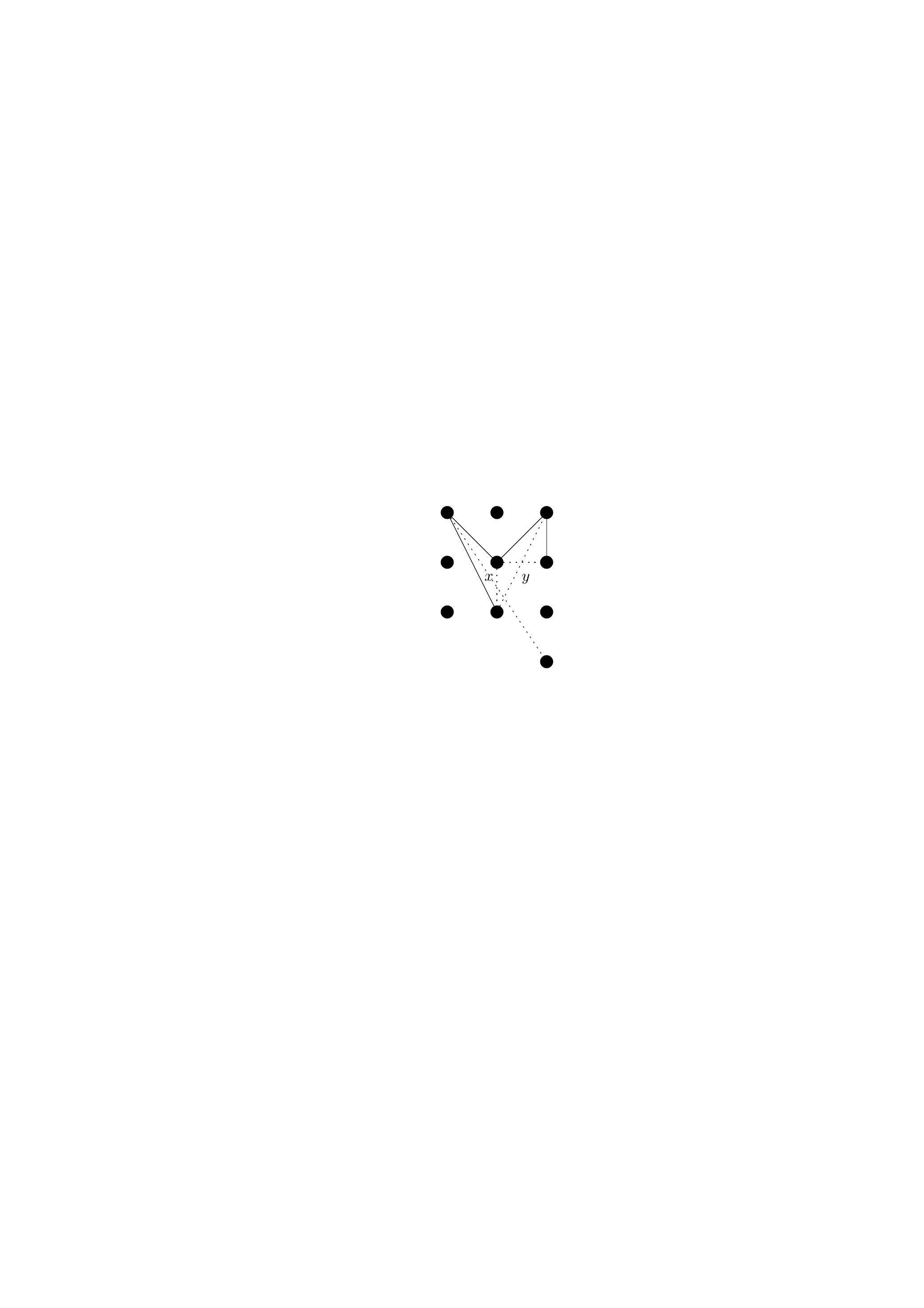}
      \end{center}\vspace{-.5cm}
      \caption{An example violating positive correlation.}
      \label{fig:nofkg}
   \end{figure}
   Solid edges are constraint edges. Dotted edges are the possible configuration of edges with midpoints in $x$ and $y$. 
   Note that if $\sigma_y$ is in ground state given the constraints, then there are two choices for the edge $\sigma_x$, therefore
   $\pi^\xi\big(\sigma_x = \bar\sigma_x \mid \sigma_y=\bar\sigma_y\big) <1$. 
   On the other hand, if $\sigma_y$ is not in ground state, then $\sigma_x$ can only be in ground state,
   giving that 
   $$
      \pi^\xi\big(\sigma_x = \bar\sigma_x \mid \sigma_y \neq \bar\sigma_y\big)
      =1
      >\pi^\xi\big(\sigma_x = \bar\sigma_x \mid \sigma_y=\bar\sigma_y\big),
   $$
   which establishes the first inequality of the lemma.
   For the second inequality, note that 
   \begin{align*}
      \pi^\xi\big(\sigma_x =\bar\sigma_x\big)
      &= \pi^\xi\big(\sigma_x =\bar\sigma_x \mid \sigma_y   =  \bar\sigma_y\big)\pi^\xi\big(\sigma_y   =  \bar\sigma_y\big)
       + \pi^\xi\big(\sigma_x =\bar\sigma_x \mid \sigma_y \neq \bar\sigma_y\big)\pi^\xi\big(\sigma_y \neq \bar\sigma_y\big)\\
      &= \pi^\xi\big(\sigma_x =\bar\sigma_x \mid \sigma_y   =  \bar\sigma_y\big)\pi^\xi\big(\sigma_y   =  \bar\sigma_y\big)
       + \pi^\xi\big(\sigma_y \neq\bar\sigma_y\big).
   \end{align*}
   Since $\pi^\xi\big(\sigma_y =\bar\sigma_y\big)\in(0,1)$, we have that $\pi^\xi\big(\sigma_x =\bar\sigma_x\big)$ is strictly between 
   $\pi^\xi\big(\sigma_x =\bar\sigma_x \mid \sigma_y=\bar\sigma_y\big)$ and $1$.
\end{proof}

\end{document}